\documentclass[12pt, reqno]{amsart}
\usepackage{amssymb, mathrsfs}
\usepackage{latexsym}
\usepackage{amsmath}
\usepackage{amsthm}
\usepackage{amsfonts}
\usepackage{dsfont, upgreek}
\usepackage{mathtools}
\usepackage{epsfig}
\usepackage{amscd}
\usepackage{graphicx}
\usepackage{tikz-cd }

\usepackage{enumitem}
\setlist[enumerate]{itemsep=0.5ex}

\usepackage{stmaryrd}

\usepackage{color}
\usepackage{tikz}
\usetikzlibrary{patterns}

\usepackage[left=1.4in,right=1.4in,top=1.2in,bottom=1.2in]{geometry}

\usetikzlibrary{decorations.markings}
\usetikzlibrary{arrows.meta}

\usepackage{extarrows}

\usepackage{adjustbox}
\usepackage{pgfplots}
\usepgfplotslibrary{colormaps}
\usepackage{slashed}



\usepackage[colorlinks=true,linkcolor=blue,citecolor=blue]{hyperref}

\usepackage[colorinlistoftodos,prependcaption,textsize=tiny]{todonotes}  

\usepackage[all,cmtip]{xy}
\bibliographystyle{abbrv}

\theoremstyle{plain}
\newtheorem{theorem}{Theorem}[section]
\newtheorem{proposition}[theorem]{Proposition}
\newtheorem{lemma}[theorem]{Lemma}
\newtheorem{corollary}[theorem]{Corollary}

\newtheorem{conjecture}[theorem]{Conjecture}

\theoremstyle{definition} 
\newtheorem{definition}[theorem]{Definition}

\newtheorem*{claim*}{Claim}
\newtheorem{claim}[theorem]{Claim}
\newtheorem{setup}[theorem]{Geometric Setup}

\theoremstyle{remark} 
\newtheorem{remark}[theorem]{Remark}

\numberwithin{equation}{section}

\newcommand{\Sc}{\mathrm{Sc}}
\newcommand{\cl}{\mathbb{C}\ell}
\newcommand{\Endo}{\mathrm{End}}
\newcommand{\Bigwedge}{\mathord{\adjustbox{raise=.4ex, totalheight=.7\baselineskip}{$\bigwedge$}}}

\newcommand{\fiber}{\mathbb F}
\newcommand{\link}{\mathbb L}
\newcommand{\hotimes}{\mathbin{\widehat{\otimes}}}

\newcommand{\ind}{\textup{Ind}}
\newcommand{\id}{\mathrm{id}}

\newcommand{\dist}{\mathrm{dist}}

\newcommand{\R}{\mathbb{R}}
\newcommand{\tr}{\mathrm{tr}}

\newcommand{\Z}{\mathbb{Z}}
\newcommand{\N}{\mathbb{N}}

\newcommand{\dR}{\mathrm{dR}}
\newcommand{\dom}{\mathrm{dom}}
\newcommand{\sph}{\mathbb{S}}
\newcommand{\normal}{\mathbf{n}}
\newcommand{\normalnew}{\mathbf{m}}
\newcommand{\Y}{\mathbf{e}}

\newcommand{\interior}[1]{%
	{\kern0pt#1}^{\mathrm{\,o}}%
}

\makeatletter
\let\save@mathaccent\mathaccent
\newcommand*\if@single[3]{%
	\setbox0\hbox{${\mathaccent"0362{#1}}^H$}%
	\setbox2\hbox{${\mathaccent"0362{\kern0pt#1}}^H$}%
	\ifdim\ht0=\ht2 #3\else #2\fi
}
\newcommand*\rel@kern[1]{\kern#1\dimexpr\macc@kerna}
\newcommand*\wideaccent[2]{\@ifnextchar^{{\wide@accent{#1}{#2}{0}}}{\wide@accent{#1}{#2}{1}}}
\newcommand*\wide@accent[3]{\if@single{#2}{\wide@accent@{#1}{#2}{#3}{1}}{\wide@accent@{#1}{#2}{#3}{2}}}
\newcommand*\wide@accent@[4]{%
	\begingroup
	\def\mathaccent##1##2{%
		\let\mathaccent\save@mathaccent
		\if#42 \let\macc@nucleus\first@char \fi
		\setbox\z@\hbox{$\macc@style{\macc@nucleus}_{}$}%
		\setbox\tw@\hbox{$\macc@style{\macc@nucleus}{}_{}$}%
		\dimen@\wd\tw@
		\advance\dimen@-\wd\z@
		\divide\dimen@ 3
		\@tempdima\wd\tw@
		\advance\@tempdima-\scriptspace
		\divide\@tempdima 10
		\advance\dimen@-\@tempdima
		\ifdim\dimen@>\z@ \dimen@0pt\fi
		\rel@kern{0.6}\kern-\dimen@
		\if#41
		#1{\rel@kern{-0.6}\kern\dimen@\macc@nucleus\rel@kern{0.4}\kern\dimen@}%
		\advance\dimen@0.4\dimexpr\macc@kerna
		\let\final@kern#3%
		\ifdim\dimen@<\z@ \let\final@kern1\fi
		\if\final@kern1 \kern-\dimen@\fi
		\else
		#1{\rel@kern{-0.6}\kern\dimen@#2}%
		\fi
	}%
	\macc@depth\@ne
	\let\math@bgroup\@empty \let\math@egroup\macc@set@skewchar
	\mathsurround\z@ \frozen@everymath{\mathgroup\macc@group\relax}%
	\macc@set@skewchar\relax
	\let\mathaccentV\macc@nested@a
	\if#41
	\macc@nested@a\relax111{#2}%
	\else
	\def\gobble@till@marker##1\endmarker{}%
	\futurelet\first@char\gobble@till@marker#2\endmarker
	\ifcat\noexpand\first@char A\else
	\def\first@char{}%
	\fi
	\macc@nested@a\relax111{\first@char}%
	\fi
	\endgroup
}
\makeatother

\newcommand\overbar{\wideaccent\overline}

\makeatletter
\newcommand*{\transpose}{%
	{\mathpalette\@transpose{}}%
}
\newcommand*{\@transpose}[2]{%
	\raisebox{\depth}{$\m@th#1\intercal$}%
}
\makeatother

\begin{document}
	
	\title{On Gromov's dihedral extremality and rigidity conjectures}
	
	\author{Jinmin Wang}
	\address[Jinmin Wang]{Department of Mathematics, Texas A\&M University}
	\email{jinmin@tamu.edu}
	\thanks{The first author is partially supported by  NSF 1800737 and 1952693.}
	\author{Zhizhang Xie}
	\address[Zhizhang Xie]{ Department of Mathematics, Texas A\&M University }
	\email{xie@tamu.edu}
	\thanks{The second author is partially supported by NSF 1800737 and 1952693.}
	\author{Guoliang Yu}
	\address[Guoliang Yu]{ Department of
		Mathematics, Texas A\&M University}
	\email{guoliangyu@tamu.edu}
	\thanks{The third author is partially supported by NSF 1700021, 2000082, and the Simons Fellows Program.}
	
	\begin{abstract}
		In this paper, we develop a new index theory for manifolds with polyhedral boundary. As an application, we prove  Gromov's dihedral extremality conjecture  regarding comparisons of scalar curvatures, mean curvatures and dihedral angles between   two compact manifolds with polyhedral boundary in all dimensions. We also prove Gromov's  dihedral rigidity conjecture for a class of positively curved manifolds with polyhedral boundary in all dimensions.
	\end{abstract}
	\maketitle

\section{Introduction}

In the past several years, Gromov has formulated an extensive list of conjectures and
open questions on  scalar curvature
\cite{GromovDiracandPlateau, Gromovadozen, Gromovinequalities2018, 	Gromov4lectures2019}. This has 
given rise to new perspectives on scalar curvature and inspired a wave of recent activity 
in this area. The main purpose of this paper is to prove a new index theorem for manifolds with polyhedral boundary and apply it to investigate the following two conjectures of Gromov:  the  dihedral extremality conjecture  (Conjecture \ref{conj:extremal}) \cite{Gromovadozen} and  the dihedral rigidity conjecture (Conjecture \ref{conj:dihedral}) \cite{ GromovDiracandPlateau} about comparisons of scalar curvatures, mean curvatures and dihedral angles for compact manifolds with polyhedral boundary, which can be viewed as scalar curvature analogues of the Alexandrov's triangle comparisons for spaces whose sectional curvature is bounded below \cite{MR0049584,MR854238}. These two  conjectures have profound implications in  geometry and mathematical physics  such as the positive mass theorem, a foundational result in general relativity and differential geometry
\cite{MR535700,MR612249} \cite{MR626707} (cf. \cite[Section 5]{Li:2019tw} and also the discussion after Theorem \ref{thm:polyhedra}). In this paper, we answer positively Gromov's dihedral extremality conjecture  in all dimensions.  We also prove Gromov's  dihedral rigidity conjecture for a class of positively curved manifolds in all dimensions (cf. (I) \& (II) of  Theorem \ref{thm:special}).   In fact, we shall  prove a  general theorem  (Theorem \ref{thm:extremal-rigid}) on comparisons of scalar curvatures, mean curvatures and dihedral angles between  two compact manifolds with polyhedral boundary possibly of different dimensions, from which we derive the above results. Roughly speaking, manifolds with polyhedral boundary are manifolds that locally modeled on Euclidean polyhedra.   See Definition \ref{def:polytopeboundary} for the precise definition of manifolds with polyhedral boundary.  

Given a Riemannian metric $g$ on an oriented manifold $M$ with polyhedral boundary, we shall denote the scalar curvature of $g$ by $\Sc(g)$, the mean curvature of each face $F_i$ of $M$ by $H_g(F_i)$, and the dihedral angle function of two adjacent faces $F_i$ and $F_j$ by $\theta_{ij}(g)$. Here the dihedral angle $\theta_{ij}(g)_x$ at a point $x\in F_i\cap F_j$ is defined as follows.  
\begin{definition}\label{def:exterior}
	Write $F_{ij} = F_i\cap F_j$. Let $\nu_i$ and $\nu_j$ be the unit inner normal vector of $F_{ij}$ with respect to $F_i$ and $F_j$ at $x\in F_{ij}$, respectively.  Let $\theta_{ij}(g)_x$ be either the angle of $\nu_i$ and $\nu_j$, or $\pi$ plus this angle, depending on the vector $(\nu_i+\nu_j)/2$ points inward or outward, respectively. See Figure \ref{fig:dihedralangles}.
	
	 Equivalently, we can also describe the dihedral angles in terms of the angle $\alpha_{ij}$ between  the unit inner normal vectors $u_i$ and $u_j$ of $F_i$ and $F_j$ (with respect to $M$) at $x$. Indeed, let $\alpha_{ij}$ be the angle between $u_i$ and $u_j$. The dihedral angle $\theta_{ij}(g)_x$ equals to either $\pi- \alpha_{ij}$ or $\pi + \alpha_{ij}$, depending on the vector $(\nu_i+\nu_j)/2$ points inward or outward, respectively. 
\end{definition} 

\begin{figure}
	\begin{tikzpicture}[scale=1]
		\fill[gray!20] ({7-sqrt(3)},2) -- (7,1) -- ({7+sqrt(3)},2) -- ({7+sqrt(3)},0) -- ({7-sqrt(3)},0) -- cycle;
			\fill[gray!20] (0,0) -- (3,0) -- ({3*sin(50)},{3*cos(50)}) -- (0,0);
	\draw[very thick,black] (0,0) -- (3,0);
	\draw[very thick,black] (0,0) -- ({3*sin(50)},{3*cos(50)});
	\filldraw (1.5,-0.7) node {Angle in $(0,\pi)$.};
	\filldraw (7,-0.7) node {Angle in $(\pi,2\pi)$};
	\draw[very thick,black] ({7-sqrt(3)},2) -- (7,1) -- ({7+sqrt(3)},2) ;
	\draw[very thick,black] (0,0) -- ({3*sin(50)},{3*cos(50)});
	\fill[gray!20] (0,0) -- (3,0) -- ({3*sin(50)},{3*cos(50)}) -- (0,0);
	\draw[very thick, blue,-stealth] (0,0) -- (1.5,0);
\draw[very thick, blue,-stealth] (0,0) -- ({1.5*sin(50)},{1.5*cos(50)});
\draw[blue,-stealth] (0,0) -- ({(1.5*sin(50)+1.5)/2},{1.5*cos(50)/2});
\draw[blue,dotted]({1.5*sin(50)},{1.5*cos(50)}) -- (1.5,0);
			\draw[black] (0.5,0) arc (0:40:0.5);
		\draw[blue] ({1.5*sin(50)/2-0.2},{1.5*cos(50)/2+0.2}) node {$\nu_i$};
		\draw[blue] ({1.5/2},-0.25) node {$\nu_j$};
\draw[black] (0.6,0) arc (0:40:0.6);
\draw[black] ({7+0.15*sqrt(3)},{1+0.15}) arc (30:-210:0.3);
\draw[black] ({7+0.175*sqrt(3)},{1+0.175}) arc (30:-210:0.35);
	\draw[very thick, blue,-stealth] (7,1) -- ({7+sqrt(3)*0.8},1.8) node[anchor=north west] {$\nu_j$};
	\draw[very thick, blue,-stealth] (7,1) -- ({7-sqrt(3)*0.8},1.8) node[anchor=north east] {$\nu_i$};
	\draw[blue,-stealth] (7,1) -- ({7},1.8) ;
	\draw[blue,dotted]({7+sqrt(3)*0.8},1.8) -- ({7-sqrt(3)*0.8},1.8);
	\end{tikzpicture}
	\label{fig:dihedralangles}
	\caption{Dihedral angles.}
\end{figure}
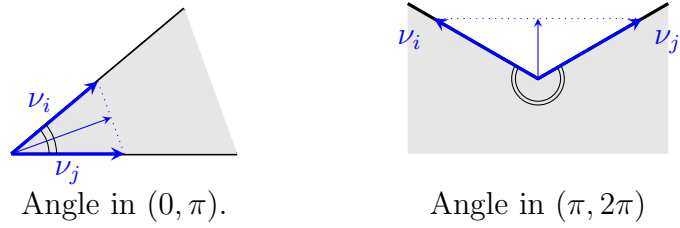
Here the angle $\theta_{ij}(g)_x$ takes values in $(0,\pi)\cup(\pi,2\pi)$. 
Furthermore, our sign convention for the mean curvature is that the mean curvature of the standard round sphere,  viewed as the boundary of the Euclidean unit ball,  is positive.

\begin{conjecture}[{Gromov's dihedral extremality conjecture for convex polyhedra, \cite[Section 7]{Gromovadozen}}]\label{conj:extremal}
	Let $P$ be a convex polyhedron in $\R^n$ and $g_0$ the Euclidean metric on $P$. If $g$ is a smooth Riemannian metric on $P$ such that 
	\begin{enumerate}[label=$(\arabic*)$]
		\item $\Sc(g)\geq \Sc(g_0) = 0$,
		\item $H_g(F_i)\geq H_{g_0}(F_i) = 0$ for each face $F_i$ of $P$, and
		\item $\theta_{ij}(g)\leq \theta_{ij}(g_0)$ on each $F_{ij} = F_i\cap F_j$,	
	\end{enumerate}
	then we have 
	\[ \Sc(g)=0, H_g(F_i) = 0 \textup{ and }  \theta_{ij}(g) =  \theta_{ij}(g_0)\]
	for all $i$ and all $j\neq i$. 
\end{conjecture}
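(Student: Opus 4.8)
The plan is to derive Conjecture~\ref{conj:extremal} from a general dihedral comparison theorem (Theorem~\ref{thm:extremal-rigid} announced above), the relevant special case being the one in which the reference manifold with corners is the flat polyhedron $(P,g_0)$ itself and the comparison map is the identity $(P,g)\to(P,g_0)$, which has degree one; then the hypotheses $(1)$--$(3)$ are literally the hypotheses of the comparison theorem and its conclusion is the desired system of equalities. So the real work is the comparison theorem, and for that I would use an index theorem for Dirac-type operators on manifolds with corners. Concretely, on $(P,g)$ --- a spin manifold with corners, since $P$ is contractible --- consider the spinor Dirac operator $D$ (doubled if $n$ is odd, so as to carry a $\Z/2$-grading), and on each face $F_i$ impose a chirality-type boundary condition built from Clifford multiplication by the inner unit normal of $F_i$. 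The mean-curvature hypothesis $H_g(F_i)\ge 0$ is exactly what makes the face contribution in the Lichnerowicz identity nonnegative, $\|D\psi\|^2=\|\nabla\psi\|^2+\tfrac14\int_P \Sc(g)\,|\psi|^2+(\text{face terms with }H_g)+(\text{corner terms with }\theta_{ij})$, while along each codimension-two face $F_{ij}$ the boundary conditions coming from $F_i$ and $F_j$ do not automatically combine into an elliptic condition: the obstruction is governed by the dihedral angle, and the hypothesis $\theta_{ij}(g)\le\theta_{ij}(g_0)$ is precisely the borderline inequality making the resulting corner boundary value problem Fredholm and self-adjoint and, simultaneously, giving the corner terms the correct sign.

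Granting this, the argument would run as in the Llarull/Gromov--Lawson scheme. First I would compute the index of the corner boundary value problem by an excision/gluing reduction to model corners; the outcome should be that it equals a nonzero topological quantity attached to the reference polyhedron. For the cube this can be seen directly, since reflecting $(P,g_0)$ across all of its faces yields the flat torus $T^n$, whose Dirac operator has nontrivial (stable) kernel, and the corner index theorem says exactly that the reflection-type boundary conditions transfer this nonvanishing to $(P,g)$; in general the ``virtual topology'' created by the reflection-type boundary conditions plays the same role, with the degree of the comparison map feeding in here. A nonzero index produces a nonzero $\psi\in\ker D$; plugging it into the Lichnerowicz identity forces every nonnegative term to vanish, so $\Sc(g)\,|\psi|^2\equiv 0$, the face terms vanish, the corner terms vanish, and $\psi$ is parallel. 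A parallel spinor has constant norm, hence is nowhere zero, so one concludes pointwise: $\Sc(g)\equiv 0$ on $P$, $H_g(F_i)\equiv 0$ on every face, and $\theta_{ij}(g)\equiv\theta_{ij}(g_0)$ on every $F_{ij}$ --- which is the assertion of Conjecture~\ref{conj:extremal}.

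The hard part will be the corner index theory itself: giving a rigorous meaning to Dirac operators on manifolds with corners equipped with reflection-type boundary conditions, proving the associated boundary value problem is regular (Fredholm, self-adjoint, with elliptic estimates up to the corners) precisely under the dihedral-angle inequality, and computing its index. The codimension $\ge 2$ strata are the crux, because there the two self-adjoint involutions defining the adjacent boundary conditions do not commute; I expect one must enlarge the coefficient bundle so that the spinor coefficients are allowed to ``rotate'' by the dihedral angle across adjacent faces, and then show that the deficiency and excess indices of the model corner operators vanish exactly at and below the Euclidean angle $\theta_{ij}(g_0)$. A second difficulty, more combinatorial, is the model computation showing the index is nonzero for an \emph{arbitrary} convex polyhedron, not just for the cube or a simplex where the reflections are transparent; this is where the full strength of the statement --- all convex polyhedra, all dimensions --- is used, and where recasting everything as a comparison theorem with a map of nonzero degree is what keeps the argument uniform. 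Beyond these, there are the routine analytic reductions (smoothing, choice of function spaces, possibly perturbing $g$ near the faces). Finally, note that since Conjecture~\ref{conj:extremal} asks only for the curvature and angle equalities and not for an isometry with $(P,g_0)$, no further rigidity step is needed here; that upgrade is the separate content of Conjecture~\ref{conj:dihedral}.
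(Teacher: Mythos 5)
Your overall route is the same as the paper's: deduce Conjecture \ref{conj:extremal} from the general comparison theorem (Theorem \ref{thm:extremal-rigid}, in the form of Theorem \ref{thm:polyhedra}) applied to the identity map $(P,g)\to (P,g_0)$, prove that theorem by an index theory for Dirac-type operators on manifolds with corners with local boundary conditions whose well-posedness is governed by the dihedral angles, and then let a nonzero index produce a parallel, nowhere-vanishing harmonic section which, fed back into the Lichnerowicz-type identity with face and corner terms, forces all the equalities. That skeleton, including the observation that no rigidity step is needed for this conjecture, matches Sections \ref{sec:boundaryconditions}--\ref{sec:proofmain}.

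The concrete gap is in your choice of operator and boundary condition. With the \emph{untwisted} spinor Dirac operator on $(P,g)$ and chirality conditions built only from the inner normals of the faces of $(P,g)$, the model corner problem depends solely on $\theta_{ij}(g)$; the self-adjointness threshold is then an absolute angle condition (compare Lemma \ref{lemma:essensa-jumpangle} and Remark \ref{rk:mixedbdycondition}), and the comparison hypothesis $\theta_{ij}(g)\leq\theta_{ij}(g_0)$ simply cannot enter, since the boundary condition carries no information about $(P,g_0)$. Moreover, for that operator on a contractible domain there is no reason for the index to be nonzero, and your cube-to-torus reflection heuristic does not extend to a general convex polyhedron. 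The paper repairs both defects simultaneously by twisting: the operator acts on $S_N\otimes f^*S_M$ and the condition $B$ of Definition \ref{def:boundarycondition} involves \emph{both} the normal of the face of $(P,g)$ and the normal of the corresponding Euclidean face. In the corner model this operator is the de Rham operator on a cone whose link operator $P_B$ has spectrum $\{-\beta/(2\alpha)+k\pi/\alpha\}_{k\in\Z}$, so essential self-adjointness holds exactly when $\alpha\leq\beta\leq\pi$ (Lemma \ref{lemma:essensa-jumpanglewithf}, Theorem \ref{thm:ess-sa}) --- note the hypothesis needed is the comparison inequality \emph{together with} $\theta_{ij}(g_0)\leq\pi$, automatic here by convexity --- and the index is $\deg_{\widehat A}(f)\cdot\chi(P)=1$ (Theorem \ref{thm:index}), computed via homotopy invariance of the index and deformation to the smooth-boundary case, not via reflections. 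Your closing suggestion to ``enlarge the coefficient bundle so that the spinor coefficients rotate by the dihedral angle'' is exactly this twist, so the missing step is to make it the starting definition rather than an afterthought; also be aware that the limiting corner terms in the Lichnerowicz identity, namely $\tfrac12\int_{\overbar F_{ij}}\big((\pi-\theta_{ij}(\overbar g))-|\pi-\theta_{ij}(g)|\big)|\varphi|^2$, require the careful smoothing argument of Proposition \ref{prop:D^2}, which is a nontrivial piece of the proof your outline passes over.
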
	
In other words, the dihedral extremality conjecture  states that on a convex polyhedron, one cannot simultaneously increase the scalar curvature of the metric and the mean curvature of the faces while decreasing the dihedral angles. 

\begin{conjecture}[{Gromov's dihedral rigidity conjecture for convex polyhedra, \cite[Section 2.2]{GromovDiracandPlateau}}]\label{conj:dihedral}
	Let $P$ be a convex polyhedron in $\R^n$ and $g_0$ the Euclidean metric on $P$. If $g$ is a smooth Riemannian metric on $P$ such that 
	\begin{enumerate}[label=$(\arabic*)$]
		\item $\Sc(g)\geq \Sc(g_0) = 0$,
		\item $H_g(F_i)\geq H_{g_0}(F_i) = 0$ for each face $F_i$ of $P$, and
		\item $\theta_{ij}(g)\leq \theta_{ij}(g_0)$ on each $F_{ij} = F_i\cap F_j$,	
	\end{enumerate}
	then $g$ is also a flat metric.
\end{conjecture}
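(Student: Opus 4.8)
The plan is to derive Conjecture~\ref{conj:dihedral} from the extremality assertion of Conjecture~\ref{conj:extremal} (itself the Euclidean-model case of Theorem~\ref{thm:extremal-rigid}) by analyzing the equality case of the corner index theorem that drives its proof. So assume $g$ satisfies $(1)$--$(3)$. The extremality conclusion immediately upgrades the three inequalities to equalities: $\Sc(g)\equiv 0$ on $P$, $H_g(F_i)\equiv 0$ on every face $F_i$, and $\theta_{ij}(g)\equiv\theta_{ij}(g_0)$ on every edge $F_{ij}=F_i\cap F_j$. What remains is to deduce from these rigid equalities that $\mathrm{Rm}(g)\equiv 0$.

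The first step is an infinitesimal rigidity statement extracted from the equality case of the index argument. Non-vanishing of the relevant index is witnessed by a nonzero harmonic section $\sigma$ of a twisted spinor bundle on $(P,g)$ satisfying the local boundary conditions on the faces that encode the mean curvatures and the dihedral angles. Pairing the Bochner--Lichnerowicz--Weitzenb\"ock identity $D^\ast D=\nabla^\ast\nabla+\tfrac14\Sc(g)$ with $\sigma$ and integrating by parts produces face terms controlled by $H_g(F_i)$ and corner terms controlled by $\theta_{ij}(g_0)-\theta_{ij}(g)$; by $(1)$--$(3)$ all of these are non-negative, so in the present equality case each must vanish separately. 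One concludes: $\nabla\sigma\equiv 0$, so $g$ carries a nonzero parallel spinor and is therefore \emph{Ricci-flat}; the second fundamental form of every $F_i$ vanishes, so each face is totally geodesic; and adjacent totally geodesic faces meet at the constant Euclidean angle $\theta_{ij}(g_0)$.

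The second step globalizes this to flatness. When $n=3$ there is nothing further to prove, because in dimension three $\mathrm{Ric}(g)\equiv 0$ already forces $\mathrm{Rm}(g)\equiv 0$; this settles Conjecture~\ref{conj:dihedral} in that dimension. In higher dimensions the idea is to reflect: since each $F_i$ is totally geodesic and, by Ricci-flatness, $g$ is real-analytic up to the faces, Schwarz reflection extends $g$ analytically across every $F_i$. When all the Euclidean dihedral angles $\theta_{ij}(g_0)$ are sub-multiples of $\pi$ --- so that $P$ is a Euclidean Coxeter polytope, e.g. a cube or a right-angled polytope --- iterating these reflections tiles $\R^n$ by isometric copies of $P$ and produces a complete Ricci-flat metric $\tilde g$ on $\R^n$ invariant under the crystallographic reflection group $W$ generated by the face reflections. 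Quotienting by the translation lattice $\Lambda\subset W$ yields a Ricci-flat metric on the torus $\R^n/\Lambda$; since a Ricci-flat torus is flat (by the Cheeger--Gromoll splitting theorem), $\tilde g$ is Euclidean, and restricting to the fundamental domain $P$ shows $g$ is flat.

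The main obstacle is the globalization in dimension $n\ge 4$ for a \emph{general} convex polyhedron, whose Euclidean dihedral angles need not be sub-multiples of $\pi$: the face reflections then generate a non-discrete group, there is no torus quotient, and one is left a priori only with a Ricci-flat metric with totally geodesic faces --- which is why Conjecture~\ref{conj:dihedral} as stated is established here only for $n=3$. The most promising way to reach the general case is to strengthen the first step rather than the second: if the index argument can be arranged to produce the \emph{entire} $2^{\lfloor n/2\rfloor}$-dimensional space of parallel spinors --- for instance by exploiting all the Clifford-linear symmetries of the Euclidean model, so that the face and corner contributions in the index formula are forced to vanish on the whole spinor bundle rather than on a single line --- then the holonomy of $g$ is trivial and $g$ is flat in every dimension. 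Establishing this maximality is, I expect, the heart of the matter.
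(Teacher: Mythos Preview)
Your argument for what the paper actually establishes --- Ricci-flatness in every dimension and hence flatness when $n=3$ --- is exactly the paper's: nonvanishing of the index yields a harmonic section of the twisted spinor bundle, the equality case of the Bochner identity forces it to be parallel, and since the Euclidean model is flat the bundle $f^*S_M$ is flat, so the parallel section decomposes into genuine parallel spinors on $(P,g)$, giving $\mathrm{Ric}(g)\equiv 0$. The paper stops there for $n\ge 4$ and records only Ricci-flatness; the full conjecture in those dimensions is left open.

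Your first step, however, claims more than the argument delivers. You assert that the equality case also forces every face to be totally geodesic. It does not. The boundary contribution in the Bochner identity is governed by Lemma~\ref{lemma:secondff>=}, whose estimate is built from the second fundamental form $A$ of $\partial M$; when $M$ is flat Euclidean space one has $A\equiv 0$ identically, so both sides of that inequality vanish and equality is automatic --- it imposes no constraint whatsoever on the second fundamental form of the faces of $(P,g)$. All the boundary gives you is the mean-curvature equality $\overbar H\equiv 0$, already contained in the extremality conclusion. Your assertion that ``by Ricci-flatness, $g$ is real-analytic up to the faces'' is likewise a nontrivial boundary-regularity statement you have not justified. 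These two gaps undercut the Schwarz-reflection step even in the Coxeter case: without totally geodesic faces and analytic boundary behaviour, the reflected metric need not extend smoothly across the faces, let alone across the corners.

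So the honest summary is: for the portion of Conjecture~\ref{conj:dihedral} that the paper proves (namely $n=3$), your approach coincides with the paper's; your attempted extension to $n\ge 4$ via reflection rests on an unproven totally-geodesic claim and should be regarded as heuristic. Your closing suggestion --- arranging the index argument to produce the full space of parallel spinors and thereby trivialize holonomy --- is a plausible line of attack, but it is speculation rather than proof.
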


In dimension two, both conjectures are  immediate consequences of the classical Gauss--Bonnet theorem for compact surfaces with piecewise smooth boundary. In higher dimensions,  Gromov showed that  the dihedral extremality conjecture holds for a special class of convex polyhedra, under some extra restrictions on dihedral angles \cite{GromovDiracandPlateau}. In particular, Gromov showed that the the dihedral extremality conjecture holds for the standard Euclidean cube $[0, 1]^n$ for all $n\geq 2$ \cite{GromovDiracandPlateau}. More recently, Li proved the dihedral rigidity conjecture for the following special classes  of convex polyhedra  in dimension $\leq 7$ \cite{Li:2019tw, Lichaocomparison}: 
\begin{enumerate}[label=$(\arabic*)$]
	\item  cone type $3$-dimensional polyhedra satisfying some extra dihedral angle conditions,
	\item  prism-type polyhedra of dimension $n\leq 7$, that is, $P$ is the Cartesian product $P_0 \times [0, 1]^{n-2}$, where $P_0$ is $2$-dimensional polygon with non-obtuse dihedral angles. 
\end{enumerate}

In this paper, we shall prove Gromov's dihedral extremality conjecture (Conjecture  \ref{conj:extremal}) for  all convex polyhedra in all dimensions.   In fact,  our proof also proves a weaker version of  Gromov's dihedral rigidity conjecture  for convex polyhedra in all dimensions,  where we show that $g$ is Ricci flat instead of showing $g$ is flat. Note that Ricci flatness coincides with flatness in dimension three.  Consequently, we answer positively Gromov's  dihedral rigidity conjecture for all convex polyhedra  in dimension three.

 We shall derive Gromov's dihedral extremality conjecture and dihedral rigidity conjecture for convex polyhedra as a consequence of a more general theorem (Theorem \ref{thm:special} or Theorem \ref{thm:extremal-rigid}) on comparisons of scalar curvatures, mean curvatures and dihedral angles between  two compact manifolds with polyhedral boundary, which are possibly of different dimensions.

Before we state the theorems, let us first fix some notation. Let $f\colon (N, \overbar{g}) \to (M, g)$ be a smooth map between two Riemannian manifolds. We have the linear maps
\[  df\colon TN \to TM \textup { and more generally } \wedge^k df \colon \Bigwedge^k TN \to \Bigwedge^k TM\]  
where $df$ is the tangent map and $\wedge^k TN$ is the $k$-th exterior product of $TN$. 

\begin{definition}
	We define $\|df\|_x$ to be the norm of the map 
	\[ df \colon T_x N \to T_{f(x)}M \]
	and more generally $\|\wedge^k df\|_x$ to be the norm of the map
	\[ \wedge^k df \colon \Bigwedge^k T_x N \to \Bigwedge^k T_{f(x)}M. \]
	We say $f$ is distance-non-increasing if $\|df\|_x\leq 1$ for all $x\in N$. Similarly, we say $f$ is area-non-increasing if $\|\wedge^2 df\|_x\leq 1$ for all $x\in N$.
\end{definition}
\begin{definition}\label{def:spin}
	A smooth map $f\colon N \to M$ is called a spin map if the second Stiefel--Whitney classes of $TM$ and $TN$ are related by 
	\[  w_2(TN) = f^\ast(w_2(TM)).\]
	Equivalently, $f\colon N \to M$ is a spin map   if $TN\oplus f^\ast TM$ admits a spin structure.
\end{definition}  Note that here we do \emph{not} require either $M$ or $N$ to be a spin manifold. On the other hand, if $M$ happens to be a spin manifold, then $f$ being a spin map implies that $N$ is also a spin manifold.

Let $f\colon (N, \overbar g) \to (M, g)$ be a map between two manifolds with polyhedral boundary. Throughout the paper, we shall always assume $f$ is a polytope map, which roughly speaking means that $f$ is Lipschitz, smooth away from the codimension three faces of $N$,  and maps codimension $k$ faces of $N$ to codimension $k$ faces of $M$. See Definition \ref{def:polytopeMap} for the precise definition of polytope maps. We have the following main theorem of the paper. 
\begin{theorem}\label{thm:special}
	Let $(N,\overbar{g})$ and $(M, g)$ be compact oriented Riemannian manifolds with polyhedral boundary. Suppose  
	\begin{enumerate}[label=$(\alph*)$]
		\item the curvature operator of $g$ is non-negative,
		\item each codimension one face ${F}_i$ of $M$ is convex, that is, the second fundamental form of  ${F}_i$  is non-negative, 
		\item all dihedral angles $\theta_{ij}(g)$ of $M$ are less than $\pi$. 
	\end{enumerate}
	Let $f\colon (N, \overbar{g}) \to (M, g)$ be a  spin and polytope\footnote{See Definition \ref{def:polytopeMap} for the precise definition of polytope maps.} map such that 
	\begin{enumerate}[label=$(\arabic*)$]
		\item $f$ is area-non-increasing on $N$, and $f^\partial = f|_{\partial N}$ is distance-non-increasing on the boundary $\partial N$,  
		\item 
		$\Sc(\overbar{g})_x \geq \Sc(g)_{f(x)}$ for all $x\in N$, 
		\item $H_{\overbar{g}}(\overbar{F}_i)_y \geq  H_{g}(F_i)_{f(y)}$ for all $y$ in each codimension one face\footnote{Here the notations $\overbar{F}_i$ and $F_i$ are chosen  so that the map $f$ takes the face $\overbar{F}_i$ of $N$ to the face $F_i$ of $M$.} $\overbar{F}_i$ of $N$, 
		\item $\theta_{ij}(\overbar{g})_z\leq  \theta_{ij}(g)_{f(z)}$ for all $\overbar F_i, \overbar F_j$ and all $z \in \overbar F_i\cap \overbar{F}_j$, 
		\item $M$  has nonzero Euler characteristic,
		\item the $\widehat A$-degree $\deg_{\widehat A}(f)$ of $f$ is nonzero, that is, 
		\[  \deg_{\widehat A}(f)\coloneqq \int_{N} \widehat A(N) \wedge f^\ast[M] \neq 0. \] 
	\end{enumerate} 
	Then we have 
	\begin{enumerate}[label=$(\roman*)$]
		\item 
		$\Sc(\overbar{g})_x =\Sc(g)_{f(x)}$ for all $x\in N$, 
		\item $H_{\overbar{g}}(\overbar{F}_i)_y  =  H_{g}(F_i)_{f(y)}$ for all $y\in \overbar{F}_i$, 
		\item $\theta_{ij}(\overbar{g})_z = \theta_{ij}(g)_{f(z)}$ for all $\overbar F_i, \overbar F_j$ and all $z \in \overbar F_i\cap \overbar{F}_j$, 
	\end{enumerate}
	and the following are true. 
	\begin{enumerate}[label=\textup{(\Roman*)}]
		\item If  $\mathrm{Ric}(g)>0$ and $f$ is distance-non-increasing on the whole $N$,  then $f$ is a Riemannian submersion. Here $\mathrm{Ric}(g)$ is the Ricci curvature of the metric $g$ on $M$.
		\item If  $0<\mathrm{Ric}(g)<\frac{1}{2} \Sc(g)\cdot g$, then $f$ is a Riemannian submersion. 
		\item If $(M, g)$ is flat, then $(N, \overbar{g})$ is Ricci flat. Consequently, $(N, \overbar{g})$ is flat if $\dim N = 3$.
	\end{enumerate} 
\end{theorem}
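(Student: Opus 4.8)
The plan is to run an index-theoretic argument in the spirit of Llarull, Goette--Semmelmann and Gromov--Lawson, lifted to the setting of manifolds with corners. First I would use the hypothesis that $f$ is a spin map to equip $TN\oplus f^\ast TM$ with a spin structure and form the associated graded spinor bundle $\mathcal S\to N$; this is a module over $\cl(TN)\hotimes\cl(f^\ast TM)$, and I would let $D$ be the Dirac operator on $\mathcal S$ built from the $TN$-Clifford action and the tensor-product connection $\nabla^{\mathcal S}=\nabla^{\overbar g}\otimes 1+1\otimes f^\ast\nabla^{g}$. Since the $\cl(f^\ast TM)$-action commutes with $D$, the operator is Clifford-linear, and by the index theory for manifolds with corners developed earlier in the paper its index is given by a characteristic-number formula; under hypotheses $(5)$ and $(6)$ I expect this index to equal, up to sign, $\chi(M)\cdot\deg_{\widehat A}(f)$, which is nonzero, so $D$ has nontrivial kernel --- fix $0\neq\sigma$ with $D\sigma=0$.

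Next I would invoke the Lichnerowicz--Weitzenb\"ock formula $D^2=\nabla^\ast\nabla+\tfrac14\Sc(\overbar g)+\mathcal R^{f^\ast TM}$, where $\mathcal R^{f^\ast TM}$ is the curvature endomorphism coming from the $f^\ast TM$-factor, and establish the pointwise bound $\tfrac14\Sc(\overbar g)_x+\langle\mathcal R^{f^\ast TM}\sigma,\sigma\rangle_x\ \ge\ \tfrac14\big(\Sc(\overbar g)_x-\Sc(g)_{f(x)}\big)\,|\sigma|^2\ \ge\ 0$; the first inequality is an adaptation of the Goette--Semmelmann estimate and uses that the curvature operator of $g$ is non-negative [hyp. $(a)$] together with $f$ being area-non-increasing [hyp. $(1)$], and the second is hyp. $(2)$. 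For the boundary I would impose on each codimension-one face $\overbar F_i$ of $N$ a chirality-type local boundary condition (Clifford multiplication by the inner normal composed with the grading involution on $\mathcal S$); Green's formula then produces a boundary integral over $\partial N$, bounded below by $\tfrac12\int_{\partial N}(H_{\overbar g}-H_g\circ f)\,|\sigma|^2\ge 0$ by using convexity of the faces of $M$ [hyp. $(b)$], the distance-non-increasing hypothesis on $\partial N$ [hyp. $(1)$] and the mean-curvature comparison [hyp. $(3)$], together with corner contributions along the $\overbar F_i\cap\overbar F_j$ that are non-negative because $\theta_{ij}(g)\le\pi$ [hyp. $(c)$], the faces are convex, and $\theta_{ij}(\overbar g)\le\theta_{ij}(g)\circ f$ [hyp. $(4)$]. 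Feeding $\sigma\in\ker D$ into $0=\|D\sigma\|^2=\|\nabla^{\mathcal S}\sigma\|^2+\int_N\langle(\tfrac14\Sc(\overbar g)+\mathcal R^{f^\ast TM})\sigma,\sigma\rangle+(\text{boundary}+\text{corner terms})$ and using that every summand is non-negative forces each one to vanish identically: $\nabla^{\mathcal S}\sigma\equiv 0$, the curvature term vanishes pointwise, and the boundary and corner integrands vanish pointwise.

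From here the conclusions follow by analyzing equality. Because $\nabla^{\mathcal S}\sigma\equiv 0$, $|\sigma|$ is a nonzero constant, so $\sigma$ never vanishes; pointwise vanishing of the Weitzenb\"ock curvature term then forces equality throughout the displayed estimate, hence $\Sc(\overbar g)_x=\Sc(g)_{f(x)}$ everywhere --- conclusion $(i)$ --- and likewise vanishing of the boundary and corner integrands gives $(ii)$ and $(iii)$. For $(I)$, with $\mathrm{Ric}(g)>0$ and $f$ distance-non-increasing on all of $N$, the equality case of the estimate is rigid: strict positivity of $\mathrm{Ric}(g)$ together with $\|df\|\le 1$ forces $df_x$ to be a co-isometry at every point, i.e.\ $f$ is a Riemannian submersion; $(II)$ is the analogous assertion for area-non-increasing maps, the pinching $0<\mathrm{Ric}(g)<\tfrac12\Sc(g)\,g$ being precisely the hypothesis that makes the area version of the equality case force $f$ to be a Riemannian submersion. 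For $(III)$, if $(M,g)$ is flat then $\Sc(g)=0$, so $\Sc(\overbar g)=0$ by $(i)$, and $f^\ast TM$ is flat, so $\mathcal R^{f^\ast TM}=0$; then $\nabla^{\mathcal S}\sigma\equiv 0$ yields, locally, a nowhere-vanishing parallel spinor on $(N,\overbar g)$, which forces $\mathrm{Ric}(\overbar g)=0$, and in dimension three a Ricci-flat metric is flat.

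The step I expect to be the main obstacle is the index theory for manifolds with corners itself: choosing local boundary conditions on the faces that are compatible along the corners, proving Fredholmness and a well-posed index in that setting, computing the index by a corner analogue of the Atiyah--Singer index theorem, and --- most delicately --- controlling the distributional corner contributions to the integration-by-parts so that the dihedral-angle hypotheses $(c)$ and $(4)$ render them non-negative with a usable equality case. The Weitzenb\"ock estimate and the subsequent rigidity analysis are adaptations of established scalar-curvature techniques; the corner index theory is where the genuinely new work lies.
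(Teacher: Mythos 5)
Your proposal is correct and follows essentially the same route as the paper: the twisted Dirac operator on $S_N\otimes f^\ast S_M$ with the chirality-type local boundary condition $B$, the corner index theorem giving $\mathrm{ind}=\deg_{\widehat A}(f)\cdot\chi(M)\neq 0$, the Weitzenb\"ock/Goette--Semmelmann and mean-curvature estimates together with the dihedral-angle terms (Proposition \ref{prop:D^2}) forcing a parallel nowhere-vanishing harmonic section, and then the equality analysis yielding (i)--(iii) and the rigidity statements (I)--(III). The only detail you omit is the reduction of the odd-dimensional case to the even-dimensional one (the paper multiplies by $[0,1]$, respectively by a totally geodesic spherical polyhedron $Q\subset\mathbb S^3$ for (I) and (II)), since the corner index theory is set up for even-dimensional $N$ and $M$.
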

In the above, $\widehat A(N)$ is the $\widehat A$-class of  $N$ and $[M] \in H^{\dim M}(M, \partial M)$ is the fundamental class of $M$ (with respect to the given orientation on $M$). In fact, with a little more care, we can improve the estimates to obtain the following strengthening of Theorem \ref{thm:special}. 

\begin{theorem}\label{thm:extremal-rigid}
	Let $(N,\overbar{g})$ and $(M, g)$ be compact oriented Riemannian manifolds with polyhedral boundary. Suppose  
	\begin{enumerate}[label=$(\alph*)$]
		\item the curvature operator of $g$ is non-negative,
		\item each codimension one face ${F}_i$ of $M$ is convex, that is, the second fundamental form of  ${F}_i$  is non-negative, 
		\item all dihedral angles $\theta_{ij}(g)$ of $M$ are less than $\pi$. 
	\end{enumerate}
	Let $f\colon (N, \overbar{g}) \to (M, g)$ be a  spin and  polytope map such that 
	\begin{enumerate}[label=$(\arabic*)$]
		\item 
		$\Sc(\overbar{g})_x \geq \|\wedge^2 df\| \cdot \Sc(g)_{f(x)}$ for all $x\in N$, 
		\item $H_{\overbar{g}}(\overbar{F}_i)_y \geq \|df^\partial\| \cdot  H_{g}(F_i)_{f(y)}$  for all codimension one faces\footnote{Here $f^\partial = f|_{\partial N}$ and $\|df^\partial\|$ is the norm of the map $df^\partial\colon T_y(\partial N) \to T_{f(y)}(\partial M)$.} $\overbar{F}_i$ of $N$ and all $y\in \overbar{F}_i$, 
		\item $\theta_{ij}(\overbar{g})_z\leq  \theta_{ij}(g)_{f(z)}$ for all $\overbar F_i, \overbar F_j$ and all $z \in \overbar F_i\cap \overbar{F}_j$, 
		\item $M$  has nonzero Euler characteristic,
		\item the $\widehat A$-degree $\deg_{\widehat A}(f)$ of $f$ is nonzero, 
	\end{enumerate}
	Then  we have 
	\begin{enumerate}[label=$(\roman*)$]
		\item 
		$\Sc(\overbar{g})_x =\|\wedge^2 df\| \cdot \Sc(g)_{f(x)}$ for all $x\in N$, 
		\item $H_{\overbar{g}}(\overbar{F}_i)_y  = \|df^\partial\| \cdot  H_{g}(F_i)_{f(y)}$ for all $y\in \overbar{F}_i$, 
		\item $\theta_{ij}(\overbar{g})_z = \theta_{ij}(g)_{f(z)}$ for all $\overbar F_i, \overbar F_j$ and all $z \in \overbar F_i\cap \overbar{F}_j$,
	\end{enumerate}
	and the following are true. 
	\begin{enumerate}[label=\textup{(\Roman*)}] 
		\item Suppose $\dim M = \dim N$. If $\mathrm{Ric}(g)>0$ and 
		\[ \Sc(\overbar{g})_x \geq \|df\|^2 \cdot \Sc(g)_{f(x)} \]
		for all $x\in N$, then $\|df\|\equiv a$ for some constant $a>0$ and $f\colon (N, a\cdot \overbar{g}) \to (M, g)$ is a Riemannian covering map. 
		\item If $\dim M = \dim N$ and  $0<\mathrm{Ric}(g)<\frac{1}{2} \Sc(g)\cdot g$, then $\|\wedge^2 df\|\equiv c$ for some constant $c >0$ and $f\colon (N, \sqrt{c}\cdot \overbar{g}) \to (M, g)$ is a Riemannian covering map. 
		\item If $(M, g)$ is flat, then $(N, \overbar{g})$ is Ricci flat. Consequently, $(N, \overbar{g})$ is flat if $\dim N = 3$.
	\end{enumerate} 
\end{theorem}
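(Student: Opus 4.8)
The plan is to run the Dirac-operator argument for scalar-curvature comparison --- in the form developed by Gromov--Lawson and Goette--Semmelmann for maps into manifolds with non-negative curvature operator --- but now on manifolds with corners, the new ingredient being an index theorem for Dirac-type operators on such manifolds. Since $f$ is a spin map, $TN\oplus f^\ast TM$ carries a spin structure; from its spinor bundle together with the natural $\cl(f^\ast TM)$-module structure on $f^\ast\Bigwedge^\ast T^\ast_{\C}M$ one builds on $N$ a $\Z/2$-graded Dirac bundle $\mathcal S$ that is, informally, ``the spinor bundle of $N$ twisted by $f^\ast\bigl(\Bigwedge^{\mathrm{ev}}T^\ast_\C M\ominus\Bigwedge^{\mathrm{odd}}T^\ast_\C M\bigr)$'', i.e.\ the grading on the exterior factor is the one detecting $\chi(M)$. (Neither $N$ nor $M$ need be spin; only $\mathcal S$ is needed globally, and it exists.) Let $D$ be the associated Dirac operator. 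It obeys the Lichnerowicz identity
\[ D^2=\nabla^\ast\nabla+\tfrac14\Sc(\overbar g)+\mathcal R, \]
where $\mathcal R$ is the curvature endomorphism of the twisting bundle. The decisive pointwise estimate is $\mathcal R_x\geq-\tfrac14\|\wedge^2 df\|_x\cdot\Sc(g)_{f(x)}$: the curvature of $f^\ast\Bigwedge^\ast T^\ast_\C M$ is obtained from the curvature operator of $g$ by pulling back along $\wedge^2 df$, and since that curvature operator is non-negative (hypothesis (a)) a standard computation yields the stated bound. Combined with hypothesis (1), the zeroth-order term $\tfrac14\Sc(\overbar g)+\mathcal R$ is then $\geq 0$ everywhere, with equality only where (i) holds.

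Next I would set up the boundary value problem. On each face $\overbar F_i$ impose a local chirality-type boundary condition; Green's formula for $D$ then produces, besides the bulk terms, a boundary integral on $\overbar F_i$ whose integrand is a non-negative multiple of $H_{\overbar g}(\overbar F_i)-\|df\|\,H_g(F_i)$, hence $\geq 0$ by hypothesis (2) --- here the convexity of the faces of $M$ (hypothesis (b)) is used to control the twisting-bundle contribution to this term. Along a corner $\overbar F_i\cap\overbar F_j$ the boundary conditions of the two adjacent faces have to be compatible for $D$ to be a Fredholm operator; the obstruction to compatibility is governed by the dihedral angle, and the inequalities $\theta_{ij}(\overbar g)\leq\theta_{ij}(g)\leq\pi$ (hypotheses (3) and (c)) are precisely what makes the corner contribution carry the right sign. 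Proving that this mixed boundary value problem is Fredholm and computing its index is the main obstacle --- this is the ``new index theory for manifolds with corners'' of the introduction. The index has the shape $\int_N\widehat A(N)\wedge f^\ast\mathrm{Ch}(\text{twisting bundle})$ together with boundary and corner correction terms; by a Chern--Gauss--Bonnet computation on manifolds with corners (whose corner terms involve exactly the dihedral angles) these assemble so that the index equals $\pm\,\chi(M)\cdot\deg_{\widehat A}(f)$, using that $\mathrm{Ch}\bigl(\Bigwedge^{\mathrm{ev}}\ominus\Bigwedge^{\mathrm{odd}}\bigr)$, evaluated relative to $\partial M$, is the relative Euler class $\chi(M)\cdot[M]$. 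Granting this, hypotheses (4) and (5) force the index to be nonzero, so there is a nonzero spinor $\psi$ with $D\psi=0$ satisfying the boundary conditions.

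Finally I extract the rigidity. Feeding $\psi$ into the integrated Weitzenböck identity, every non-negative contribution must vanish: $\nabla\psi\equiv 0$, so $\psi$ is parallel and nowhere zero; the bulk term vanishes, giving $\Sc(\overbar g)_x=\|\wedge^2 df\|_x\Sc(g)_{f(x)}$, i.e.\ (i); the boundary terms on the $\overbar F_i$ vanish, giving (ii); the corner terms vanish, giving (iii). For the refined conclusions one analyzes the equality case of $\mathcal R_x\geq-\tfrac14\|\wedge^2 df\|_x\Sc(g)_{f(x)}$ on the line spanned by the parallel spinor: when $\dim N=\dim M$ and $\mathrm{Ric}(g)>0$ --- together with the extra scalar hypothesis $\Sc(\overbar g)\geq\|df\|^2\Sc(g)$ of (I) --- this equality forces $df$ to be conformal with $\|df\|$ locally constant, hence constant on the connected manifold $N$, so $f$ is a homothety and, $N,M$ being compact with $\deg_{\widehat A}(f)\neq 0$, a Riemannian covering of $(N,a\cdot\overbar g)$ onto $(M,g)$; this proves (I), and (II) is the same argument with the pinching $0<\mathrm{Ric}(g)<\tfrac12\Sc(g)\cdot g$ playing the role of the extra scalar hypothesis and $\|\wedge^2 df\|$ in place of $\|df\|$. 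For (III), flatness of $(M,g)$ makes $f^\ast\Bigwedge^\ast T^\ast_\C M$ flat and $\Sc(g)\equiv 0$, whence $\psi$ is locally of the form $\sum_a\phi_a\otimes e_a$ with $\{e_a\}$ a parallel frame of the twisting bundle and the $\phi_a$ parallel spinors on $N$, at least one nonzero at each point; the Bochner identity $\sum_j e_j\cdot R^{S_N}(e_j,X)\phi_a=-\tfrac12\,\mathrm{Ric}(\overbar g)(X)\cdot\phi_a$ then forces $\mathrm{Ric}(\overbar g)\equiv 0$, and in dimension three Ricci-flat is flat.
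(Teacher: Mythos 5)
Your overall strategy coincides with the paper's: a Dirac operator built from the spin structure on $TN\oplus f^\ast TM$, a local chirality-type boundary condition on each codimension one face, dihedral angles entering through the corner analysis, an index equal to $\deg_{\widehat A}(f)\cdot\chi(M)$, and Weitzenb\"ock rigidity. But the two hardest steps are asserted rather than proved, and one of them is asserted incorrectly. For the index theory, you posit an APS/Gauss--Bonnet-style local index formula on a manifold with corners whose corner corrections ``involve exactly the dihedral angles''; no such formula is established, and the paper deliberately avoids it: essential self-adjointness and Fredholmness under $\theta_{ij}(\overbar g)\le\theta_{ij}(g)\le\pi$ are proved by conical analysis near the corners (the link operator must satisfy $|P_B|\ge 1/2$, cf.\ Lemma \ref{lemm:>=1/2} and Lemmas \ref{lemma:essensa-jumpangle}--\ref{lemma:essensa-higherdim}, Theorem \ref{thm:ess-sa}), and the index is then computed by homotopy invariance (Theorem \ref{thm:homotopyInvariant}) and deformation to the smooth-boundary case, with no angle terms surviving. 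Likewise, on the analytic side the angle terms in the Weitzenb\"ock inequality do not come from a Green's-formula ``compatibility'' term at the corner: they arise as limits of mean-curvature integrals over a carefully chosen smoothing of $\partial N$ (Proposition \ref{prop:D^2}), and this approximation argument is what your sketch is missing. Two smaller points: the bundle you describe, $S_N$ twisted by $f^\ast\Bigwedge^\ast T^\ast_{\C}M$, need not exist globally when $N$ is not spin --- the correct global object is $S_N\otimes f^\ast S_M$, the spinor bundle of $TN\oplus f^\ast TM$, whose $M$-factor produces the Euler-characteristic grading via $S_M\otimes S_M^\ast\cong\Bigwedge^\ast_{\C} T^\ast M$; and you never reduce the odd-dimensional case (the paper multiplies by $[0,1]$, resp.\ by a right-angled spherical polyhedron $Q\subset\mathbb S^3$ for parts (I) and (II)).

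The genuinely wrong step is in (I) and (II). The equality case of the curvature estimate is purely pointwise: with $\mathrm{Ric}(g)>0$ and $\Sc(\overbar g)\ge\|df\|^2\cdot f^\ast\Sc$ it shows that at each $x$ either $df_x=0$ or $df_x$ is a homothety with ratio $\|df\|_x$; it gives no control whatsoever on $d\|df\|$, so your claim that $\|df\|$ is ``locally constant, hence constant'' does not follow (and the possible zero set of $df$ must also be dealt with). This is exactly where the paper still has to work: on $U=\{\|df\|>0\}$ one has $f^\ast g=h^2\,\overbar g$ with $h=\|df\|$; the conformal transformation law for scalar curvature combined with $\Sc(\overbar g)=\|\wedge^2 df\|\cdot f^\ast\Sc$ gives $2h^k\Delta h=-(n-4)h^{k-1}|dh|^2$ on all of $N$, the mean-curvature equality (ii) gives the Neumann condition $\partial h/\partial\overbar e_n=0$ on $\partial N$, and a Stokes argument for all $k\ge 1$ forces $dh\equiv0$, hence $h\equiv a>0$ because $\deg_{\widehat A}(f)\neq0$ makes $U$ nonempty; part (II) first uses the pinching $0<\mathrm{Ric}(g)<\tfrac12\Sc(g)\cdot g$ to show $\|df\|^2=\|\wedge^2 df\|$ and then runs the same argument. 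Without an argument of this type, (I) and (II) are not proved. The remaining parts of your outline --- a parallel, nowhere-vanishing harmonic section from the nonzero index, the deduction of (i)--(iii) from Proposition \ref{prop:D^2}, and part (III) via flatness of $f^\ast S_M$ and the Bochner identity --- do match the paper.
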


Theorem \ref{thm:special} and 	Theorem \ref{thm:extremal-rigid}  can be viewed as a  strengthening  of the well-known positive mass theorem (for spin manifolds). Recall that the positive mass theorem states that if $(X, g)$ is a complete asymptotically Euclidean manifold of dimension $n\geq 3$ such that its scalar curvature is non-negative, then the ADM mass of each end of $X$ is non-negative. We refer the reader to \cite{MR535700,MR612249} \cite{MR626707} for the precise meanings of ``asymptotically Euclidean" and ADM mass. Here we shall briefly indicate how one can deduce the positive mass theorem for spin manifolds from Theorem \ref{thm:extremal-rigid} (cf. \cite[Section 5]{Li:2019tw} for a similar discussion). Indeed, by a result of Lohkamp \cite[Lemma 6.2]{MR1678604}, if the ADM mass of an end of $X$ is negative, then one can reduce to the case where $X$ has only an end and the ADM mass of that end is negative. In this case, again by a result of Lohkamp  \cite[Proposition 6.1]{MR1678604},  there exists another complete Riemannian metric $g_1$ on $X$ with $\Sc(g_1) \geq 0$ and $\Sc(g_1)_x > 0 $ for some point $x\in X$ such that there is a compact set $K\subset X$ with $(X- K, g_1)$ being isometric to the standard  Euclidean space minus a ball. Choose a large flat cube  in $\mathbb R^n$ and denote it by $M$. Let $Z$ be the isometric copy of $\mathbb R^n- M$ in $X-K $, and define $N$ to be $X - Z$. Note that  $N$ is a spin manifold, since we have assumed $X$ is spin. The boundary $\partial N$ of $N$ is isometric to the boundary $\partial M$ by construction. Furthermore, since $M$ is a flat cube, clearly there exists a smooth map $f\colon N \to M$ such that $f$ equals the identity near the boundary and all conditions for $f$ in Theorem \ref{thm:extremal-rigid} are satisfied. For example, take $f\colon N \to M$ to be a map that is identity near the boundary and crashes $K\subset N$ to a point in $M$. However the scalar curvature on $N$ is strictly positive somewhere, we arrive at a contradiction. Therefore, we see that Theorem \ref{thm:extremal-rigid} implies the positive mass theorem for spin manifolds.

As an immediate consequence of either Theorem \ref{thm:special} or Theorem \ref{thm:extremal-rigid},  we obtain the following theorem, which solves Gromov's dihedral extremality conjecture (Conjecture \ref{conj:extremal}) for convex polyhedra in all dimensions, and Gromov's dihedral rigidity conjecture (Conjecture \ref{conj:dihedral})  for convex polyhedra in dimension three. 
\begin{theorem}\label{thm:polyhedra}
	Let $P$ be a convex polyhedron in $\R^n$ and $g_0$ the Euclidean metric on $P$. If $g$ is a smooth Riemannian metric on $P$ such that 
	\begin{enumerate}[label=$(\arabic*)$]
		\item $\Sc(g)\geq \Sc(g_0) = 0$,
		\item $H_g(F_i)\geq H_{g_0}(F_i) = 0$ for each face $F_i$ of $P$, and
		\item $\theta_{ij}(g)\leq \theta_{ij}(g_0)$ on each $F_{ij} = F_i\cap F_j$,	
	\end{enumerate}
	then we have 
	\[ \Sc(g)=0, H_g(F_i) = 0 \textup{ and }  \theta_{ij}(g) =  \theta_{ij}(g_0)\]
	for all $i$ and  all $j\neq i$. Furthermore, $g$ is Ricci flat. Consequently, $g$ is flat if $\dim P=3$.
\end{theorem}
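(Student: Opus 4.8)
The plan is to derive Theorem~\ref{thm:polyhedra} directly from Theorem~\ref{thm:extremal-rigid} (or equivalently Theorem~\ref{thm:special}) by exhibiting the right comparison map. First I would take $N = M = P$ and let $f = \id_P \colon (P, g) \to (P, g_0)$ be the identity map, where $g_0$ is the reference Euclidean metric and $g$ is the given metric satisfying hypotheses (1)--(3). The identity map is trivially a corner map in the sense of Definition~\ref{def:corner}: it is smooth, it sends each face to itself, and the tangent map is the identity, so the transversality condition in part (3) of that definition is immediate. It is also trivially a spin map since $TN \oplus f^\ast TM = TP \oplus TP$ always admits a spin structure (indeed $w_2(TP) = f^\ast w_2(TP)$). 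One must verify that $(M, g_0) = (P, g_0)$, the Euclidean polyhedron, satisfies the three geometric hypotheses $(a)$--$(c)$ of the theorem: the Euclidean curvature operator is zero, hence non-negative; each face of a convex polyhedron is a flat hyperplane piece, so its second fundamental form vanishes and is in particular non-negative; and convexity of $P$ forces all dihedral angles $\theta_{ij}(g_0) \le \pi$.

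Next I would check the remaining numbered hypotheses of Theorem~\ref{thm:extremal-rigid} against conditions (1)--(3) of Theorem~\ref{thm:polyhedra}. Since $f$ is the identity on the underlying manifold, $\|df\|$ and $\|\wedge^2 df\|$ measure the distortion of $g$ relative to $g_0$; but crucially the comparisons needed are $\Sc(\overbar g)_x \ge \|\wedge^2 df\|\cdot \Sc(g_0) = 0$ and $H_{\overbar g}(\overbar F_i)_y \ge \|df\|\cdot H_{g_0}(F_i) = 0$, which hold automatically because the Euclidean scalar and mean curvatures vanish. Here $\overbar g = g$ plays the role of the metric on $N$ and $g = g_0$ the metric on $M$ in the notation of the theorem. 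Hypothesis (3) is precisely $\theta_{ij}(g) \le \theta_{ij}(g_0)$. For the topological conditions: a convex polyhedron in $\R^n$ is contractible, hence $\chi(P) = 1 \neq 0$, giving nonzero Euler characteristic; and the $\widehat A$-degree of the identity map is $\deg_{\widehat A}(\id) = \int_P \widehat A(P) \wedge [P] = 1$ since $\widehat A(P)$ has constant term $1$ and the top-degree part of $\widehat A$ contributes nothing against the fundamental class $[P] \in H^n(P, \partial P)$ — more precisely $\int_P \widehat A(P) \wedge f^\ast[P] = \int_P f^\ast[P] = 1$ because only the degree-zero component of $\widehat A(P)$ pairs nontrivially. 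As the paper notes, one uses here the extension of Theorem~\ref{thm:extremal-rigid} to manifolds with polyhedron-like boundary, valid because the Euclidean metric near a vertex of $P$ is exactly conical, so the analytic inputs (Proposition~\ref{prop:D^2}, Theorem~\ref{thm:ess-sa}, Theorem~\ref{thm:index}) apply verbatim.

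With all hypotheses verified, the conclusions $(i)$--$(iii)$ of Theorem~\ref{thm:extremal-rigid} give $\Sc(g)_x = \|\wedge^2 df\|\cdot \Sc(g_0) = 0$, $H_g(F_i)_y = \|df\|\cdot H_{g_0}(F_i) = 0$, and $\theta_{ij}(g)_z = \theta_{ij}(g_0)_z$ for all faces and all points, which is exactly the extremality statement of Conjecture~\ref{conj:extremal}. For the rigidity part, since $(M, g) = (P, g_0)$ is flat, conclusion $(\mathrm{III})$ of Theorem~\ref{thm:extremal-rigid} yields that $(N, \overbar g) = (P, g)$ is Ricci flat; and in dimension three Ricci flatness is equivalent to flatness (the Riemann tensor is algebraically determined by the Ricci tensor when $n = 3$, since the Weyl tensor vanishes identically in dimension three), so $g$ is flat when $\dim P = 3$. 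I expect the only genuine subtlety — and the step worth spelling out carefully — to be the justification that Theorem~\ref{thm:extremal-rigid} applies to a polyhedron at all, i.e.\ that its corner structure, which is more singular than the manifold-with-corners model used in the statement, is covered by the asymptotically-conical analysis; everything else is a routine matching of hypotheses. It is also worth remarking that one should double-check the orientation and the precise meaning of $f^\ast[M]$ so that $\deg_{\widehat A}(f) = 1$ rather than $0$, but this is immediate for the identity map.
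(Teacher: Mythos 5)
Your proposal is correct and is essentially the paper's own argument: the authors derive Theorem \ref{thm:polyhedra} from Theorem \ref{thm:extremal-rigid} applied to the identity map $f=\id\colon (P,g)\to (P,g_0)$, with the hypotheses verified exactly as you do (vanishing Euclidean scalar/mean curvature, convexity giving $\theta_{ij}(g_0)\le\pi$, $\chi(P)=1$, $\deg_{\widehat A}(\id)=1$), and with the same appeal to the remark that the asymptotically conical analysis behind Proposition \ref{prop:D^2}, Theorem \ref{thm:ess-sa} and Theorem \ref{thm:index} extends verbatim to polyhedron-like boundaries. The subtlety you flag — that a polyhedron is not literally a manifold with corners and needs this extension — is precisely the point the paper itself singles out, so nothing is missing.
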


Our strategy to prove Theorem \ref{thm:special} and Theorem \ref{thm:extremal-rigid} is to use Dirac type operators with appropriate elliptic boundary conditions. Let us briefly outline the key steps. First, we shall find a suitable elliptic boundary condition for the relevant twisted Dirac operator that naturally arises in our geometric setup, and use it to construct a self-adjoint Fredholm operator. As we are dealing with manifolds with polyhedral boundary, the Riemannian metric on the  boundary is not smooth but with singularities. In general, the analysis for elliptic boundary problems of differential operators on manifolds with singularities is rather delicate. The approach that we develop in the current paper takes advantage of the special features of both  the operators and the underlying geometry, and should be of independent interest on its own.  

Due to the presence of polyhedral corners, we are naturally led to the analysis of operators that arise from  Riemannian metrics of conical type. There is an extensive literature on this type of analysis since the work of Cheeger \cite{MR530173,MR573430,JC83}. For the geometric applications in the present paper, a key step is to prove the  relevant twisted Dirac operator  subject to a natural boundary condition is an essentially self-adjoint Fredholm operator. We show that the essential self-adjointness of the operator is completely determined by the dihedral angles. More precisely, we show in Theorem \ref{thm:ess-sa} that the relevant twisted Dirac operator is an essentially self-adjoint Fredholm operator when the dihedral angles are less than $\pi$ and satisfy a certain comparison condition (cf. Theorem \ref{thm:ess-sa}). We then develop a cutting-and-pasting formula and a deformation argument to compute the Fredholm index of  twisted Dirac operators (with boundary conditions) on manifolds with polyhedral boundarys.  In particular,   we  show that the Fredholm index of the relevant twisted Dirac operator is  precisely the Euler characteristic of $M$ multiplied by the $\widehat A$-degree of $f$, cf. Theorem 
\ref{thm:index} for the case of manifolds with corners and Theorem \ref{thm:index-poly} for the more general case of manifolds with polyhedral boundary. Since  Theorem \ref{thm:special} (or Theorem \ref{thm:extremal-rigid}) assumes that the Euler characteristic of $M$ multiplied by the $\widehat A$-degree of $f$ is nonzero, it follows  that the associated twisted Dirac operator admits a nontrivial solution. However, there is an extra technical difficulty that we need to overcome. The assumption on the dihedral angles in Theorem \ref{thm:ess-sa} is slightly stronger than the corresponding assumption in  Theorem \ref{thm:special} and Theorem \ref{thm:extremal-rigid}. Strictly speaking, Theorem \ref{thm:ess-sa} does not directly apply to the geometric setup of Theorem \ref{thm:special} and Theorem \ref{thm:extremal-rigid}.  To remedy this, we shall approximate the geometric setup of  Theorem \ref{thm:special} and Theorem \ref{thm:extremal-rigid} by a sequence of setups satisfying  the stronger comparison condition on dihedral angles so that  Theorem \ref{thm:ess-sa} applies to each approximation. We then apply the cutting-and-pasting formula and deformation argument to compute the Fredholm index of the associated twisted Dirac operator  for each approximation, which is again the Euler characteristic of $M$ multiplied by the $\widehat A$-degree of $f$. In particular, the twisted Dirac operator $D_n$ associated to each approximation admits a nontrivial solution $\varphi_n$. We then use this sequence  $\{\varphi_n\}$ to produce a nontrivial solution $\varphi$ of the original twisted Dirac operator. See Lemma \ref{lemma:solutionapproximate-poly} and the proof of Theorem \ref{thm:extremal-rigid} for details. 

Finally, we  apply the standard Bochner-Lichnerowicz-Weitzenbock formula  to this  nontrivial solution $\varphi$ to conclude the equalities for the scalar curvatures and mean curvatures (i.e. part $(i)$ and part $(ii)$ of Theorem \ref{thm:special} and Theorem \ref{thm:extremal-rigid}). The equality for dihedral angles (i.e. part $(iii)$ of Theorem \ref{thm:special} and Theorem \ref{thm:extremal-rigid}) requires a separate argument. In an early version of the paper, this was proved by a somewhat elaborate differential geometric computation. However, an observation from \cite{Wang:2022vf} allows us to greatly simplify this part of the proof. Therefore,   we shall prove the equality for dihedral angles in the current version of the paper by following the observation from \cite{Wang:2022vf}. In the end,  the remaining parts of Theorem \ref{thm:special} and Theorem \ref{thm:extremal-rigid} are proved by some local computations.

We point out that Lott proved Theorem  \ref{thm:special} for even dimensional manifolds with smooth boundary (in this case, there are no dihedral angles) \cite{Lottboundary}. In the case of odd dimensional manifolds with smooth boundary, a natural approach is to reduce it to the even dimensional case by taking direct products of the manifolds with the closed unit interval.  But such a product construction results in  manifolds with corners. This is one of the reasons that  studying manifolds with corners (more generally polyhedral boundary) is important and necessary.

This paper is organized as follows. In Section \ref{sec:boundaryconditions}, we establish some key  estimates for scalar curvature and  mean curvature on manifolds with polyhedral boundary. In Section \ref{sec:indextheory}, we introduce some natural elliptic boundary conditions for  twisted  Dirac operators   on  manifolds with polyhedral boundary. We use these boundary conditions to prove that the relevant Dirac operators are essentially self-adjoint under a suitable comparison condition on the  dihedral angles. We then prove an index theorem for these Dirac operators (subject to certain local boundary conditions) on manifolds with polyhedral boundary. It turns out the case of general manifolds with polyhedral boundary is more subtle than the case of manifolds with corners. In order to make our exposition more transparent, we first prove the index theorem for the case of manifolds with corners in Section \ref{sec:fredindex}, then we generalize it  to the general case of manifolds with polyhedral boundary  in Section \ref{sec:index-poly}.   Finally in Section \ref{sec:proofmain}, we prove the main results of this paper. 

\vspace{.5cm}
\textbf{Acknowledgments.} We would like to thank Christian B\"{a}r, Simone Cecchini, Bernhard Hanke, and Thomas Schick for their very helpful  comments.

\section{Estimates of scalar curvature and mean curvature on manifolds with polyhedral boundary}\label{sec:boundaryconditions}
In this section, we establish some estimates for scalar curvature and  mean curvature  on manifolds with polyhedral boundary. These estimates will be useful in  the proofs of our main theorems. 

\subsection{Estimates of scalar curvature and mean curvature on manifolds with smooth boundary}\label{sec:smoothboundary}

Let us first review some estimates for scalar curvature and mean curvature on manifolds with smooth boundary, cf. \cite{GoetteSemmelmann, Listing:2010te, Lottboundary}.

Let $(M, g)$ and $(N, \overbar{g})$ be two oriented compact Riemannian manifolds with smooth boundary.  Suppose  $f\colon N \to M$ is a spin  map. So the bundle $TN\oplus f^\ast TM$ admits a spin structure. Let us denote by $S_N\otimes f^\ast S_M = S_{TN\oplus f^\ast TM}$ the associated spinor bundle over $N$. 
The vector bundle $S_N\otimes f^* S_M$ carries a natural Hermitian metric and a unitary connection $\nabla$ compatible with Clifford multiplication by elements of $\cl (TN)\otimes f^\ast \cl(TM)$. We will denote the Clifford multiplication of a vector $\overbar v\in TN$ by $\overbar c(\overbar v)$ and the Clifford multiplication of a vector $v\in f^*TM$ by $ c(v)$.

Let $D$ be the Dirac operator on $S_N\otimes f^* S_M$, which  can be locally  expressed by
\begin{equation}\label{eq:Dirac}
D=\sum_{i} \overbar c(\overbar e_i)\nabla_{\overbar e_i},
\end{equation}
where $\{\overbar e_i\}$ is a local orthonormal basis with respect to the metric $\overbar{g}$ on $TN$. By the Bochner--Lichnerowicz--Weitzenbock formula, we have 
\begin{equation}\label{eq:D^2}
D^2=\nabla^*\nabla+\frac{\overbar{\Sc}}{4}+\frac{1}{8}\sum_{i,j}\sum_{k,l}\langle f^\ast R^M_{\overbar e_i,\overbar e_j}e_k,e_l\rangle_{M} \, \overbar c(\overbar e_i)\overbar c(\overbar e_j)\otimes c(e_k)c(e_l),
\end{equation}
where  $\overbar{\Sc} \coloneqq  \Sc(\overbar g)$ is the scalar curvature of $N$, $\{e_i\}$ is a local orthonormal basis of $f^\ast TM$, and  $f^\ast R^M$ is the curvature form of $f^\ast TM$.

The Riemannian curvature tensor $R^M$ on $M$ induces a self-adjoint curvature operator $\mathcal R^M$ on $\Bigwedge^2 TM$ by 
\begin{equation}\label{eq:curvatureoperator}
\langle\mathcal R^M(e_i\wedge e_j), e_k\wedge e_l\rangle = - \langle R^M_{e_i, e_j}e_k, e_l\rangle,
\end{equation}
where $\{e_i\}$ is a local orthonormal basis of $TM$. The sign has been chosen so that all sectional curvatures of $M$ is non-negative if $\mathcal R^M$ is a non-negative operator. Let us define Clifford multiplication by $2$-forms to be 
$$\overbar c(\overbar u\wedge \overbar v)=\overbar c(\overbar u)\overbar c(\overbar v),~c(u\wedge v)=c(u)c(v),
$$
for all $\overbar u, \overbar v \in T_xN$ and $u, v \in (f^\ast TM)_x$ with $\overbar g(\overbar u, \overbar v) = 0 = g(u, v)$. 
If  $\{\overbar w_j\}$ and $\{w_i\}$ are local orthonormal bases of $\Bigwedge^2TN$ and $f^\ast \Bigwedge^2TM$ respectively, 
then we can rewrite \eqref{eq:D^2} as
\begin{equation}\label{eq:D^2smooth}
D^2=\nabla^*\nabla+\frac{\overbar{\Sc}}{4}-\frac{1}{2}\sum_{i,j}\langle \mathcal R^M f_*\overbar w_j,w_i\rangle_{M} \, \overbar c(\overbar w_j)\otimes c(w_i),
\end{equation}

Let $\varphi$ be a smooth section of $S_N\otimes f^* S_M$ over $N$. By the Stokes formula, we have
\begin{equation}
\int_N\langle D\varphi,D\varphi\rangle=\int_N\langle D^2\varphi,\varphi\rangle+\int_{\partial N}\langle \overbar c(\overbar e_n) D\varphi,\varphi\rangle,
\end{equation}
where $\overbar e_n$ denotes the unit inner normal vector to $\partial N$. From line \eqref{eq:D^2smooth}, we have
\begin{align*}
\int_N\langle D^2\varphi,\varphi\rangle=&\int_N\langle\nabla^*\nabla \varphi,\varphi\rangle+\int_N\frac{\overbar{\Sc}}{4}|\varphi|^2\\ &-\frac{1}{2}\int_N\langle\sum_{i,j}\langle \mathcal R^M f_*\overbar w_j,w_i\rangle_{M} \overbar c(\overbar w_j)\otimes c(w_i)\varphi,\varphi\rangle.
\end{align*}

We have the following lemma (cf. \cite[Section 1.1]{GoetteSemmelmann}). 
\begin{lemma}\label{lemma:curvature>=}
	If the curvature operator of $M$ is non-negative, then
	\begin{equation}\label{eq:curvature>=}
	-\frac{1}{2}\sum_{i,j}\langle \mathcal R^M f_*\overbar w_j,w_i\rangle_{M} \overbar c(\overbar w_j)\otimes c(w_i)\geq -\|\wedge^2 df\|\cdot\frac{f^*\Sc}{4}.
	\end{equation}
\end{lemma}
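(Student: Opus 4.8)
The plan is to diagonalize the curvature operator $\mathcal R^M$ and reduce the inequality to a pointwise estimate on the Clifford-module norm of a sum of simple terms. First I would fix a point $x\in N$ and work in the fiber. Since $\mathcal R^M_{f(x)}$ is a self-adjoint operator on $\Bigwedge^2 T_{f(x)}M$, I would choose an orthonormal eigenbasis $\{w_i\}$ of $\Bigwedge^2 T_{f(x)}M$ with eigenvalues $\lambda_i \geq 0$ (non-negativity of the curvature operator is exactly the hypothesis). Pulling back, the operator on the left-hand side of \eqref{eq:curvature>=} becomes, after this diagonalization, a sum of the form $-\tfrac12\sum_i \lambda_i\, \overbar c(f^\ast w_i)\otimes c(w_i)$, where $f^\ast w_i \in \Bigwedge^2 T_xN$ is the image of $w_i$ under the adjoint of $\wedge^2 df$ (up to a careful bookkeeping of which Clifford module the bar-variables live in; one has to be slightly careful because $\overbar c$ acts via $TN$ and the natural object is $\wedge^2 df^\ast$ or its transpose).

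The key step is then the estimate that each summand $-\overbar c(\overbar\omega)\otimes c(\omega)$, for a simple or more generally decomposable $2$-form, is bounded below by $-\|\overbar\omega\|\cdot\|\omega\|$ as an endomorphism of the spinor module $S_N\otimes f^\ast S_M$. This is the standard fact that for a unit $2$-vector $w = e_k\wedge e_l$ the Clifford action $c(e_k)c(e_l)$ is skew-adjoint with $(c(e_k)c(e_l))^2 = -1$, hence has spectrum $\{\pm i\}$; tensoring with the corresponding skew-adjoint operator $\overbar c(\overbar e_i)\overbar c(\overbar e_j)$ on the other factor gives an operator with square $-1\otimes-1 = 1$ and spectrum $\{\pm 1\}$, so $-\overbar c\otimes c \geq -1$ in that normalization. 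Summing over $i$ with weights $\lambda_i \geq 0$ and tracking norms, I would get the bound $-\tfrac12\sum_i \lambda_i \|f^\ast w_i\|$. Then I would bound $\|f^\ast w_i\|\leq \|\wedge^2 df\|_x\cdot\|w_i\| = \|\wedge^2 df\|_x$ (this is the definition of the operator norm of $\wedge^2 df$, using $\|w_i\|=1$), yielding the lower bound $-\tfrac12\|\wedge^2 df\|_x\sum_i\lambda_i$. Finally I would identify $\sum_i\lambda_i = \tfrac12 \Sc(g)_{f(x)}$: the trace of the curvature operator is (up to the sign convention fixed in \eqref{eq:curvatureoperator}) one half the scalar curvature, since $\Sc = \sum_{k\neq l}\langle R_{e_k,e_l}e_l,e_k\rangle$ and summing over unordered pairs halves this. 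Combining gives exactly the right-hand side $-\|\wedge^2 df\|\cdot \tfrac{f^\ast\Sc}{4}$.

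The main obstacle I anticipate is not any single inequality but the careful handling of the mixed Clifford structure: one must be precise about the fact that $\overbar c$ and $c$ act on different factors with different underlying vector spaces related only through $f$, so the ``diagonalization'' of $\mathcal R^M$ happens on the $M$-side while the norm control of the bar-forms happens on the $N$-side, and the cross term $\langle \mathcal R^M f_\ast \overbar w_j, w_i\rangle$ couples them asymmetrically. One has to argue that, after picking the eigenbasis $\{w_i\}$ on the $M$-side, the relevant $N$-side $2$-forms appearing are $(\wedge^2 df)^\ast w_i$ and that their norms are governed by $\|\wedge^2 df\|$; a clean way is to write $\sum_{i,j}\langle \mathcal R^M f_\ast\overbar w_j, w_i\rangle\, \overbar c(\overbar w_j)\otimes c(w_i) = \sum_i \lambda_i\, \overbar c\big((\wedge^2 df)^\ast w_i\big)\otimes c(w_i)$ and then apply the module-norm estimate termwise. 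A secondary subtlety, worth a remark, is that decomposable $2$-forms $(\wedge^2 df)^\ast w_i$ need not be \emph{simple}; but the estimate $-\overbar c(\overbar\omega)\otimes c(\omega)\geq -\|\overbar\omega\|\,\|\omega\|$ still holds for any $2$-form $\overbar\omega$ by writing it in a basis adapted to its canonical (block $2\times2$) form and using that the operator norm of Clifford multiplication by a $2$-form equals its comass, which is dominated by its Euclidean norm. This is where I would cite \cite[Section 1.1]{GoetteSemmelmann} for the precise packaging.
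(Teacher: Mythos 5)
Your overall architecture (diagonalize $\mathcal R^M$, estimate termwise, use the trace identity $\operatorname{tr}\mathcal R^M=\tfrac12\Sc$) founders on the key step, and the subtlety you dismiss in your last sentences is in fact fatal as stated. The claim that ``the operator norm of Clifford multiplication by a $2$-form equals its comass, which is dominated by its Euclidean norm'' is false: for a $2$-form in canonical form $\omega=\sum_a\lambda_a\,e_{2a-1}\wedge e_{2a}$, the operators $c(e_{2a-1})c(e_{2a})$ commute and square to $-1$, so the spectrum of $c(\omega)$ is $\{i\sum_a\pm\lambda_a\}$ and its operator norm is $\sum_a|\lambda_a|$ --- the $\ell^1$-quantity, which \emph{dominates} the Euclidean norm (the comass is $\max_a|\lambda_a|$ and is irrelevant here). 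Consequently the termwise estimate $-\overbar c(\overbar\omega)\otimes c(\omega)\geq-\|\overbar\omega\|\,\|\omega\|$ fails for non-simple forms: with $\overbar\omega=\omega=e_1\wedge e_2+e_3\wedge e_4$ in dimension $4$, the operator $\overbar c(\omega)\otimes c(\omega)$ attains the eigenvalue $4$ while $\|\omega\|^2=2$. Since the eigenforms $w_i$ of the curvature operator (and their pullbacks $(\wedge^2 df)^{\ast}w_i$) are in general not simple, your termwise bound would only yield a dimension-dependent constant strictly worse than $\|\wedge^2 df\|\cdot\tfrac{f^\ast\Sc}{4}$, so the sharp inequality of the lemma does not follow from this route.

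The paper's proof avoids exactly this trap. Instead of eigen-decomposing and taking operator norms term by term, it writes $\mathcal R^M=L^2$ for a self-adjoint square root $L$, rewrites the left-hand side as $-\tfrac12\sum_k\overbar c(\overbar Lw_k)\otimes c(Lw_k)$, and completes the square: with $\alpha=\sqrt{\|\wedge^2 df\|}$,
\[
-\tfrac12\,\overbar c(\overbar Lw_k)\otimes c(Lw_k)\;\geq\;\tfrac14\,\alpha^{-2}\,\overbar c(\overbar Lw_k)^2\otimes 1+\tfrac14\,1\otimes\alpha^{2}c(Lw_k)^2,
\]
because the square of the skew-symmetric element $\alpha^{-1}\overbar c(\overbar Lw_k)\otimes 1+1\otimes\alpha\,c(Lw_k)$ is non-positive. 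The two summed squares are then computed \emph{exactly}, not estimated by operator norms: $\sum_k c(Lw_k)^2=-\tfrac{f^\ast\Sc}{2}$ by the Lichnerowicz computation (the cross terms cancel by the Bianchi identity --- precisely the cancellation your termwise norm bound discards), and $\sum_k\overbar c(\overbar Lw_k)^2$ is evaluated in a singular-value frame $f_\ast\overbar e_i=\mu_i e_i$ and bounded below by $-\|\wedge^2 df\|\cdot\tfrac{f^\ast\Sc}{2}$ using $\mu_i\mu_j\leq\|\wedge^2 df\|$ together with the non-negativity of the terms $R^M_{ijji}$. If you want to salvage your plan you must replace the termwise norm estimate by a mechanism of this kind (square root plus completing the square, or an equivalent use of the Bianchi cancellation); as written, the central inequality of your argument is simply not true.
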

\begin{proof}
	As the curvature operator $\mathcal R^M$ is non-negative, there exists a self-adjoint $L\in \Endo(\Bigwedge^2 TM)$  such that $\mathcal R^M=L^2$, that is,
	$\langle \mathcal R^M  w_j,w_i\rangle_{M}=\langle L  w_j,L w_i\rangle_{M}.$
	
	Set $$\overbar L w_k\coloneqq \sum_i\langle L w_k,f_*\overbar w_i\rangle_M\overbar w_i\in\Bigwedge^2 TN.$$
	If $\|\wedge^2 df\|=0$, that is, $\wedge^2 df=0$, then both sides of line \eqref{eq:curvature>=} are zero. So we assume that $\|\wedge^2 df\|>0$ and set $\alpha=\sqrt{\|\wedge^2 df\|}$.
	The left hand side of line \eqref{eq:curvature>=} can be written as
	\begin{align*}
	&-\frac{1}{2}\sum_{i,j}\langle \mathcal R^M f_*\overbar w_j,w_i\rangle_{M} \overbar c(\overbar w_j)\otimes c(w_i)\\
	=&-\frac{1}{2}\sum_{i,j,k}\langle L(f_*\overbar w_j),w_k\rangle_M \cdot 
	\langle Lw_i,w_k\rangle_M \cdot  \overbar c(\overbar w_j)\otimes c(w_i)\\
	=&-\frac{1}{2}\sum_{k}\overbar c(\overbar Lw_k)\otimes c(L w_k)\\
	=&\frac{1}{4}\sum_k\Big(\alpha^{-2}\overbar c(\overbar Lw_k)^2\otimes 1+1\otimes \alpha^2c(L w_k)^2-\big(\alpha^{-1}\overbar c(\overbar Lw_k)\otimes 1+1\otimes \alpha c(L w_k)\big)^2\Big)\\
	\geq&\frac 1 4 \sum_k\alpha^{-2}\overbar c(\overbar Lw_k)^2\otimes 1+\frac{1}{4}\sum_k 1\otimes \alpha^2c(L w_k)^2, 
	\end{align*}
	where the last inequality follows from the fact  that the element
	\[  \alpha^{-1}\overbar c(\overbar Lw_k)\otimes 1+1\otimes \alpha c(L w_k)\] is skew-symmetric, hence its square is non-positive.

	Now we consider the terms  $\sum_k\alpha^{-2}\overbar c(\overbar Lw_k)^2\otimes 1 $ and $\sum_k 1\otimes  \alpha^2c(L w_k)^2$. The same proof for the Lichnerowicz formula (cf. \cite[Theorem II.8.8]{spingeometry}) shows that
	$$\sum_k \alpha^2 c(L w_k)^2=-\alpha^2 \cdot \frac{f^*\Sc}{2}=-\|\wedge^2 df\|\cdot \frac{f^*\Sc}{2}.$$
	Similarly, by the definition of $\overbar L$, we have
	\begin{align*}
	\sum_k\overbar c(\overbar Lw_k)^2 & =  \sum_{i,j,k}\langle \overbar Lw_k, f_*\overbar w_i\rangle_M \cdot 
	\langle \overbar Lw_k, f_*\overbar w_j\rangle_M \cdot \overbar c(\overbar w_i)\otimes c(\overbar w_j) \\
	& =\sum_{i,j}\langle \mathcal R^M(f_*\overbar w_i),f_*(\overbar w_j)\rangle_M \cdot \overbar c(\overbar w_i)\overbar c(\overbar w_j).
	\end{align*}
	We choose a local $\overbar g$-orthonormal frame $\overbar e_1,\ldots,\overbar e_n$ of $TN$ and a local $g$-orthonormal frame $e_1,\ldots,e_m$ of $TM$ such that $f_*\overbar e_i=\mu_i e_i$ with $\mu_i\geq 0$ for any $i\leq \min\{m,n\}$, and $f_*\overbar e_i=0$ otherwise. This can be done by diagonalizing $f^*g$ with respect to the metric $\overbar g$. Then we have 
	$f_*(\overbar e_i\wedge\overbar e_j)=\mu_i\mu_j e_i\wedge e_j$.
	Clearly, we have $\mu_i\mu_j\leq\|\wedge^2 df\|$ for all $i, j$ with $i\neq j$. Therefore we have
	\begin{align*}
	\sum_k\alpha^{-2} \overbar c(\overbar Lw_k)^2=-\alpha^{-2} \sum_{i,j}\mu_i^2\mu_j^2(f^\ast R_{ijji}^M)\geq -\|\wedge^2 df\|\cdot \frac{f^*\Sc}{2}.
	\end{align*}
	This finishes the proof.
\end{proof}

Next we shall review a comparison formula for mean curvature. By the Stokes formula, we have 
\begin{equation}
\int_N\langle\nabla^*\nabla \varphi,\varphi\rangle=\int_N|\nabla\varphi|^2+\int_{\partial N}\langle\nabla_{\overbar e_n}\varphi,\varphi\rangle
\end{equation}
for all smooth sections $\varphi$ of $S_N\otimes f^\ast S_M$.
\begin{definition}
	We define $\overbar c_\partial$ to be the Clifford action of $T(\partial N)$ on the bundle  $S_N\otimes f^* S_M$ given by  
	\[ \overbar c_\partial(\overbar e_\lambda)=\overbar c(\overbar e_n)\overbar c(\overbar e_\lambda) \]
	for all $\overbar e_\lambda \in T(\partial N)$,  where $\overbar e_n$ is the unit inner normal vector to $\partial N$. Similarly, we define $ c_\partial$ to be the Clifford action of $f^\ast T(\partial M)$ on the bundle  $S_N\otimes f^* S_M$ given by  
	\[ c_\partial(e_\lambda)= c( e_n) c(e_\lambda), \] where $e_n$ is the unit inner normal vector to $\partial M$ in $M$. 
\end{definition}

The boundary Dirac operator $D^\partial$ acting on $S_N\otimes f^* S_M$ over $\partial N$ is given by 
\begin{equation}\label{eq:boundarydirac}
D^{\partial}\coloneqq \sum_{\lambda}\overbar c_\partial(\overbar e_\lambda)\nabla^{\partial}_{\overbar e_\lambda}, 
\end{equation}
where $\nabla^{\partial}$ is the connection on $S_N\otimes f^* S_M$ over  $\partial N$ defined by 
\begin{equation}\label{eq:nablapartial}
\begin{aligned}
\nabla^\partial & =\nabla-\frac 1 2 \sum_{\mu}\langle\prescript{N}{}\nabla \overbar e_n,\overbar e_\mu \rangle_N\cdot \overbar c(\overbar e_n)\overbar c(\overbar e_\mu)\otimes 1 \\
& \hspace{2cm} -\frac 1 2\sum_{\mu} \langle\prescript{M}{}\nabla  e_n, e_\mu \rangle_M \cdot 1\otimes  c( e_n) c(e_\mu),
\end{aligned}
\end{equation}
where $\prescript{N}{}\nabla$ and $\prescript{M}{}\nabla$ are the Levi--Civita connections on $N$ and $M$ respectively (cf. \cite[Theorem 2.7]{spinorialapproach}).
Here we use Greek symbols $\lambda$ and $\mu$ to indicate that the summation is taken over basis vectors tangential to the boundary.
We have
\begin{equation}\label{eq:D-Dpartial}
\overbar c(\overbar e_n) D+\nabla_{\overbar e_n}=\overbar c(\overbar e_n)\sum_{\lambda} \overbar c(\overbar e_\lambda)\nabla_{\overbar e_\lambda}=D^\partial+\sum_{\lambda}\overbar c_\partial(\overbar e_\lambda)(\nabla_{\overbar e_\lambda}-\nabla_{\overbar e_\lambda}^{\partial}).
\end{equation}

Let $\overbar A$ be the second fundamental form of $\partial N$ in $N$, that is,
\begin{equation*}
\prescript{N}{}{\nabla} _{\overbar e_\lambda}\overbar e_\mu-\prescript{\partial N}{}{\nabla}_{\overbar e_\lambda}\overbar e_\mu=\overbar A_{\lambda,\mu}\overbar e_n.
\end{equation*}
Let $\overbar H$ be the mean curvature of $\partial N$, that is,\footnote{Our convention of the  mean curvature is that the mean curvature is  the trace of the second fundamental form, or equivalently the sum of all principal curvatures. } $\overbar H \coloneqq \tr \overbar A$. Similarly, on $M$ we have 
\begin{equation*}
\prescript{M}{}{\nabla} _{ e_\lambda}e_\mu-\prescript{\partial M}{}{\nabla}_{ e_\lambda} e_\mu= A_{\lambda,\mu} e_n.
\end{equation*}
and $H=\tr A$. Note that
\begin{equation*}
\sum_{\lambda,\mu}\overbar c_{\partial}(\overbar e_\lambda)\langle\prescript{N}{}\nabla_{\overbar e_\lambda} \overbar e_n,\overbar e_\mu \rangle_N\cdot \overbar c(\overbar e_n)\overbar c(\overbar e_\mu)=
-\sum_{\lambda,\mu}\overbar A_{\lambda,\mu} \overbar c_\partial(\overbar e_\lambda)\overbar c_\partial(\overbar e_\mu)=\overbar H,
\end{equation*}
and
\begin{equation*}
\sum_{\lambda,\mu}\overbar c_{\partial}(\overbar e_\lambda)\otimes \langle\prescript{M}{}\nabla_{f_*\overbar e_\lambda}  e_n, e_\mu \rangle_M c( e_n) c(e_\mu)= -
\sum_{\lambda,\mu}A(f_*\overbar e_\lambda, e_\mu)\overbar c_\partial(\overbar e_\lambda)\otimes c_\partial(e_\mu).
\end{equation*}
We obtain that
\begin{equation}\label{eq:secondf}
\sum_{\lambda}\overbar c_\partial(\overbar e_\lambda)(\nabla_{\overbar e_\lambda}-\nabla_{\overbar e_\lambda}^{\partial})=\frac{\overbar H}{2}-\frac{1}{2}\sum_{\lambda,\mu}A(f_*\overbar e_\lambda, e_\mu)\overbar c_\partial(\overbar e_\lambda)\otimes c_\partial(e_\mu).
\end{equation}
For the last term on the right hand side of the above equation, we have the following lemma (cf. \cite[Lemma 2.1]{Lottboundary}).
\begin{lemma}\label{lemma:secondff>=}
	If the second fundamental form $A$ of $\partial M$ is non-negative, then
	\begin{equation}\label{eq:secondff>=}
	-\frac{1}{2}\sum_{\lambda,\mu}A(f_*\overbar e_\lambda, e_\mu)\overbar c_\partial(\overbar e_\lambda)\otimes c_\partial(e_\mu)\geq-\|df^\partial\|\cdot\frac{f^*H}{2}.
	\end{equation}
\end{lemma}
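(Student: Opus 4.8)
The plan is to follow the proof of Lemma~\ref{lemma:curvature>=} essentially line by line, with the curvature operator $\mathcal R^M$ on $\Bigwedge^2 TM$ replaced by the second fundamental form $A$, which the hypothesis makes a non-negative self-adjoint endomorphism of $f^\ast T(\partial M)$. Thus the first step is to write $A = B^2$ with $B$ self-adjoint non-negative, so that $A(u,v) = \langle Bu, Bv\rangle$. Expanding $A(f_\ast\overbar e_\lambda, e_\mu) = \sum_\nu \langle B f_\ast\overbar e_\lambda, e_\nu\rangle\langle B e_\mu, e_\nu\rangle$ and regrouping the Clifford terms, the left-hand side of \eqref{eq:secondff>=} becomes $-\frac{1}{2}\sum_\nu \overbar c_\partial(\overbar u_\nu)\otimes c_\partial(Be_\nu)$, where $\overbar u_\nu \coloneqq \sum_\lambda\langle f_\ast\overbar e_\lambda, Be_\nu\rangle\,\overbar e_\lambda \in T(\partial N)$ is the analogue of the vector $\overbar L w_k$ appearing in the proof of Lemma~\ref{lemma:curvature>=}.

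Next comes the same completion-of-squares argument. If $\|df\| = 0$ then $f_\ast = 0$, hence $\overbar u_\nu = 0$ and both sides of \eqref{eq:secondff>=} vanish; otherwise set $\alpha = \sqrt{\|df\|}$. Since $\overbar c_\partial(\overbar u_\nu)\otimes 1$ and $1\otimes c_\partial(Be_\nu)$ commute and are both skew-adjoint, one has
\[
-\frac{1}{2}\,\overbar c_\partial(\overbar u_\nu)\otimes c_\partial(Be_\nu) = \frac{1}{4}\Big(\alpha^{-2}\overbar c_\partial(\overbar u_\nu)^2\otimes 1 + \alpha^{2}\,1\otimes c_\partial(Be_\nu)^2 - \big(\alpha^{-1}\overbar c_\partial(\overbar u_\nu)\otimes 1 + 1\otimes\alpha\, c_\partial(Be_\nu)\big)^2\Big),
\]
and the last term is the negative of the square of a skew-adjoint operator, hence $\geq 0$. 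Summing over $\nu$ and discarding that term gives
\[
-\frac{1}{2}\sum_\nu\overbar c_\partial(\overbar u_\nu)\otimes c_\partial(Be_\nu) \geq \frac{\alpha^{-2}}{4}\sum_\nu\overbar c_\partial(\overbar u_\nu)^2\otimes 1 + \frac{\alpha^{2}}{4}\sum_\nu 1\otimes c_\partial(Be_\nu)^2 .
\]

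Finally I would evaluate the two remaining sums. One uses that $\overbar c_\partial$ and $c_\partial$ are genuine Clifford actions, i.e. $\overbar c_\partial(\overbar u)\overbar c_\partial(\overbar v) + \overbar c_\partial(\overbar v)\overbar c_\partial(\overbar u) = -2\,\overbar g(\overbar u,\overbar v)$ on $T(\partial N)$, and similarly for $c_\partial$ on $f^\ast T(\partial M)$; this follows from $\overbar c_\partial(\overbar e_\lambda) = \overbar c(\overbar e_n)\overbar c(\overbar e_\lambda)$ and the anticommutation relations of $\overbar c$. Hence $\overbar c_\partial(\overbar u_\nu)^2 = -|\overbar u_\nu|^2$ and $c_\partial(Be_\nu)^2 = -|Be_\nu|^2$, so, exactly as in the Lichnerowicz-type computation of Lemma~\ref{lemma:curvature>=}, $\sum_\nu c_\partial(Be_\nu)^2 = -\tr(B^2) = -\tr A = -f^\ast H$, while $\sum_\nu\overbar c_\partial(\overbar u_\nu)^2 = -\sum_\lambda|Bf_\ast\overbar e_\lambda|^2 = -\sum_\lambda A(f_\ast\overbar e_\lambda, f_\ast\overbar e_\lambda)$. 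Diagonalizing $f^\ast g$ with respect to $\overbar g$ on $T(\partial N)$, so that $f_\ast\overbar e_\lambda = \mu_\lambda e_\lambda$ with $0\le\mu_\lambda\le\|df\|$, and using $A(e_\lambda, e_\lambda)\ge 0$, we get $A(f_\ast\overbar e_\lambda, f_\ast\overbar e_\lambda) = \mu_\lambda^2 A(e_\lambda, e_\lambda)\le\|df\|^2 A(e_\lambda, e_\lambda)$, hence $\sum_\lambda A(f_\ast\overbar e_\lambda, f_\ast\overbar e_\lambda)\le\|df\|^2 f^\ast H$. Plugging these bounds into the displayed inequality and using $\alpha^2 = \|df\|$ yields the lower bound $-\frac{\|df\|}{4}f^\ast H - \frac{\|df\|}{4}f^\ast H = -\|df\|\cdot\frac{f^\ast H}{2}$, which is \eqref{eq:secondff>=}. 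I do not anticipate a genuine obstacle: compared with Lemma~\ref{lemma:curvature>=} this case is in fact slightly simpler, since $A$ lives directly on the boundary tangent bundle rather than on $2$-forms and so there is no product of eigenvalues to control; the only points needing care are the Clifford relations for $\overbar c_\partial,c_\partial$ (used in the trace identities) and invoking the non-negativity of $A$ in the final diagonalization estimate.
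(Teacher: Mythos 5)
Your proof is correct and follows essentially the same route as the paper's: writing $A=L^2$, defining the pulled-back vectors $\overbar L e_\nu$ (your $\overbar u_\nu$), completing the square with $\alpha=\sqrt{\|df\|}$ and discarding the non-positive square of the skew-adjoint cross term, then evaluating the two remaining sums via the trace of $A$ and the bound $\sum_\lambda A(f_*\overbar e_\lambda,f_*\overbar e_\lambda)\leq\|df\|^2\,f^*H$ obtained by diagonalizing $f^*g$ against $\overbar g$. Your explicit use of the Clifford relations for $\overbar c_\partial$ and $c_\partial$ is a slightly cleaner way to phrase the paper's term-by-term computation, but it is the same argument.
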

\begin{proof}
	The strategy is similar to that of Lemma \ref{lemma:curvature>=}. 
	As the second fundamental form $A$ is non-negative, there exists a self-adjoint operator $L \in \Endo(TM)$ such that $A=L^2$, that is,
	$$ A(e_\lambda,e_\mu)=\langle L  e_\lambda,L e_\mu\rangle_{M}.$$
	Let us define $$\overbar L e_\nu\coloneqq \sum_\lambda\langle L e_\nu,f_*\overbar e_\lambda\rangle_M\cdot \overbar e_\lambda.$$
	If $\|df^\partial\|=0$, that is, $df^\partial=0$, then both sides of line \eqref{eq:secondff>=} are zero. If $\|df^\partial\|>0$, we set $\alpha=\sqrt{\|df^\partial\|}$ and rewrite
	the left hand side of line \eqref{eq:secondff>=} as
	\begin{align*}
	&-\frac{1}{2}\sum_{\lambda,\mu}A(f_*\overbar e_\lambda, e_\mu)\overbar c_\partial(\overbar e_\lambda)\otimes c_\partial(e_\mu)\\
	=&-\frac{1}{2}\sum_{\lambda,\mu,\nu}\langle L(f_*\overbar e_\lambda),e_\nu\rangle_M 
	\langle L(e_\mu),e_\nu\rangle_M  \overbar c_\partial(\overbar e_\lambda)\otimes c_\partial(e_\mu)\\
	=&-\frac{1}{2}\sum_{\nu}\overbar c_\partial(\overbar Le_\nu)\otimes c_\partial(L e_\nu)\\
	=&\frac{1}{4}\sum_\nu\Big(\alpha^{-2}\overbar c_\partial(\overbar Le_\nu)^2\otimes 1+1\otimes \alpha^2c_\partial(L e_\nu)^2-\big(\alpha^{-1}\overbar c_\partial(\overbar Le_\nu)\otimes 1+1\otimes\alpha c_\partial(L e_\nu)\big)^2\Big)\\
	\geq&\frac 1 4 \sum_\nu \alpha^{-2}\overbar c_\partial(\overbar Le_\nu)^2\otimes 1+\frac{1}{4}\sum_\nu 1\otimes  \alpha^2 c_\partial(Le_\nu)^2,
	\end{align*}
	where the last inequality  follows from the fact  that the element
	\[  \big(\alpha^{-1}\overbar c_\partial(\overbar Le_\nu)\otimes 1+1\otimes \alpha c_\partial(L e_\nu)\big) \] is skew-symmetric, hence its square is non-positive.  
	
	If we write $L e_\nu=\sum_\lambda L_{\nu\lambda} \cdot e_\lambda$, then we have
	\begin{align*}
	\alpha^2\sum_\nu c_\partial(Le_\nu)^2=&\alpha^2\sum_{\nu,\lambda}L_{\nu\lambda}L_{\nu\lambda}\cdot c_\partial(e_\lambda)^2+\alpha^2\sum_{\nu}\sum_{\lambda\ne\mu}L_{\nu\lambda}L_{\nu\mu}\cdot c_\partial(e_\lambda)c_\partial(e_\mu)\\
	=&-\alpha^2\sum_\lambda A_{\lambda\lambda}+\alpha^2\sum_{\lambda\ne\mu}A_{\lambda\mu} \cdot c_\partial(e_\lambda)c_\partial(e_\mu)\\
	=&-\alpha^2\tr(A)=-\|df^\partial\|\cdot H.
	\end{align*}
	Similarly, since $\langle f_*\overbar e_\nu,f_*\overbar e_\nu\rangle_M\leq\|df^\partial\|^2\cdot \langle \overbar e_\nu,\overbar e_\nu\rangle$, we have
	\begin{align*}
	\alpha^{-2}\sum_\nu c_\partial(\overbar L e_\nu)^2=
	&\alpha^{-2}\sum_{\lambda,\mu}A(f_*\overbar e_\lambda, f_*\overbar e_\mu)\cdot c_\partial(e_\lambda)c_\partial(e_\mu)\\
	=&-\alpha^{-2}\sum_{\lambda} A(f_*\overbar e_\lambda, f_*\overbar e_\lambda) \\
	\geq & -\alpha^{-2}\|df^\partial\|^2\cdot \tr (A)=-\|df^\partial\|\cdot H.
	\end{align*}
	This finishes the proof.
\end{proof}

By combining Lemma \ref{lemma:curvature>=} and Lemma \ref{lemma:secondff>=}, we obtain the following proposition.
\begin{proposition}\label{prop:D^2smooth}
	Let 	$(M, g)$ and $(N, \overbar{g})$ be two oriented compact Riemannian manifolds with smooth boundary and $f\colon N \to M$ is a spin  map.  Assume that both the curvature operator of $M$ and the second fundamental form of $\partial M$ are non-negative. Then for a smooth section $\varphi$ of $S_N\otimes f^* S_M$ over $N$, we have
	\begin{equation}\label{eq:>=smooth}
	\begin{aligned}
	\int_N|D\varphi|^2\geq& \int_N |\nabla\varphi|^2 + \int_N\frac{\overbar{\Sc}}{4}|\varphi|^2-\int_N\|\wedge^2 df\|\cdot \frac{f^*\Sc}{4}|\varphi|^2\\
	&+\int_{\partial N}\langle D^\partial\varphi,\varphi\rangle+
	\int_{\partial N}\frac{\overbar H}{2}|\varphi|^2-\int_{\partial N}\|df^\partial\|\cdot \frac{f^*H}{2}|\varphi|^2.
	\end{aligned}\end{equation}
\end{proposition}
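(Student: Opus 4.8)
The plan is to assemble the proposition directly from the three integration-by-parts identities already recorded (the two Stokes formulas together with the rewriting \eqref{eq:D-Dpartial} and \eqref{eq:secondf}) and the two pointwise Bochner-type estimates of Lemma \ref{lemma:curvature>=} and Lemma \ref{lemma:secondff>=}. I would do this in two stages: first extract the interior contributions, then the boundary contributions, throughout reading the inequalities of Lemmas \ref{lemma:curvature>=} and \ref{lemma:secondff>=} as inequalities of self-adjoint endomorphisms of the fibers of $S_N\otimes f^\ast S_M$, so that a pointwise bound $T\geq S$ yields $\langle T\varphi,\varphi\rangle\geq\langle S\varphi,\varphi\rangle$ at every point and hence after integration.

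For the interior stage, I would start from the Stokes formula $\int_N\langle D\varphi,D\varphi\rangle=\int_N\langle D^2\varphi,\varphi\rangle+\int_{\partial N}\langle \overbar c(\overbar e_n)D\varphi,\varphi\rangle$ and insert the Bochner--Lichnerowicz--Weitzenb\"ock expression \eqref{eq:D^2smooth} for $D^2$. The curvature term $-\tfrac12\sum_{i,j}\langle\mathcal R^M f_\ast\overbar w_j,w_i\rangle_M\,\overbar c(\overbar w_j)\otimes c(w_i)$ is bounded below fiberwise by $-\|\wedge^2 df\|\cdot\tfrac{f^\ast\Sc}{4}$ by Lemma \ref{lemma:curvature>=}, while $\int_N\langle\nabla^\ast\nabla\varphi,\varphi\rangle=\int_N|\nabla\varphi|^2+\int_{\partial N}\langle\nabla_{\overbar e_n}\varphi,\varphi\rangle$ by the second Stokes formula. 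Combining these gives
\[
\int_N|D\varphi|^2\geq \int_N|\nabla\varphi|^2+\int_N\frac{\overbar\Sc}{4}|\varphi|^2-\int_N\|\wedge^2 df\|\cdot\frac{f^\ast\Sc}{4}|\varphi|^2+\int_{\partial N}\big\langle\big(\overbar c(\overbar e_n)D+\nabla_{\overbar e_n}\big)\varphi,\varphi\big\rangle,
\]
which already contains all the desired interior terms.

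For the boundary stage, I would rewrite the operator $\overbar c(\overbar e_n)D+\nabla_{\overbar e_n}$ appearing in the boundary integral by \eqref{eq:D-Dpartial} as $D^\partial+\sum_\lambda\overbar c_\partial(\overbar e_\lambda)(\nabla_{\overbar e_\lambda}-\nabla^\partial_{\overbar e_\lambda})$, then use \eqref{eq:secondf} to identify the second summand with $\tfrac{\overbar H}{2}-\tfrac12\sum_{\lambda,\mu}A(f_\ast\overbar e_\lambda,e_\mu)\overbar c_\partial(\overbar e_\lambda)\otimes c_\partial(e_\mu)$, and finally bound the last term below by $-\|df\|\cdot\tfrac{f^\ast H}{2}$ via Lemma \ref{lemma:secondff>=}. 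Integrating over $\partial N$ and substituting into the displayed inequality yields \eqref{eq:>=smooth}. There is no real analytic difficulty here since everything in sight is smooth; the only thing that needs genuine care is the bookkeeping of sign and orientation conventions — in particular that $\overbar e_n$ denotes the \emph{inner} unit normal in both Stokes formulas, that the estimates of the two lemmas are applied as fiberwise operator inequalities before pairing with $\varphi$, and that the boundary integral from each Stokes formula enters with the correct sign so that the $D^\partial$, mean curvature, and dihedral-angle-free boundary terms line up as stated.
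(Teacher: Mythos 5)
Your proposal is correct and follows essentially the same argument as the paper: Stokes for $D^2$ and $\nabla^\ast\nabla$, the Weitzenb\"ock formula \eqref{eq:D^2smooth} with the fiberwise bound of Lemma \ref{lemma:curvature>=} for the interior terms, and then \eqref{eq:D-Dpartial}, \eqref{eq:secondf} and Lemma \ref{lemma:secondff>=} to convert the boundary term $\overbar c(\overbar e_n)D+\nabla_{\overbar e_n}$ into the $D^\partial$, mean curvature, and $\|df\|\cdot f^\ast H$ contributions. No gaps.
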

\begin{proof}
	By the Stokes formula, we have 
	\begin{align*}
	\int_N\langle D\varphi,D\varphi\rangle  & =\int_N\langle D^2\varphi,\varphi\rangle+\int_{\partial N}\langle \overbar c(\overbar e_n) D\varphi,\varphi\rangle.
	\end{align*}
	From Equation \eqref{eq:D^2smooth} and Lemma \ref{lemma:curvature>=}, we have
	\begin{align*}
	\int_N\langle D^2\varphi,\varphi\rangle\geq&
	\int_N |\nabla\varphi|^2 + \int_N\frac{\overbar{\Sc}}{4}|\varphi|^2-\int_N\|\wedge^2 df\|\cdot \frac{f^*\Sc}{4}|\varphi|^2+\int_{\partial N}\langle\nabla_{\overbar e_n}\varphi,\varphi\rangle.
	\end{align*}
	By applying line \eqref{eq:D-Dpartial}, line \eqref{eq:secondf} and Lemma \ref{lemma:secondff>=}, we obtain
	$$\int_{\partial N}\langle\nabla_{\overbar e_n}\varphi,\varphi\rangle+\int_{\partial N}\langle \overbar c(\overbar e_n) D\varphi,\varphi\rangle\geq
	\int_{\partial N}\langle D^\partial\varphi,\varphi\rangle+
	\int_{\partial N}\frac{\overbar H}{2}|\varphi|^2-\int_{\partial N}\|df^\partial\|\cdot \frac{f^*H}{2}|\varphi|^2.
	$$
	
\end{proof}

\subsection{Manifolds with polyhedral boundary}
In this subsection, we introduce a notion of manifolds with polyhedral boundary. 

Recall that $n$-dimensional smooth manifolds with corners are locally modeled on $[0, \infty)^k\times \mathbb R^{n-k}$ with $0\leq k \leq n$. More precisely, let $M$ be a Hausdorff space. A chart $(U, \varphi)$ (possibly with corners) for $M$ is a homeomorphism $\varphi$ from an open subset $U$ of $M$ to an open subset of  $[0, \infty)^k\times \mathbb R^{n-k}$ for some $0\leq k \leq n$. Two charts $(U_1, \varphi_1)$ and $(U_2, \varphi_2)$ are $C^\infty$-related if either $U_1\cap U_2$ is empty or the map 
\[ 	\varphi_2\circ \varphi_1^{-1}\colon \varphi_1(U_1\cap U_2) \to \varphi_2(U_1\cap U_2) \]
is a diffeomorphism (of open subsets in $[0, \infty)^{k_1}\times \mathbb R^{n-k_1}$ and $[0, \infty)^{k_2}\times \mathbb R^{n-k_2}$). A system of pairwise $C^\infty$-related charts of $M$ that covers $M$ is called an atlas of $M$. 
\begin{definition}\label{def:manifoldcorner}
	A smooth manifold with corners is a Hausdorff space equipped with a maximal atlas of charts. 
\end{definition}

Similarly, we introduce the following notion of \emph{manifolds with polyhedral  boundary}, which are locally modeled on $n$-dimensional polyhedra in $\mathbb R^n$. For a given Hausdorff space $X$, a polytope chart $(U, \varphi)$  for $X$ is a homeomorphism $\varphi$ from an open subset $U$ of $M$ to an open subset of an $n$-dimensional polyhedron in $\mathbb R^n$.  Two polytope charts $(U_1, \varphi_1)$ and $(U_2, \varphi_2)$ are $C^\infty$-related if either $U_1\cap U_2$ is empty or the map 
\[ 	\varphi_2\circ \varphi_1^{-1}\colon \varphi_1(U_1\cap U_2) \to \varphi_2(U_1\cap U_2) \]
is a diffeomorphism (of open subsets of $n$-dimensional polyhedra). Again, a system of pairwise $C^\infty$-related charts of $X$ that covers $X$ is called an atlas of $X$. 
\begin{definition}\label{def:polytopeboundary}
	A smooth manifold with polyhedral  boundary is a Hausdorff space equipped with a maximal atlas of polytope charts. 
\end{definition}

A Riemannian manifold with polyhedral boundary is a smooth manifold with polyhedral boundary equipped with a smooth Riemannian metric.  A main difference between manifolds with corners and manifolds with polyhedral boundary is the following: for an $n$-dimensional manifold with corners, there can be at most $n$ codimension one faces meeting at a given point; while there may be more than $n$ codimension one faces meeting at a given point in an $n$-dimensional manifold with polyhedral boundary.

\begin{definition}
	Let $N$ be an $n$-dimensional manifold with polyhedral boundary. We define the codimension $k$ stratum of $N$ to be the set of interior points of all codimension $k$ faces of $N$. 
\end{definition}

For each point $x$ in the codimension $k$ stratum of $N$, it admits a small neighborhood $U$ of the form: 
\[ \R^{n-k} \times P\]
such that $P$ is a polyhedral corner in $\R^k$ enclosed by hyperplanes passing through the origin of $\R^k$ and $x$ is the origin of $\R^n$.  In this case, we call the partial derivatives along $\mathbb R^{n-k}$ the base directions of the neighborhood $U$ of $x$.

\begin{definition}\label{def:polytopeMap} A  map $f\colon (N,\overbar g)\to (M,g)$ between Riemannian manifolds with polyhedral boundary is called a \emph{polytope map} if 
	\begin{enumerate}
		\item 	$f$ is Lipschitz\footnote{Here $f\colon (N,\overbar g)\to (M,g)$ is said to be Lipschitz if there exists $C>0$ such that $d_M(f(x), f(y))\leq C\cdot d_N(x, y)$  for all points in $x, y\in N$.},
		\item $f$ is smooth away from the codimension three faces of $N$,
		\item $f$ maps the codimension $k$ stratum of $N$ to the  codimension $k$ stratum of $M$, and 
		\item every point $x$ in $N$ has a small open neighborhood $U$ such that $f$ is smooth with respect to the base directions on $U$.  
	\end{enumerate}
\end{definition}

\begin{remark}
	 Condition (4) in the above definition of polytope maps is mainly added for technical reasons. It can certainly be weakened without affecting all the main results in the present paper. However, imposing condition (4) makes some of the proofs of this paper a little more transparent. In any case,  condition (4) is always satisfied  in the main geometric applications that we are concerned with. 
\end{remark}

We emphasize that a polytope map $f\colon N \to M$ is \emph{not} required to be smooth at the codimension three faces of $N$. Such a flexibility will be important when we consider $n$-dimensional polyhedral corners that have more than $n$ codimension one faces meeting at their vertices. For example, let $N$ and $M$ be two convex polyhedra in $\R^n$ with the same combinatorial type. Then there is always a smooth polytope map $f\colon N\to M$. One can construct $f$ smoothly near each codimension $2$ face of $N$, and extend $f$ radially to any higher codimensional faces.

Consider the vector bundle $f^*TM$ over $N$, which is equipped with the pull-back connection $f^*\nabla^M$ of the Levi--Civita connection on $M$. The smooth structure of $f^*TM$ is defined everywhere away from faces of codimension $\geq 3$. In particular, it makes sense to talk about the space of smooth sections of $f^\ast TM$ that vanishes near codimension two faces, which will be denoted by  $C^\infty_0(N,f^*TM)$. Moreover, the connection $f^*\nabla^M$ is well-defined away from codimension three  faces.

We define
$H^1(N,f^*TM)$ to be the completion of $C^\infty_0(N,f^*TM)$ with respect to the the following Sobolev $H^1$-norm:
\begin{equation}\label{eq:sobolevh1} 
	\|s\|_1\coloneqq \big(\|s\|^2+\|\widetilde \nabla s\|^2\big)^{1/2}
\end{equation}
for $s\in C^\infty_0(N,f^*TM)$, where $\widetilde \nabla = f^\ast \nabla^M$. 
\begin{lemma}\label{lemma:H1space}
	The space $H^1(N,f^*TM)$  is independent (up to bounded isomorphisms of Hilbert spaces) of the metric on $TM$, and coincides with the usual $H^1$-space if $f$ is smooth.
\end{lemma}
\begin{proof}
	Let  $\{U_\alpha\}$ be an open cover of $N$ consisting of polytope charts such that  $TN$ is trivial on $U_\alpha$ and $TM$ is trivial on $f(U_\alpha)$. Let $\{\phi_\alpha\}$ be a smooth partition of unity subordinate to $\{U_\alpha\}$. Set $m = \dim M$. For each $s\in C^\infty_0(N,f^*TM)$,  we may view $s_\alpha\coloneqq \varphi_\alpha s$ with a smooth function from $N$ to $\R^m$ after we identify $f^\ast TM $ with a trivial bundle over $N$. More precisely, we choose a smooth orthonormal basis $\{e_i\}$ of $TM$ over $f(U_\alpha)$. Then $s_\alpha$ is uniquely written as
	$$s_\alpha=\sum_{i=1}^n s^i_\alpha e_i$$
	where $s^i_\alpha$'s  are smooth functions vanishes near codimension two  faces.
	
	Let $\Gamma_{ij}^k$ be the Christoffel symbols of the Levi-Civita connection on $TM$, that is, 
	$$\nabla^M_{e_i}e_j=\sum_{k=1}^n\Gamma_{ij}^ke_k.$$
	Let $\{X_1,\cdots,X_n\}$ be a  orthonormal basis of $TN$ over $U_\alpha$. Then we may write  
	$$f_*X_i=\sum_{j=1}^n x^j_ie_j,$$
	where $x_i^j$'s are bounded functions over $M$ (since $f$ is Lipschitz) and  smooth in the interior of $M$ (since $f$ is smooth away from codimension two faces). It follows that 
	$$\widetilde \nabla_{X_i}s_{\alpha}=\sum_{j=1}^nX_i(s_\alpha^j) \cdot e_j+\sum_{j,k=1}^n x_i^j\Gamma_{ji}^k e_k.$$
	Since $x_i^j$ and $\Gamma_{ji}^k$ are uniformly bounded over $U_\alpha$, it is  not difficult to see that the  Sobolev $H^1$-norm from line \eqref{eq:sobolevh1} is equivalent to the following norm: 
	$$\|s_\alpha\|_{new}^2\coloneqq \sum_{i=1}^n\|s_\alpha^i\|^2+\sum_{i=1}^n\|\text{grad}(s_\alpha^i)\|^2,$$
	where the latter is independent of the choice of the metric on $M$. Together with  the partition of unity subordinate to $\{U_\alpha\}$, it follows that $H^1(N,f^*TM)$ is independent of the metric on $TM$, up to bounded isomorphisms of Hilbert spaces.
	
	Now we prove the last conclusion: $H^1(N,f^*TM)$ coincides with the usual $H^1$-space if $f$ is smooth. Indeed, if $f$ is smooth, then $f^*TM$ is a smooth vector bundle over the entire $N$. Recall that  removing a subspace of codimension $\geq 2$ does not affect the definition of Sobolev $H^1$-spaces. In particular,   the space of smooth sections that vanish near codimension two faces of $N$ is dense in the usual $H^1$-space, where the usual  $H^1$-space is the completion of all smooth sections over $N$ (that do not necessarily vanish near codimension two faces of $N$). This finishes the proof.
\end{proof}

The proof of Lemma \ref{lemma:H1space} shows that the $H^1$-norm is locally equivalent  to the norm of $H^1$-functions. Therefore the compact embedding theorem and the trace theorem for ordinary $H^1$-space also holds for $H^1(N,f^*TM)$.

Let us summarize the geometric assumption that will be used throughout this paper.
\begin{setup}\label{setup}
	Let $(M, g)$ and $(N, \overbar{g})$ be two oriented compact Riemannian manifolds with polyhedral boundary.  Suppose   $f\colon N \to M$ is a spin map (Definition \ref{def:spin}) and also a polytope  map (Definition \ref{def:polytopeMap}).
\end{setup}


Let us retain the same notation from  Section \ref{sec:smoothboundary}. In particular, $\overbar{\Sc}$ (resp. $\Sc$) is the scalar curvature of $N$ (resp. $M$),  and $\overbar H$ (resp. $H$) is the mean curvature of $\partial N$ (resp. $\partial M$).

Let $\{\overbar F_i\}$  be the collection of  codimension one  faces  of $N$, and denote the intersection $\overbar F_i\cap \overbar F_j$ by $\overbar  F_{ij}$. Similarly,  let $\{F_i\}$ be the collection of  codimension one faces of $M$ and write $F_{ij} = F_i\cap F_j$. We denote $\overbar \theta_{ij}$ and $\theta_{ij}$ to be the dihedral angle functions at $\overbar F_{ij}$ and $F_{ij}$ respectively.  In the following, whenever we write $\overbar F_i$ and $F_i$, we mean that $f$ maps $\overbar F_i$ to $F_i$. The same applies to $F_{ij}$ and $\theta_{ij}$.

Recall that  $S_N\otimes f^*S_M$ stands for the spinor bundle $S_{TN\oplus f^*TM}$ of $TN\oplus f^*TM$ on $N$.  Let $\nabla^{S_N}$ and $\nabla^{S_M}$ be the local spinor connections on the local spinor bundles $S_N$ and $S_M$.  We equip the bundle $S_N\otimes f^*S_M$  with the connection $\nabla=\nabla^{S_N}\otimes 1+1\otimes f^* \nabla^{S_M}$, which is well-defined away from codimension three faces of $N$. If $\varphi$  is a smooth section of $S_N\otimes f^* S_M$ that vanishes near the codimension two faces of $N$, then the exact same proof of Proposition \ref{prop:D^2smooth} shows that inequality \eqref{eq:>=smooth} holds for $\varphi$. To summarize, we have the following proposition. 

\begin{proposition}\label{prop:D^2}
	Assume Geometric Setup \ref{setup}.  Let $D$ be the Dirac operator on $S_N\otimes f^* S_M$.  If both the curvature operator of $M$ and the second fundamental form of $\partial M$ are non-negative, then  we have 
	\begin{equation}\label{eq:>=0corner}
	\begin{aligned}
	\int_N|D\varphi|^2\geq& \int_N |\nabla\varphi|^2 + \int_N\frac{\overbar{\Sc}}{4}|\varphi|^2-\int_N\|\wedge^2 df\|\cdot\frac{f^*\Sc}{4}|\varphi|^2\\
	&+\int_{\partial N}\langle D^\partial\varphi,\varphi\rangle+
	\int_{\partial N}\frac{\overbar H}{2}|\varphi|^2-\int_{\partial N}\|df^\partial\|\cdot\frac{f^*H}{2}|\varphi|^2
	\end{aligned}\end{equation}
	for all smooth sections  $\varphi$ of $S_N\otimes f^* S_M$ that vanish near codimension two faces of $N$. 
\end{proposition}

\begin{remark}
	If we assume $f$ to satisfy some slightly stronger regularity conditions than those listed in Definition \ref{def:polytopeMap}, then a version of Proposition \ref {prop:D^2} holds for all smooth sections of $S_N\otimes f^\ast S_M$ (that do not necessarily vanish near codimension two faces of $N$), which incorporates the dihedral angles. See Proposition \ref{prop:D^2WithAngles} in  Appendix \ref{sec:appendixestimate} for the precise details. In an early version of the paper, Proposition \ref{prop:D^2WithAngles} was used as a key step in the proofs of the main theorems (Theorem \ref{thm:special} and Theorem \ref{thm:extremal-rigid}). However, a new observation from \cite{Wang:2022vf} allows us to complete   the proofs of Theorem \ref{thm:special} and Theorem \ref{thm:extremal-rigid} without the need of Proposition \ref{prop:D^2WithAngles}. For this reason, we have decided to put Proposition \ref{prop:D^2WithAngles} in  the appendix. 
\end{remark}

\section{Index theory on manifolds with polyhedral boundary}\label{sec:indextheory}

While the theory of elliptic boundary conditions for operators on manifolds with smooth boundary is quite well developed,  the corresponding theory for operators on manifolds with polyhedral boundary has been relatively unknown. In this section, we investigate a class of twisted Dirac operators on manifolds with polyhedral boundary and their elliptic boundary conditions. Under suitable assumptions on dihedral angles (cf. Theorem \ref{thm:ess-sa} below),  we show that this class of twisted Dirac operators become essentially self-adjoint  with respect to a natural class of local boundary conditions. We then prove an index theorem for twisted Dirac operators on manifolds with polyhedral boundary in Section \ref{sec:index-poly}.  In Section \ref{sec:proofmain}, we shall use this index theorem to prove our main theorems. 

Throughout this section, let us assume the Geometric Setup \ref{setup} and that the dimensions of  $N$ and $M$ have the same parity. The key part of this section is to analyze Dirac type operators that arise from asymptotically conical metrics.

\subsection{Local boundary conditions on codimension one faces}

In this subsection, we introduce a local  boundary condition on the codimension one faces of $N$  for the Dirac operator $D$ associated to $S_N\otimes f^* S_M \coloneqq  S_{TN\oplus f^\ast TM}$. Since the dimensions of $N$ and $M$ have the same parity, the rank of the bundle  $TN\oplus f^\ast TM$ is even, hence the spinor bundle $S_N\otimes f^* S_M = S_{TN\oplus f^\ast TM}$ carries a natural $\mathbb Z_2$-grading. If both $N$ and $M$ are even dimensional, then $S_{TN\oplus f^\ast TM}$ locally is equal to the tensor product of $S_N$ with $f^\ast S_M$ (which is the reason that we have adopted the notation $S_N\otimes f^* S_M$ in place of $S_{TN\oplus f^\ast TM}$). In this case, let $\overbar\epsilon$ and $\epsilon$ be the grading operators on $S_N$ and $f^*S_M$  respectively. Then the $\mathbb Z_2$-grading on $S_{TN\oplus f^\ast TM}$ is given by $\overbar\epsilon\otimes \epsilon$. All the computation throughout the paper works essentially the same for  both the even dimensional case (where $\dim N$ and $\dim M$ are even) and the odd dimensional case (where $\dim N$ and $\dim M$ are odd). For simplicity, we shall mainly focus on the even dimensional case from now on. 

Let $\{\overbar F_i\}$ be the collection of all  codimension one  faces of $N$. We denote the  unit inner normal vector to $\overbar F_i$ in $N$ by $\overbar e_n$. Let $\overbar\epsilon$ and $\epsilon$ be the grading operators on $S_N$ and $f^*S_M$  respectively.
\begin{definition}\label{def:boundarycondition}
	We say 	a section $\varphi$ of  $S_N\otimes f^* S_M$ over $ N$ satisfies the local boundary condition $B$ if $\varphi|_{\partial N}$  satisfies 
	$$(\overbar \epsilon\otimes\epsilon)(\overbar c(\overbar e_n)\otimes c(e_n))\varphi=-\varphi$$
	on each codimension one face $\overbar F_i$ of $N$, where $\overbar e_n$ is the  unit inner normal vector of $\overbar F_i$ and $ e_n$ is the corresponding unit inner normal vector of $F_i$ in $M$. 
\end{definition}

The following lemma shows that the Dirac operator $D$ associated to $S_N\otimes f^* S_M$ is symmetric  with respect to the above local boundary condition $B$.

\begin{lemma}\label{lemma:formallysa}
	The operator $D$ is symmetric with respect to the above boundary condition $B$. More precisely, if $\varphi$ and  $\psi$ are smooth sections of $S_N\otimes f^* S_M$ over $N$ satisfying the boundary condition $B$, then $\langle D\varphi,\psi\rangle=\langle \varphi,D\psi\rangle$.
\end{lemma}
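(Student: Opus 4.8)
The plan is to integrate by parts, i.e.\ to use Stokes' theorem (Green's formula for the Dirac operator), and to show that the boundary integrand vanishes pointwise on $\partial N$ as a consequence of the local boundary condition $B$. Concretely, for smooth sections $\varphi,\psi$ of $S_N\otimes f^\ast S_M$, the standard computation with the Dirac operator $D=\sum_i \overbar c(\overbar e_i)\nabla_{\overbar e_i}$ gives
\[
\langle D\varphi,\psi\rangle - \langle \varphi, D\psi\rangle = \int_{\partial N}\langle \overbar c(\overbar e_n)\varphi, \psi\rangle,
\]
where $\overbar e_n$ is the unit inner normal (with the appropriate sign), and this boundary term is a sum of integrals over the smooth parts of the codimension one faces $\overbar F_i$; the codimension $\geq 2$ strata have measure zero and contribute nothing. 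So the whole statement reduces to the pointwise claim that on each face $\overbar F_i$, $\langle \overbar c(\overbar e_n)\varphi,\psi\rangle = 0$ whenever $\varphi,\psi$ both satisfy $B$.

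First I would fix a codimension one face $\overbar F_i$ and work at a point $y$ in its interior, using the decomposition $\varphi = \varphi_{++}+\varphi_{+-}+\varphi_{-+}+\varphi_{--}$ with respect to the commuting gradings $\overbar\epsilon$ and $\epsilon$, and likewise for $\psi$. The key algebraic facts are: $\overbar c(\overbar e_n)$ anticommutes with $\overbar\epsilon$ and commutes with $\epsilon$ (and $c(e_n)$ vice versa), so the operator $T\coloneqq \overbar c(\overbar e_n)\otimes 1$ maps the $(\pm,\ast)$ components to the $(\mp,\ast)$ components; and $T$ is skew-adjoint with $T^2=-1$. The boundary condition $B$ says precisely that $\varphi|_{\partial N}$ lies in the $(-1)$-eigenspace of the self-adjoint involution $J\coloneqq (\overbar\epsilon\otimes\epsilon)(\overbar c(\overbar e_n)\otimes c(e_n))$, equivalently (as recorded in the excerpt) $\varphi_{--}=-(\overbar c(\overbar e_n)\otimes c(e_n))\varphi_{++}$ and $\varphi_{--}=(\overbar c(\overbar e_n)\otimes c(e_n))\varphi_{-+}$, with the analogous relations for $\psi$. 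Then I would expand $\langle \overbar c(\overbar e_n)\varphi,\psi\rangle = \langle T\varphi,\psi\rangle$ in components: since $T$ shifts the first grading, only the pairings $\langle T\varphi_{++},\psi_{--}\rangle$, $\langle T\varphi_{--},\psi_{++}\rangle$, $\langle T\varphi_{-+},\psi_{+-}\rangle$, $\langle T\varphi_{+-},\psi_{-+}\rangle$ survive. Substituting the $B$-relations into each of these and using that $\overbar c(\overbar e_n)\otimes c(e_n)$ is unitary and self-adjoint (it squares to $+1$, being a product of two skew operators on orthogonal tensor factors), each surviving term cancels against another — the cross terms coming from the $++$/$--$ pair cancel the cross terms from the $-+$/$+-$ pair, or each vanishes on its own. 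The cleanest way to organize this is to observe that $T$ anticommutes with the involution $J$ defining $B$: indeed $T=\overbar c(\overbar e_n)\otimes 1$ and $J = (\overbar\epsilon\,\overbar c(\overbar e_n))\otimes(\epsilon\, c(e_n))$, and $\overbar c(\overbar e_n)$ anticommutes with $\overbar\epsilon\,\overbar c(\overbar e_n)$ (since $\overbar c(\overbar e_n)^2=-1$ and $\overbar\epsilon$ anticommutes with $\overbar c(\overbar e_n)$), so $TJ=-JT$. Hence $T$ maps $\ker(1+J)$ into $\ker(1-J)$, which is orthogonal to $\ker(1+J)$; since $\varphi,\psi$ both lie in $\ker(1+J)$ on $\overbar F_i$, we get $\langle T\varphi,\psi\rangle = 0$ pointwise.

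Assembling: $\langle D\varphi,\psi\rangle - \langle\varphi,D\psi\rangle = \sum_i \int_{\overbar F_i}\langle \overbar c(\overbar e_n)\varphi,\psi\rangle = 0$. I expect the main (though still modest) obstacle to be bookkeeping the sign and grading conventions — which operator anticommutes with which, and verifying that $\overbar c(\overbar e_n)$ genuinely anticommutes with the involution $J$ rather than commuting with it — since the whole argument hinges on that one relation $TJ=-JT$. There is also a minor analytic point to address, namely that $\partial N$ is only piecewise smooth, so one should note that Stokes' formula applies on the open dense union of face interiors and the lower-dimensional strata are negligible; for smooth sections on a compact manifold with corners this causes no trouble.
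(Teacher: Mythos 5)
Your proposal is correct and follows essentially the same route as the paper: after Stokes' formula reduces everything to the boundary integrand over the faces $\overbar F_i$, the paper uses exactly the two commutation facts you identify ($\overbar c(\overbar e_n)$ commutes with $\overbar c(\overbar e_n)\otimes c(e_n)$ and anticommutes with $\overbar\epsilon\otimes\epsilon$, i.e.\ $TJ=-JT$) to conclude that Clifford multiplication by $\overbar e_n$ sends $\ker(1+J)$ into the orthogonal eigenspace $\ker(1-J)$, so the boundary term vanishes. The only cosmetic differences are that the paper applies the argument to $\overbar c(\overbar e_n)\psi$ rather than $T\varphi$ and writes the boundary term with the opposite pairing, which changes nothing since the term is shown to be zero.
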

\begin{proof}
	By the Stokes formula, we have
	$$\int_N\langle D\varphi,\psi\rangle-\int_N \langle \varphi,D\psi\rangle=\int_{\partial N}\langle\varphi,\overbar c(\overbar e_n)\psi\rangle=\sum_i \int_{\overbar F_i}\langle\varphi,\overbar c(\overbar e_n)\psi\rangle.$$
	Due to the boundary condition $B$, we have
	$$\varphi=-(\overbar \epsilon\otimes\epsilon)(\overbar c(\overbar e_n)\otimes c(e_n))\varphi,~\psi=-(\overbar \epsilon\otimes\epsilon)(\overbar c(\overbar e_n)\otimes c(e_n))\psi.$$
	Note that $\overbar c(\overbar e_n)$ commutes with $\overbar c(\overbar e_n)\otimes c(e_n)$, but anti-commutes with $\overbar \epsilon\otimes\epsilon$. It follows that 
	$$\overbar c(\overbar e_n)\psi=(\overbar \epsilon\otimes\epsilon)(\overbar c(\overbar e_n\otimes c(e_n))(\overbar c(\overbar e_n)\psi).$$
	Hence $\overbar c(\overbar e_n)\psi$ lies in $\ker(1-(\overbar \epsilon\otimes\epsilon)(\overbar c(\overbar e_n)\otimes c(e_n)))$, which is orthogonal to  $B = \ker(1+(\overbar \epsilon\otimes\epsilon)(\overbar c(\overbar e_n)\otimes c(e_n)))$. This finishes the proof.
\end{proof}
Recall that we define the boundary Dirac operator $D^\partial$ on $S_N\otimes f^* S_M$ over $\partial N$ by
\begin{equation*}
	D^{\partial}\coloneqq \sum_{\lambda}\overbar c_\partial(\overbar e_\lambda)\nabla^{\partial}_{\overbar e_\lambda}, 
\end{equation*}
where $\nabla^{\partial}$ is given by
\begin{equation*}
	\nabla^\partial=\nabla-\frac 1 2 \sum_{\mu}\langle\prescript{N}{}\nabla \overbar e_n,\overbar e_\mu \rangle_N\overbar c(\overbar e_n)\overbar c(\overbar e_\mu)\otimes 1-\frac 1 2\sum_{\mu}1\otimes \langle\prescript{M}{}\nabla  e_n, e_\mu \rangle_M c( e_n) c(e_\mu).
\end{equation*}
Here $\prescript{N}{}\nabla$ and $\prescript{M}{}\nabla$ denote the Levi--Civita connections on $N$ and $M$ respectively. We will need the following lemma in Section \ref{sec:proofmain}.
\begin{lemma}\label{lemma:D^partialvanishes}
	If $\varphi$ satisfies the boundary condition $B$, then $\langle D^\partial\varphi,\varphi\rangle=0$ on the boundary $\partial N$.
\end{lemma}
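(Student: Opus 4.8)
The plan is to argue pointwise on each codimension one face $\overbar F_i$, using the algebraic structure of the boundary condition $B$ together with the anticommutation relations among the Clifford operators and the grading operators. Fix a point on $\overbar F_i$ and a local $\overbar g$-orthonormal frame $\overbar e_1,\dots,\overbar e_{n-1}$ of $T(\partial N)$, completed by the inner normal $\overbar e_n$; set $P \coloneqq (\overbar\epsilon\otimes\epsilon)(\overbar c(\overbar e_n)\otimes c(e_n))$, so that $B$ is the $(-1)$-eigenspace of the involution $P$, i.e. $\varphi = -P\varphi$. The key point is that each summand $\overbar c_\partial(\overbar e_\lambda)\nabla^\partial_{\overbar e_\lambda}$ of $D^\partial$ should be shown to \emph{anticommute} with $P$ in the appropriate sense, forcing $\langle D^\partial\varphi,\varphi\rangle$ into the orthogonal complement of $B$ and hence to pair to zero with $\varphi$.

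First I would record the commutation relations: $\overbar c_\partial(\overbar e_\lambda) = \overbar c(\overbar e_n)\overbar c(\overbar e_\lambda)$ commutes with $\overbar c(\overbar e_n)\otimes c(e_n)$ (since $\overbar c(\overbar e_\lambda)$ anticommutes with $\overbar c(\overbar e_n)$ for $\lambda<n$, and $\overbar c(\overbar e_n)$ commutes with itself), while it anticommutes with $\overbar\epsilon\otimes\epsilon$ (because a single Clifford factor flips each grading, and here there are two Clifford factors acting on $S_N$ but an odd total count relative to the grading — more precisely $\overbar c(\overbar e_n)$ and $\overbar c(\overbar e_\lambda)$ each anticommute with $\overbar\epsilon$, so their product commutes with $\overbar\epsilon$; I will need to recheck the parity here and instead track that $\overbar c_\partial(\overbar e_\lambda)$ preserves the $\overbar\epsilon$-grading but the composite $P$ involves $\overbar c(\overbar e_n)\otimes c(e_n)$ which does anticommute with $\overbar c_\partial$ only through the $c(e_n)$ factor). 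The cleaner route: show directly that $P\,\overbar c_\partial(\overbar e_\lambda) = -\overbar c_\partial(\overbar e_\lambda)\,P$ by checking the three factors of $P$ against $\overbar c_\partial(\overbar e_\lambda)$ one at a time. Next, $\nabla^\partial_{\overbar e_\lambda}$ should be checked to commute with $P$: this is where the two correction terms in the definition of $\nabla^\partial$ enter — they are precisely the terms that make $\nabla^\partial$ parallel-transport the bundle decomposition induced by $\overbar e_n$ and $e_n$, so that $\nabla^\partial$ preserves the sub-bundle defined by the boundary condition; concretely, $\nabla^\partial_{\overbar e_\lambda}(\overbar c(\overbar e_n)) = 0$ and $\nabla^\partial_{\overbar e_\lambda}(c(e_n)) = 0$ (the correction terms are exactly $-\tfrac12$ times the relevant connection $1$-forms), and $\overbar\epsilon,\epsilon$ are parallel, so $\nabla^\partial_{\overbar e_\lambda}$ commutes with $P$. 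Combining, each term $\overbar c_\partial(\overbar e_\lambda)\nabla^\partial_{\overbar e_\lambda}$ anticommutes with $P$, hence so does $D^\partial$.

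Once $D^\partial P = -P D^\partial$ is established, the conclusion is immediate: if $\varphi$ satisfies $B$, then $P\varphi = -\varphi$, so $P(D^\partial\varphi) = -D^\partial(P\varphi) = D^\partial\varphi$, meaning $D^\partial\varphi$ lies in the $(+1)$-eigenspace of $P$, which is orthogonal to $B$; since $\varphi\in B$, we get $\langle D^\partial\varphi,\varphi\rangle = 0$ pointwise on $\partial N$. I would present this last step first as the structural skeleton of the argument and then fill in the two anticommutation verifications. The main obstacle I anticipate is the bookkeeping in the verification that $\nabla^\partial$ commutes with $P$ — one must carefully match the two correction terms in \eqref{eq:nablapartial} against the derivatives of $\overbar c(\overbar e_n)$ and $c(e_n)$, keeping track of the fact that $\overbar e_n$ (resp. $e_n$) need not be parallel along $\partial N$, and confirm the signs; the Clifford anticommutation checks for $\overbar c_\partial(\overbar e_\lambda)$ versus $P$ are routine once the parities of $\overbar\epsilon\otimes\epsilon$ and $\overbar c(\overbar e_n)\otimes c(e_n)$ against $\overbar c_\partial$ are pinned down.
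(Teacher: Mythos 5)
Your plan is correct and is essentially the paper's own proof: the paper shows that $\gamma=\overbar c(\overbar e_n)\otimes c(e_n)$ and the grading $\overbar\epsilon\otimes\epsilon$ are $\nabla^\partial$-parallel (exactly because the two correction terms in \eqref{eq:nablapartial} cancel the derivatives of $\overbar c(\overbar e_n)$ and $c(e_n)$), and that $\overbar c_\partial(\overbar e_\lambda)$ anticommutes with $P=(\overbar\epsilon\otimes\epsilon)\gamma$, so $D^\partial\varphi$ lies in the $(+1)$-eigenspace of $P$, orthogonal to $B$. Only note, in the parenthetical you flagged for rechecking, that the anticommutation of $\overbar c_\partial(\overbar e_\lambda)$ with $\gamma$ comes from the $\overbar c(\overbar e_n)$ factor (it commutes with $1\otimes c(e_n)$ and with $\overbar\epsilon\otimes\epsilon$); with that sign bookkeeping fixed, your factor-by-factor verification goes through as stated.
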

\begin{proof}
	For brevity, let us write $\gamma=\overbar c(\overbar e_n)\otimes c(e_n)$. We first show that $\gamma$ is parallel with respect to the connection $\nabla^\partial$. 
	
	Note that
	\begin{align*}
		\nabla^\partial\gamma-\gamma\nabla^\partial & =
		(\nabla\gamma-\gamma\nabla)-(\nabla-\nabla^\partial)\gamma+\gamma(\nabla-\nabla^\partial)\\
		&=\nabla\gamma-\gamma\nabla+2\gamma(\nabla-\nabla^\partial),
	\end{align*}
	since $\gamma$ anti-commutes with $(\nabla-\nabla^\partial)$. By the definition of $\nabla^\partial$, we have
	\begin{align*}
		& 2\gamma(\nabla_{\overbar e_\lambda}-\nabla_{\overbar e_\lambda}^\partial)\\
		=&
		-\sum_{\mu}\langle\prescript{N}{}\nabla_{\overbar e_\lambda} \overbar e_n,\overbar e_\mu \rangle_N\overbar c(\overbar e_\mu)\otimes c(e_n)-\sum_{\mu}\overbar c(\overbar e_n)\otimes \langle\prescript{M}{}\nabla_{f_*\overbar e_\lambda}  e_n, e_\mu \rangle_Mc(e_\mu)\\
		=&-\sum_\mu \overbar c(\prescript{N}{}\nabla_{\overbar e_\lambda} \overbar e_n)\otimes c(e_n)-
		\sum_\mu\overbar c(\overbar e_n)\otimes c(\prescript{M}{}\nabla_{f_*\overbar e_\lambda}  e_n)\\
		=&-(\nabla_{\overbar e_\lambda}\gamma-\gamma\nabla_{\overbar e_\lambda}),
	\end{align*}
	where the last identity follows from the Leibniz  rule of the spinor connection $\nabla$. To summarize, we have shown that $\nabla^\partial \gamma - \gamma \nabla^\partial =0$, that is, $\gamma$ is parallel with respect to $\nabla^\partial$. 
	
	It is easy to see that the grading operator $\overbar\epsilon\otimes\epsilon$ also commutes with $\nabla^\partial$.
	Note that $\overbar c_\partial(\overbar e_\lambda)\coloneqq \overbar c(\overbar e_n)\overbar c(\overbar e_\lambda)$ commutes with $\overbar\epsilon$, but anti-commutes with $\overbar c(\overbar e_n)$. 
	Therefore, if $\varphi$ satisfies the boundary condition $B$, that is, 
	$$\varphi=-(\overbar \epsilon\otimes\epsilon)(\overbar c(\overbar e_n)\otimes c(e_n))\varphi,$$
	then we have
	$$D^\partial\varphi=(\overbar \epsilon\otimes\epsilon)(\overbar c(\overbar e_n)\otimes c(e_n))D^\partial\varphi.$$
	It follows that $D^\partial\varphi$ lies in $\ker(1-(\overbar \epsilon\otimes\epsilon)(\overbar c(\overbar e_n)\otimes c(e_n)))$, which is orthogonal to   $B = \ker(1+(\overbar \epsilon\otimes\epsilon)(\overbar c(\overbar e_n)\otimes c(e_n)))$. This finishes the proof.
\end{proof}

To prepare for the next proposition, let us introduce the following definition. 
\begin{definition}\label{def:smooth,H1} \mbox{}
	\begin{enumerate}
		\item A section of $S_N\otimes f^*S_M$  over $N$ is called smooth if it is smooth in the interior, and its partial derivatives of any order are uniformly bounded.
		\item Let $C^\infty_0(N,S_N\otimes f^*S_M;B)$ be the collection of smooth sections that satisfy the boundary condition $B$ at each codimension one face and vanish near all faces with codimension $\geq 2$. 
		\item Let $H^1(N,S_N\otimes f^*S_M;B)$ be the completion of  $C^\infty_0(N,S_N\otimes f^*S_M;B)$ with respect to  the $H^1$-norm
		$$\|\varphi\|_1\coloneqq (\|\varphi\|^2+\|\nabla\varphi\|^2)^{1/2}.$$
	\end{enumerate}
\end{definition}

The bundle $S_N\otimes f^*S_M$ as well as its connection are constructed from $TN$ and $f^*TM$. By Lemma \ref{lemma:H1space}, the $H^1$-space is well-defined and  satisfies the usual properties of Sobolev $H^1$-spaces such as the Rellich lemma and the Sobolev trace theorem.
\begin{proposition}\label{prop:norm-equivalent}
	Assume Geometry Setup \ref{setup}. Let $D$ be the Dirac operator on $S_N\otimes f^*S_M$ and $B$ a local boundary condition given in Definition $\ref{def:boundarycondition}$. Then for sections in $H^1(N,S_N\otimes f^*S_M;B)$, the $H^1$-norm is equivalent to the following norm
	$$\|\varphi\|_D\coloneqq (\|\varphi\|^2+\|D\varphi\|^2)^{1/2}$$
\end{proposition}
\begin{proof}
	Clearly, there exists a constant $C >0$ such that $\|\varphi\|_D \leq C\|\varphi\|_1$ for all $\varphi\in H^1(N,S_N\otimes f^*S_M;B)$. Hence to prove the proposition, it suffices to show the reversed inequality for smooth sections $\varphi\in C^\infty_0(N,S_N\otimes f^*S_M;B)$.
	
	The same proof of Proposition \ref{prop:D^2} implies that  there exist $C_1>$ and $C_2>0$ such that
	\begin{equation}\label{eq:D-norm>=}
		\|D\varphi\|^2\geq \|\nabla\varphi\|^2-C_1\|\varphi\|^2-C_2\int_{\partial N}|\varphi|^2
	\end{equation}
	for all $\varphi\in C^\infty_0(N,S_N\otimes f^*S_M;B)$. We remark that the above inequality does not require the curvature operator of $M$ or the second fundamental form of $\partial M$ to be non-negative. 
	
	Let $H^{\frac{3}{4}}(N,S_N\otimes f^* S_M)$ be the $H^{\frac{3}{4}}$ Sobolev space over $N$ with norm $\|\cdot\|_{\frac{3}{4}}$, and $H^{\frac{1}{4}}(\partial N, S_N\otimes f^* S_M)$  the $H^{\frac{1}{4}}$ Sobolev space 
	over $\partial N$ with norm $\|\cdot\|_{\frac{1}{4}}$.  By  the Sobolev trace theorem,
	there exists a constant $C_3>0$ such that 
	\[  \left\|\varphi|_{\partial N}\right\|_{\frac{1}{4}} \leq C_3 \|\varphi\|_{\frac{3}{4}}  \]
	for all $\varphi\in H^{\frac{3}{4}}(N,S_N\otimes f^* S_M)$. In particular, it follows that 
	$$\int_{\partial N}|\varphi|^2\leq C_3^2 \big\|\varphi\big\|_{\frac{3}{4}}^2$$
	for all $\varphi\in H^{\frac{3}{4}}(N,S_N\otimes f^* S_M)$. 
	Furthermore, it follows from the interpolation theorem that  there exists $C_4>0$ such that   
	$$\|\varphi\|_{\frac{3}{4}}\leq C_4\|\varphi\|_1^{3/4}\|\varphi\|^{1/4}\leq C_4 \Big(\frac{3}{4} \varepsilon^{4/3}  \|\varphi\|_1+\frac{1}{4} \varepsilon^{-4}\|\varphi\|\Big)$$
	for all $\varepsilon>0$. For a sufficiently small $\varepsilon$,  we see that there exists $C_5>0$ such that 
	\begin{equation}
		\|\varphi\|_D\geq C_5\|\varphi\|_1
	\end{equation}
	for all   $\varphi\in H^1(N,S_N\otimes f^*S_M;B)$. This finishes the proof. 
\end{proof}

\begin{remark}
	Note that Proposition \ref{prop:norm-equivalent} is only a statement about the equivalence of $H^1$-norm and  the graph norm of $D$ on elements within $H^1(N,S_N\otimes f^*S_M;B)$. It does \emph{not} imply that the maximal domain of $D$ is $H^1(N,S_N\otimes f^*S_M;B)$ in general. In Section \ref{sec:conic} below, we shall prove that, under extra assumptions on the dihedral angles, the operator $D$ with the boundary condition $B$ becomes essentially self-adjoint (cf. Theorem \ref{thm:ess-sa}). In particular, under these extra assumptions, the maximal domain of $D_B$ equals $H^1(N,S_N\otimes f^*S_M;B)$. 
\end{remark}

\subsection{Essential self-adjointness of twisted Dirac  operators on model conical  spaces}\label{sec:conic}
The main goal of this subsection and the next subsection is to show the twisted Dirac operator $D$ acting on $S_N\otimes f^*S_M$ over $N$ subject to a suitable local boundary condition is essentially self-adjoint. In order to make our presentation more transparent,  we first consider the special case of twisted Dirac operators on certain model cones. We will then deal with the general case  in the next subsection.

Let $D$ be the Dirac operator acting on $S_N\otimes f^*S_M$ over $N$ subject to the local boundary condition $B$ as in Definition \ref{def:boundarycondition}. From Lemma \ref{lemma:formallysa} and Proposition \ref{prop:norm-equivalent}, $D_B$ is a closable unbounded symmetric operator, and the domain of the minimal extension (i.e.,  the closure of $D_B$) is $H^1(N,S_N\otimes f^*S_M;B)$. The maximal extension of $D_B$ has domain 
$$\dom_{\max }(D_B)=\{f\in L^2(N,S_N\otimes f^*S_M): Df\in L^2(N,S_N\otimes f^*S_M) \},$$
where $Df$ is defined in the weak sense via pairing with sections in  $C^\infty_0(N,S_N\otimes f^*S_M;B)$. See  Definition \ref{def:smooth,H1} for the definition of  $C^\infty_0(N,S_N\otimes f^*S_M;B)$. The operator $D_B$ is  essentially self-adjoint if $\dom_{\max }(D_B)=H^1(N,S_N\otimes f^*S_M;B)$.

To determine whether $D_B$ is essentially self-adjoint, we can localize our computation on small neighborhoods of points in faces of various codimenions. Indeed, if there exists a non-zero element $\varphi$ in $\dom_{\max }(D_B)$ but not in $H^1(N,S_N\otimes f^*S_M;B)$, then, by a smooth partition of unity,  there exists a small neighborhood of a point in $\partial N$ on which the restriction of $\varphi$  is not in $H^1$. 

Suppose $x$ is a point in the interior of a codimension $k$ face $\overbar F_\lambda$ of $N$, then a small neighborhood of $x$ is homeomorphic to  $W\times \fiber$, where $W$ is a ball in $\R^{n-k}$ and  $\fiber$ is a polyhedral corner in $\R^k$, that is,  $\fiber$ is the closed subset in $\R^k$ enclosed by hyperplanes in $\R^k$ passing through the origin.
More precisely,   let $W$ be a small neighborhood of $x$ in $\overbar F_\lambda$ and $\mathcal N(W)$  the normal bundle of $W$ in $N$.  Let $\mathcal N_\varepsilon(W)$ be the subspace of $\mathcal N(W)$ where each fiber is the $\varepsilon$-ball centered at $0$.   The exponential map $\exp\colon \mathcal N_\varepsilon(W) \to N$ is injective for some sufficiently small $\varepsilon >0$. To be precise, the exponential map $\exp\colon \mathcal N_\varepsilon(W) \to N$  is only partially defined, since we are near a codimension $k$ face. On  each fiber of the normal $\varepsilon$-ball bundle $\mathcal N_\varepsilon(W)$, the exponential map is only defined on the intersection of the $\varepsilon$ ball with some asymptotically polyhedral corner of $\mathbb R^k$. In particular, the image $\exp(\mathcal N_\varepsilon(W) )$  is an open neighborhood of $x$ that can be viewed as a fiber bundle  $W\times \fiber$ over $W$. For each $y\in W$, the fiber $\fiber_y$ over $y$ is precisely $\exp_y(\mathcal N_{\varepsilon}(W)_y)$, where $\mathcal N^{\varepsilon}(W)_y$ is the fiber of the normal bundle $\mathcal N_{\varepsilon}(W)$ at $y$.  Note that the (fiberwise) Riemannian metric $g^\fiber$ on  $\fiber_y \cong \fiber$ is asymptotically conical, that is,
\begin{equation}
	g^\fiber=dr^2+r^2 g^{\link}+o(r^2),
\end{equation}
where $r$ is the radial variable of $\fiber$, and $\link = \fiber \cap \mathbb S^{k-1}$ is the link of $\fiber$ with its Riemannian metric denoted by $g^{\link}$. Similarly, there exists a small open  neighborhood of $f(x)$ in $M$ that carries a similar fiber bundle structure, denoted by $V\times \mathbb G$, where $V$ is an open neighborhood of $f(x)$ in  $F_\lambda$, where $F_\lambda $ is the corresponding codimension $k$ face in $M$.

With the above fiber bundle structures, the vector bundle $TN\oplus f^*TM$ near $x$ has the following orthogonal decomposition
$$TN\oplus f^*TM=T\fiber\oplus T\overbar F_\lambda\oplus f^*(T\mathbb G)\oplus TF_\lambda, $$
where $T\fiber$ (resp.  $T\mathbb G$) is the vertical bundle (consisting of vectors tangent to the fiber) of the fiber bundle $W\times \fiber$ (resp. $V
\times \mathbb G$), and    $T\overbar F_\lambda$  (resp. $T F_\lambda$) is the orthogonal complement of  $T\fiber$ in $TN$ (resp. $T\mathbb G$ in $TM$). We fix a smooth orientation preserving bundle isometry between $T\fiber$   and $f^*(T\mathbb G)$ in a small neighborhood of $x$.  We emphasize that this bundle isometry is \emph{not} induced by the map $f$. The choice of such a bundle isometry is certainly not unique, but any such choice will work for our discussion below. Now locally the spinor bundle decomposes as 
\begin{equation}\label{eq:spinordecomp}
	S_N\otimes f^*S_M=S_{T\fiber\oplus f^\ast(T\mathbb G)}\otimes S_{T\overbar F_\lambda\oplus f^*TF_\lambda}.
\end{equation}

\begin{remark}\label{remark:innernormal}
	It should be noted that, as the geometry of the fiber $\fiber_x$ may change as $x$ varies in $W$,  the fiberwise unit inner normal vectors of faces of each fiber $\fiber_x$ are  \emph{different} from the corresponding unit inner normal vectors of faces of the total space $W\times \fiber$ in general. On the other hand, at the vertex of each $\fiber_x$ (that is, at the point $(x, 0)\in W\times\{0\} \subset W\times \fiber$), the fiberwise unit inner normal vectors  coincide with the corresponding unit inner normal vectors of the total space $W\times \fiber$.  Therefore, instead of the vertical subbundle $T\fiber$ above, we may choose a smooth subbundle $\mathcal V$ of $T(W\times \fiber)$ such that
	\begin{enumerate}
		\item at any point $z$ in each codimension one face $\overbar F$ of the total space $W\times \fiber$, the vector space $\mathcal V_z$ at $z$ contains the unit inner normal vector of $\overbar F$ at $z$, and 
		\item $\mathcal V$ is isomorphic to $T\fiber$ via a smooth bundle isometry that is asymptotically the identity map as $r \to 0$, where $r$ is the radial variable of the fiber $\fiber$.  In particular, $\mathcal V$ coincides with $T\fiber$ at $W\times \{0\}$.
	\end{enumerate}	
	Similar remarks also apply to $V\times \mathbb G$ in $M$. 
\end{remark}
We shall revisit this observation from Remark \ref{remark:innernormal} in the proof of Theorem \ref{thm:ess-sa} in  Section \ref{sec:ess-selfadj-general}. For the moment, we first consider the essential self-adjointness of the Dirac operator $D_B$ along a single fiber $\fiber$, as this is the model case which the general case reduces to. Throughout this subsection, we shall consider the model case where both $\fiber$ and $\mathbb G$ are closed subsets in $\R^k$ enclosed by some hyperplanes in $\R^k$ passing through the origin. In particular, $\fiber$ and $\mathbb G$,  and all of their faces are flat. In this case, we shall identify both of the tangent spaces $T\fiber$ and $T\mathbb G$ with the trivial $\mathbb R^k$ bundle over $\fiber$ and $\mathbb G$. By \cite[I.3.9]{spingeometry}, the spinor bundle  $S_{T\fiber\oplus f^\ast(T\mathbb G)} = S_{\mathbb R^{2k}}$  is naturally identified with the bundle of forms $\Bigwedge^*\fiber \coloneqq \Bigwedge^*\R^k$, and the Clifford actions $\overbar c$ and $c$  become the usual Clifford actions on forms. Consequently,  the associated Dirac operator $D^\fiber$ on $S_{T\fiber\oplus f^\ast(T\mathbb G)}$  becomes precisely  the de Rham operator $D^\dR_\fiber$ acting on  $\Bigwedge^*\fiber$.
Note that the metric on $\fiber$ is conical, that is,
$$g^\fiber=dr^2+r^2g^\link$$
where $r$ is the radial coordinate of $\fiber$ and $\link = \fiber \cap \mathbb S^{k-1}$ is the link of $\fiber$.

	There is a natural unitary that transforms $D^\dR_\fiber$ into an elliptic operator in terms of the cylindrical metric (i.e., product metric) on $(0, \infty) \times \link$, cf. \cite[Section 5]{LeschTopology}. Denote by $\Omega^\ast (\fiber)$ the space of forms over $\fiber$. Then the unitary $\Psi$ is given by
	\begin{equation}\label{eq:coordinateChange}
		\Psi=(\Psi_{\mathrm{even}},\Psi_{\mathrm{odd}})\colon 
		C^\infty((0,\infty),\Omega^*\link)\oplus C^\infty((0,\infty),\Omega^*\link)
		\longrightarrow \Omega^\ast(\fiber),
	\end{equation}
	where  
	\begin{equation}\label{eq:transformeven}
		\Psi_{\mathrm{even}}\colon C^\infty((0,\infty),\Omega^* \link)\longrightarrow \Omega^{\mathrm{even}} \fiber,~
		\omega_p\longmapsto\begin{cases}
			r^{p-\frac{k-1}{2}}\omega_p,&\text{ if $p$ is even}\\
			r^{p-\frac{k-1}{2}}\omega_p\wedge dr,&\text{ if $p$ is odd}
		\end{cases}
	\end{equation}
	and
	\begin{equation}\label{eq:transformodd}
		\Psi_{\mathrm{odd}}\colon C^\infty((0,\infty),\Omega^* \link)\longrightarrow \Omega^{\mathrm{odd}} \fiber,~
		\omega_p\longmapsto\begin{cases}
			r^{p-\frac{k-1}{2}}\omega_p,&\text{ if $p$ is odd}\\
			r^{p-\frac{k-1}{2}}\omega_p\wedge dr,&\text{ if $p$ is even}
		\end{cases}
	\end{equation}
	
	With respect to the even/odd grading of differential forms, we have 
	$$D^\dR_\fiber=\begin{pmatrix}
		&D^{\dR,-}_\fiber\\D^{\dR,+}_\fiber&
	\end{pmatrix},$$
	where $D^{\dR,-}_\fiber\colon \Omega^{\mathrm{odd}} \mathbb F \to \Omega^{\mathrm{even}} \mathbb F$ and 
	$D^{\dR,+}_\fiber\colon \Omega^{\mathrm{even}} \mathbb F \to \Omega^{\mathrm{odd}} \mathbb F$. Let us define
	\begin{equation}\label{eq:linkOperator}
		P\coloneqq \begin{pmatrix}
			c_0&d^*&&&\\
			d&c_1&&&\\
			&&\ddots&&\\
			&&& c_{k-2}&d^*\\
			&&& d&c_{k-1}
		\end{pmatrix}
	\end{equation}
	where $d$ is the de Rham differential on $\Omega^\ast \link$, $d^\ast$ is the adjoint of $d$, and  \[ c_p=(-1)^p\big(p-\frac{k-1}{2}\big). \] A straightforward computation shows that (cf. \cite[Section 5]{BruningSeeley} \cite[Proposition 5.3]{LeschTopology}) 
	\begin{equation}\label{eq:fiberdeRham1}
		\Psi_{\mathrm{odd}}^{-1}D^{\dR,+}_\fiber\Psi_{\mathrm{even}}=\frac{\partial}{\partial r}+\frac{1}{r}P\colon C^\infty((0,\infty),\Omega^* \link)\to C^\infty((0,\infty),\Omega^* \link),
	\end{equation}
	and
	\begin{equation}\label{eq:fiberdeRham2}
		\Psi_{\mathrm{even}}^{-1}D^{\dR,-}_\fiber\Psi_{\mathrm{odd}}=-\frac{\partial}{\partial r}+\frac{1}{r}P\colon C^\infty((0,\infty),\Omega^* \link)\to C^\infty((0,\infty),\Omega^* \link).
	\end{equation}
Equivalently,
\begin{equation}\label{eq:deRhamAfterConj}
	\Psi^*D^{\dR}\Psi=\begin{pmatrix}
		0&-\frac{\partial}{\partial r}\\
		\frac{\partial}{\partial r}&0
	\end{pmatrix}+\frac 1 r\begin{pmatrix}
	0&P\\P&0
\end{pmatrix}.
\end{equation}

	Now let us review the boundary condition. By our geometric assumptions, $\fiber$ and $\mathbb G$ have the same combinatorial type, that is, there is a one-to-one correspondence between the codimension one faces of $\fiber$ and those of $\mathbb G$. The boundary condition $B$ on $\Bigwedge^*\fiber$ over each face $\overbar F_i$ of $\fiber$ is given by
	$$\mathscr E(\overbar c(u_i)\otimes c(v_i))\varphi=-\varphi,$$
	where $u_i$ is the unit inner normal vector of $\overbar F_i$ and $v_i$ is the unit inner normal vector of the corresponding face $F_i$ of $\mathbb G$, cf. Definition \ref{def:boundarycondition}. Here $\mathscr E$ is the \mbox{even-odd} grading operator on $\Bigwedge^*\fiber$, and $\overbar c$ and $c$ are the left and right Clifford actions on $\Bigwedge^*\fiber$  given by (cf. \cite[I.3.9]{spingeometry})
	\begin{equation}\label{eq:cliffordonforms}
		\overbar c(u)w=u\wedge w - u \lrcorner w \textup{ and } c(v)w =(-1)^{\deg w}(v\wedge w +v \lrcorner w),
	\end{equation}
	where $\lrcorner$ is the contraction operator. 
	
	Note that the boundary condition $B$ respects the even-odd grading on differential forms. Therefore, under the unitary transform $\Psi$, the boundary condition $B$ gives rise to a boundary condition on the bundle of forms $\Bigwedge^*\link$ over the link $\link$. The link $\link$, which is a subspace of $\mathbb S^{k-1}$, is also a manifold with polyhedral boundary  of dimension $k-1$. The  operator $P$  in line \eqref{eq:linkOperator} is defined along the link $\link$ and  only differs from the de Rham operator of $\link$ by a bounded endomorphism.

	\begin{figure}
		\begin{tikzpicture}[scale=1.3]
			\draw[very thick,black] (0,0) -- (3,0);
			\draw[very thick,black] (0,0) -- ({3*sin(30)},{3*cos(30)});
			\draw[very thick,blue] plot[domain=0:60] ({2*cos(\x)},{2*sin(\x)});
			\filldraw ({2.5*cos(30)},{2.5*sin(30)}) node {$\mathbb L$};
			\draw [very thick,-stealth,red] (2,0) --  (2,0.7);
			\draw [very thick,-stealth,red] ({2*sin(30)},{2*cos(30)}) --  ({2*sin(30)+0.7*cos(30)},{2*cos(30)-0.7*sin(30)});
		\end{tikzpicture}
		\caption{Local boundary condition for the operator $P$ along each link.}
		\label{fig:link}
	\end{figure}
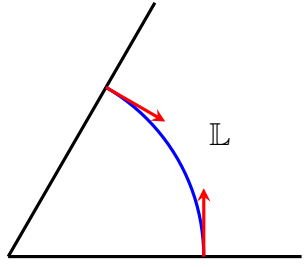

	Let us denote by $D^\dR_{\fiber,B}$ the de Rham operator of $\fiber$, which acts on differential forms that are supported away from the vertex  of $\fiber$, subject to the local boundary condition $B$. Similarly, let $P_B$ be the operator $P$ subject to the boundary condition induced by $B$, cf. Figure \ref{fig:link}.
	The following lemma characterizes when $D^\dR_{\fiber,B}$ is essentially self-adjoint in terms of the spectrum of the operator $P_B$ on the link.  
	\begin{lemma}[{cf. \cite[Theorem 3.1]{BruningSeeley}}]\label{lemm:>=1/2}
		Assume that  $P_B$ is essentially self-adjoint.  Then  $D^\dR_{\fiber,B}$ is essentially self-adjoint if and only if $|P_B|\geq 1/2$.
	\end{lemma}
	\begin{proof}
		Set $E_\pm(D^\dR_{\fiber,B} )\coloneqq \ker(D^\dR_{\fiber, B}\mp i)$. The von Neumann Theorem states that $D^\dR_{\fiber,B}$ is essentially self-adjoint if and only if 
		\[ E_+(D^\dR_{\fiber,B} )=E_-(D^\dR_{\fiber,B} )=0. \]
		By assumption, the $L^2$-space of differential forms on the link $\link$ admits an orthonormal basis $\{\phi_\lambda\}$, where each $\phi_\lambda$ is the eigenvector of $P_B$ with eigenvalue $\lambda$.
		
		Suppose that $\varphi=\varphi_{\text{even}}\oplus \varphi_{odd}$ lies in the kernel of $D^\dR_{\fiber, B}- i$. Let us denote  $\psi_0=\Psi_{\mathrm{even}}^{-1}(\varphi_{\text{even}})$ and 
		$\psi_1=\Psi_{\mathrm{odd}}^{-1}(\varphi_{\text{odd}})$. We have
		\begin{equation}\label{eq:deficiency}
			\Big(\begin{psmallmatrix}
				0&-\frac{\partial}{\partial r}\\
				\frac{\partial}{\partial r}&0
			\end{psmallmatrix}+\frac{1}{r}\begin{pmatrix}
				0&P\\P&0
			\end{pmatrix}- i\Big)\begin{pmatrix}\psi_0\\\psi_1
			\end{pmatrix}=0.
		\end{equation}
	If we write
		$$\psi_0=\sum_\lambda\psi_{0,\lambda}(r)\phi_\lambda \textup{ and } \psi_1=\sum_\lambda\psi_{1,\lambda}(r)\phi_\lambda, $$
then Equation \ref{eq:deficiency} splits into a family of differential equations according to the eigenvectors $\{\phi_\lambda\}$. That is, for each $\lambda$, we have the following system of ordinary differential equations 
		\begin{equation}\label{eq:psi0,1def}
			\begin{cases}
				\displaystyle	\frac{\partial}{\partial r} \psi_{0,\lambda}+\frac \lambda r\psi_{0,\lambda}=i\psi_{1,\lambda},\\
				\displaystyle	-\frac{\partial}{\partial r}\psi_{1,\lambda}+\frac \lambda r\psi_{1,\lambda}=i\psi_{0,\lambda}.
			\end{cases}
		\end{equation}
		It follows that 
		$$\begin{cases}
			\displaystyle -\frac{\partial^2}{\partial r^2}\psi_{0,\lambda}-\frac{\lambda}{r^2}\psi_{0,\lambda}+\frac{\lambda^2}{r^2}\psi_{0,\lambda}+\psi_{0,\lambda}=0, 	\vspace{.3cm}\\
			\displaystyle i\psi_{1,\lambda}=\frac{\partial}{\partial r}\psi_{0,\lambda}+\frac{\lambda} {r}\psi_{0,\lambda}, 
		\end{cases}$$
		where the solution to the first differential equation consists of modified Bessel functions. More precisely, we have
		\begin{equation}\label{eq:solutionKI}
			\begin{cases}
				\psi_{0,\lambda}=c_1\sqrt r\cdot K_{\lambda+ 1/2}(r)\phi_\lambda+c_2\sqrt r\cdot I_{\lambda+ 1/2}(r)\phi_\lambda\\
				\psi_{1,\lambda}=-i(c_1\sqrt r\cdot K_{\lambda- 1/2}(r)\phi_\lambda+c_2\sqrt r\cdot K_{\lambda- 1/2}(r)\phi_\lambda)
			\end{cases},
		\end{equation}
		where $I_\nu$ and $K_\nu$ are modified Bessel functions of the first and the second kind, respectively. It follows that $\psi_{0,\lambda}\oplus \psi_{1,\lambda}$ is  an $L^2$ solution if and only if we have  $-1/2<\lambda<1/2$ (cf. \cite[Lemma 4.2]{LeschTopology}). This finishes the proof. 
	\end{proof}

	\begin{remark}
		In general, if $\dim E_+(D^\dR_{\fiber, B})=\dim E_-(D^\dR_{\fiber, B})$ (but not necessarily zero), then $D^\dR_{\fiber, B}$ admits self-adjoint extensions. The self-adjoint extensions of $D^\dR_{\fiber, B}$ are in one-to-one correspondence to isometries between $E_+(D^\dR_{\fiber, B})$ to $E_-(D^\dR_{\fiber, B})$. More precisely, the domain of a self-adjoint extension of $D^\dR_{\fiber, B}$ has the following form
		\begin{equation}\label{eq:bdf}
			\mathrm{dom}((D^\dR_{\fiber, B})_{\min}) +  \{x-\Phi x: x\in E_+(D^\dR_{\fiber, B}) \},
		\end{equation}
		where   $\Phi$ is a unitary operator from $E_+(D^\dR_{\fiber, B})$ to $E_-(D^\dR_{\fiber, B})$ and   $(D^\dR_{\fiber, B})_{\min}$ is the closure of $D^\dR_{\fiber, B}$. 
	\end{remark}

	Now we investigate the essential self-adjointness of $D^\dR_{\fiber, B}$  on $\fiber$  subject to the local boundary condition $B$ from Definition \ref{def:boundarycondition}.  Let us first consider the case where $\dim \fiber=2$.

	\begin{lemma}\label{lemma:essensa-jumpanglewithf}
		Let $\fiber $ and $\mathbb G$ be two sectors in $\R^2$. Let $B$ be the boundary condition on $\Bigwedge^*\fiber = \Bigwedge^\ast \mathbb R^2$ over each edge given by
		$$\mathscr E(\overbar c(u_i)\otimes c(v_i))\varphi=-\varphi$$
		for $i=1,2$, where $\mathscr E$ is the even-odd grading operator on $\Bigwedge^\ast \mathbb R^2$, $u_i$'s are the inner normal vectors of $\fiber $ and $v_i$'s are the inner normal vectors of $\mathbb G $, cf. Figure \ref{fig:SNfSM}.	
		Let $D^\dR_{\fiber,B}$ be the de Rham operator acting on $\Bigwedge^*\fiber$ with the boundary condition $B$. Suppose both the dihedral angles $\alpha$ of $\fiber$ and $\beta$ of $\mathbb G$ are less than or equal to $\pi$. Then $D^\dR_{\fiber,B}$ is essentially self-adjoint if and only if $\alpha\leq\beta$.
	\end{lemma}
	\begin{proof}
		
		\begin{figure}
			\begin{tikzpicture}[scale=1]
				\draw[very thick,black] (0,0) -- (3,0);
				\draw[very thick,black] (0,0) -- ({3*sin(50)},{3*cos(50)});
				\draw [-stealth] (2,0) --  (2,0.5) node[anchor=north west] {$\scriptstyle u_1=v_1$};
				\draw [-stealth] ({2*sin(50)},{2*cos(50)}) --  ({2*sin(50)+0.5*cos(40)},{2*cos(50)-0.5*sin(50)}) 
				node[anchor=east] {$\scriptstyle u_2$};
				\draw [blue,-stealth] ({2*sin(50)},{2*cos(50)}) --  ({2*sin(50)+0.5*cos(10)},{2*cos(50)-0.5*sin(20)})
				node[anchor=south] {$\scriptstyle v_2$};
				\filldraw (1.5,-0.5) node {$\fiber$};
			\end{tikzpicture}
			\caption{The boundary condition at the two edge of $\fiber$.}
			\label{fig:SNfSM}
		\end{figure}
		By applying a rotation on $\mathbb G$ if necessary, we may assume $u_1=v_1$.  Then the vector $v_2$ differs from $u_2$ by an angle $(\beta - \alpha)$  counterclockwise. For brevity, let us write  $\delta=\beta-\alpha$. More precisely, if we choose unit vectors $u_2^\perp$ and $v_2^\perp$ such that  $u_2^\perp$ is orthogonal to $u_2$,  $v_2^\perp$ is orthogonal to $v_2$, and $u_2\wedge u_2^\perp=v_2\wedge v_2^\perp$ is the volume form of $\mathbb R^2$, then we have
		\begin{equation}
			\begin{pmatrix}
				v_2\\ v_2^\perp
			\end{pmatrix}=\begin{pmatrix}
				\cos\delta &\sin\delta\\-\sin\delta&\cos\delta
			\end{pmatrix}\begin{pmatrix}
				u_2\\ u_2^\perp
			\end{pmatrix} 
		\end{equation}
		
		On the bottom edge $\overbar{F}_1$ of $\fiber$  where $u_1=v_1$ is the unit inner normal vector, a direct computation shows that the boundary condition is the usual absolute boundary condition, that is, if we decompose a differential form as 
		\[ w = w_1 + w_2 dx \] 
		where $dx$ is the differential of the normal direction and $w_j$ are tangential differential forms, then $w$ satisfies the absolute boundary condition at this edge if $w_2$ vanishes. 
	
		On the other edge $\overbar F_2$ of $\fiber$, we have 
		$$\epsilon(\overbar c(u_2)\otimes c(v_2))w=-w.$$
		For a given differential form $w$, we have the following decomposition: 
		$$w=\varphi_1+\varphi_2 u_2+\varphi_3 u_2^\perp+\varphi_4 u_2\wedge u_2^\perp,$$
		and
		$$\epsilon(\overbar c(u_2)\otimes c(v_2))w =
		\psi_1+\psi_2 u_2+\psi_3 u_2^\perp+\psi_4u_2\wedge u_2^\perp. 
		$$
		A direct computation shows that
		\begin{equation*}
			\begin{pmatrix}\psi_1 \\ \psi_4 \\ \psi_2 \\ \psi_3
			\end{pmatrix}=\begin{pmatrix}
				-\cos\delta&\sin\delta&&\\ \sin\delta &\cos\delta &&\\
				&&\cos\delta &\sin\delta\\ &&\sin\delta&-\cos\delta
			\end{pmatrix}\begin{pmatrix}\varphi_1 \\ \varphi_4 \\ \varphi_2 \\ \varphi_3
			\end{pmatrix}
		\end{equation*}
		Therefore, if $\epsilon(\overbar c(u_2)\otimes c(v_2))w= - w$, then we have
		\begin{equation}\label{eq:nbcondition}
			\varphi_1\sin\frac{\delta}{2}+\varphi_4\cos\frac{\delta}{2}=0,\text{ and }
			\varphi_2\sin\frac{\delta}{2}+\varphi_3\cos\frac{\delta}{2}=0
		\end{equation}
		at the edge $\overbar F_2$. 
		Note that this  boundary condition does \emph{not} mix even and odd degree differential forms.
		
		Under the unitaries given in line \eqref{eq:transformeven} and \eqref{eq:transformodd}, the de Rham operator on $\fiber$ is conjugate to the operator 
		\begin{equation}\label{eq:deRhamconj}
			\begin{pmatrix}
				0&-\frac{\partial}{\partial r}\\ \frac{\partial}{\partial r}&0
			\end{pmatrix}+\frac 1 r\begin{pmatrix}
				0&P\\P&0
			\end{pmatrix}
		\end{equation}
		where 
		$$P=\begin{pmatrix}
			-1/2&-\frac{\partial}{\partial \theta}\\\frac{\partial}{\partial \theta}&-1/2
		\end{pmatrix}$$
		as in line \eqref{eq:linkOperator}.
		
		Let $\phi(\theta)=\phi_0(\theta)+\phi_1(\theta) d\theta$ be a differential form on the link $\link$ of $\fiber$. Under the same conjugation above, the boundary condition $B$  becomes  the following boundary condition: $\phi_1(0)=0$, and 
		\begin{equation}\label{eq:newboundary}
			-\phi_0(\alpha)\sin\frac{\delta}{2}+\phi_1(\alpha)\cos\frac{\delta}{2}=0.
		\end{equation}
		Furthermore, the explicit formula in line \eqref{eq:nbcondition} shows that  the boundary conditions for the two copies of $P$ (appearing in the matrix from line \eqref{eq:deRhamconj}) coincide.   It is easy to see that the operator $P$ with this boundary condition becomes an essentially self-adjoint  Fredholm operator, which will be denoted by $P_B$. 
		
		Let $D_\theta^{\dR}=\begin{pmatrix}
			0&-\frac{\partial}{\partial \theta}\\\frac{\partial}{\partial \theta}&0
		\end{pmatrix}$ be the de Rham operator on the link. If $D_\theta^\dR\phi=\lambda\phi$, then
		\begin{equation}
			-\phi_1'=\lambda \phi_0,\text{ and }\phi_0'=\lambda \phi_1.
		\end{equation}
		Hence $\phi_1''=-\lambda^2\phi_1$. By  the boundary condition $\phi_1(0)=0$, we see that  $\phi_1(\theta)=\rho\cdot \sin(\lambda\theta)$ for some constant $\rho$. It follows that $\phi_0(\theta)=- \rho\cdot \cos(\lambda\theta)$. The boundary condition at $\theta=\alpha$ implies that
		\begin{equation}
			\sin(\lambda\alpha)\cos\frac{\delta}{2}+\cos(\lambda\alpha)\sin\frac{\delta}{2}=0,
		\end{equation}
		that is, $\sin(\lambda\alpha+\delta/2)=0$. Therefore the spectrum of the operator $D_\theta^\dR$ with  this mixed boundary condition is 
		$$\Big\{-\frac{\delta}{2\alpha}+\frac{k\pi}{\alpha}\Big\}_{k\in\Z}. $$
		Hence the spectrum of $P_B=-1/2+D_\theta^\dR$ with this mixed boundary condition is given by
		$$ \Big\{-\frac{\beta}{2\alpha}+\frac{k\pi}{\alpha}\Big\}_{k\in\Z}.$$
		Note that when $k=1$, we always have
		$$-\frac{\beta}{2\alpha}+\frac{\pi}{\alpha}=\frac{2\pi-\beta}{2\alpha}\geq \frac 1 2,$$
		since we have assumed that $\alpha+\beta\leq2\pi$. Moreover, since by assumption we have $\alpha\leq \pi$, it follows that  $|P_B|\geq 1/2$ if and only if $-\frac{\beta}{2\alpha}\leq -\frac{1}{2}$, that is,  $ \alpha\leq\beta$. By Lemma \ref{lemm:>=1/2}, this finishes the proof.
	\end{proof}

In Lemma \ref{lemma:essensa-jumpanglewithf} above,  the only geometric input from the sector  $\mathbb G$ is that its dihedral angle enters into the definition of the local boundary condition. More importantly, the proof of  Lemma \ref{lemma:essensa-jumpanglewithf} in fact shows that $|P_B|\geq 1/2$ as long as we have $\alpha \leq \pi$, $\alpha\leq \beta$ and $\alpha + \beta\leq 2\pi$. In particular, the angle $\beta$ does  \emph{not} necessarily  need to be convex. In other words, even when   $\beta >\pi$,  we still have $|P_B|\geq 1/2$ as long as we have $\alpha \leq \pi$, $\alpha\leq \beta$ and $\alpha + \beta\leq 2\pi$. A simple computation shows that the condition 
\[ \alpha \leq \pi, \alpha\leq \beta \textup{ and } \alpha + \beta\leq 2\pi \]
is equivalent to 
\[ \alpha \leq \pi \textup{ and } 	\langle v_1,v_2\rangle\geq \langle u_1,u_2\rangle \]
where $u_i$'s are the inner normal vectors of $\fiber $ and $v_i$'s are the inner normal vectors of $\mathbb G $. The above discussion will play an important role in our computation of the Fredholm index of the twisted Dirac operator $D_B$ associated to $S_N\otimes f^\ast S_M$ over $N$ in Theorem \ref{thm:index-poly}. Let us summarize the above discussion in the following lemma. 

\begin{lemma}\label{lemma:ess-sa-innerproductcomparison}
	Let $\fiber $ be a sector in $\R^2$. Suppose $u_1$ and $u_2$ are the inner normal vectors of $\fiber$, and $v_1$ and $v_2$ are two unit vectors in $\mathbb R^2$.   Let $B$ be the boundary condition on $\Bigwedge^*\fiber = \Bigwedge^\ast \mathbb R^2$ over each edge given by
	$$\mathscr E(\overbar c(u_i)\otimes c(v_i))\varphi=-\varphi$$
	for $i=1,2$, where $\mathscr E$ is the even-odd grading operator on $\Bigwedge^\ast \mathbb R^2$.
	Let $D^\dR_{\fiber,B}$ be the de Rham operator acting on $\Bigwedge^*\fiber$ with the boundary condition $B$. Suppose the dihedral angle $\alpha$ of $\fiber$ is $\leq \pi$. Then  $D^\dR_{\fiber,B}$ is essentially self-adjoint if and only if 
 \[			\langle v_1,v_2\rangle\geq \langle u_1,u_2\rangle. \]
 Moreover, $|P_B| > 1/2$ if and only if $\langle v_1,v_2\rangle > \langle u_1,u_2\rangle$, where $P_B$ is the operator along the link as in the proof of Lemma \ref{lemma:essensa-jumpanglewithf}.  
\end{lemma}

	By a similar argument, we also obtain the following lemma on the essential  self-adjointness of twisted Dirac operator subject to some mixed boundary conditions. This lemma will be important  for  computing the Fredholm index of $D_B$, cf. Theorem \ref{thm:index}.
	\begin{lemma}\label{lemma:essensa-jumpanglewithfmixed}
		Assume the same notation as in Lemma \ref{lemma:essensa-jumpanglewithf}.  Let $\widehat B$ be the local boundary condition\footnote{In other words, the boundary condition $\widehat B$ coincides with the usual boundary condition $B$ on one edge, and is the orthogonal complement of $B$ on the other edge. } on $\Bigwedge^\ast \fiber$ given by  
		$$\mathscr E(\overbar c(u_1)\otimes c(v_1))\varphi=-\varphi$$
		and  
		$$\mathscr E(\overbar c(u_2)\otimes c(v_2))\varphi= \varphi.$$
		Suppose both the dihedral angles $\alpha$ of $\fiber$ and $\beta$ of $\mathbb G$ are less than or equal to $\pi$.	Then the de Rham operator $D^\dR_{\fiber}$ on $\Bigwedge^*\fiber$ subject to the boundary condition $\widehat B$ is essentially self-adjoint if and only if $\alpha+\beta\leq\pi$.
	\end{lemma}
	\begin{proof}
		By the same discussion from the beginning of the proof of Lemma \ref{lemma:essensa-jumpanglewithf}, we may assume that $v_1=u_1$.
		
		Under the unitaries given in line \eqref{eq:transformeven} and \eqref{eq:transformodd}, the de Rham operator on $\fiber$ is conjugate to the operator 
		\begin{equation}
			\begin{pmatrix}
				0&-\frac{\partial}{\partial r}\\ \frac{\partial}{\partial r}&0
			\end{pmatrix}+\frac 1 r\begin{pmatrix}
				0&P\\P&0
			\end{pmatrix}
		\end{equation}
		where
		$$P=\begin{pmatrix}
			-1/2&-\frac{\partial}{\partial \theta}\\\frac{\partial}{\partial \theta}&-1/2
		\end{pmatrix}.$$
		Let $\phi(\theta)=\phi_0(\theta)+\phi_1(\theta) d\theta$ be a differential form on the link $\link$ of $\fiber$. Under the same conjugation above, the boundary condition $\widehat B$  becomes  the following boundary condition: $\phi_1(0)=0$, and 
		\begin{equation}
			-\phi_0(\alpha)\sin\frac{\pi+\delta}{2}+\phi_1(\alpha)\cos\frac{\pi+\delta}{2}=0,
		\end{equation}
		that is,
		\begin{equation}
			\phi_0(\alpha)\cos\frac{\delta}{2}+\phi_1(\alpha)\sin\frac{\delta}{2}=0.
		\end{equation}
		
		Let $D_\theta^{\dR}=\begin{pmatrix}
			0&-\frac{\partial}{\partial \theta}\\\frac{\partial}{\partial \theta}&0
		\end{pmatrix}$ be the de Rham operator on the link $\link$. If $D_\theta^\dR\phi=\lambda\phi$, then
		\begin{equation}
			-\phi_1'=\lambda \phi_0,\text{ and }\phi_0'=\lambda \phi_1.
		\end{equation}
		Hence $\phi_1''=-\lambda^2\phi_1$. By  the boundary condition $\phi_1(0)=0$, we see that  $\phi_1(\theta)=\rho\cdot \sin(\lambda\theta)$ for some constant $\rho$. It follows that $\phi_0(\theta)=- \rho\cdot \cos(\lambda\theta)$. The boundary condition at $\theta=\alpha$ implies that
		\begin{equation}
			\cos(\lambda\alpha)\cos\frac{\delta}{2}-\sin(\lambda\alpha)\sin\frac{\delta}{2}=0,
		\end{equation}
		that is, $\cos(\lambda\alpha+\delta/2)=0$. Therefore the spectrum of the operator $D_\theta^\dR$ with  the mixed boundary condition $\widehat B$ is 
		$$\Big\{-\frac{\delta}{2\alpha}+\frac{\pi}{2\alpha}+\frac{k\pi}{\alpha}\Big\}_{k\in\Z}. $$
		Hence the spectrum of $P_B=-1/2+D_\theta^\dR$ with the mixed boundary condition $\widehat B$ is given by
		$$ \Big\{-\frac{\beta}{2\alpha}+\frac{\pi}{2\alpha}+\frac{k\pi}{\alpha}\Big\}_{k\in\Z}.$$
		
		Note that, for $k=-1$, we have
		$$-\frac{\beta}{2\alpha}+\frac{\pi}{2\alpha} +  \frac{k\pi}{\alpha}=-\frac{\pi+\beta}{2\alpha}\leq -\frac 1 2$$
		since we have assumed that $\alpha\leq\pi$.  Similarly,  for $k=0$, we also have
		$$-\frac{\beta}{2\alpha}+\frac{\pi}{2\alpha} +  \frac{k\pi}{\alpha}=\frac{\pi-\beta}{2\alpha}\geq  -\frac 1 2, \textup{ since } \beta\leq  \pi.$$
		Therefore $D$ is essentially self-adjoint if and only if
		$$\frac{\pi-\beta}{2\alpha}\geq\frac 1 2,$$
		that is, $\alpha+\beta\leq\pi$. By Lemma \ref{lemm:>=1/2}, this finishes the proof.
	\end{proof}
	\begin{remark}\label{remark:mixedBdyConditionInNormalVectors}
		We point out that the condition on dihedral angles in Lemma \ref{lemma:essensa-jumpanglewithfmixed} does \emph{not} require the comparison between $\alpha$ and $\beta$. In other words, for the mixed boundary condition $\widehat B$, as long as $\alpha+ \beta \leq \pi$, the Dirac operator $D^\fiber_{\widehat B}$   is still  essentially self-adjoint even if $\alpha>\beta$.  
	\end{remark}

Before we move to the higher dimensional case, we prove one more technical lemma for the two dimensional case, which will be useful for the gluing formula we shall consider in Section \ref{sec:gluing}. 
\begin{lemma}\label{lemma:essensa-Bs}
	Let $\fiber_r=\{(x,y):x\geq 0,y\geq 0\}$ the first quadrant of $\mathbb R^2$ and $\fiber_l=\{(x,y):x\leq 0,y\geq 0\}$ the second quadrant of $\mathbb R^2$. Suppose  $Q_x$ and $Q_y$ are the orthogonal projections from $\Bigwedge^*\R^2$ to the subspaces generated by $\{dx,dx\wedge dy\}$ and $\{dy,dx\wedge dy\}$, respectively. Let $Q_x^\perp = 1 - Q_x$ and $Q_y^\perp = 1- Q_y$ on $\Bigwedge^* \mathbb R^2$. Let $B_s$, $s\in[0,1]$,  be the following boundary condition on smooth sections $(\varphi_r, \varphi_l)$ of  $\Bigwedge^*\R^2$ over $\fiber_r\coprod\fiber_l$: 
	$$\begin{cases}
		Q_y(\varphi_r(x,0))=0,~Q_y(\varphi_l(x,0))=0,\\Q_x(\varphi_r(0,y))=sQ_x(\varphi_l(0,y)),~sQ_x^\perp(\varphi_r(0,y))=Q_x^\perp(\varphi_l(0,y)).
	\end{cases}$$
	Then the de Rham operator $D^\dR$ acting on $\Bigwedge^*\R^2$ over $\fiber_r\coprod\fiber_l$ with boundary condition $B_s$ is essentially self-adjoint near the origin of $\fiber_r\coprod\fiber_l$.
\end{lemma}
\begin{proof}
	Under the unitaries given in line \eqref{eq:transformeven} and \eqref{eq:transformodd}, the de Rham operator on $\fiber_r\coprod\fiber_l$ is conjugate to the operator 
	\begin{equation}
		\begin{pmatrix}
			0&-\frac{\partial}{\partial r}\\ \frac{\partial}{\partial r}&0
		\end{pmatrix}+\frac 1 r\begin{pmatrix}
			0&P\\P&0
		\end{pmatrix}
	\end{equation}
	where
	$$P=\begin{pmatrix}
		-1/2&-\frac{\partial}{\partial \theta}\\\frac{\partial}{\partial \theta}&-1/2
	\end{pmatrix}.$$
	Let $\phi_r(\theta)=\phi_{r,0}(\theta)+\phi_{r,1}(\theta) d\theta$ be a differential form on the interval $[0,\pi/2]$, and $\phi_l(\theta)=\phi_{l,0}(\theta)+\phi_{l,1}(\theta) d\theta$ a differential form on $[\pi/2,\pi]$. Under the conjugation above, the boundary condition $B_s$ becomes the following boundary condition:
	$$\begin{cases}
		\phi_{r,1}(0)=0,~\phi_{l,1}(\pi)=0,\\
		\phi_{r,1}(\pi/2)=s\cdot \phi_  {l,1}(\pi/2),~s\cdot \phi_{r,0}(\pi/2)=\phi_{l,0}(\pi/2).
	\end{cases}$$
	Note that when $s=0$, the spectrum of the operator $P$ has already been computed in Lemma \ref{lemma:essensa-jumpanglewithf} and Lemma \ref{lemma:essensa-jumpanglewithfmixed}. In particular, we have $|P|\geq 1/2$  with respect to the boundary condition $B_0$. 
	
	Now we assume that $0<s\leq 1$. By the boundary conditions at $\theta = 0$ and $\theta = \pi$, if we have
	$$P\phi_r=(\lambda-\frac 1 2)\phi_r \textup{ and } P\phi_l=(\lambda-\frac 1 2)\phi_l, $$
	then
	$$\begin{cases}
\phi_r(\theta)=a(-\cos(\lambda\theta)+\sin(\lambda\theta)d\theta) \vspace{0.2cm}\\
\phi_l(\theta)=b(-\cos(\lambda(\theta-\pi))+\sin(\lambda(\theta-\pi))d\theta)
	\end{cases} 
	$$
	with $a^2+b^2=1$.
	The boundary condition at $\theta=\pi/2$ yields
	$$a\sin(\frac{\lambda\pi}{2})=-s\cdot b\cdot \sin(\frac{\lambda\pi}{2}),~s\cdot a\cdot \cos(\frac{\lambda\pi}{2})=b\cos(\frac{\lambda\pi}{2}).$$
	Note that if both $\sin(\lambda\pi/2)$ and $\cos(\lambda\pi/2)$ are non-zero, then we obtain
	$$a=-sb\text{ and } sa=b$$
	which is impossible. Therefore either $\sin(\lambda\pi/2) = 0$ or $\cos(\lambda\pi/2) = 0$, that is, $\lambda\in\Z$. Thus the spectrum of $P$ is 
	\[ \Big\{k- \frac{1}{2} : k\in\Z \Big\}. \] By Lemma \ref{lemm:>=1/2}, this finishes the proof.
\end{proof}

	Now we turn to the higher dimensional case. 
	\begin{lemma}\label{lemma:essensa-higherdim}
		Let $\fiber$ and $\mathbb G$ be two convex polyhedral corners in  $\R^k$ that are enclosed by $\ell$ hyperplanes through the origin respectively, where $\ell \geq k$.
		Let $B$ be the boundary condition on $\Bigwedge^*\fiber$ over each codimension one face $\overbar F_i$ given by
		$$\mathscr E(\overbar c(u_i)\otimes c(v_i))\varphi=-\varphi,$$
		where $\mathscr E$ is the even-odd grading operator on $\Bigwedge^\ast \mathbb R^k$, $u_i$'s are the inner normal vectors of $\fiber $ and $v_i$'s are the inner normal vectors of $\mathbb G $.
		Let $D^\dR_{\fiber,B}$ be the de Rham operator acting on $\Bigwedge^*\fiber$ with the boundary condition $B$.  If  the dihedral angles $\alpha_{ij}$ of $\fiber$ are less than or equal to the corresponding dihedral angles  $\beta_{ij}$ of $\mathbb G$, then $D_{\fiber,B}^\dR$ is essentially self-adjoint.
	\end{lemma}
	\begin{figure}
		\begin{tikzpicture}[scale=.7]
			\draw[very thick, black] (0,0) -- (0,5);
			\draw[very thick, black] (0,0) -- (5,0);
			\draw[very thick, black] (0,0) -- (-2,-2);
			\draw[thick,black] plot[domain=0:90] ({4*cos(\x)}, {4*sin(\x)});
			\draw[thick,black] plot[domain=90:0] ({-1.4*cos(\x)}, {4*sin(\x)-1.4*cos(\x)});
			\draw[thick,black] plot[domain=90:0] ({4*sin(\x)-1.4*cos(\x)},{-1.4*cos(\x)});
			\fill[gray!20] plot[domain=0:90] ({4*cos(\x)}, {4*sin(\x)}) --
			plot[domain=90:0] ({4*sin(\x)-1.4*cos(\x)},{-1.4*cos(\x)}) --
			plot[domain=0:90] ({-1.4*cos(\x)}, {4*sin(\x)-1.4*cos(\x)});
			\draw[dashed] (0,0) -- (-1.4,-1.4);
			\draw[dashed] (0,0) -- (4,0);
			\draw[dashed] (0,0) -- (0,4);
			\draw[thick,black] plot[domain=0:90]
			({(4*cos(80))*cos(\x) +(-1.4*cos(75))*sin(\x)},
			{(4*sin(80)-4)*cos(\x)+ (4*sin(75)-1.4*cos(75)-4)*sin(\x) + 4});
			\draw[thick,black] plot[domain=0:90]
			({(4*sin(80)-4)*cos(\x)+ (4*sin(75)-1.4*cos(75)-4)*sin(\x) + 4}
			,{(4*cos(80))*cos(\x) +(-1.4*cos(75))*sin(\x)});
			\draw[thick,black] plot[domain=0:90]
			({(4*sin(10)-1.4*cos(10)+1.4)*cos(\x) + (-1.4*cos(10)+1.4)*sin(\x) -1.4},
			{(-1.4*cos(10)+1.4)*cos(\x) + (4*sin(10)-1.4*cos(10)+1.4)*sin(\x) - 1.4});
			\filldraw (1,1) node {$\mathbb L$};
		\end{tikzpicture}
		\caption{A two dimensional link of a three dimensional cone.}
		\label{fig:link2}
	\end{figure}
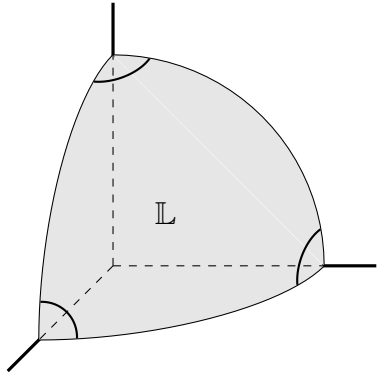
	\begin{proof}
		We prove this by induction on $n$. The case where $n=2$ has been proved in Lemma \ref{lemma:essensa-jumpanglewithf}. Assume that the lemma holds for dimensions up to $(n-1)$.  
		
		Under the unitaries given in line \eqref{eq:transformeven} and \eqref{eq:transformodd}, the de Rham operator on $\fiber$ is conjugate to the operator 
		\begin{equation*}
			\begin{pmatrix}
				0&-\frac{\partial}{\partial r}\\ \frac{\partial}{\partial r}&0
			\end{pmatrix}+\frac 1 r\begin{pmatrix}
				0&P\\P&0
			\end{pmatrix}.
		\end{equation*}
		Recall again that the boundary condition $B$ (as given in Definition \ref{def:boundarycondition}) for codimension one  faces of $\fiber$ does not mix the even and odd degree differential forms. In particular, the boundary condition $B$ induces a boundary condition, still denoted by $B$, for the operator $P$ on the link $\link$ of $\fiber$. 
		
		The link $\link$ of $\fiber$ is a spherical polyhedron in the standard  unit sphere $\mathbb S^{n-1}$, which in particular is itself a manifold with polyhedral boundary. 
		\begin{claim}\label{claim:essa}
			Under the assumption  $\alpha_{ij}\leq \beta_{ij}\leq \pi$, the operator $P$ on the link with the boundary condition $B$ is essentially self-adjoint. 
		\end{claim} 
		We shall prove both the claim and the lemma by inductively alternating between the two statements. More precisely, we shall first prove Claim \ref{claim:essa} for the case of $\dim \fiber =3$, which will in turn be used to prove Lemma \ref{lemma:essensa-higherdim} for the case of $\dim \fiber =3$. We will then use this proven case of Lemma \ref{lemma:essensa-higherdim} to prove Claim \ref{claim:essa} for the case of $\dim \fiber =4$, and so on.

		Let us first prove the above claim in the case where $\dim \fiber =3$. By the explicit formula of the operator $P$ from line \eqref{eq:linkOperator}, $P$ with the boundary condition $B$ is essentially self-adjoint if and only if the de Rham operator $D_{\link}^\dR$ on the link with boundary condition $B$ is essentially self-adjoint, since $P$ and $D_{\link}^\dR$ only differ by a bounded zeroth order term.  Now $\fiber$ is a convex polyhedron in $\R^3$ enclosed by $\ell$ planes $\overbar F_1, \overbar F_2, \cdots, \overbar F_\ell$ that go through the origin.
		The link $\link$ of $\fiber$ is a convex spherical polygon in $\mathbb S^2$, and the dihedral angles of $\link$ coincide with those of $\fiber$. We first show that the de Rham operator $D^\dR_\link$ is essentially self-adjoint with respect to the restriction of $B$ to $\link$. Near an edge $\overbar F_i\cap \overbar F_j$, by rotating $\mathbb G$  if necessary,\footnote{Note that the operator $P$ commutes with rotations on $\mathbb G$. Here is a conceptual explanation. It is clear that the Dirac operator $D_\fiber^{\dR}$ on $\fiber$ commutes with rotations on $\mathbb G$. Furthermore, the unitaries in line \eqref{eq:transformeven} and \eqref{eq:transformodd} (for changing from conic coordinates to cylindrical coordinates) commute with rotations on $\mathbb G$. Since we obtain the operator $P$ by writing the operator $D_\fiber^{\dR}$ in terms of cylindrical coordinates, it follows that $P$ commutes with rotations of $\mathbb G$. We also refer the reader to an explicit computation done at the end of the proof of Lemma \ref{lemma:essensa-higherdim} for an alternative justification. } we may assume without loss of generality that the corresponding edge $F_i\cap F_j$ of $\mathbb G$  coincides with $\overbar F_i\cap \overbar F_j$  and the face $F_i$ coincides with $\overbar F_i$. Of course, the face $F_j$ may still be different from $\overbar F_j$ in general. In any case, the vectors $u_i$, $u_j$, $v_i$ and  $v_j$ are all  orthogonal to $\overbar F_i\cap \overbar F_j$. Now the restriction of the boundary condition $B$ to $\link$ near $\overbar F_i\cap \overbar F_j$ is also of the form as given in Definition \ref{def:boundarycondition}. The metric of $\link$ near $\overbar F_i\cap \overbar F_j$ is asymptotically conical, where the criterion for essential self-adjointness is the same as the standard conical case, cf. Lemma \ref{lemma:asymptoticConical} below. By repeating the same argument near each edge $\overbar F_i\cap \overbar F_j$ of $\fiber$,   this proves that $D^\dR_{\link,B}$, hence $P_B$, is essentially self-adjoint in the case where $\dim \fiber =3$.

		Now we shall prove Lemma \ref{lemma:essensa-higherdim} for the case where $\dim \fiber = 3$. Since we have already shown the operator $P_B$ on the link is essentially self-adjoint in this case,  it suffices to show that the spectrum of $P_B$  satisfies $|P_B|\geq 1/2$ (cf. Lemma \ref{lemm:>=1/2}). 
		\begin{claim}\label{claim:>=1/2}
			If the operator $P_B$ acting on the differential forms $\Bigwedge^*\mathbb L$ over $\mathbb L$ is essentially self-adjoint and $n = \dim \fiber\geq 3$, then $|P_B|\geq 1/2$. 
		\end{claim}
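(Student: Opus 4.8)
The plan is to diagonalize $P_B$ by combining the algebraic structure of the operator $P$ from \eqref{eq:deRham} with Hodge theory on the link. Write $m\coloneqq\dim\link=n-1\ge 2$, let $N$ be the degree operator on $\Omega^*\link$, $\tau=(-1)^N$ the $\mathbb Z_2$-grading, $\mathcal D=d+d^*$, $S\coloneqq\tau(d-d^*)$, and recall that the zeroth order term $c$ in \eqref{eq:deRham} acts by $c|_{\Omega^p}=(-1)^p\bigl(p-\tfrac m2\bigr)$. From $d^2=(d^*)^2=0$ and the anticommutation of $\tau$ with $d,d^*$ one gets the two identities $S^2=\Delta$ and $\{c,\mathcal D\}=S$; since $P=\mathcal D+c$ these give
\[
P^2=\Delta+\bigl(N-\tfrac m2\bigr)^2+S=\bigl(S+\tfrac12\bigr)^2+\bigl(N-\tfrac m2\bigr)^2-\tfrac14 ,
\]
which makes transparent that only forms concentrated in the degrees closest to $m/2$ can produce spectrum of $P_B$ in $(-\tfrac12,\tfrac12)$. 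To turn this into a proof I would use that the boundary condition $B$ is $\tau$-invariant and, by the Clifford algebra argument of Lemma~\ref{lemma:D^partialvanishes}, makes $\mathcal D$ symmetric on the core, so that — handling the boundary and corner terms produced by integration by parts exactly as in Proposition~\ref{prop:D^2}, where the convexity of $\link$ and the dihedral comparison $\theta_{ij}(\overbar g)\le\theta_{ij}(g)\le\pi$ give them the correct sign — $\mathcal D_B$ (essentially self-adjoint by Claim~\ref{claim:essa}, available for the present $n$ through the inductive alternation explained there) admits a Hodge decomposition $L^2(\Omega^*\link)=\ker\mathcal D_B\oplus\bigoplus_\mu\operatorname{span}(\omega,\mu^{-1/2}d\omega)$ into its kernel and two-dimensional $\mathcal D_B$-invariant blocks built from pure-degree coclosed eigenforms $\Delta_B\omega=\mu\omega$, $\mu>0$. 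Because $c$ is diagonal in the degree it preserves each summand, hence so does $P_B=\mathcal D_B+c$.

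It then suffices to show $|P_B|\ge\tfrac12$ on each summand. On $\ker\mathcal D_B$ one has $P_B=c$, acting by $(-1)^p(p-\tfrac m2)$ on degree-$p$ harmonic forms, so $|P_B|\ge\tfrac12$ there is equivalent to the vanishing of all harmonic $(m/2)$-forms of $\link$ compatible with $B$. On a block of bidegree $(p,p+1)$, $P_B$ is the $2\times2$ matrix $\bigl(\begin{smallmatrix}c_p&\sqrt\mu\\\sqrt\mu&c_{p+1}\end{smallmatrix}\bigr)$, with eigenvalues $\tfrac{(-1)^{p+1}}2\pm\sqrt{\bigl(p+\tfrac12-\tfrac m2\bigr)^2+\mu}$; these have absolute value $\ge\tfrac12$ unless $\mu<1-\bigl(p+\tfrac12-\tfrac m2\bigr)^2$, which forces $p\in\{m/2-1,m/2\}$ and $\mu<\tfrac34$ when $m$ is even, and $p=(m-1)/2$ and $\mu<1$ when $m$ is odd. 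So the whole claim reduces to two facts about the spherical sector $\link$: (a) $\link$ carries no nonzero harmonic $(m/2)$-form compatible with $B$; and (b) the Hodge Laplacian with boundary condition $B$ on coclosed forms of the degrees just listed has first eigenvalue $\ge 1$. Both are Bochner/Reilly estimates: in the setting of Lemma~\ref{lemma:essensa-higherdim}, all dihedral angles of $\fiber$ are $\le\pi$, so $\link$ is a geodesically convex region of the round sphere $S^m$ whose faces are totally geodesic; the Weitzenböck curvature term of $S^m$ on $p$-forms equals $p(m-p)$, which is $\ge 1$ for every degree occurring here once $m\ge 2$, except for coclosed $0$-forms (which arise only when $m=2$), where one uses instead the Reilly/Lichnerowicz lower bound $\ge m$ for the first nonzero eigenvalue of a convex spherical domain; the interior term $\|\nabla\varphi\|^2$ is nonnegative; and the boundary and corner integrands are nonnegative because the faces are totally geodesic and the dihedral angles satisfy $\theta_{ij}(\overbar g)\le\theta_{ij}(g)\le\pi$, just as in Lemmas~\ref{lemma:curvature>=} and \ref{lemma:secondff>=} and Proposition~\ref{prop:D^2}.

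The part I expect to be the crux is making the Hodge block decomposition compatible with the boundary condition $B$ when the corner map $f$ is not the identity, in which case $B$ is the ``rotated'' mixed condition \eqref{eq:nbcondition}, not purely absolute/relative, coupling forms of neighbouring degrees. One must check that $B$ nonetheless respects the splitting into pure-degree blocks (in particular that it singles out the middle degree so that the Bochner arguments for (a), (b) are legitimate) and that the copies of $P$ appearing in \eqref{eq:deRhamconj} inherit the same boundary condition, exactly as was verified by hand in dimension two in Lemma~\ref{lemma:essensa-jumpanglewithf}. In practice I would, as in Remark~\ref{remark:viewpoint}, restate (a) and (b) intrinsically as assertions about the de Rham operator of $\link$ with boundary condition $B$, with no reference to $f$ or $g$; localize near the corners of $\link$ by a partition of unity and reduce the analysis there to the already-proven lower-dimensional cases of Lemma~\ref{lemma:essensa-higherdim}; and run the Bochner/Reilly estimate on a smooth exhaustion of $\link$ by manifolds with smooth boundary, in the spirit of the proof of Proposition~\ref{prop:D^2}, letting the exhaustion parameter tend to $0$.
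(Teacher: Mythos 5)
Your reduction to Hodge blocks has a genuine gap at exactly the point you flag as the crux, and it is not a technicality that localization or exhaustion will repair: the whole strategy presupposes that the boundary condition $B$ is compatible with the splitting of $L^2(\link,\Bigwedge^*\link)$ into pure-degree blocks $\mathrm{span}(\omega,\mu^{-1/2}d\omega)$ built from coclosed eigenforms of a degree-wise Laplacian $\Delta_B$. But when $f$ is not angle-preserving, $B$ is the rotated condition \eqref{eq:nbcondition}, which couples forms of degree $0$ with forms of degree $2$ and mixes tangential and normal components in degree $1$ (this is already explicit in the two-dimensional computation of Lemma \ref{lemma:essensa-jumpanglewithf}). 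Consequently the operator ``$\Delta_B$ on coclosed $p$-forms'' is not even well posed degree by degree, $\ker\mathcal D_B$ need not consist of forms that are separately closed and coclosed nor of pure degree, and $P_B=\mathcal D_B+c$ does not preserve your blocks, so the matrix eigenvalue computation and the two target statements (a) and (b) do not have a meaning to which your Bochner/Reilly estimates can be applied. For the same reason the $p=0$ case cannot be handled by the Neumann--Lichnerowicz/Escobar bound: the rotated condition is not Neumann on functions, since it ties the function component to a $2$-form component along each face. The angle hypotheses of the theorem also do not enter your boundary-sign discussion in the way you suggest; in the relevant estimate the boundary integrand vanishes identically (because the normal fields defining $B$ are parallel along each face and the corresponding grading operator anticommutes with the boundary terms), rather than merely having a favorable sign.

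The paper avoids the decomposition altogether: it takes an arbitrary eigensection $P_B\varphi=\lambda\varphi$, writes $\lambda^2=\int(|\nabla\varphi|^2+\langle\mathfrak R\varphi,\varphi\rangle+|A\varphi|^2+\langle\mathscr E(AD-DA)\varphi,\varphi\rangle)$ after checking that the boundary terms vanish for sections satisfying $B$, bounds the cross term by $\sqrt{n-1}\,\|\nabla\varphi\|$ via the identity $\mathscr E(AD-DA)=\sum_j(1\otimes c(e_j))\nabla_{e_j}$ and Cauchy--Schwarz, and uses the pointwise, degree-diagonal inequality $\mathfrak R+A^2\geq p(n-1-p)+\bigl(p-\tfrac{n-1}{2}\bigr)^2=\tfrac{(n-1)^2}{4}$ coming from Gallot--Meyer with $\gamma=1$ on the spherical link. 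Completing the square gives $|\lambda|\geq\tfrac{\sqrt{(n-1)(n-2)}}{2}\geq\tfrac12$ for $n\geq3$, uniformly in the rotated boundary condition, with no harmonic-form vanishing and no eigenvalue bounds for a degree-wise Laplacian. If you want to keep your route, you would first have to prove that the rotated $B$ can be conjugated to a degree-preserving condition by a unitary commuting with $\mathcal D$ and $c$, which is essentially a new (and in general false) claim; the direct Weitzenb\"ock estimate is the efficient way around it.
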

		
		In fact, we shall prove a stronger lower bound of the spectral gap. More precisely, under the same assumption, we will show that 
		\begin{equation}\label{eq:spectralgap}
			|P_B| \geq\frac{\sqrt{(n-1)(n-2)}}{2},
		\end{equation} 
		where again $\dim \fiber = n$ and thus $\dim \link = n-1$. Before we prove Claim \ref{claim:>=1/2} above, let us fix some notation. Recall that the operator $P$ on the link $\link$ is equal to the sum of the de Rham operator $D = D^{\dR}_\link$ acting on $\Bigwedge^*\mathbb L$ over the link $\link$ and  a bounded operator that is diagonal with respect to the $\Z$-grading of differential forms with diagonal entries $(-1)^p(p-\frac{n-1}{2})$, cf. line \eqref{eq:linkOperator}. We will still denote by $\overbar c$ and $c$ the  left and right Clifford multiplications  on  $\Bigwedge^*\mathbb L$. A direct computation shows that 
		\begin{equation}
			P=D+\frac 1 2 \sum_k \overbar c(e_k)\otimes c(e_k),
		\end{equation}
		where $\{e_k\}$ is a local orthonormal basis for $T\link$. We define a new connection $\widehat\nabla$ on $\Bigwedge^*\mathbb L$ by setting 
		\begin{equation}\label{eq:modconnection}
			\widehat\nabla_X\coloneqq \nabla_X+\frac 1 2 c(X),
		\end{equation}
		where $X$ is any tangent vector of $\link$ and $c(X)$ is the right Clifford multiplication of $X$ on $\Bigwedge^*\mathbb L$. In particular, we have 
		$$P=\sum_{k} \overbar c(e_k)\widehat\nabla_{e_k}.$$ If $\dim\link$ is even,  then we have the natural identification  $\Bigwedge^*\link \cong S_\link\otimes S^\ast_\link$. In this case,  $\widehat\nabla$ can be identified with a tensor product connection on $S_\link\otimes S^\ast_\link$, where the first component is equipped with the spinor connection and the second component is equipped with a  flat connection. This gives a more conceptual explanation for some of the computation below. In any case,  the computation below is done explicitly, and in particular does \emph{not} require the above conceptual explanation.  
		
		Now let us derive a Lichnerowicz formula for  $P^2$. For a given point $x\in \link$, we choose a local orthonormal frame $\{e_i\}$ of $T\link$ such that $[e_i,e_j]=0$ at $x$. Note that the Clifford multiplications $\overbar c$ and $c$ commute with each other, so we have
		$$P^2=-\sum_{i}\widehat\nabla_{e_i}\widehat\nabla_{e_i}+\sum_{i<j}\overbar c(e_i)\overbar c(e_j)\widehat R_{e_i,e_j},$$
		where $\widehat R$ is the curvature operator for $\widehat\nabla$, that is,
		$$\widehat R_{e_i,e_j}=\widehat \nabla_{e_i}\widehat\nabla_{e_j}-\widehat \nabla_{e_j}\widehat\nabla_{e_i}.$$
		Since the Levi-Civita connection on $\link$ is torsion free, we obtain that
		\begin{align*}
			\widehat R_{e_i,e_j}=&(\nabla_{e_i}+\frac 1 2 c(e_i))(\nabla_{e_j}+\frac 1 2 c(e_j))-(\nabla_{e_j}+\frac 1 2 c(e_j))(\nabla_{e_i}+\frac 1 2 c(e_i))\\
			=&\nabla_{e_i}\nabla_{e_j}-\nabla_{e_j}\nabla_{e_i}+\frac 1 2 c(e_i)c(e_j).
		\end{align*}
		Note that  $\link$ has constant sectional curvature $1$, in particular has  scalar curvature $(n-1)(n-2)$. From line \eqref{eq:D^2smooth},  we see that
		$$\sum_{i<j}\overbar c(e_i)\overbar c(e_j)(\nabla_{e_i}\nabla_{e_j}-\nabla_{e_j}\nabla_{e_i})=\frac{(n-1)(n-2)}{4}-\frac 1 2\sum_{i<j} \overbar c(e_i)\overbar c(e_j)\otimes c(e_i)c(e_j).$$
		Therefore, we have the following Lichnerowicz formula for $P^2$: 
		\begin{align*}
			P^2=&\widehat\nabla^*\widehat\nabla+\sum_{i<j}\overbar c(e_i)\overbar c(e_j)\widehat R_{e_i,e_j}\\
			=&\widehat\nabla^*\widehat\nabla+\frac{(n-1)(n-2)}{4}-\frac 1 2 \sum_{i<j}\overbar c(e_i)\overbar c(e_j)\otimes c(e_i)c(e_j)  \\
			& \hspace{4cm} +\frac 1 2 \sum_{i<j}\overbar c(e_i)\overbar c(e_j)\otimes c(e_i)c(e_j)\\
			=&\widehat\nabla^*\widehat\nabla+\frac{(n-1)(n-2)}{4}
		\end{align*}
		
		Recall that on each codimension one face $\link_i = \overbar  F_i\cap \link $ of $\link$, the boundary condition $B$ is given by  
		$$\ker \big(\mathscr E(\overbar c(u_i)\otimes c(v_i))+ 1\big),$$
		where $u_i$ is the inner unit normal vector field of $\overbar F_i$ and $v_i$ is the unit inner normal vector field of the corresponding codimension one face $F_i$ of $\mathbb G$. Observe that, in the current setup of the lemma, the vector fields  $u_i$ and $v_i$ are  constant along $\overbar F_i$.

		Let $\varphi$ be a differential form in $C_0^\infty(\mathbb L,\Bigwedge^*\mathbb L;B)$. 
		By a similar computation as in Proposition \ref{prop:D^2}, we have
		\begin{equation}\label{eq:deRhamestimate}
			\begin{split}
				&\int_{\mathbb L}\langle P \varphi,  P \varphi\rangle \\
				=&
				\int_{\mathbb L}\langle P^2\varphi,\varphi\rangle+\sum_i\int_{\link_i}\langle\overbar c(u_i)P\varphi,\varphi\rangle\\ 
				=&\int_{\mathbb L}\langle \Big(\widehat\nabla^*\widehat\nabla+\frac{(n-1)(n-2)}{4}\Big)\varphi,\varphi\rangle+\sum_i\int_{\link_i}\langle\overbar c(u_i)P\varphi,\varphi\rangle\\
				=&\int_{\mathbb L}\langle \widehat\nabla\varphi,\widehat\nabla\varphi\rangle+ \int_{\mathbb L}
				\frac{(n-1)(n-2)}{4}|\varphi|^2+\sum_{i,j}\int_{\link_i}\langle \overbar c(u_i)\overbar c(e^j_i)\widehat\nabla_{e^j_i}\varphi,\varphi\rangle,
			\end{split}
		\end{equation}
		where in the last summation $\{e_i^j\}$ is a local orthonormal basis of tangent vectors of $\link_i$. Observe that the dihedral angles do not appear in Equation \eqref{eq:deRhamestimate}, since the element $\varphi$ vanishes near the codimension two faces of $\link$. 
		
		Set
		\begin{equation}\label{eq:hatDpartial}
			\widehat D^\partial_i=\overbar c(u_i)\sum_j \overbar c(e^j_i)\widehat\nabla_{e^j_i}=\overbar c(u_i)\sum_j \overbar c(e^j_i)\nabla_{e^j_i}+\overbar c(u_i)\sum_j \overbar c(e^j_i)\otimes c(e_i^j).
		\end{equation}
		To show that $|P|\geq \sqrt{(n-1)(n-2)}/2$, it suffices to show that the term
		\[ \langle \overbar c(u_i)\overbar c(e^j_i)\widehat\nabla_{e^j_i}\varphi,\varphi\rangle = \langle \overbar c(u_i)\widehat D^\partial_i\varphi,\varphi\rangle \]
		in line \eqref{eq:deRhamestimate} vanishes if $\varphi$ satisfies the boundary condition. Equivalently, it suffices to show that the operator $\widehat D^\partial_i$ maps the boundary condition $B$ to its orthogonal complement. In the following we shall give two different arguments  to verify this. 
		
		Here is the first argument, which is somewhat more conceptual. We shall verify 
		$\langle \overbar c(u_i)\widehat D^\partial_i\varphi,\varphi\rangle$
		vanishes along each face $\link_i$ of $\link$ one at a time (\emph{not} simultaneously) if $\varphi$ satisfies the boundary condition. Recall that the boundary condition $B$ is given by  \[ B = \ker ( \mathscr E(\overbar c(u_i)\otimes c(v_i) ) + 1). \] After the change of coordinates as in line \eqref{eq:transformeven} and \eqref{eq:transformodd}, the boundary condition $B$ becomes a boundary condition on $\Bigwedge^*\link$, which we still denote by $B$. We shall perform the computation at each codimension one face of $\link$ one at a time. Let us fix a face $\link_i$ of  $\link$.  By  applying a rotation to $\mathbb G$ if necessary, we may assume that $u_i=v_i$ on $\link_i$. We emphasize that we only require the equality  $u_i=v_i$ to hold on the given face $\link_i$. Now the boundary condition $B$ on $\link_i$  is the sub-bundle consisting of differential forms that only contain tangential directions along $\link_i$. In other words,   we have $B=\ker(\mathscr E(\overbar c(u_i)\otimes c(u_i))+1)$ on $\link_i$, where $\mathscr E$ is the even-odd grading operator on $\Bigwedge^*\link$.
		
		Note that $u_i$ is constant along $\link_i$. In particular,  the  term  $\overbar c(u_i)\sum_j \overbar c(e^j_i)\nabla_{e^j_i}$ in line \eqref{eq:hatDpartial} commutes with $\mathscr E$ and $c(u_i)$, and anti-commutes with $\overbar c(u_i)$. Similarly, the term 
		$\overbar c(u_i)\sum_j \overbar c(e^j_i)\otimes c(e_i^j)$ in line \eqref{eq:hatDpartial} anti-commutes with $\mathscr E$, $\overbar c(u_i)$ and $c(u_i)$. In conclusion,  $\widehat D^\partial_i$ anti-commutes with $\mathscr E(\overbar c(u_i)\otimes c(u_i))$, hence maps the boundary condition $B$ to its orthogonal complement along the face $\link_i$. This implies that 	$\langle \overbar c(u_i)\widehat D^\partial_i\varphi,\varphi\rangle$
		vanishes along  $\link_i$.  By  performing the same argument at each face of $\link$, we see that $\langle \overbar c(u_i)\widehat D^\partial_i\varphi,\varphi\rangle$
		vanishes along each face $\link_i$ if $\varphi$ satisfies the boundary condition $B$. This finishes the proof of  Claim \ref{claim:>=1/2}.
		
		Now we shall give an alternative argument to show that $\langle \overbar c(u_i)\widehat D^\partial_i\varphi,\varphi\rangle=0$ if $\varphi$ satisfies the boundary condition $B$, via a direct computation that in fact does \emph{not} require any rotation of  $\mathbb G$. Note that in general the normal vector $v_i$ of the corresponding face $F_i$ of $\mathbb G$ may not be tangential to the link $\link$ of $\fiber$, and we have the following decomposition
		\begin{equation}\label{eq:decompositionOfVi}
			v_i=\alpha_i\frac{\partial}{\partial r}+\beta_i \widehat{v}_i,
		\end{equation}
		along $\link_i$, where $\link_i$ is a codimension one face of $\link$ as above and  $\widehat{v}_i$ is orthogonal to $\frac{\partial}{\partial r}$. A direct computation shows that the boundary condition $B$ on $\link_i$ is the following
		\begin{equation}
			\ker\big( \mathscr E \overbar c(\overbar u_i)\otimes (\alpha_i+\beta_i c(\widehat{v}_i))+1\big),
		\end{equation}
		where $\mathscr E$ again is the even-odd grading operator on $\Bigwedge^*\link$. We remark that the boundary condition $B$  may not preserve the even-odd $\Z_2$-grading on $\Bigwedge^*\link$.
		
		In any case, let us write  $\gamma_i=\mathscr E \overbar c(\overbar u_i)\otimes (\alpha_i+\beta_i c(\widehat{v}_i))$ for short.
		Note that the Clifford multiplications $\overbar c$ and $c$ commute with each other. Therefore, to show that  the operator $\widehat D^\partial_i$ maps the boundary condition $B$ to its orthogonal complement,  it suffices to show that  $\gamma_i$ commutes with the connection $\widehat\nabla$ on $\link_i$.

		For  simplicity, we omit the subscript $i$ in $\gamma_i, \link_i$ and so on.  Although $v$ is a constant vector field on $\fiber$, the functions $\alpha$ and $\beta$, and the vector filed $\widehat v$ in line \eqref{eq:decompositionOfVi} may not be constant in general.  Consider the flat connection  $\overbar \nabla$  on the tangent bundle of $\fiber$. Let $\nabla^\link$ be the Levi--Civita connection on the tangent bundle of $\link$. Then we have 
		$$\begin{cases}\displaystyle
			\overbar\nabla_X Y=\nabla^\link_X Y+\langle X,Y\rangle\frac{\partial}{\partial r}\\
			\displaystyle\overbar\nabla_X \frac{\partial}{\partial r}=-X,
		\end{cases}$$
		for tangent vectors $X$ and $Y$ along $\link$. 
		By applying $\overbar\nabla$ to both sides of line \eqref{eq:decompositionOfVi}, we obtain
		$$0=\overbar\nabla_X v =X(\alpha )\frac{\partial}{\partial r}-\alpha  X+X(\beta ) \widehat v +\beta \nabla^\link_X\widehat v +\beta \langle X,\widehat v \rangle\frac{\partial}{\partial r},$$
		since $v $ is a constant vector field on $\fiber$. By regrouping the terms according to whether they are tangential or orthogonal to $\link$, we obtain that
		\begin{equation}\label{eq:alphaBeta}
			\begin{cases}
				X(\alpha )=-\beta \langle X,\widehat v \rangle\\
				\alpha  X=X(\beta)\widehat v +\beta\nabla^\link_X\widehat v 
			\end{cases}
		\end{equation}
		Let us denote $\gamma =\mathscr E \overbar c(\overbar u )\otimes (\alpha +\beta  c(\widehat{v }))$. We need to show that $[\widehat\nabla,\gamma]=0$, where $\widehat \nabla$ is the connection from line \eqref{eq:modconnection}.  Note that
		\begin{align*}
			[\nabla_X,\gamma]=& \mathscr E\overbar c(u) \otimes\big(X(\alpha)+X(\beta)c(\widehat v)+\beta c(\nabla^\link_X\widehat v)\big)\\
			=&\mathscr E\overbar c(u) \otimes\big( -\beta\langle X,\widehat v\rangle+\alpha c(X)\big),
		\end{align*}
		and
		\begin{align*}
			\frac 1 2[1\otimes c(X),\gamma]=&-\frac 1 2\mathscr E\overbar c(u)\otimes \big(
			c(X)(\alpha +\beta  c(\widehat{v }))+(\alpha +\beta  c(\widehat{v }))c(X)\big)\\
			=&-\frac 1 2\mathscr E\overbar c(u)\otimes \big(2\alpha c(X)-2\beta \langle X,\widehat v\rangle\big)\\
			=&-\mathscr E\overbar c(u)\otimes \big( -\beta\langle X,\widehat v\rangle+\alpha c(X)\big).
		\end{align*}
		This shows that $[\widehat\nabla,\gamma]=0$. It follows that $\langle \overbar c(u_i)\widehat D^\partial_i\varphi,\varphi\rangle=0$ if $\varphi$ satisfies the boundary condition $B$.  This completes the alternative proof of Claim \ref{claim:>=1/2}.
		
		Now we complete the proofs of Claim \ref{claim:essa} and Lemma \ref{lemma:essensa-higherdim} by inductively alternating between Claim \ref{claim:essa} and Lemma \ref{lemma:essensa-higherdim}. 
	\end{proof}

	\subsection{Essential self-adjointness of twisted Dirac operators}\label{sec:ess-selfadj-general}
	In this and the next subsections, we prove the Dirac operator $D_B$ on $S_N\otimes f^\ast S_M$  is essentially self-adjoint, under appropriate conditions on dihedral angles. More precisely, we have  the following theorem.
	\begin{theorem}\label{thm:ess-sa}
		Assume the geometric setup $\ref{setup}$. 	Let $D_B$ be the Dirac operator on $S_N\otimes f^*S_M$ subject to the local  condition $B$ given in Definition \ref{def:boundarycondition}. In particular,  $D_B$ acts on  the space of sections  $C^\infty_0(N,S_N\otimes f^*S_M;B)$ given in Definition $\ref{def:smooth,H1}$. For each pair of  codimension one faces $\overbar F_i, \overbar F_j$ of $N$, assume  the dihedral angles $\theta_{ij}(\overbar{g})$ of $N$ and $\theta_{ij}(g)$ of $M$ satisfy either  
		\begin{equation}\label{eq:dihedralstrict}
			0<\theta_{ij}(\overbar{g})_z < \theta_{ij}(g)_{f(z)} < \pi, \textup{ for all }   z \in \overbar F_i\cap \overbar{F}_j
		\end{equation} 
	or 
	\begin{equation}\label{eq:dihedralequal}
		0<\theta_{ij}(\overbar{g})_z =  \theta_{ij}(g)_{f(z)} < \pi, \textup{ for all }   z \in \overbar F_i\cap \overbar{F}_j. 
	\end{equation} 
 Then   $D_B$ is essentially  self-adjoint.  Furthermore, its self-adjoint extension $\overbar{D}_B$ is Fredholm  with domain   $H^1(N,S_N\otimes f^*S_M;B)$.
	\end{theorem}
	
	First, we need the following lemma, which is roughly speaking a generalization of   Lemma \ref{lemm:>=1/2} to the case of asymptotically conical metrics. The proof essentially follows from the analysis in \cite[Section 3]{BruningSeeley}. For the convenience of the reader, we sketch a proof here. 
	\begin{lemma}\label{lemma:asymptoticConical}
		Let $\fiber$ and $\mathbb G$ be two convex  polyhedral corners in  $\R^k$ that are enclosed by $\ell$ hyperplanes through the origin,  respectively.
		Let $B$ be the boundary condition on $\Bigwedge^*\fiber$ over each codimension one face $\overbar F_i$ given by
		$$\mathscr E(\overbar c(u_i)\otimes c(v_i))\varphi=-\varphi,$$
		where $\mathscr E$ is the even-odd grading operator on $\Bigwedge^\ast \mathbb R^k$, $u_i$'s are the inner normal vectors of $\fiber $ and $v_i$'s are the inner normal vectors of $\mathbb G $. Consider the differential operator $ \mathcal D$ given in cylindrical coordinates by 
		$$\begin{pmatrix}
			0&-\frac{\partial}{\partial r}\\ \frac{\partial}{\partial r}&0
		\end{pmatrix}+\frac 1 r\begin{pmatrix}
			0&P\\P&0
		\end{pmatrix}+\begin{pmatrix}
			0&A_r\\A_r&0
		\end{pmatrix},$$
		where $A_r$ is a family of symmetric first order differential operators.
		Suppose that 
		\begin{enumerate}[label=$(\arabic*)$]
			\item both the operators $P$ and $P+rA_r$ (along each link) are essentially self-adjoint with respect to the boundary condition $B$ and have the same   domain $H^1(\link_r,E;B)$,
			\item $|P|\geq 1/2$,
			\item  $P^{-1}A_r$ and $A_rP^{-1}$ are bounded operators,  and their operator norms are uniformly bounded by a constant that is  independent of $r$,
		\end{enumerate} 
		then $\mathcal D$ is  essentially self-adjoint  with respect to the boundary condition $B$  and its domain is  $H^1(\fiber, \Bigwedge^*\R^k; B)$.
	\end{lemma}
	\begin{proof}
		By  Lemma \ref{lemm:>=1/2}, the operator 
		$$ \slashed{D} =\begin{pmatrix}
			0&-\frac{\partial}{\partial r}\\ \frac{\partial}{\partial r}&0
		\end{pmatrix}+\frac 1 r\begin{pmatrix}
			0&P\\P&0
		\end{pmatrix}.$$
		is essentially self-adjoint, since $|P|\geq 1/2$. Moreover, the domain of $\slashed D$ is precisely $H^1(\fiber, \Bigwedge^*\R^k; B)$. 
		
		It suffices to show that the maximal domain of $\mathcal D$ is contained in the maximal domain of $\slashed D$. Let $\link$ be the link of $\fiber $. Then it  suffices to show that if $u\in L^2([0,1]\times \link,\Bigwedge^*\link)$ and
		$$h\coloneqq \frac{\partial}{\partial r} u+\frac 1 r Pu+A_ru\in L^2([0,1]\times\link,\Bigwedge^*\link)$$
		where  $h$ is defined in the weak sense, then $u$ lies in the (maximal) domain of $$\frac{\partial}{\partial r}+\frac 1 r P.$$
		
		Let $\{\psi_j \}_{j\in\N^+}$ be a partition of unity subordinate to  the open cover 
		\[ \{(2^{-j-1},2^{-j+1}) \}_{j\in\N^+}\] of $(0,1)$. Note that each $\psi_j$ is supported away from the origin. From the condition (1), we see that $\psi_ju$ lies in $H^1((2^{-j-1},2^{-j+1})\times \link,\Bigwedge^*\link)$. Therefore, for any $\varepsilon>0$, there exists a smooth section $u_j$ supported on $(2^{-j-1},2^{-j+1})\times \link$ such that
		$$\|\psi_ju - u_j\|_{\mathcal D}\leq \frac{\varepsilon}{2^j},$$
		where $\|\cdot\|_\mathcal D$ is the graph norm of $\mathcal D$. Since the partition of unity is locally finite, the summation $\sum_{j=1}^\infty u_j$ is well-defined and furthermore we have
		$$\big\|\sum_{j=1}^\infty u_j-u\big\|_{\mathcal D}\leq \varepsilon.$$
		Therefore, by replacing $u$ with $\sum_{j=1}^\infty u_j$, we may assume that $u$ is  smooth on $(0,1]\times\link$. Note that in general  $u$ is not smooth  at $\{0\}\times \link$, and we do not have much control of how $u$ behaves at $\{0\}\times \link$.  In any case,  by multiplying a smooth cut-off function, we assume without loss of generality that $u$ is supported on $[0,\delta)\times\link$ for some sufficiently small $\delta>0$.

		Since $P_{B}$ is essentially self-adjoint, there is an orthonormal basis $\{\varphi_\lambda \}_{\lambda\in\Lambda}$ of $L^2(\link,\Bigwedge^*\link)$ such that $\varphi_\lambda\in H^1(\link,\Bigwedge^*\link;B)$ and $P\varphi_{\lambda}=\lambda\varphi_\lambda$.  	Note that  we have $|\lambda|\geq1/2$ by assumption.
		Since $P^{-1}A_r$ is bounded and its operator norm is uniformly bounded (independent of $r$), there exists $C>0$ such that 
		$$\|A_r^\ast \varphi_\lambda\|\le C|\lambda|$$
		for all $\varphi_\lambda$. 
		
		Denote by $u(r)$ (resp. $h(r)$) the restriction of $u$ (resp. $h$) on $\{r\}\times \link$. We define
		\[ g(r) \coloneqq h(r) - A_ru(r). \]
		Let us set 
		$$u_\lambda(r)\coloneqq \langle u(r), \varphi_\lambda\rangle,  \hspace{.2cm} h_\lambda(r)\coloneqq \langle h(r),\varphi_\lambda\rangle \textup{ and } g_\lambda(r)\coloneqq \big\langle g(r),\varphi_\lambda\big\rangle. $$
		Since both $u_\lambda$ and $h_\lambda$ are in $L^2[0,1]$, it follows that $g_\lambda \in L^2[0,1]$. Moreover, by construction, we have 
		$$g_\lambda=\frac{d}{dr}u_\lambda+\frac\lambda r u_\lambda$$
		in the weak sense. In particular, we have 
		$$u_\lambda(r)=  \int_1^r \Big(\frac t r\Big)^\lambda g_\lambda (t)dt, $$
		since $u_\lambda$ vanishes if $r>\delta$. In the case where  $\lambda\geq 1/2$, we also have
		$$u_\lambda(r)=\int_0^r \Big(\frac t r\Big)^\lambda  g_\lambda(t)dt.$$
		This leads us to define the following operator: 	$$T_\lambda(\xi)=\begin{cases}
			\displaystyle \int_1^r \Big(\frac t r\Big)^\lambda \xi (t)dt & \textup{ if } \lambda\leq -1/2,  \vspace{.4cm}\\
			\displaystyle\int_0^r \Big(\frac t r\Big)^\lambda \xi(t)dt & \textup{ if } \lambda\geq 1/2.
		\end{cases} \textup{ for all $\xi\in L^2[0,\delta]$}. $$
		Moreover, we define 
		$$T(\zeta)\coloneqq \sum_{\lambda\in\Lambda}T_\lambda(\langle \zeta,\varphi_\lambda\rangle)\cdot\varphi_\lambda \textup{ for all $\zeta\in L^2([0,\delta]\times \link,\Bigwedge^*\link )$}. $$
		By the same proofs of  \cite[Lemma 2.2]{BruningSeeley} and  \cite[Lemma 2.3]{BruningSeeley}, we see that  $T$ maps $L^2([0,\delta]\times \link,\Bigwedge^*\link)$ to the domain of $\frac{\partial}{\partial r}+\frac 1 r P$. Strictly speaking, \cite[Lemma 2.2]{BruningSeeley} and  \cite[Lemma 2.3]{BruningSeeley} were only stated for the case where the link $\link$ is a closed manifold. However, since the proofs of \cite[Lemma 2.2]{BruningSeeley} and  \cite[Lemma 2.3]{BruningSeeley} were carried out by  spectral computation, the same proofs also apply in our current setting where $\link$ is a manifold with polyhedral boundary. More precisely, an element $v\in L^2([0, \delta]\times \link, \Bigwedge^*\link)$ lies in the domain of $\frac{\partial}{\partial r}+\frac 1 r P$ if
		$$\sum_\lambda \big\| \frac{\lambda}{r} \langle v,\varphi_\lambda\rangle \big\|^2_{L^2[0,1]}<\infty,\text{ and }\sum_\lambda \big\|\frac{d}{dr}\langle v,\varphi_\lambda\rangle\big\|^2_{L^2[0,1]}<\infty.$$
		In particular,  whether an element $v$ lies in the domain of $\frac{\partial}{\partial r}+\frac 1 r P$  is determined by  $\langle v ,\varphi_\lambda\rangle$ as  functions over $[0, 1]$. Therefore we can apply the same proofs of \cite[Lemma 2.2 \& Lemma 2.3]{BruningSeeley} and Schur's test to show that $T$ maps $L^2([0,\delta]\times \link,\Bigwedge^*\link)$ to the domain of  $\frac{\partial}{\partial r}+\frac 1 r P$.
		
		Let $\mathcal A$ be the (unbounded) operator on $L^2([0,\delta]\times \link,\Bigwedge^*\link)$ defined by setting
		\[  \mathcal A(\xi)_r = A_r(\xi_r)  \] 
		for $\xi$ in the domain of $\frac{\partial}{\partial r}+\frac 1 r P$,  where $\xi_r$ (resp. $\mathcal A(\xi)_r$)  is the restriction of $\xi$  (resp. $\mathcal A(\xi)$)  on $\link_r = \{r\}\times \link$. A straightforward computation shows that  $\mathcal AT$ and $T\mathcal A$ are bounded operators such that  the operator norms $\|\mathcal A T\|$ and $\|T \mathcal A\|$ are $\leq C(\delta)$, where $C(\delta)$ is a positive number that goes to zero as $\delta\to0$,  cf.  \cite[Lemma 2.2]{BruningSeeley}.
		
		Since by construction $h_\lambda(r)=g_\lambda(r)+\langle A_r u(r),\varphi_\lambda\rangle$ and $Tg_\lambda = u_\lambda$, we see that 
		$$Th=u+T \mathcal A u.$$
		By choosing a sufficiently small $\delta$, we can assume without loss of generality that $C(\delta)<1$. It follows that  $(1+T \mathcal A)$ is invertible  bounded operator in this case and 
		$$(1+T \mathcal A)^{-1}=\sum_{j=0}^\infty (-1)^j (T \mathcal A)^j,$$
		which implies that 
		$$u=(1+T \mathcal A)^{-1}Th=\sum_{j=0}^\infty (-1)^j (T \mathcal A)^jTh=T\Big(\sum_{j=0}^\infty (-1)^j ( \mathcal AT)^jh\Big)$$
		Note that $\sum_{j=0}^\infty (-1)^j ( \mathcal AT)^jh$ lies in $L^2([0,\delta]\times \link,\Bigwedge^*\link )$,  since $h$ does. We have already observed that $T$ maps $L^2([0,\delta]\times \link,\Bigwedge^*\link)$ to the domain of $\frac{\partial}{\partial r}+\frac 1 r P$.   Therefore, we conclude that $u$ lies in the maximal domain of $\frac{\partial}{\partial r}+\frac 1 r P$. To summarize, we have shown that the maximal domain of $\mathcal D$ is contained in the maximal domain of $\slashed D$.   This finishes the proof.	
	\end{proof}

	Now we prove a few key lemmas that will allow us to reduce the verification of the essential self-adjointness of $D_B$ in the general case to a standard product case. In order to deal with a smooth family of fiberwise Dirac-type operators, a first step is to find a somewhat natural way to identify the $L^2$ spaces of sections along different fibers while keeping track of the boundary condition on each fiber. One of the main nuances is that the dihedral angles may vary from fiber to fiber, and futhermore the dihedral angles may also vary along a codimension two face within the same fiber. 
	The follow lemma provides  a key ingredient that circumvents such a nuance and enables us to reduce the computation to a standard product case.  
	
	\begin{lemma}\label{lemma:smoothEquivalence}
		Let $\fiber $ and $\mathbb G$ be two sectors in $\R^2$. Let $B$ be the boundary condition on $\Bigwedge^*\fiber = \Bigwedge^\ast \mathbb R^2$ over each edge given by
		$$\mathscr E(\overbar c(u_i)\otimes c(v_i))\varphi=-\varphi$$
		for $i=1,2$, where $\mathscr E$ is the even-odd grading operator on $\Bigwedge^\ast \mathbb R^2$, $u_i$'s are the inner normal vectors of $\fiber $ and $v_i$'s are the inner normal vectors of $\mathbb G $, cf. Definition \ref{def:boundarycondition}. Similarly, let  $\fiber' $ and $\mathbb G'$ be two sectors in $\R^2$, and $B'$  the corresponding boundary condition on $\Bigwedge^*\fiber' = \Bigwedge^\ast \mathbb R^2$ as given in Definition \ref{def:boundarycondition}. Suppose $\alpha, \beta, \alpha'$ and $\beta'$ are the dihedral angels of $\fiber,  \mathbb G,  \fiber'$ and $\mathbb G'$ respectively. If   $0<\alpha<\beta<\pi$ and $0<\alpha'<\beta'<\pi$, 
		then there exists an invertible linear map $T\colon\Bigwedge^\ast \R^2\to \Bigwedge^\ast \R^2$ that maps $B$ to $B'$ and preserves the even-odd grading on $\Bigwedge^\ast \R^2$. Moreover, as a matrix-valued function of the parameters $(\alpha,\beta,\alpha',\beta')$, $T$  is smooth as we vary  $\alpha,\beta,\alpha'$ and $\beta'$, as long as  the condition $0<\alpha<\beta<\pi$ and $0<\alpha'<\beta'<\pi$ are satisfied throughout the variation of parameters.
	\end{lemma}
	\begin{proof}	
		Let $\{e_1,e_2\}$ be the standard orthonormal basis of $\R^2$. We identify $\Bigwedge^\ast \R^2$ over $\fiber$ and $\fiber'$ with the trivial bundle spanned by $1,e_1\wedge e_2,e_1,e_2$.  	Without loss of generality, we assume that $\fiber$ and $\fiber'$ both lie in $\{ (x,y):y\geq 0\}$ and have a common edge $\overbar F_1$ given by $\{(x,y):y=0,x\geq 0 \}$. Moreover, by rotating $\mathbb G$ and $\mathbb G'$ if necessary,  we may assume both the boundary conditions $B$ and $B'$ on this edge are given by
		$$\mathscr E(\overbar c(e_2)\otimes c(e_2))\varphi=-\varphi,$$
		where $\mathscr E$ is the even-odd grading operator on $\Bigwedge^\ast \mathbb R^2$. Equivalently, this boundary condition says that the differential form $\varphi$ lies in the linear span of $1$ and $e_1$ along the edge $\overbar F_1$.

		The boundary condition $B$ on the other edge $\overbar F_2$ of $\fiber$ is given by
		$$\mathscr E(\overbar c(u_\alpha)\otimes c(u_\beta))\varphi=-\varphi,$$
		where $u_\alpha=(\sin\alpha) e_1-(\cos\alpha) e_2$ and $u_\beta=(\sin\beta) e_1-(\cos\beta) e_2$. A direct computation shows that the linear map $\mathscr E(\overbar c(u_\alpha)\otimes c(u_\beta))\colon \Bigwedge^\ast \R^2 \to \Bigwedge^\ast \R^2 $ corresponds to the following matrix 
		$$\begin{pmatrix}
			-\cos(\alpha-\beta)&-\sin(\alpha-\beta)&0&0\\
			-\sin(\alpha-\beta)&\cos(\alpha-\beta)&0&0\\
			0&0&-\cos(\alpha+\beta)&-\sin(\alpha+\beta)\\
			0&0&-\sin(\alpha+\beta)&\cos(\alpha+\beta)
		\end{pmatrix}$$
		with respect to the basis $\{1,e_1\wedge e_2,e_1,e_2\}$ of $\Bigwedge^\ast \R^2$. 	Therefore $\varphi$ satisfies the boundary condition  $B$ on the edge $\overbar F_2$ if and only if $\varphi|_{\overbar F_2}$ lies in the linear span  
		$$\text{span}\{(1+\cos(\alpha-\beta))+\sin(\alpha-\beta)e_1\wedge e_2,~
		\sin(\alpha+\beta)e_1+(1-\cos(\alpha+\beta))e_2 \}.$$ The vectors $w_1 = 1,  w_2 = (1+\cos(\alpha-\beta))+\sin(\alpha-\beta)e_1\wedge e_2, w_3 = e_1$ and $w_4 =  \sin(\alpha+\beta)e_1+(1-\cos(\alpha+\beta))e_2$  are  linearly independent in $\Bigwedge^\ast \R^2$, since $0<\alpha<\beta<\pi$.

		Similarly, $\varphi$ satisfies the boundary condition $B'$ on the other edge $\overbar F'_2$ of $\fiber'$ if and only if
		$\varphi|_{\overbar F'_2}$ lies in the linear span  $$\text{span}\{(1+\cos(\alpha'-\beta'))+\sin(\alpha'-\beta')e_1\wedge e_2,~
		\sin(\alpha'+\beta')e_1+(1-\cos(\alpha'+\beta'))e_2 \}.$$ The vectors $w'_1 = 1,  w'_2 = (1+\cos(\alpha'-\beta'))+\sin(\alpha'-\beta')e_1\wedge e_2, w'_3 = e_1$ and $w'_4 =  \sin(\alpha'+\beta')e_1+(1-\cos(\alpha'+\beta'))e_2$  are  linearly independent in $\Bigwedge^\ast \R^2$, since $0<\alpha'<\beta'<\pi$. 
		
		If we define the linear map $T\colon\Bigwedge^\ast \R^2\to \Bigwedge^\ast \R^2$ by setting $T(w_j) = w'_j$, then $T$ is an invertible linear map that preserves the even-odd grading of $\Bigwedge^\ast \R^2$. It is clear from the explicit expressions of $w_j$ and $w'_j$ above that $T$ is smooth with respect to the parameters $\alpha,\beta,\alpha'$ and $\beta'$, as long as  the condition $0<\alpha<\beta<\pi$ and $0<\alpha'<\beta'<\pi$ are satisfied throughout the variation of these parameters.
	\end{proof}

	\begin{remark}\label{remark:smoothEqual}
		The same proof of Lemma \ref{lemma:smoothEquivalence} also applies to the case where  $0<\alpha=\beta<\pi$ and $0<\alpha'=\beta'<\pi$. In this case, the boundary condition $B$  at the edge $\overbar F_2$ of $\fiber$  is spanned by $$w_1 = 1,  w_2 = e_1\wedge e_2, w_3 = e_1, w_4 =  \sin(2\alpha)e_1+(1-\cos(2\alpha))e_2,$$
		and the boundary condition $B'$  at the edge $\overbar F'_2$ of $\fiber'$
		is spanned by $$w_1' = 1,  w_2' = e_1\wedge e_2, w_3' = e_1, w_4' =  \sin(2\alpha')e_1+(1-\cos(2\alpha'))e_2.$$
We define $T\colon \Bigwedge^\ast \R^2\to \Bigwedge^\ast \R^2$ to be the linear map that  maps $w_i$ to $w_i'$ for $i=1,2,3,4$. 
		We emphasize that however such a linear map $T\colon\Bigwedge^\ast \R^2\to \Bigwedge^\ast \R^2$  does \emph{not} exist if $\alpha<\beta$ but $\alpha'=\beta'$. In other words, we can vary smoothly the dihedral angles $\alpha$ and $\beta$ as we wish,   as long as either the  inequality $0<\alpha < \beta<\pi$ is satisfied throughout the variation, or   the inequality $0<\alpha = \beta<\pi$ is satisfied throughout the variation. 
	\end{remark}

	We also need some technical lemmas concerning the boundedness of  certain multiplication operators  on Sobolev spaces. In particular, these lemmas will be useful when proving the essential self-adjointness of $D_B$ near faces of codimension $\geq 3$. 		
			\begin{lemma}\label{lemma:sobolevEmbedding}
				Let $n\geq 3$. Suppose $r$ is the radial variable of $\R^n$. Then multiplying by $r^{-1}$ defines a bounded linear operator
				$$H^1(\R^n)\to L^2(\R^n),~\varphi\mapsto r^{-1} \varphi,$$
				where $H^1(\R^n)$ is the Sobolev $H^1$-space on $\R^n$. 
			\end{lemma}
			\begin{proof}
				We parameterize $\R^n$ by $(\theta,r)$ with $\theta\in \mathbb S^{n-1}$. Let $C^\infty_{c, 0}(\R^n) $ be  the subspace of compactly supported smooth functions on $\mathbb R^n$ that vanish at the origin. We know that $C^\infty_{c, 0}(\R^n) $ is dense in $H^1(\R^n)$. Hence to prove the lemma, it suffices to show that there exists $C>0$ such that 
				$$\big\|\frac 1 r\varphi\big\|_{L^2}\leq C\cdot\|\varphi\|_{H^1}$$
				for all $\varphi \in C^\infty_{c, 0}(\R^n) $.
				
				In polar coordinates, we have 
				\[ \big\|\frac 1 r\varphi\big\|_{L^2} = \Big(\int_{\mathbb S^{n-1}}\int_0^{+\infty}\varphi^2(\theta,r) r^{n-3}drd\theta\Big)^{1/2} \] For any fixed $\theta\in \mathbb S^{n-1}$, integration by parts implies that 
				\begin{align*}
					\int_0^{+\infty}\varphi^2(\theta,r) r^{n-3}dr=&-\frac{2}{n-2}\int_0^{+\infty}\varphi(\theta,r)\frac{\partial}{\partial r}\varphi(\theta,r)\cdot r^{n-2}dr\\
					=&-\frac{2}{n-2}\int_0^{+\infty}r^{(n-3)/2}\varphi(\theta,r)\cdot r^{(n-1)/2}\frac{\partial}{\partial r}\varphi(\theta,r)dr\\
					\leq &\frac{2}{(n-2)}\Big(\int_0^{+\infty}\varphi^2(\theta,r) r^{n-3}dr\Big)^{\frac 1 2}\Big(\int_0^{+\infty}\Big[\frac{\partial}{\partial r}\varphi(\theta,r)\Big]^2 r^{n-1}dr\Big)^{\frac 1 2}.
				\end{align*}
				It follows that 
				$$\int_0^{+\infty}\varphi^2(\theta,r) r^{n-3}dr\leq \frac{4}{(n-2)^2}\int_0^{+\infty}\Big[\frac{\partial}{\partial r}\varphi(\theta,r)\Big]^2 r^{n-1}dr$$
				Now by integrating both sides of the above inequality  over $\mathbb S^{n-1}$, we obtain that
				$$\big\|\frac 1 r\varphi\big\|^2_{L^2}\leq  \frac{4}{(n-2)^2}\int_{\mathbb S^{n-1}}\int_0^{+\infty}\Big[\frac{\partial}{\partial r}\varphi(\theta,r)\Big]^2 r^{n-1}dr d\theta. $$
				By the definition of Sobolev $H^1$-norm, there exists a constant $C>0$ such that 
				\[\frac{4}{(n-2)^2}\int_{\mathbb S^{n-1}}\int_0^{+\infty}\Big[\frac{\partial}{\partial r}\varphi(\theta,r)\Big]^2 r^{n-1}dr d\theta \leq  C \|\varphi\|^2_{H^1} \]
				for all $\varphi \in C^\infty_{c, 0}(\R^n) $. This finishes the proof. 
			\end{proof}

We will also need the following iterated version of Lemma \ref{lemma:sobolevEmbedding}.
\begin{lemma}\label{lemma:iteratedSobolev}
	Let $n\geq 3$. Set
	$$\Sigma_k=\{(x_1,x_2,\ldots,x_n)\in\R^n:x_1=x_2=\cdots=x_k=0\}.$$
	Then multiplying by the function 
	\[ \prod_{k=3}^n\frac{1}{\dist(x,\Sigma_k)}  \]
	defines a bounded linear operator from $H^1(\R^n)$ to $L^2(\R^n)$, where $\dist(x,\Sigma_k)$ is the distance from $x$ to $\Sigma_k$. 
\end{lemma}
\begin{proof}
	Let $\mathcal M_k$ stand for multiplying by the function 
	\[ \frac{1}{\dist(x,\Sigma_k)}. \]  Let $\Bigwedge^*\R^n$ be the trivial bundle of differential forms over $\R^n$. It suffices to show the operator $\mathcal M_n\mathcal M_{n-1}\cdots \mathcal M_{3}$ defines a bounded linear operator from $H^1(\R^n,\Bigwedge^*\R^n)$ to $L^2(\R^n,\Bigwedge^*\R^n)$. The reason for working with the space of differential forms instead of functions is only a matter of convenience. 
	 
	 We will prove the lemma by induction on $n$. If $n=3$, it follows immediately from  Lemma \ref{lemma:sobolevEmbedding} that $\mathcal M_3\colon H^1(\R^3,\Bigwedge^*\R^3) \to L^2(\R^3,\Bigwedge^*\R^3)$ is bounded.  For a general $n$, let $D^\dR$ be the standard de Rham operator $d+d^*$ acting on $\Bigwedge^*\R^n$. Recall that there is isometry
	 	\begin{equation*}
	 	\Psi=(\Psi_{\mathrm{even}},\Psi_{\mathrm{odd}})\colon 
	 	C^\infty((0,\infty),\Omega^*\mathbb S^{n-1})\oplus C^\infty((0,\infty),\Omega^*\mathbb S^{n-1})
	 	\longrightarrow \Omega^\ast(\R^n),
	 \end{equation*}
	 where  
	 \begin{equation*}
	 	\Psi_{\mathrm{even}}\colon C^\infty((0,\infty),\Omega^*\mathbb S^{n-1})\longrightarrow \Omega^{\mathrm{even}} \R^n,~
	 	\omega_p\longmapsto\begin{cases}
	 		r^{p-\frac{n-1}{2}}\omega_p,&\text{ if $p$ is even}\\
	 		r^{p-\frac{n-1}{2}}\omega_p\wedge dr,&\text{ if $p$ is odd}
	 	\end{cases}
	 \end{equation*}
 	and
 \begin{equation*}
 	\Psi_{\mathrm{odd}}\colon C^\infty((0,\infty),\Omega^*\mathbb S^{n-1})\longrightarrow \Omega^{\mathrm{odd}} \R^n,~
 	\omega_p\longmapsto\begin{cases}
 		r^{p-\frac{n-1}{2}}\omega_p,&\text{ if $p$ is odd}\\
 		r^{p-\frac{n-1}{2}}\omega_p\wedge dr,&\text{ if $p$ is even}
 	\end{cases}
 \end{equation*}
	 as in line \eqref{eq:coordinateChange}, where $\mathbb S^{n-1}$ is the unit $(n-1)$-sphere. After conjugating with $\Psi$,  the de Rham operator  becomes 
	 \begin{equation}\label{eq:DdR}
	 		\Psi^*D^{\dR}\Psi=\begin{pmatrix}
	 	0&-\frac{\partial}{\partial r}\\
	 	\frac{\partial}{\partial r}&0
	 \end{pmatrix}+\frac 1 r\begin{pmatrix}
	 	0&P\\P&0
	 \end{pmatrix}.
	 \end{equation}
	 as in line \eqref{eq:deRhamAfterConj}, where $P$ is equal to the de Rham operator on $\mathbb S^{n-1}$ up to a bounded endomorphism (cf. line \eqref{eq:linkOperator}).
	 
	Since $C_c^\infty(\R^n-\{0\},\Bigwedge^*\R^n)$ is dense in $H^1$, it suffices to show that there is $C>0$ such that
	 $$\|\mathcal M_n\mathcal M_{n-1}\cdots \mathcal M_{3}\varphi\|_{L^2}\leq C\|\varphi\|_{H^1}$$
	 for all $\varphi\in C_c^\infty(\R^n-\{0\},\Bigwedge^*\R^n)$.  
	 
	 We first show that there is $C_1>0$ such that
	 \begin{equation}\label{eq:1/rP}
	 	\Big\|\frac 1 r\begin{pmatrix}
	 		0&P\\P&0
	 	\end{pmatrix}\Psi^*\varphi\Big\|_{L^2}
	 	\leq C_1\|\varphi\|_{H^1}.
	 \end{equation} 
	By definition of the Sobolev $H^1$-space, the $L^2$-norm of $\frac{\partial}{\partial r}\varphi$ is bounded by the $H^1$-norm of $\varphi$. Note that 
	 $$\Psi^*\frac{\partial}{\partial r}\Psi(\omega_p)=\frac{\partial}{\partial r}\omega_p+\frac{1}{r}(p-\frac{n-1}{2})\omega_p,$$
	 for any degree $p$ form  $\omega_p$ in $C^\infty((0,\infty),\Omega^*\mathbb S^{n-1})$.
	  Since multiplication by $1/r$ defines a bounded linear map from $H^1$ to $L^2$ by Lemma \ref{lemma:sobolevEmbedding}, once we write  $\Psi^\ast \varphi$ as a sum of differential forms of homogeneous degrees and apply the above equation, we see that  there is $C_2>0$ such that
	 \begin{equation}\label{eq:d/dr}
	 	 \|\frac{\partial}{\partial r}(\Psi^\ast \varphi)\|_{L^2}\leq C_2\|\varphi\|_{H^1},
	 \end{equation}
	   As the $L^2$-norm $D^\dR\varphi$ is bounded by the $H^1$-norm of $\varphi$, we obtain \eqref{eq:1/rP} from line \eqref{eq:DdR} and \eqref{eq:d/dr}. By definition of the $L^2$-norm in the cylindrical coordinates, we have	   
	 \begin{equation}\label{eq:1/rPP}
		\int_0^\infty \frac{1}{r^2}\big\|\begin{psmallmatrix}
			0&P\\P&0
		\end{psmallmatrix}(\Psi^*\varphi (r))\big\|^2_{L^2}\, dr = \Big\|\frac 1 r\begin{pmatrix}
	 	0&P\\P&0
	 \end{pmatrix}\Psi^*\varphi\Big\|_{L^2}^2\leq C_1^2\|\varphi\|_{H^1}^2
	 \end{equation}

	 Let $\mathbb S^{n-1}_k=\mathbb S^{n-1}\cap \Sigma_k$ for $k<n$. Then the operator $\Psi^*\mathcal M_k\Psi$ acts on the space $C^\infty((0,\infty),\Omega^*\mathbb S^{n-1}) \oplus C^\infty((0,\infty),\Omega^*\mathbb S^{n-1})$ as multiplying by the function\footnote{Strictly speaking, $\dist(x,\mathbb S^{n-1}_k)$ is the distance measured in $\R^n$ instead of $\mathbb S^{n-1}$. But the function $\dist_{\mathbb S^{n-1}}(- ,\mathbb S^{n-1}_k)$ is Lipschitz equivalent to $\dist(- ,\mathbb S^{n-1}_k)$. Hence it suffices to work with $\dist(x,\mathbb S^{n-1}_k)$ instead of $\dist_{\mathbb S^{n-1}}(x,\mathbb S^{n-1}_k)$ in the above induction argument. }  $1/\dist(x,\mathbb S^{n-1}_k)$.  For brevity, we will still denote $\Psi^*\mathcal M_k\Psi$ by $\widehat {\mathcal M}_k$. By the induction hypothesis, there is $C_3>0$ such that 
	 $$\|\mathcal M_{n-1}\cdots \mathcal M_{3}\psi\|_{L^2} \leq C_3\|\psi\|_{H^1}$$
	 for all $\psi\in H^1(\mathbb S^{n-1},\Bigwedge^*\mathbb S^{n-1}). $
	 By the   G\r{a}rding's inequality  for the operator $P$ on $\mathbb S^{n-1}$  (cf. Proposition \ref{prop:norm-equivalent}),  there is $C_4>0$ such that
	 $$\big\|\psi\big\|_{H^1}^2\leq C_4(\big\|\psi\big\|^2_{L^2}+\big\|P\psi\big\|^2_{L^2})$$
	 for all $\psi\in H^1(\mathbb S^{n-1},\Bigwedge^*\mathbb S^{n-1})$. It follows that there exists $C>0$ such that 
	 \begin{align*}
	 	&\big\|\mathcal M_n\mathcal M_{n-1}\cdots \mathcal M_{3}\varphi\big\|_{L^2}^2=\Big\|\frac 1 r\widehat{\mathcal M}_{n-1}\cdots \widehat{\mathcal M}_{3}(\Psi^*\varphi)\Big\|_{L^2}^2\\
	 	=&\int_0^\infty \frac{1}{r^2}\big\|\widehat{\mathcal M}_{n-1}\cdots \widehat{\mathcal M}_{3}(\Psi^*\varphi(r))\big\|_{L^2}^2\, dr\\
	 	\leq &C^2_3C_4\int_0^\infty \Big(\frac{1}{r^2} \big\|\Psi^*\varphi(r)\big\|^2_{L^2}+\frac{1}{r^2}\big\|\begin{psmallmatrix}
	 		0&P\\P&0
	 	\end{psmallmatrix}(\Psi^*\varphi(r))\big\|^2_{L^2}\Big)dr\\
= &C^2_3C_4\int_0^\infty \Big(\frac{1}{r^2}\big\|\varphi(r)\big\|^2_{L^2}+\frac{1}{r^2}\|\begin{psmallmatrix}
			0&P\\P&0
		\end{psmallmatrix}(\Psi^*\varphi(r))\|^2_{L^2}\Big)dr\\
	 	\leq & C\|\varphi\|^2_{H^1},
	 \end{align*}
for all $\varphi\in C_c^\infty(\R^n-\{0\},\Bigwedge^*\R^n)$,   where the last line follows from  Lemma \ref{lemma:sobolevEmbedding} and line \eqref{eq:1/rPP}. This finishes the proof.
\end{proof}

			Now we are ready to prove Theorem \ref{thm:ess-sa}: the essential self-adjointness of the twisted Dirac operator $D_B$ acting on $S_N\otimes f^\ast S_M$ under appropriate comparison conditions on dihedral angles. 
			
\begin{proof}[Proof of Theorem \ref{thm:ess-sa}]
	By Proposition \ref{prop:norm-equivalent}, $D$ is a closable symmetric operator and its minimal domain (i.e., the domain of its closure) is  $H^1(N,S_N\otimes f^*S_M;B)$. By the discussion at the beginning of Section \ref{sec:conic}, to determine whether $D_B$ is essentially self-adjoint, we can localize our computation on small neighborhoods of points in the interiors of faces of codimensions $\geq 1$. Our general strategy is to prove the theorem by induction on the codimensions of singular strata. 
				
   The essential self-adjointness of $D_B$ near codimension one  but away codimension two faces is classical and standard. So let us only consider the essential self-adjointness of $D_B$ near faces of codimension two or higher. 
				
	Suppose $x$ is a point in the interior of a codimension two face $\overbar F$ of $N$. Near $x$, by applying a smooth cut-off function, we may localize the verification of the essential-adjointness of $D_B$ in a sufficiently small neighborhood of $x$. By the definition of manifolds with polyhedral boundary, the tangent cone at $x$ is $\R^{n-2} \times \fiber$, where $n=\dim N$ and $\fiber$ is a flat Euclidean polyhedral corner in $\R^2$, that is,  $\fiber$ is the region enclosed by two straight rays passing through the origin of $\R^2$. Similarly, the tangent cone at $f(x)$ is $\R^{m-2} \times \mathbb G$, where $m=\dim M$ and $\mathbb G$ is a flat Euclidean polyhedral corner in $\R^2$. Let $W$ be a small neighborhood of $x$ in the codimension two face $\overbar F$. Let $\mathcal N(W)$ be the normal bundle of $W$ in $N$.  We denote by $\mathcal N_\delta(W)$   the subspace of $\mathcal N(W)$ where each fiber is the $\delta$-ball of the normal vector space. The image of $\mathcal N_\delta(W)$ under the exponential map in $N$ gives a small neighborhood of $x$ in $N$, which we denote by $Z$.   Note that $Z$ inherits a natural fiber bundle structure from that of the normal bundle $\mathcal N(W)$, where each fiber of the projection map $Z\to W$ is a two dimensional asymptotically Euclidean  polyhedral corner. Let  $T_vZ \subset TZ$ be the vertical subbundle that  consists of tangent vectors that are tangential to the fiber of the fiber bundle $Z\to W$. As pointed out in Remark \ref{remark:innernormal}, the inner normal vectors of the codimension one faces of $Z$ may not lie in the vertical bundle $T_vZ$ in general. On the other hand, at each point $y$ in the base space  $W$,  the inner normal vectors of the codimension one faces of $Z$  at $y$ actually lie in the vertical bundle $(T_vZ)_y$ at $y$. In particular, we may choose a smooth two dimensional smooth subbundle $\mathcal V \subset TZ$ such that 
	\begin{enumerate}[label=(2-\alph*)]
		\item at any point $z$ in a codimension one face of $Z$, the vector space $\mathcal V_z$ at $z$ contains the unit inner normal vector of that codimension one face at $z$, and 
		\item $\mathcal V$ is isomorphic to $T_vZ$ via a smooth bundle isometry that is asymptotically the identity map as $r \to 0$, where $r$ is the radial variable of each fiber of the fiber bundle $Z\to W$.  In particular, $\mathcal V$ coincides with $T_vZ$ over $W = W\times \{0\}\subset Z$.
	\end{enumerate}	
	By the definition of polytope maps (cf. Definition \ref{def:polytopeMap}), $f(x)$ is also an interior point of a codimension two face $F$ of $M$. Let $U$ be a small neighborhood of $f(x)$ in $F$. Similarly, the normal bundle $\mathcal N(U)$ of $U$ in $M$ induces a fiber bundle structure on a small neighborhood $Y$ of $f(x)$ in $M$. We may also choose a smooth two dimensional subbundle $\widehat{\mathcal V}\subset TY$ with properties that are analogous to those of $\mathcal V$ above. In particular, we have the following decomposition 
	\[  (S_N\otimes f^\ast S_M)|_Z = S_{\mathcal V\oplus f^\ast \widehat{\mathcal V} } \otimes S_{\mathcal V^\perp \oplus f^\ast (\widehat{\mathcal V})^\perp  }. \]

	Now consider the tangent cone $\mathbb R^{n-2}\times \fiber$ at $x$ and the tangent cone  $\mathbb R^{m-2}\times \mathbb G$ at $f(x)$ above.  The associated spinor bundle $S_{T\mathbb F \oplus T\mathbb G} \otimes S_{T\mathbb R^{n-2}\oplus f^\ast T\mathbb R^{m-2}}$ over $\mathbb R^{n-2}\times \fiber$ carries a natural local boundary condition $B_o$ determined by the unit inner normal vectors of codimension one faces of $\fiber$ and $\mathbb G$ (cf. Definition \ref{def:boundarycondition}). Recall that  the dihedral angles of $N$  and the corresponding dihedral angles of $M$ satisfy either the strict comparison inequality \eqref{eq:dihedralstrict} throughout the given codimension two face $\overbar F$, or  the equality  \eqref{eq:dihedralequal} throughout the given codimension two face $\overbar F$. It follows from Lemma \ref{lemma:smoothEquivalence} and Remark \ref{remark:smoothEqual} that  there exists a smooth invertible bundle map
 \begin{equation}\label{cd:bundlemap}
	\begin{CD}
	S_N\otimes f^\ast S_M = S_{\mathcal V\oplus f^\ast \widehat{\mathcal V} } \otimes       S_{\mathcal V^\perp \oplus f^\ast (\widehat{\mathcal V})^\perp  }  @>\varPhi>>  S_{T\mathbb F \oplus T\mathbb G} \otimes S_{T\mathbb R^{n-2}\oplus f^\ast T\mathbb R^{m-2}}  \\
		@VVV  @VVV \\
			Z @>\varPhi|_Z>>   \mathbb R^{n-2}\times \mathbb F
   \end{CD}
\end{equation}
 such that
 \begin{enumerate}[label=(2-\roman*)]
	\item $\varPhi|_Z$ is a diffeomorphism from $Z$ to its image in $\mathbb R^{n-2} \times \fiber$;  
	\item $\varPhi$ maps $S_{\mathcal V\oplus f^\ast \widehat{\mathcal V} }$ to $S_{T\mathbb F \oplus T\mathbb G}$ and maps $S_{\mathcal V^\perp \oplus f^\ast (\widehat{\mathcal V})^\perp  }$ to $S_{T\mathbb R^{n-2}\oplus f^\ast T\mathbb R^{m-2}}$;
	\item $\varPhi$ maps the local boundary condition $B$ on $S_N\otimes f^\ast S_M$ to the local boundary condition $B_o$ on $S_{T\mathbb F \oplus T\mathbb G} \otimes S_{T\mathbb R^{n-2}\oplus f^\ast T\mathbb R^{m-2}} $; 
	\item \label{item:approx} by choosing the neighborhood $Z$ of $x$ to be sufficiently small so that both the base space $W$ and the fiber are sufficiently small,\footnote{that is, if we choose $W$ and  the number $\delta$ in $\mathcal N_\delta(W)$ to be sufficiently small}  $\varPhi$ becomes arbitrarily close to the identity map. More precisely, let $\varepsilon_0$ be some fixed constant that is less than  the minimum of the injective radii of $N$ at $x$ and of $M$ at $f(x)$. Suppose $D$ (resp. $\slashed D$) is the associated Dirac operator on $Z$ (resp. $\mathbb R^{n-2}\times \fiber$) subject to the local boundary condition $B$ (resp. $B_o$). The operator $\varPhi D\varPhi^\ast$ is a first order differential operator defined on $\mathbb B_{\varepsilon_0} \times \fiber_\delta$ that is symmetric with respect to the boundary condition $B_o$, where $ \mathbb B_{\varepsilon_0}$ is the open ball of radius $\varepsilon_0$ centered at the origin of $\mathbb R^{n-2}$ and $\fiber_\delta$ is the intersection of $\fiber$ with the open ball of radius $\delta$ centered at the origin of $\mathbb R^2$. By rescaling the metrics on $N$ and $M$ by $1/\delta$, the tangent cones $\mathbb R^{n-2}\times \fiber$ at $x$ and  $\mathbb R^{m-2}\times \mathbb G$ at $f(x)$ before and after the rescaling  are canonically identical. Let $\varPhi_\delta$ be a smooth bundle map similar to the map $\varPhi$ above but with respect to the new rescaled metrics on $N$ and $M$.  Correspondingly, we denote by $D_\delta$  the associated Dirac operator on $Z$ subject to the local boundary condition $B$ under the new rescaled metrics on $N$ and $M$. When $\delta$ is sufficiently small, each operator $\varPhi_\delta D_\delta \varPhi^\ast_\delta$ is well defined on $\mathbb B_1\times \fiber_1$, where $ \mathbb B_1$ is the open ball of radius $1$ centered at the origin of $\mathbb R^{n-2}$ and $\fiber_1$ is the intersection of $\fiber$ with the open ball of radius $1$ centered at the origin of $\mathbb R^2$.  We may choose $\Phi_\delta$ so that  $\varPhi_\delta D_\delta \varPhi^\ast_\delta$ converges smoothly to $\slashed D$ on $\mathbb B_1\times \fiber_1$ as $\delta \to 0$ in the sense that the coefficients of $\varPhi_\delta D_\delta \varPhi^\ast_\delta$ (as a first order differential operator) converges smoothly and uniformly  over $\mathbb B_1\times \fiber_1$ to the coefficients of $\slashed D$.
 \end{enumerate}   
	 Now  it follows that  
 \[ \mathbf T_\delta = \varPhi_\delta D_\delta \varPhi^\ast_\delta  - \slashed D \] 
 becomes arbitrarily small relative to $\slashed D$ on $\mathbb B_1\times \fiber_1$ as $\delta \to 0$, that is, there exist numbers $a_\delta$ and $b_\delta$ (depending on $\delta$) such that $b_\delta \to 0$ as $\delta\to 0$ and 
 \[ \|\mathbf T_\delta (\varphi)\|^2 \leq a_\delta\|\varphi\|^2 + b_\delta\|\slashed D\varphi\|^2  \]
 for all $\varphi \in C_c^\infty(\mathbb B_1\times \fiber_1, E; B_o)$, where $C_c^\infty(\mathbb B_1\times \fiber_1, E; B_o)$ is the space of all smooth sections that are compactly supported in $B_1\times \fiber_1$ and satisfy the boundary condition $B_o$ at the codimension one faces of $\fiber_1$. It follows from that the Kato-Rellich perturbation theorem\footnote{For the convenience of the reader, we recall the Kato-Rellich perturbation theorem here. Suppose $\mathbf D$ is essentially self-adjoint acting on a Hilbert space $H$. Let $\mathbf T$ be a symmetric operator whose domain contains the domain $\dom(\mathbf D)$ of $\mathbf D$. If there exist numbers $a$ and $b$	with $b<1$ such that for all $u\in \dom(\mathbf D)$, 
	\[ \|\mathbf Tu\|^2\leq a \|u\|^2 + b\|\mathbf Du\|^2. \] Then $\mathbf D+ \mathbf T$ is also essentially self-adjoint. More precisely, let $\overbar {\mathbf D}$ and $\overbar {\mathbf T}$ be the closure of $\mathbf D$ and $\mathbf T$. Then under the above conditions, the domain of the closure $\overbar {\mathbf T}$ of $\mathbf T$ contains the domain $\dom (\overbar {\mathbf D})$ and $\overbar {\mathbf D} + \overbar {\mathbf T}$ is self-adjoint on $\dom (\overbar {\mathbf D})$. } that $\varPhi_\delta D_\delta \varPhi^\ast_\delta$ is (locally) essentially self-adjoint if $\slashed D$ is (locally) essentially self-adjoint. Since $\Phi_\delta$ is a smooth invertible bundle map (which is locally diffeomorphic on the base spaces), it follows that $D_\delta$ is (locally) essentially self-adjoint if and only if $\varPhi_\delta D_\delta \varPhi^\ast_\delta$ is (locally) essentially self-adjoint.\footnote{If $\varphi\in L^2(\mathbb B_1\times \mathbb F_1,E)$ and $D_\delta \varphi\in L^2(\mathbb B_1\times \mathbb F_1,E)$, then $(\varPhi_\delta^*)^{-1}\varphi$ and $\varPhi_\delta D_\delta \varPhi^*_\delta(\varPhi_\delta^*)^{-1}\varphi$ are also in $L^2(\mathbb B_1\times \mathbb F_1,E)$. Therefore, if $\varPhi_\delta D_\delta \varPhi^*_\delta$ is essentially self-adjoint, which implies that $(\varPhi_\delta^*)^{-1}\varphi\in H^1(\mathbb B_1\times \mathbb F_1,E;B_o)$, then $\varphi$ is also in the $H^1$-space. This shows that if $\varPhi_\delta D_\delta \varPhi^*_\delta$ is essentially self-adjoint, then $D_\delta$ is also essentially self-adjoint.} 
 As $D$ is just a rescaling of $D_\delta$,   we have reduced the verification of essentially self-adjointness of $D_B$ near faces of codimension two (but away from codimension three) to a standard product case $\slashed{D}_{B_o}$, where the latter is indeed essentially self-adjoint by Lemma \ref{lemma:essensa-jumpanglewithf}. As a consequence, we have verified that $D_B$ is essentially self-adjoint near faces of codimension two (but away from codimension three).  Moreover, since $\varPhi_\delta$ is smooth, it preserves the Sobolev $H^1$ spaces. It follows that the domain of $D_B$ (near faces of codimension two) is $H^1(N, S_N\otimes f^\ast S_M; B)$.

    The proof of the essentially self-adjointness of $D_B$ near faces of codimension $\geq 3$ is very similar to the codimension two case, except that the analogue of the bundle map $\varPhi$ (from line \eqref{cd:bundlemap}) will no longer be smooth in general. We will apply Lemma \ref{lemma:sobolevEmbedding} and Lemma \ref{lemma:iteratedSobolev} to take care of this extra technical difficulty. 

	We shall retain the same notation as the codimension two case. Suppose $x$ is a point in the interior of a codimension three face $\overbar F$ of $N$.   By the definition of manifolds with polyhedral boundary, the tangent cone at $x$ is $\R^{n-3} \times \fiber$, where $n=\dim N$ and $\fiber$ is a flat Euclidean polyhedral corner in $\R^3$. Similarly, the tangent cone at $f(x)$ is $\R^{m-3} \times \mathbb G$, where $m=\dim M$ and $\mathbb G$ is a flat Euclidean polyhedral corner in $\R^3$. Let $W$ be a small neighborhood of $x$ in the codimension three face $\overbar F$. Let $\mathcal N(W)$ be the normal bundle of $W$ in $N$.  We denote by $\mathcal N_\delta(W)$   the subspace of $\mathcal N(W)$ where each fiber is the $\varepsilon$-ball of the normal vector space. The image of $\mathcal N_\delta(W)$ under the expoential map in $N$ gives a small neighborhood of $x$ in $N$, which we denote by $Z$.   Note that $Z$ inherits a natural fiber bundle structure from that of the normal bundle $\mathcal N(W)$, where each fiber of the projection map $Z\to W$ is a three dimensional asymptotically Euclidean  polyhedral corner. Let  $T_vZ \subset TZ$ be the vertical subbundle that  consists of tangent vectors that are tangential to the fiber of the fiber bundle $Z\to W$. Similar to the codimension two case above,  we may choose a smooth three dimensional smooth subbundle $\mathcal V \subset TZ$ such that the following hold. 
	\begin{enumerate}[label=(3-\alph*)]
		\item At any point $z$ in a codimension one face of $Z$, the vector space $\mathcal V_z$ at $z$ contains the unit inner normal vector of that codimension one face at $z$. Note that if $z$ happens to lie in the interior of a codimension two face,  the vector space $\mathcal V_z$ at $z$ contains both the unit inner normal vectors of the codimension one faces meeting at $z$. Similarly, if $z$ lies in the interior of a codimension three face (which is $W$ in the current case),  the vector space $\mathcal V_z$ at $z$ contains  the unit inner normal vectors of all  codimension one faces meeting at $z$. 
		\item $\mathcal V$ is isomorphic to $T_vZ$ via a smooth bundle isometry that is asymptotically the identity map as $r \to 0$, where $r$ is the radial variable of each fiber of the fiber bundle $Z\to W$.  In particular, $\mathcal V$ coincides with $T_vZ$ over $W = W\times \{0\}\subset Z$. 
		\item Near any point $z$ in the interior of a codimension two face of $Z$, the bundle $\mathcal V$ itself admits a two dimensional smooth subbundle $\mathcal V_{(2)}$ that satisfies the conditions (2-a) and (2-b) as in the codimension two case.  
	\end{enumerate}	
    A similar construction applies to a small neighborhood $Y$ of $f(x)$ in $M$ to produce a fiber bundle structure of $Y$ over $U$ and  a smooth three dimensional subbundle $\widehat{\mathcal V} \subset TY$ with properties analogous to (3-a), (3-b) and (3-c) above, where $U$ is a small neighborhood of $f(x)$ in a codimension three face $F$ of $M$. In particular, near any point $y$ in the interior of a codimension two face of $Y$, the bundle $\widehat{\mathcal V} $ itself admits a two dimensional smooth subbundle $\widehat{\mathcal V}_{(2)}$ that satisfies the conditions (2-a) and (2-b) as in the codimension two case.  

	Let $S_r(W)_w$  be the sphere of radius $r$ in the normal vector space $\mathcal N(W)_w$ over $w\in W$.  Since $N$ is a manifold with polyhedral boundary, the exponential map $\exp_w \colon \mathcal N(W)_w \to N$ is only defined on a partial region of $\mathcal N(W)_w$. In particular, the image of $S_r(W)_w$ in $N$ under the exponential map $\exp_w$ is a (possibly curved) polygon (with possibly curved boundary). We denote by $Q_r(W)_w$ the preimage in the normal vector space  $\mathcal N(W)_w$ of this polygon with respect to the exponential map  $\exp_w$.   Now rescale the set $Q_r(W)_w \subset \mathcal N(W)_w$ by $r^{-1}$ and denote the resulting set by $r^{-1}Q_r(W)_w$. Suppose $\mathbb R^{n-3}\times \fiber_w$ is the tangent cone (as a subset of $T_wZ$) at $w$  and $\link(\fiber_w)$ is the link of $\fiber_w$ at radius $1$. Then 
	 $r^{-1}Q_r(W)_w$ converges to $\link(\fiber_w)$, as $r\to 0$. The collection $\{Q_r(W)_w\}_{w\in W}$ forms a fiber bundle over $W$, which will be denoted by $Q_r(W)$. The collection $\{\link(\fiber_w)\}_{w\in W}$ also forms a fiber bundle over $W$, which will be denoted by $\link(\fiber)(W)$. 
	 
	 Consider the tangent cones $\mathbb R^{n-3}\times \fiber$ at $x$. It follows from the above discussion that there exists a polytope map (cf. Definition \ref{def:polytopeMap}) $\psi\colon Z\to \mathbb R^{n-3}\times \fiber$ such that the following hold. 
	 \begin{enumerate}[label=($\alpha$\arabic*)]
		\item $\psi$ is a fiber bundle map fits into the following diagram of fiber bundles 
		\[ \begin{CD}
		   Z @>\psi>>   \mathbb R^{n-3}\times \fiber \\
		   @VVV  @VVV \\
		   W @>\psi|_W>>  \mathbb R^{n-3} 
	   \end{CD}\]
	   where $\psi|_W$ is a diffeomorphism\footnote{For example, choose $\psi|_W$ to be the inverse of the exponential map $\exp_x\colon T_xW = \mathbb R^{n-3} \to W$, as long as $W$ is sufficiently small.} from $W$ to its image in $\mathbb R^{n-3}$.  
	   \item  For each $r>0$, $\psi$ is a smooth diffeomorphism from $Q_r(W)$ to $\mathbb R^{n-3}\times \link(\fiber)_r$, where $\link(\fiber)_r$ is the link of $\fiber$ at radius $r$. Here we have identified $Z$ as a subspace of $\mathcal N(W)$ under the exponential map, hence there is no confusion to view $Q_r(W)$ as part of $Z$.  
	   \item $\psi$ is \emph{fiberwise asymptotically conical} in the following sense. For each $r$, we view  $\psi \colon Q_r(W) \to \mathbb R^{n-3}\times \link(\fiber)_r$ as a map 
		\[ \psi \colon  r^{-1} Q_r(W) \to \mathbb R^{n-3}\times r^{-1}\link(\fiber)_r = \mathbb R^{n-3}\times \link(\fiber) \]
	     which will be denoted by $\psi_r$. Recall that $r^{-1} Q_r(W)_w$ converges to $\link(\fiber_w)$ as $r\to 0$. We say $\psi$ is fiberwise asymptotically conical if $\psi_r$ uniformly converges to a smooth fiber bundle map $\link(\fiber)(W) \to \mathbb R^{n-3}\times \link(\fiber)$.  
 	 \end{enumerate}
	Similar constructions clearly also apply to the neighborhood $Y$ of $f(x)$ in $M$ and the tangent cone $\mathbb R^{m-3}\times \mathbb G$ at $f(x)$. 
	
    Consider the associated spinor bundle $S_{T\mathbb F \oplus T\mathbb G} \otimes S_{T\mathbb R^{n-3}\oplus f^\ast T\mathbb R^{m-3}}$ over $\mathbb R^{n-3}\times \fiber$ with a natural local boundary condition $B_o$ determined by the unit inner normal vectors of codimension one faces of $\fiber$ and $\mathbb G$ (cf. Definition \ref{def:boundarycondition}).  Recall that  the dihedral angles of $N$  and the corresponding dihedral angles of $M$ satisfy either the strict comparison inequality \eqref{eq:dihedralstrict} throughout each given codimension two face, or  the equality  \eqref{eq:dihedralequal} throughout each given codimension two face. It follows from Lemma \ref{lemma:smoothEquivalence}  and Remark \ref{remark:smoothEqual} that there exists an invertible bundle map 
	\begin{equation}\label{cd:bundlemap3}
		\begin{CD}
		S_N\otimes f^\ast S_M = S_{\mathcal V\oplus f^\ast \widehat{\mathcal V} } \otimes       S_{\mathcal V^\perp \oplus f^\ast (\widehat{\mathcal V})^\perp  }  @>\Psi>>  S_{T\fiber\oplus T\mathbb G} \otimes S_{T\mathbb R^{n-3}\oplus f^\ast T\mathbb R^{m-3}}  \\
			@VVV  @VVV \\
				Z- W @>\psi>>   \mathbb R^{n-3}\times \interior{\fiber}
	   \end{CD}
	\end{equation}
	  such that 
	 \begin{enumerate}[label=(3-\roman*)]
		\item $\psi\colon (Z- W) \to \mathbb R^{n-3}\times \interior{\fiber}$ is the restriction of the map $\psi \colon Z\to  \mathbb R^{n-3}\times \fiber$ above, 
		\item $\Psi$ maps $S_{\mathcal V\oplus f^\ast \widehat{\mathcal V} }$ to $S_{T\mathbb F \oplus T\mathbb G}$ and maps $S_{\mathcal V^\perp \oplus f^\ast (\widehat{\mathcal V})^\perp  }$ to $S_{T\mathbb R^{n-3}\oplus f^\ast T\mathbb R^{m-3}} $;
		\item along each $Q_r(W)$, $\Psi$ is a smooth invertible bundle morphism that maps $S_{\mathcal V_{(2)}\oplus f^\ast \widehat{\mathcal V}_{(2)} }$ to $S_{T\link(\fiber) \oplus T\link(\mathbb G)}$ and  $S_{\mathcal V_{(2)}^\perp \oplus f^\ast (\widehat{\mathcal V}_{(2)}^\perp)  }$ to $S_{T\link(\fiber)^\perp \oplus f^\ast T\link(\mathbb G)^\perp } $, where $T\link(\fiber)$ and $T\link(\mathbb G)$ are the tangent bundle of $\link(\fiber)$ and $\link(\mathbb G)$;
		\item $\Psi$ is fiberwise asymptotically conical\footnote{$\Psi$ can be chosen to be fiberwise asymptotically conical because all the relevant geometric data and constructions are fiberwise asymptotically conical. We should point out that $\Psi$ is usually not well-defined at the base space $W = W\times \{0\}\subset Z$ when the three dimensional polyhedral corner  $\fiber$ has  more than three faces. However, this does not affect our asymptotically conical type analysis. } in the sense of Condition ($\alpha$3) above; 
		\item $\Psi$ maps the local boundary condition $B$ on $S_N\otimes f^\ast S_M$ to the local boundary condition $B_o$ on $S_{T\mathbb F \oplus T\mathbb G} \otimes S_{T\mathbb R^{n-3}\oplus f^\ast T\mathbb R^{m-3}}$; 
		\item similar to Property \ref{item:approx} of the map $\varPhi$ in the codimension two case, by choosing the neighborhood $Z$ of $x$ to be sufficiently small,   $\Psi$ becomes arbitrarily close to the identity map in the following sense. By rescaling the metrics on $N$ and $M$ by $1/\delta$, the tangent cone $\mathbb R^{n-3}\times \fiber$ at $x$ and  the tangent cone $\mathbb R^{m-3}\times \mathbb G$ at $f(x)$ before and after the rescaling are canonically identical. Let $\Psi_\delta$ be an invertible bundle map similar to the map $\Psi$ above but with respect to the new rescaled metrics on $N$ and $M$.  Correspondingly, we denote by $D_\delta$  the associated Dirac operator on $Z$ subject to the local boundary condition $B$ under the new rescaled metrics on $N$ and $M$. When $\delta$ is sufficiently small, each operator $\Psi_\delta D_\delta \Psi^\ast_\delta$ is well defined on $\mathbb B_1\times \interior {\fiber}_1$, where $ \mathbb B_1$ is the open ball of radius $1$ centered at the origin of $\mathbb R^{n-3}$ and $\interior{\fiber}_1$ is the intersection of $\interior{\fiber}$ with the open ball of radius $1$ centered at the origin of $\mathbb R^3$. Recall that $\interior{\fiber}$ stands for $\fiber$ with its vertex removed. Here is a key difference that distinguishes the case of codimension three  from the codimension two case. Since the map $\Psi_\delta$ is fiberwsie asymptotically conical but generally only smooth along the (asymptotic) links, the commutator $[D_\delta, \varPhi^\ast_\delta]$ is a bundle endomorphism  of the form $r^{-1} \mathcal A_\delta$, where $r$ is the radial variable of the fiber $\interior{\fiber}$ and $\mathcal A_\delta$ is a uniformly bounded bundle endomorphism.  Let $\slashed D$ be the associated Dirac operator on $\R^{n-3} \times \fiber$. We may choose $\Psi_\delta$ so that  $\Psi_\delta \Psi^\ast_\delta D_\delta $ converges to $\slashed D$ on $\mathbb B_1\times \interior{\fiber}_1$ and the supremum norm of $\mathcal A_\delta$ goes to zero, as $\delta \to 0$. 
	 \end{enumerate} 
	 
     Since $\dim \fiber =3$ in the current case,  it follows from Lemma \ref{lemma:sobolevEmbedding} that $r^{-1}\mathcal A_\delta$ defines a bounded linear map $H^1(\mathbb B_1\times \interior {\fiber}_1, E) \to L^2(\mathbb B_1\times \interior {\fiber}_1, E)$, where $E = S_{T\fiber\oplus T\mathbb G} \otimes S_{T\mathbb R^{n-3}\oplus f^\ast T\mathbb R^{m-3}}$. It follows from the same argument in the codimension two case that, as long as $\delta$ is sufficiently small, we may apply the Kato-Rellich perturbation theorem to reduce the verifcation of the essential self-adjointness of $D_B$ near faces of codimension three (but away from codimension four) to a standard product case. This finishes the codimension three case.

	Now the case of codimension $\geq 4$ follows by an exact same argument as the codimension three case. We shall be brief. An analogue of $\Psi$ (as in the codimension three case) is constructed by first defining it near codimension two faces of $Z$, then inductively extending  it in an asymptotically conical way to faces of higher codimensions. The commutator $[D, \Psi^\ast]$ is a bundle endomorphism that is of the iterated form  as the term  $\prod_{k=3}^n\frac{1}{\dist(x,\Sigma_k)}$ from Lemma \ref{lemma:iteratedSobolev}. Finally, we apply Lemma \ref{lemma:iteratedSobolev} (instead of Lemma \ref{lemma:sobolevEmbedding}) together with the Kato-Rellich perturbation theorem to reduce the verifcation of the essential self-adjointness of $D_B$ to a standard product case. This complete the proof of the theorem. 
\end{proof}

			\section{A gluing formula for Fredholm index}\label{sec:gluing}
		In this section, we  prove a gluing formula for the Fredholm index of Dirac operator, which is a key ingredient for computing the Fredholm index of the Dirac operator $D_B$ on $S_N\otimes f^\ast S_M$.
			\begin{definition}\label{def:extension}
				Let $N$ be a manifold with polyhedral boundary and $X$ a codimension one face of $N$. Let $E$ be a Hermitian vector bundle over $N$. Suppose that $B$ is a local boundary condition on $\partial N$ for sections of $E$. We say a section $\varphi$ of $E$ over $X$ satisfies the boundary condition $B_{\partial X}$ on $\partial X$ if $\varphi$ satisfies  the boundary condition $B$ on  $\partial X\subset \partial N$. We call $B_{\partial X}$ the restriction of $B$ to $\partial X$ (cf. Figure \ref{fig:restriction}). Let $\tau$ be the trace map from $H^1(N,E)$ to $H^{1/2}(X,E)$. We say $X$ satisfies the extension property if there exists a bounded linear map
				$$\mathcal E\colon H^{1/2}(X,E;B_{\partial X})\to H^1(N,E; B_{\partial N - X})$$
				such that $\tau \mathcal E=1$, where $B_{\partial N - X}$ stands for the boundary condition $B$ on the codimension one faces in $\partial N$ except $X$.  
			\end{definition}
		
		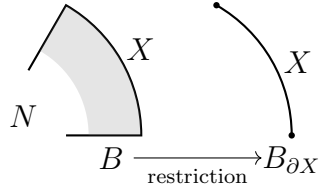
\begin{figure}[h]
			\begin{tikzpicture}[scale=1]
				\draw[thick]  (2+2,0) arc (0:60:2);
				\draw (1.6,-0.3) node {$B$};
				\draw[->] (1.9,-0.3) -- (3.6,-0.3) node [midway, below] {\tiny{restriction}};
				\draw ({2+2.3*cos(25)},{2.3*sin(25)}) node {$X$};
				\fill (2+2,0) circle (1.2pt);
				\fill ({2+2*cos(60)},{2*sin(60)}) circle (1.2pt);
				\draw (2+2,0) node [anchor=north] {$B_{\partial X}$};
				\draw[thick] (1,0) -- (2,0) arc (0:60:2) -- ({1*cos(60)},{1*sin(60)});
				\draw ({2.3*cos(30)},{2.3*sin(30)}) node {$X$};
				\draw ({0.5*cos(30)},{0.5*sin(30)}) node {$N$};
				\fill[gray,opacity=0.2] (1.3,0) -- (2,0) arc (0:60:2) -- ({1.3*cos(60)},{1.3*sin(60)}) arc(60:0:1.3);
			\end{tikzpicture}
		\caption{Restriction of the boundary condition $B$.}
		\label{fig:restriction}
		\end{figure}

			\begin{proposition}\label{prop:gluing}
				Let $N$ be a Riemannian manifold with polyhedral boundary and $E$ a Clifford bundle over $N$. Let $N=N_1\cup_X N_2$ be a partition of $N$, where $N_1$ and $N_2$ are Riemannian manifolds with polyhedral boundary, and $X$ is a common codimension one face of $N_1$ and $N_2$. Suppose that $B$ is a local boundary condition on $\partial N$ for sections of $E$.  Let $E_1$ be a subbundle of $E$ over $X$ and $E_2 = E_1^\perp$  its orthogonal complement. 
				Let $Q$ be the orthogonal projection to $E_1$ and $Q^\perp=1-Q$. For each $s\in [0, 1], $ consider  the following boundary condition $B_s$ for $E$ on the boundary of the disjoint union  $N_1\sqcup  N_2$: 
				\begin{enumerate}[label=$(\roman*)$]
					\item $s Q(\varphi_1)=Q(\varphi_2)$ and $ Q^\perp(\varphi_1)=s Q^\perp(\varphi_2)$ along $X$, 
					\item $\varphi_1$ (resp. $\varphi_2$) satisfies the boundary condition $B$ along all codimension one faces of $N_1$ (resp. $N_2$)  other than $X$, 
				\end{enumerate}
				for all smooth sections $\varphi_1$ and $\varphi_2$ of $E$ over $N_1$ and $N_2$ respectively.   
				Assume that 
				\begin{enumerate}[label=$(\arabic*)$]
					\item the associated Dirac operator $D$ on $E$ over $N_1\sqcup N_2$ subject to the boundary condition $B_s$ is an essentially self-adjoint Fredholm operator with domain $H^1(N_1\sqcup N_2,E;B_s)$, 
					\item $X$ satisfies the extension property with respect to both $N_1$ and $N_2$ in the sense of Definition \ref{def:extension},
				\end{enumerate}
				then the Fredholm index of $D_{B_s}$ is constant for $s\in[0,1]$.
			\end{proposition}
			\begin{remark}
				Note that when $s=0$, the boundary condition $B_0$ is precisely the boundary condition $B$ on $N$. When $s=1$, $B_1$ consists of two local boundary conditions $\mathbf B_1$ and $\mathbf B_2$ for $N_1$ and $N_2$ separately. More precisely, $\mathbf B_1$ (resp. $\mathbf B_2$) is the boundary condition for $N_1$ (resp. $N_2$) that coincides with $B$ along codimension one faces of $N_1$ (resp. $N_2$) other than the face $X$ and is determined by\footnote{We  say a section $\psi$ of $E$ over $N_i$ satisfies  the boundary condition on $X$ determined by the subbundle $E_i$ if the restriction of $\psi$ on $X$ is a section of $E_i$. } the subbundle $E_1$ (resp. $E_2$) along the face $X$. In particular, it follows from  Proposition \ref{prop:gluing} that 
				$$\ind(D^N_B)=\ind(D^{N_1}_{\mathbf B_1})+\ind(D^{N_2}_{\mathbf B_2}).$$
			\end{remark}
			\begin{proof}[Proof of Proposition \ref{prop:gluing}]
				
				Let $\tau$ be the Sobolev trace map from $H^1(N,E)$ to $X$.	By assumption, for each $s\in [0, 1]$, we have a Fredholm operator
				$$D_{B_s} \colon H^1(N_1\sqcup N_2,E;B_s)\to L^2(N_1\sqcup  N_2,E).$$
				Since $X$ satisfies the extension property with respect to both $N_1$ and $N_2$ in the sense of Definition \ref{def:extension},  there exists a bounded linear map 
				$$\mathcal E_i\colon H^{1/2}(X,E;B_{\partial X})\to H^1(N_i,E; B)$$
				such that $\tau \mathcal E_i = 1$, where $B_{\partial X}$ is the restriction of the boundary condition $B$ on $\partial X$ (cf. Definition \ref{def:extension}). For $s, t\in [0,1 ]$, consider the following bounded linear map
				\begin{align*}
					T_{s,t}\colon H^1(N_1\sqcup  N_2,E;B_s)&\to H^1(N_1\sqcup  N_2,E;B_t)\\
					(\psi_1,\psi_2)&\mapsto (\psi_1+(t-s)\mathcal E_1Q^\perp\tau\psi_2, \psi_2+(t-s)\mathcal E_2Q\tau \psi_1 ). 
				\end{align*}
				Note that $T_{s,t}$ is invertible with its inverse given by $T_{t,s}$. Moreover, $T_{s, t}$ is continuous in both $s$ and $t$ with respect to the operator norm. In particular, the following operators 
				\[  H^1(N_1\sqcup N_2,E;B_1)\xrightarrow{\ T_{1, s} \ }  H^1(N_1\sqcup N_2,E;B_s)\xrightarrow{\ D_{B_s} \ } L^2(N_1\sqcup N_2,E) \] 
				is continuous with respect to the operator norm. By the homotopy invariance of Fredholm index, the index of $D_{B^s}\circ T_{1, s}$ is constant. Since $T_{1, s}$ is invertible, we have
				\[ \ind(D_{B_s}) = \ind(D_{B_s}\circ T_{1, s}). \] 
				To summarize, we have shown that $\ind(D_{B_s})$ remains constant for  $s\in [0, 1]$.  This finishes the proof.	
			\end{proof}
			\begin{remark}\label{rk:gluingBs}
				Here is a prototypical situation to which we shall apply Proposition \ref{prop:gluing}. 	Let $f\colon N\to M$ be a spin polytope map between manifolds with polyhedral boundary, and $B$ the boundary condition given in Definition \ref{def:boundarycondition}. Suppose that $N$ decomposes into two submanifolds (with polyhedral boundary) along a hypersurface $X$  such that\footnote{For example, $X$ is composed of fiberwise links of a given codimension $k$ face of $N$.} (cf. Figure \ref{fig:gluing})
				\begin{enumerate}[label=(\roman*)]
					\item $X$ is orthogonal to all codimension one faces of $N$  that intersect $X$, 
					\item $X$ separates $N$ into $N_1$ and $N_2$.  
				\end{enumerate}
				Moreover, assume the image $f(X)$ is a hypersurface in $M$ that  decomposes $M$ into $M_1\cup_{f(X)} M_2$ and satisfies the same properties in $M$. Let $E_1$ be the subbundle of $S_N\otimes f^*S_M$ such that its sections $\varphi$ satisfy the condition 
				$$(\overbar \epsilon\otimes\epsilon)(\overbar c(\overbar e_n)\otimes c(e_n))\varphi=-\varphi  \textup{  over } X, $$
				where $\overbar e_n$ and $e_n$ are unit inner normal vectors of $X$ and $f(X)$ (with respect to $N_1$ and $M_1$). As long as $f\colon N \to M$ satisfies the assumptions of Theorem \ref{thm:ess-sa}, then the above decompositions of $N$ and $M$ satisfy all the conditions of Proposition \ref{prop:gluing}. In particular, it follows from Lemma \ref{lemma:essensa-Bs} for codimension two faces near $X$ and Claim \ref{claim:>=1/2} for higher codimensional faces near $X$ that  Condition (1) of Proposition \ref{prop:gluing} holds for the above decompositions. 
			\end{remark}
			\begin{figure}[h]
			\begin{tikzpicture}[scale=1]
				\draw[thick] (0,0) -- (3,0);
				\draw (1.5,-0.3) node {$B$};
				\draw (0.8,2) node {$B$};
				\draw[thick] (0,0) -- ({3*cos(60)},{3*sin(60)});
				\draw[smooth,domain=3.3:3.6, thick] plot({\x},{0.05*sin(3000*\x)+1.5});
				\draw[->, thick] (3.6,1.5) -- (3.8,1.5);
				\draw[thick] ({3*cos(60)+4.5},{3*sin(60)}) -- ({2*cos(60)+4.5},{2*sin(60)}) arc (60:0:2) -- ({7.5},0);
				\draw[thick] ({4.5-0.6*cos(30)},{-0.6*sin(30)}) -- ({6.5-0.6*cos(30)},{-0.6*sin(30)}) arc (0:60:2) -- ({4.5-0.6*cos(30)},{-0.6*sin(30)});
				\draw[very thick]  ({6.5-0.6*cos(30)},{-0.6*sin(30)}) arc (0:60:2);
				\draw[thick]  ({2*cos(60)+4.5},{2*sin(60)}) arc (60:0:2);
				\draw ({1.5*cos(30)},{1.5*sin(30)}) node {$N$};
				\draw ({2.5*cos(30)+4.5},{2.5*sin(30)}) node {$N_{1}$};
				\draw ({0.9*cos(30)+4.5},{0.9*sin(30)}) node {$N_{2}$};
				\draw ({1.7*cos(30)+4.5},{1.7*sin(30)}) node {$X$};
			\end{tikzpicture}
			\caption{The cutting construction and the gluing formula}
			\label{fig:gluing} 
		\end{figure}
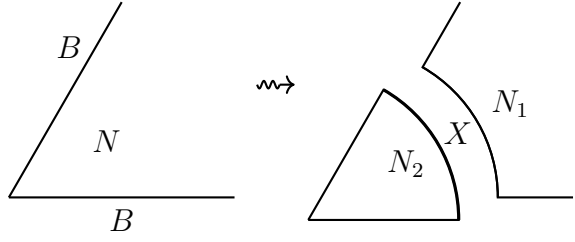
		
			\begin{lemma}\label{lemma:extension}
				Let $X$ be a compact Riemannian manifold with polyhedral boundary  and $N = X\times [0, 1]$ equipped with the direct product metric. Assume that $E$ is a Clifford bundle over $N$ which is flat along $[0,1]$. Let $D$ be the Dirac operator acting on $E$. Let $B$ be a boundary condition on $E$ over $\partial N$. Assume that the restriction $B$ on $\partial X\times [0, 1]$ is constant along $[0,1]$. 
				If
				\begin{enumerate}[label=$(\arabic*)$]
					\item $D$ subject to the boundary condition $B$ is essentially self-adjoint and its  domain is $H^1(N,E;B)$, and
					\item the Dirac operator $D_X$ along $X$ subject to the boundary condition  $B_{\partial X}$ (cf. Definition \ref{def:extension}) is  essentially self-adjoint and its domain is $H^1(X,E;B_{\partial X})$,
				\end{enumerate}
				then $X = X\times \{0\}$ satisfies the extension property in the sense of Definition \ref{def:extension}.
			\end{lemma}
			\begin{proof}
				By assumption, we have
				$$D=\alpha\frac{\partial}{\partial t}+D_X,$$
				where $t$ is the coordinate for $[0,1]$, where $\alpha$ is the Clifford multiplication of $\partial/\partial t$. 
				
				The eigenfunctions $\{\phi_\lambda\colon \phi_\lambda \in H^1(X,E; B_{\partial X}) \}$   of $D_X$ subject to the boundary condition $B_{\partial X}$  form  a complete orthonormal basis of $L^2(X,E)$. We define the Sobolev space
				$H^s(X,E; B_{\partial X})$ for $0\leq s\leq 1$ by the following norm
				$$\|\phi\|_s^2\coloneqq \sum_{\lambda}(1+|\lambda|^{2s})|\langle\phi,\phi_\lambda\rangle|^2$$
				which coincides with the usual Sobolev $H^s$-norm by standard elliptic estimates. 
				
				Let $\chi$ be a smooth cut-off function such that $\chi(t)=1$ if $0\leq t\leq 1/3$ and $\chi(t)=0$ if $t\geq 2/3$. Choose a positive smooth Schwartz function $\Psi$ on $\R$ with $\Psi(0)=1$. We define  
				$$ \mathcal  E(\phi)\coloneqq \chi(t)\cdot \Psi(tD_X)\phi$$
				for each $\phi\in H^{1/2}(X,E; B_{\partial X})$. 
				Note that
				\begin{align*}
					\|\Psi(tD_X)\phi\|^2
					=\sum_{\lambda}\int_0^1 \Psi^2(t\lambda) \cdot  |\langle\phi,\phi_\lambda\rangle|^2dt
					\leq \|\Psi\|^2_{L^\infty(\R)}\cdot \big\|\phi\big\|_{1/2}^2,
				\end{align*}
				\begin{align*}
					\big\|\frac{\partial}{\partial t}\Psi(tD_X)\phi\big\|^2
					=&\sum_{\lambda}\int_0^1(\Psi'(t\lambda))^2\lambda^2 |\langle\phi,\phi_\lambda\rangle|^2dt
					=\sum_{\lambda}|\lambda|\cdot|\langle\phi,\phi_\lambda\rangle|^2\int_0^{|\lambda|}(\Psi'(s))^2ds\\
					\leq &\|\Psi'\|^2_{L^2(\R)}\cdot \big\|\phi\big\|_{1/2}^2,
				\end{align*}
				and
				\begin{align*}
					\|D_X\Psi(tD_X)\phi\|^2
					=&\sum_{\lambda}\int_0^1 \Psi^2(t\lambda) \lambda^2 |\langle\phi,\phi_\lambda\rangle|^2dt
					=\sum_{\lambda}|\lambda|\cdot|\langle\phi,\phi_\lambda\rangle|^2\int_0^{|\lambda|}(\Psi(s))^2ds\\
					\leq &\|\Psi\|^2_{L^2(\R)}\cdot \big\|\phi\big\|_{1/2}^2.
				\end{align*} 
				Moreover, since $\Psi(0)=1$, we have $\tau\mathcal E\phi=\phi,$  where  $\tau$ is the trace map from $H^1(N,E)$ to $H^{1/2}(X,E)$. 
				It follows that $\mathcal E$ defines a bounded linear map 
				\[ \mathcal E \colon H^{1/2}(X,E; B_{\partial X}) \to H^1(N, E; B_{\partial N - X})  \] 
				such that $\tau\mathcal E = 1$.  This finishes the proof.
			\end{proof}

\section{Fredholm index of twisted Dirac operators on manifolds with corners}\label{sec:fredindex}
					
In this section and the next section, we  compute the Fredholm index of the Dirac operator $D_B$ on $S_N\otimes f^\ast S_M$. The computation for the case of manifolds with polyhedral boundary is more subtle than the case of manifolds with corners.  In order to make our presentation more transparent, we shall mainly focus on the case of manifolds with corners in this section, and prove the general case of manifolds with polyhedral boundary in Section \ref{sec:index-poly}.   
					
		\begin{theorem}\label{thm:index}
		Let $(M, g)$ and $(N, \overbar{g})$ be two oriented compact Riemannian manifolds with corners.  Suppose   $f\colon N \to M$ is a spin polytope map (Definition \ref{def:spin} and Definition \ref{def:polytopeMap}).	Let $D_B$ be the Dirac operator on $S_N\otimes f^*S_M$ subject to the local  condition $B$ given in Definition \ref{def:boundarycondition}. In particular,  $D_B$ acts on  the space of sections  $C^\infty_0(N,S_N\otimes f^*S_M;B)$ given in Definition $\ref{def:smooth,H1}$.	
		For each pair of  codimension one faces $\overbar F_i, \overbar F_j$ of $N$, assume  the dihedral angles $\theta_{ij}(\overbar{g})$ of $N$ and $\theta_{ij}(g)$ of $M$ satisfy either 
		\begin{equation}\label{eq:strictdihedral}
			0<\theta_{ij}(\overbar{g})_z < \theta_{ij}(g)_{f(z)} < \pi,  \textup{ for all }   z \in \overbar F_i\cap \overbar{F}_j
		\end{equation}  
	or 
	\begin{equation}\label{eq:equaldihedral}
		0<\theta_{ij}(\overbar{g})_z =  \theta_{ij}(g)_{f(z)} < \pi, \textup{ for all }   z \in \overbar F_i\cap \overbar{F}_j. 
	\end{equation}   Let $\overbar{D}_B$ be the unique self-adjoint extension of $D_B$ as in Theorem $\ref{thm:ess-sa}$. Then the Fredholm index of $\overbar{D}_B$ is 
							\[ \ind(\overbar{D}_B)=\deg_{\widehat A}(f)\cdot \chi(M),\]
							where $\deg_{\widehat A}(f)$ is $\widehat A$-degree of $f$ and $\chi(M)$ is Euler characteristic of $M$. 
						\end{theorem}

 As a preparation, we first prove several technical lemmas that will be crucial in the proof of Theorem \ref{thm:index}. Let $(N,\overbar g)$ and $(M,g)$ be Riemannian manifolds with corners.
Let $\overbar g_n$ (resp. $g_n$) be a sequence of Riemannian metrics on $N$ (resp. $M$) such that $\overbar g_n\to \overbar g$ (resp. $g_n\to g$) in $C^2$-norm.	
Let $B_n$ be a mixed  local boundary condition of $(S_N\otimes f^*S_M, \overbar g_n\otimes g_n)$  as follows:   on each codimension one face of $N$, $B_n$  is equal to either the boundary condition given in  Definition \ref{def:boundarycondition} or  the orthogonal complement of the boundary condition given in  Definition \ref{def:boundarycondition}.

\begin{definition}
	We say a sequence of boundary conditions $\{B_n\}$ above is of the same mixed type if on each codimension one face $\overbar F$ of $N$, either $B_n$ is always equal to the boundary condition given in  Definition \ref{def:boundarycondition} for all $n$, or $B_n$ is always equal to the orthogonal complement of the boundary condition given in  Definition \ref{def:boundarycondition} for all $n$. 
\end{definition} 

\begin{lemma}\label{lemma:solutionapproximate}
	Let $(N,\overbar g)$ and $(M,g)$ be even dimensional Riemannian manifolds with corners and $f\colon N\to M$  a  spin corner map. Let $\overbar g_n$ (resp. $g_n$) be a sequence of Riemannian metrics on $N$ (resp. $M$) such that $\overbar g_n\to \overbar g$ (resp. $g_n\to g$) in $C^2$-norm. Suppose $\{B_n\}$ is a sequence of local boundary conditions of $(S_N\otimes f^*S_M, \overbar g_n\otimes f^\ast g_n)$  of the same mixed type such that $B_n$ converges to a   local boundary condition $B$ of $(S_N\otimes f^*S_M, \overbar g\otimes g)$  in $C^1$-norm  on each codimension one face of $N$. 	Let $D_n$ be the Dirac operator on $(S_N\otimes f^*S_M, \overbar g_n\otimes g_n)$ over $N$,  and $D$ the Dirac operator on $(S_N\otimes f^*S_M, \overbar g\otimes g)$ over $N$. Let $R$ and $A$ be the bounded  bundle endomorphisms of $S_N\otimes f^*S_M$  appearing in the following Bochner-type  formula for $D$ (cf. Section \ref{sec:boundaryconditions}): 
	\begin{equation}\label{eq:homogenousIneq}
		\int_N|D\psi|^2=\int_N |\nabla\psi|^2+\int_N \langle R\psi,\psi\rangle+\int_{\partial N}\langle A\psi,\psi\rangle,
	\end{equation}
	for all $\psi\in C_0^\infty(N,S_N\otimes f^*S_M;B)$, cf. Definition \ref{def:smooth,H1}. If we have 
	\begin{enumerate}[label=$(\arabic*)$]
		\item both $R$ and $A$ are pointwise nonnegative, and
		\item for each $n$, there exists a nonzero $\varphi_n\in H^1(N,S_N\otimes f^*S_M;B_n)$ such that $D_n\varphi_n=0$,
	\end{enumerate}  
	then there exists a nonzero $\varphi\in H^1(N,S_N\otimes f^*S_M;B)$ such that  $D\varphi=0$ and $\nabla\varphi=0$. 	
\end{lemma}

\begin{proof}
	Let $\widetilde g_n=\overbar g_n\otimes f^*g_n$ be the metric induced by $\overbar g_n$ and $g_n$ on $S_N\otimes f^*S_M$, and similarly $\widetilde g = \overbar g\otimes f^*g$. Without loss of generality, we may assume that the $L^2$-norm of $\varphi_n$ in $L^2(N,(S_N\otimes f^*S_M)_{\widetilde g_n})$ is equal to $1$.
	Let $R_n$ and $A_n$ be the bounded  bundle endomorphisms of $S_N\otimes f^*S_M$  appearing in the following Bochner-type  formula for $D_n$:
	\begin{equation}\label{eq:BochnerDn}		
		\int_N|D_n\psi|^2_{\widetilde g_n} =\int_N |\prescript{n}{}{\nabla}\psi|^2_{\widetilde g_n}+\int_N \langle R_n\psi,\psi\rangle_{\widetilde g_n}+\int_{\partial N}\langle A_n\psi,\psi\rangle_{\widetilde g_n},
	\end{equation}
	for all $\psi\in C_0^\infty(N,S_N\otimes f^*S_M;B_n)$, where $\prescript{n}{}{\nabla}$ is the connection on $S_N\otimes f^*S_M$ induced by the metrics $\overbar g_n$ and $g_n$. By assumption, we have $R_n \to R$ and $A_n \to A$ uniformly, as $n\to \infty$.  It follows that there exist $\varepsilon_n >0$ such that 
	\begin{align*}
		\int_N\big|\prescript{n}{}{\nabla}\varphi_n\big|^2_{\widetilde g_n} & = \int_N\big|D_n\varphi_n\big|^2_{\widetilde g_n} - \int_N \langle R_n\varphi_n,\varphi_n\rangle_{\widetilde g_n} - \int_{\partial N}\langle A_n\varphi_n,\varphi_n\rangle_{\widetilde g_n} \\
		&  \leq  \varepsilon_n\big(\int_N\big|\varphi_n\big|_{\widetilde g_n}^2+
		\int_{\partial N}\big|\varphi_n\big|_{\widetilde g_n}^2
		\big)\\
		&=\varepsilon_n\big(1+\int_{\partial N}\big|\varphi_n\big|_{\widetilde g_n}^2
		\big)
	\end{align*}
	and $\varepsilon_n \to 0$, as $n \to \infty$. Since $\overbar g_n\to \overbar g$ (resp. $g_n\to g$) in $C^2$-norm, the trace theorem on Sobolev spaces $H^1(N,(S_N\otimes f^*S_M)_{\widetilde g_n})$ holds uniformly in $n$. That is,  there exists $C>0$ independent of $n$ such that
	$$\int_{\partial N}\big|\varphi_n\big|_{\widetilde g_n}^2
	\leq C\big\|\varphi_n\big\|_{H^1(N,(S_N\otimes f^*S_M)_{\widetilde g_n})}^2
	=C(1 +\int_N\big|\prescript{n}{}{\nabla}\varphi_n\big|^2_{\widetilde g_n})$$
	for all $n$. It follows that
	$$\int_N\big|\prescript{n}{}{\nabla}\varphi_n\big|^2_{\widetilde g_n}\leq \frac{\varepsilon_n(1+C)}{1-\varepsilon_nC}\to 0, \text{ as }n\to\infty.$$
	Hence $\big\|\varphi_n\big\|_{H^1(N,(S_N\otimes f^*S_M)_{\widetilde g_n})}^2$ is uniformly bounded for all $n$.
	
	The Sobolev space $H^1(N,(S_N\otimes f^*S_M)_{\widetilde g})$ is norm-equivalent  to the Sobolev space $H^1(N,(S_N\otimes f^*S_M)_{\widetilde g_n})$, where the constants of the norm equivalences are uniformly bounded for all $n$. Therefore $\{\varphi_n\}$ is a bounded sequence of sections in $H^1(N,(S_N\otimes f^*S_M)_{\widetilde g})$, hence has a convergent subsequence in $L^2(N,(S_N\otimes f^*S_M)_{\widetilde g})$. Without loss of generality, we may assume that $\{\varphi_n\}$ converges to $\varphi$ in $L^2(N,(S_N\otimes f^*S_M)_{\widetilde g})$. Note that  $L^2(N,(S_N\otimes f^*S_M)_{\widetilde g})$ is norm-equivalent to $L^2(N,(S_N\otimes f^*S_M)_{\widetilde g_n})$ with uniformly bounded constants for the norm equivalences. It follows that $\|\varphi\|_{\widetilde g}=1$. In particular,  $\varphi$ is nonzero.

	Note that the $L^2$-norm of $\prescript{n}{}{\nabla}\varphi_n$ with respect to $\widetilde g$ also converges to zero. Since $\overbar g_n\to \overbar g$ (resp. $g_n\to g$) in $C^2$-norm,   there exist uniformly bounded endomorphisms $T_n$ such that
	$$\prescript{n}{}{\nabla}=\nabla+T_n,$$
	and $T_n\to 0$ uniformly, as $n\to \infty$. Since the $L^2$-norm of $\varphi_n$ with respect to $\widetilde g$ is uniformly bounded, it follows that 
	\[\lim_{n\to \infty} \int_{N}\big|\nabla\varphi_n\big|^2_{\widetilde g} = 0.  \]
	Therefore, $\{\varphi_n\}$ forms a Cauchy sequence in $H^1(N,(S_N\otimes f^*S_M)_{\widetilde g})$. Consequently,  $\varphi_n$ converges  to  $\varphi$ with respect to the $H^1$-norm. In particular, we have 
	\[ \int_{N}|\nabla\varphi|^2 =  \lim_{n\to \infty} \int_{N}|\nabla\varphi_n|^2 = 0,\]
	It follows that $\nabla \varphi =0$, hence $D\varphi = 0$. Now for each codimension one face $\overbar F$ of $N$, it follows from the trace theorem that  the restriction of $\varphi_n$  on $\overbar F$ converges  to the restriction  of $\varphi$ on $\overbar F$. This implies that  $\varphi\in H^1(N,S_N\otimes f^*S_M;B)$, since $B_n$ converges to $B$ on $\overbar F$ in $C^1$-norm.  
	This finishes the proof. 
\end{proof}

For example, line \eqref{eq:homogenousIneq} holds if $f$, $(N,\overbar g)$ and $(M,g)$ satisfy the assumptions of Theorem \ref{thm:extremal-rigid} either with or without the comparison on dihedral angles. This follows from approximating $H^1$-sections by smooth sections vanishing near codimension two singularities.
We emphasize the assumption of the non-negativity in line \eqref{eq:homogenousIneq} is essential for Lemma \ref{lemma:solutionapproximate} to hold. If $D_B$ were to only satisfy the usual G\r{a}rding's inequality as in Proposition \ref{prop:norm-equivalent}, then we would only be able to obtain a convergence in $L^2$ but not in $H^1$ in general. We also emphasize that Lemma \ref{lemma:solutionapproximate} does not require the essential self-adjointness of $D$ or $D_n$.

The following immediate consequence of Lemma \ref{lemma:solutionapproximate} will  be crucial in our proof of Theorem \ref{thm:index}.
\begin{corollary}\label{coro:nosolution}
	Let $(N,\overbar g)$ and $(M,g)$ be even dimensional Riemannian manifolds with corners and $f\colon N\to M$  a  spin corner map. Let $\overbar g_n$ (resp. $g_n$) be a sequence of Riemannian metrics on $N$ (resp. $M$) such that $\overbar g_n\to \overbar g$ (resp. $g_n\to g$) in $C^2$-norm. Suppose $\{B_n\}$ is a sequence of local boundary conditions of $(S_N\otimes f^*S_M, \overbar g_n\otimes f^\ast g_n)$  of the same mixed type such that $B_n$ converges to a   local boundary condition $B$ of $(S_N\otimes f^*S_M, \overbar g\otimes g)$  in $C^1$-norm  on each codimension one face of $N$. 	Let $D_n$ be the Dirac operator on $(S_N\otimes f^*S_M, \overbar g_n\otimes g_n)$ over $N$,  and $D$ the Dirac operator on $(S_N\otimes f^*S_M, \overbar g\otimes g)$ over $N$. Let $R$ and $A$ be the bounded  bundle endomorphisms of $S_N\otimes f^*S_M$  appearing in the following Bochner-type  formula for $D$ (cf. Section \ref{sec:boundaryconditions}): 
	\begin{equation}			
		\int_N|D\psi|^2=\int_N |\nabla\psi|^2+\int_N \langle R\psi,\psi\rangle+\int_{\partial N}\langle A\psi,\psi\rangle,
	\end{equation}
	for all $\psi\in C_0^\infty(N,S_N\otimes f^*S_M;B)$, cf. Definition \ref{def:smooth,H1}. If we have 
	\begin{enumerate}[label=$(\arabic*)$]
		\item both $R$ and $A$ are pointwise nonnegative, and
		\item there does not exist a nonzero element $\varphi\in H^1(N,S_N\otimes f^*S_M;B)$ such that $D\varphi=0$,
	\end{enumerate}   then there does not exist a nonzero element $\varphi_n\in H^1(N,S_N\otimes f^*S_M;B_n)$ such that $D_n\varphi_n=0$,  as long as $n$ is sufficiently large.
\end{corollary}

Now we prove the main theorem of this section (Theorem \ref{thm:index}). 

\begin{proof}[Proof of Theorem \ref{thm:index}]
	
For simplicity, we shall first prove the theorem under the assumption that, for all pairs of adjacent codimension one faces $\overbar F_i, \overbar F_j$ of $N$,   the dihedral angles $\theta_{ij}(\overbar{g})$ of $N$ and $\theta_{ij}(g)$ of $M$ satisfy the strict comparison 
\begin{equation*}
	0<\theta_{ij}(\overbar{g})_z < \theta_{ij}(g)_{f(z)} < \pi,  \textup{ for all }   z \in \overbar F_i\cap \overbar{F}_j.
\end{equation*}  
The proof can be easily adapted to prove the  general case where both the strict comparison \eqref{eq:strictdihedral} and the equality \eqref {eq:equaldihedral} of dihedral angles appear (but on different codimension two faces). 	We divide the proof into two main steps.
	
	\textbf{Step I.} We deform the manifolds $N$ and $M$ together with the map $f\colon N\to M$ to obtain a continuous family of maps $f_t\colon (N_t, \overbar g_t) \to (M_t, g_t)$, $t\in [0, 1]$, such that
	\begin{enumerate}[label=$(\arabic*)$]
		\item  $(N_0, \overbar g_0) = (N, \overbar g)$, $ (M_0, g_0) = (M, g)$ and $f_0 = f$, 
		\item for each $0<t<1$, the dihedral angles of $(N_t, \overbar g_t)$ is strictly less than the corresponding dihedral angles of $(M_t, g_t)$, 
		\item for $t=1$, the metric $\overbar g_1$ on $N_1$ and  the metric $g_1$ on $M_1$ are  of product structure near each codimension $k$ face, for all $k\geq 1$,  and the map $f_1 \colon (N_1, \overbar g_1) \to (M_1, g_1)$ is a fiberwise isometry of some flat convex Euclidean corner in $\mathbb R^k$.  More precisely,  for $\overbar F_\lambda$ be a  codimension $k$ face of $N_1$, let $W_\lambda$ be the open subset of $\overbar F_\lambda$ that is the complement of sufficiently small (closed) neighborhoods of all codimension $k+1$ faces. Then a tubular neighborhood of $W_\lambda$ in $N_1$ is of the form $W_\lambda \times \fiber$, where $\fiber$ is a corner in the standard Euclidean space $\mathbb R^k$ and  $\overbar g_1$ is a direct product metric. The metric $g_1$ on $M_1$ carries a similar structure near the corresponding  codimension $k$ face $F_\lambda$, which we denote by $V_\lambda\times \mathbb G$. Note that we require $W_\lambda \times \fiber$ and $V_\lambda\times \mathbb G$ to have the same fiber $\fiber$. Moreover, the map $f_1$ maps $W_\lambda \times \fiber$ to $V_\lambda\times \mathbb G$ and is an isometry along the fiber $\fiber$. In particular, we note that the corresponding dihedral angles of  $(N_1, \overbar g_1)$ and $  (M_1, g_1)$ are equal everywhere. 
	\end{enumerate} 
	For manifolds with corners, a family of maps $f_t\colon (N_t, \overbar g_t) \to (M_t, g_t)$ with the above properties can be achieved by first smoothly deforming the metric $g$ on $M$ to $g_t$ so that the dihedral angles of $(M, g_t)$
	are strictly greater than those of $g$, and smoothly deforming the metric $\overbar g$ on $N$ to $\overbar g_t$ so that the dihedral angles of $(N, \overbar g_t)$
	are strictly \emph{less} than those of $\overbar g$, for $t\in [0, 1/2]$. To find a family of metrics $\{g_t\}$ satisfying the above conditions, it amounts to choosing a  family of convex corners $\{\mathbb G_t\}_{0\leq t \leq 1}$ in $\mathbb R^k$  with the following properties: 
	\begin{itemize}
		\item $\mathbb G_0 = \mathbb G$,
		\item there exists a  family  of invertible linear maps $L_t\colon \mathbb R^k \to \mathbb R^k$ that is $C^2$-continuous with respect to $t$ and  $L_t$ maps $\mathbb G$ to $ \mathbb G_t$, 
		\item for each $t>0$,  the dihedral angles of $\mathbb G_t$ are strictly larger than the corresponding dihedral angles of $\mathbb G_0$. 
	\end{itemize}
	Since $\mathbb G$ is enclosed by precisely $k$ hyperplanes passing through the origin in $\mathbb R^k$,  it is clear that such a family of $\{\mathbb G_t\}_{0\leq t \leq 1}$ exists by deforming the corresponding  normal vectors of these hyperplanes.

	At $t= 1/2$, we may assume that  $(N, \overbar g_{\frac{1}{2}})$  and $(M, g_{\frac{1}{2}})$ are of product structure near each codimension $k$ face, for all $k\geq 1$,  in the sense of item (3) above. Furthermore, with the same notation as in item (3) above,  we may assume without loss of generality that the fiber $\fiber$ of $W_\lambda\times \fiber$  in $(N, \overbar g_{\frac{1}{2}})$ is contained in the corresponding fiber $\mathbb G$ of $V_\lambda\times \mathbb G$ in  $(M,  g_{\frac{1}{2}})$, where $\fiber$ and $\mathbb G$ are viewed as subsets of the standard Euclidean space $\mathbb R^k$.  Now we further deform the metric  $g_{\frac{1}{2}}$ on $M$ to $g_t$ with $t\in [1/2, 1]$ so that the dihedral angles of $(M, g_t)$
	are strictly greater than those of $(N, \overbar g_{\frac{1}{2}}) $ for $t\in [1/2, 1)$, and the fiber $(\mathbb G, g_{t})$ converges to $(\mathbb F, \overbar g)$ as $t\to 1$. Since we are only changing the metrics while keeping the underlying spaces fixed, we choose $f_t = f$ for all $t\in [0, 1]$. In particular, when $t=1$, the map $f_1$ near each codimension $k$ face is a fiberwise isometry in the sense of item (3) above.

	\textbf{Step II.} Let $D_t$ be the Dirac operator for associated to the data $f_t\colon (N_t, \overbar g_t) \to (M_t, g_t)$ and $B_t$ the corresponding boundary condition as given in Definition \ref{def:boundarycondition}. We will first show that the Fredholm index of $(D_t)_{B_t}$ remains constant for all $t\in [0, 1)$, via a homotopy argument. However, this homotopy argument a priori fails at $t=1$, due to the fact there is \emph{no} smooth map that converts a boundary condition with the strict dihedral angle comparison to a boundary condition with the equality of  dihedral angles  (cf.  Lemma \ref{lemma:smoothEquivalence} and Remark \ref{remark:smoothEqual}). To remedy this, we shall use a cutting-and-pasting argument to directly show that there exists a sufficiently small $\delta>0$ such that the Fredholm index of $(D_t)_{B_t}$ remains constant for all $t$ with $1-\delta\leq t\leq 1$.

	Now let us complete \textbf{Step II}.	Write for short $E_t=S_{N_t}\otimes f_t^*S_{M_t}$. Let $D_t$ be the Dirac operator acting on $E_t$, and $B_t$ the boundary condition of $E_t$ as in Definition \ref{def:boundarycondition}. By Theorem \ref{thm:ess-sa}, $D_t$ with boundary condition $B_t$ is essentially self-adjoint with domain $H^1(N_t,E_t;B_t)$. Let us first  show that $\ind((D_t)_{B_t})$ remains constant for $t\in [0,1)$. 
	
	Assume that $0\leq t_1<t_2<1$. Since the dihedral angles satisfy the  strict comparison inequality  \eqref{eq:strictdihedral} for both $t_1$ and $t_2$, there exists a bounded invertible map (cf. the proof of Theorem \ref{thm:ess-sa} and Lemma \ref{lemma:smoothEquivalence})
	\[ \Psi \colon L^2(N_{t_1}, S_{N_{t_1}}\otimes f^\ast_{t_1} S_{M_{t_1}}) \to  L^2(N_{t_2}, S_{N_{t_2}}\otimes f^\ast_{t_2} S_{M_{t_2}}) \] 
	such that $\Psi$ maps the boundary condition  $B_{t_1}$ to the boundary condition $B_{t_2}$. Recall that by construction $\Psi$ is induced by a bundle map that  may not be defined on faces with codimension $\geq 3$, but the bundle map is smooth near faces of codimension $\leq 2$ and is asymptotically conical near all faces of all codimensions.

	As long as $t_1$ and $t_2$ are sufficiently close to each other, by the same argument as in the proof of Theorem \ref{thm:ess-sa}, it follows from the Kato-Rellich perturbation theorem, Lemma \ref{lemma:sobolevEmbedding} and Lemma \ref{lemma:iteratedSobolev}   that  $(\Psi D_{t_1} \Psi^\ast)_{B_{t_2}} = \Psi (D_{t_1})_{B_{t_1}} \Psi^\ast $ is essentially self-adjoint. Furthermore, $\Psi$ maps $H^1(N_{t_1}, S_{N_{t_1}}\otimes f^\ast_{t_1} S_{M_{t_1}}; B_{t_1})$ to  $H^1(N_{t_2}, S_{N_{t_2}}\otimes f^\ast_{t_2} S_{M_{t_2}}; B_{t_2})$, and
	\[ \dom (\Psi D_{t_1} \Psi^\ast)_{B_{t_2}} = H^1(N_{t_2}, S_{N_{t_2}}\otimes f^\ast_{t_2} S_{M_{t_2}}; B_{t_2}). \]
	In particular,  $(\Psi D_{t_1} \Psi^\ast)_{B_{t_2}}$ and  $(D_{t_2})_{B_{t_2}}$  have the same domain,  and the difference $\Psi D_{t_1} \Psi^\ast - D_{t_2}$ as a linear operator 
	\[(\Psi D_{t_1} \Psi^\ast - D_{t_2}) \colon H^1(N_{t_2}, S_{N_{t_2}}\otimes f^\ast_{t_2} S_{M_{t_2}}; B_{t_2})\to L^2(N_{t_2}, S_{N_{t_2}}\otimes f^\ast_{t_2} S_{M_{t_2}}) \]
	is bounded such that its operator norm goes to zero, as $t_1$ approaches $t_2$. It follows that 
\begin{equation}\label{eq:deformation-equal}
	\ind((D_{t_1})_{B_{t_1}}) = \ind( (\Psi D_{t_1} \Psi^\ast)_{B_{t_2}}) = \ind((D_{t_2})_{B_{t_2}})
\end{equation}
	as long as $t_1$ and $t_2$ are sufficiently close. This proves that   $\ind((D_t)_{B_t})$ remains constant for $t\in [0,1)$.

	Now we shall use a cutting-and-pasting argument to  show that  the Fredholm index of $(D_t)_{B_t}$ remains constant for all $t$ with $1-\delta\leq t\leq 1$, where $\delta>0$ is some sufficiently small constant. For each $t\in [1-\delta, 1]$,  we decompose $N_t$ as follows. First we cut away each corner of $N_t$ with codimension $n$, which is isometric to a corner in $\R^n$, then cut away a neighborhood of each codimension $n-1$ face, which is isometric to the direct product of a corner in $\R^{n-1}$ and a line segment, and so on. Here a corner in $\mathbb R^k$ means the intersection of a small closed ball centered at the origin in $\mathbb R^k$ and the closed subset of $\mathbb R^k$ enclosed by $k$ hyperplanes passing through the origin. We denote such a direct product\footnote{Strictly speaking, $N_{t, \lambda}$ is the direct product of a corner in $\R^k$ and a part of $F_\lambda$ that is away from   the codimension $(k+1)$ faces.} of a codimension $k$ face $F_\lambda$  and a corner in $\mathbb R^k$ by $N_{t,\lambda}$.  In particular,  we have  the following decomposition
	$$N_t= N_{t,0}\cup \bigcup_{k=2}^n\bigcup_{\ |\lambda|=k}N_{t,\lambda},$$
	where $ N_{t,0}$ is the complement of $ \bigcup_{k=2}^n\bigcup_{ |\lambda|=k}N_{t,\lambda}$. See Figure \ref{fig:decomposition} for an illustration of this cutting process  in the case of dimension two.
	We apply a similar decomposition on $M_t$ near its faces of codimension $\geq 2$: 
	$$M_t= M_{t,0}\cup\bigcup_{k=2}^n\bigcup_{\ |\lambda|=k} M_{t,\lambda}$$
	such that $f_t$ maps $N_{t, \lambda}$ to $M_{t,\lambda}$ and $f_t$ is a fiberwise linear map that converges to the fiberwise identity map as $t\to 1$.

	Now we define local boundary conditions  $B_{t,\lambda}$ for $N_{t,\lambda}$ and $B_{t,0}$ for $N_{t,0}$. For each $N_{t, \lambda}$, its codimension one faces can be categorized into the following three types. 
	\begin{itemize}
		\item   The new codimension one face $X_{t,\lambda}$  where the cutting is performed to obtain $N_{t,\lambda}$. Note that $X_{t, \lambda}$ is orthogonal to all the original codimension one faces of  $N_t$. Moreover, $X_{t, \lambda}$ has a direct product structure where each fiber is a spherical simplex. In this case, we define the boundary condition $B_{t, \lambda}$  to be
		$$(\overbar\epsilon\otimes \epsilon)(\overbar c(\overbar e_n)\otimes c(e_n))\varphi=\varphi,$$
		where $\overbar e_n$ and $e_n$ are the unit inner normal vector fields of $X_{t,\lambda}$ and $f_t(X_{t,\lambda})$. Note that this boundary condition is the orthogonal complement of the boundary condition given in Definition \ref{def:boundarycondition}.
		\item A codimension one face that is part of a new codimension one face where the cutting is performed to obtain $N_{t, \eta}$ in the previous inductive steps with the codimension $|\eta| > |\lambda|$. On these codimension one faces, we  define the boundary condition $B_{t, \lambda}$  to be the usual boundary condition given in Definition \ref{def:boundarycondition}, that is, 
		$$(\overbar\epsilon\otimes \epsilon)(\overbar c(\overbar e_n)\otimes c(e_n))\varphi= - \varphi.$$
		\item A codimension one face that is  part of a original codimension one face of $N_t$. On these codimension one faces, we also define the boundary condition $B_{t, \lambda}$  to be the usual boundary condition given in Definition \ref{def:boundarycondition}, that is, 
		$$(\overbar\epsilon\otimes \epsilon)(\overbar c(\overbar e_n)\otimes c(e_n))\varphi= - \varphi.$$
	\end{itemize} 
	In particular, by construction, the boundary condition $B_{t, 0}$ for $N_{t, 0}$ is simply the usual boundary condition from Definition \ref{def:boundarycondition} on each codimension one face of $N_{t, 0}$.

	The dihedral angles of  $N_{t, \lambda}$ and $M_{t, \lambda}$ are either the original dihedral angles of $N_t$ and $M_t$, or $\pi/2$ where the new codimesion one faces appear. Therefore, it follows from the proof of Theorem \ref{thm:ess-sa} and Lemma \ref{lemma:essensa-jumpanglewithfmixed}  that the Dirac operator $D_{t, \lambda}$ on $S_{N_{t,\lambda}}\otimes f_t^\ast S_{M_{t,\lambda}}$ subject to the boundary condition $B_{t, \lambda}$ is  essentially self-adjoint with domain $H^1(N_{t,\lambda}, S_{N_{t,\lambda}}\otimes f_t^\ast S_{M_{t,\lambda}}; B_{t, \lambda})$. 
	
	Recall that each $X_{t, \lambda}$ has a direct product structure where each fiber is a spherical simplex. Near each $X_{t,\lambda}$,   we apply the change of coordinates from line \eqref{eq:coordinateChange} that conjugates the Dirac operator from polar coordinates to cylindrical coordinates:
	$$\begin{pmatrix}
		0&-1\\1&0
	\end{pmatrix}\frac{\partial}{\partial r}+\frac 1 r \begin{pmatrix}
		0&P\\P&0
	\end{pmatrix},$$
	where $r$ is the radial variable. The boundary condition $B_{t, \lambda}$ restricts to a boundary condition $B_{\partial X_{t, \lambda}}$ on  $\partial X_{t, \lambda}$ in the sense of Definition \ref{def:extension}.  It follows from  the proof of Lemma \ref{lemma:essensa-higherdim} (cf. Claim \ref{claim:essa}) that the operator $P$ subject to the boundary condition $B_{\partial X_{t, \lambda}}$  is essentially self-adjoint.  
	By Lemma \ref{lemma:extension}, if we view $X_{t, \lambda}$ as the boundary $X_{t, \lambda}\times \{0\}$ of the direct product $X_{t, \lambda}\times [0,1]$, then $X_{t, \lambda}$ satisfies the extension property in the sense of Definition \ref{def:extension}. On the other hand, since near $X_{t, \lambda}$, the radial variable  $r$ is uniformly bounded away from zero,  it follows that the $H^1$ space on $N_{t, \lambda}$  near $X_{t, \lambda}$ is equivalent to  $H^1$ space on $X_{t, \lambda}\times [0, 1]$ near $X_{t, \lambda}\times \{0\}$ via a smooth map. Therefore, $X_{t, \lambda}$ satisfies the extension property with respect to $N_{t, \lambda}$ in the sense of Definition \ref{def:extension}.  By the gluing formula from Proposition \ref{prop:gluing}, we have
	\begin{equation}\label{eq:sumOfIndex}
		\ind((D_t)_{B_t})=\ind((D_{t,0})_{B_{t, 0}})+\sum_{k=2}^n\sum_{|\lambda|=k}\ind((D_{t,\lambda})_{B_{t, \lambda} }).
	\end{equation}
	
	\begin{claim}\label{claim:vanish}
		There exists $\delta>0$ such that  $\ind((D_{t, \lambda})_{B_{t, \lambda}}) = 0$ for all $\lambda \neq 0$ and all $t\in [1-\delta, 1]$. 
	\end{claim}
	Recall that $f_t\colon N_{t, \lambda} \to M_{t, \lambda}$ is of the form   
	\[ f_t \colon W_\lambda \times \fiber_t  \to V_\lambda \times \mathbb G_t\] 
	such that $f_t$ is also of product type.  Hence we can reduce the computation to a single fiber $f_t\colon \fiber_t \to \mathbb G_t$. By the discussion at the beginning of Section \ref{sec:conic}, the vertical bundle along the fiber  can be canonically identified with the bundle of differential forms $\Bigwedge^\ast \R^k$, where $k$ is the dimension of $\fiber_t$.   We still denote the associated Dirac operator by $D_{t, \lambda}$ and the boundary condition by $B_{t, \lambda}$, since no confusion is likely to arise.  
	
	When $t=1$, $f_1$ is exactly a fiberwise isometry. Therefore  the Dirac operator $D_{1, \lambda}$  with boundary condition $B_{1,\lambda}$ satisfies the non-negativity conditions in line \eqref{eq:homogenousIneq}, cf. Proposition \ref{prop:D^2}. It follows that, if a section $\varphi\in H^1(\fiber_{1}, \Bigwedge^\ast  \R^k ;B_{1,\lambda})$ satisfies $D_{1, \lambda} (\varphi)=0$, then $\varphi$ is parallel and satisfies the boundary condition $B_{1,\lambda}$. In particular,  $\varphi$ is a differential form on $\fiber_1$ with constant coefficients. Now on each flat codimension one face of $\fiber_1$  (that is, except the spherical face $X_{1, \lambda}$), the boundary condition $B_{1, \lambda}$ implies  that $\varphi$ does \emph{not} contain the normal direction of that given face. By going through  all flat codimension one faces, we see that $\varphi$ has to be a degree zero differential form with constant coefficients, that is, a constant scalar. However, the boundary condition  $B_{1,\lambda}$ on the spherical face $X_{1,\lambda}$ is the orthogonal complement of the boundary condition given in Definition \ref{def:boundarycondition}, which implies that $\varphi$ has to contain the normal direction of $X_{t, \lambda}$. This forces $\varphi$ to be zero.  In conclusion, there is no nontrivial  section $\varphi \in H^1(\fiber_{1}, \Bigwedge^\ast  \R^k ;B_{1,\lambda})$ such that $D_{1, \lambda} (\varphi) =0$.  By Corollary \ref{coro:nosolution}, there exists $\delta>0$ such that there is no nontrivial  section $\varphi \in H^1(\fiber_{t}, \Bigwedge^\ast  \R^k ;B_{t,\lambda})$ such that $D_{t, \lambda} (\varphi) =0$ for all $t\in [1-\delta, 1]$. In particular, it follows that 
	\[ \ind((D_{t, \lambda})_{B_{t, \lambda}}) = 0\] for all $t\in [1-\delta, 1]$. This proves the claim. 
	
		\begin{figure}
		\begin{tikzpicture}[scale=1]
			\draw[thick] (0,0) -- (3,0);
			\draw[thick] (0,0) -- ({3*cos(60)},{3*sin(60)});
			\draw[smooth,domain=3.3:3.6, thick] plot({\x},{0.05*sin(3000*\x)+1.5});
			\draw[->, thick] (3.6,1.5) -- (3.8,1.5);
			\draw[thick] ({3*cos(60)+4.5},{3*sin(60)}) -- ({2*cos(60)+4.5},{2*sin(60)}) arc (60:0:2) -- ({7.5},0);
			\draw[thick] ({4.5-0.6*cos(30)},{-0.6*sin(30)}) -- ({6.5-0.6*cos(30)},{-0.6*sin(30)}) arc (0:60:2) -- ({4.5-0.6*cos(30)},{-0.6*sin(30)});
			\draw[ultra thick]  ({6.5-0.6*cos(30)},{-0.6*sin(30)}) arc (0:60:2);
			\draw ({1.5*cos(30)},{1.5*sin(30)}) node {$N_t$};
			\draw ({2.5*cos(30)+4.5},{2.5*sin(30)}) node {$N_{t, 0}$};
			\draw ({0.9*cos(30)+4.5},{0.9*sin(30)}) node {$N_{t, \lambda}$};
			\draw ({1.7*cos(30)+4.5},{1.7*sin(30)}) node {$X$};
		\end{tikzpicture}
		\caption{The cutting construction on $N_t$. The face with the boundary condition $B^\perp$ is highlighted by thick line.}	\label{fig:decomposition}
	\end{figure}
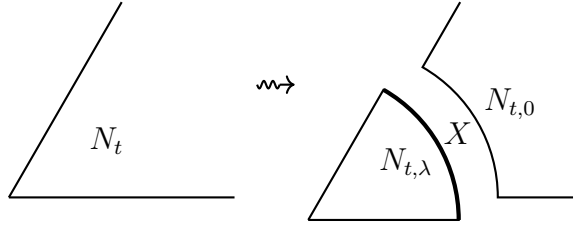
	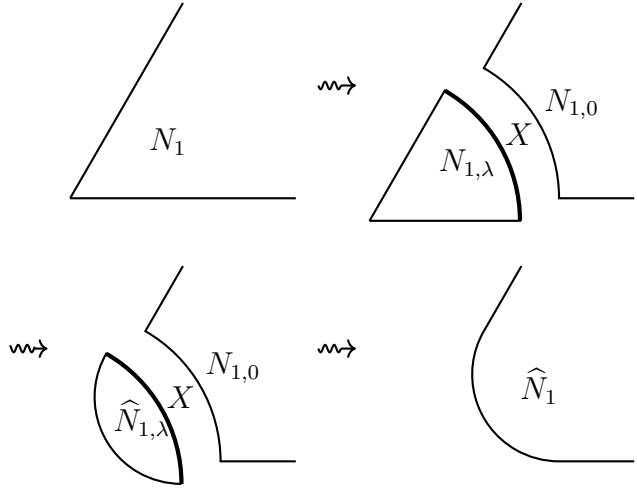
\begin{figure}
		\begin{tikzpicture}[scale=1]
			\draw[thick] (0,0) -- (3,0);
			\draw[thick] (0,0) -- ({3*cos(60)},{3*sin(60)});
			\draw[smooth,domain=3.3:3.6, thick] plot({\x},{0.05*sin(3000*\x)+1.5});
			\draw[->, thick] (3.6,1.5) -- (3.8,1.5);
			\draw[thick] ({3*cos(60)+4.5},{3*sin(60)}) -- ({2*cos(60)+4.5},{2*sin(60)}) arc (60:0:2) -- ({7.5},0);
			\draw[thick] ({4.5-0.6*cos(30)},{-0.6*sin(30)}) -- ({6.5-0.6*cos(30)},{-0.6*sin(30)}) arc (0:60:2) -- ({4.5-0.6*cos(30)},{-0.6*sin(30)});
			\draw[ultra thick]  ({6.5-0.6*cos(30)},{-0.6*sin(30)}) arc (0:60:2);
			\draw[smooth,domain=-0.8:-0.47, thick] plot({\x},{0.05*sin(3000*\x)-2});
			\draw[->,thick] (-0.47,1.5-3.5) -- (-0.3,1.5-3.5);
			\draw[thick] ({3*cos(60)},{3*sin(60)-3.5}) -- ({2*cos(60)},{2*sin(60)-3.5}) arc (60:0:2) -- ({3},-3.5);
			\draw[thick]({2-0.6*cos(30)},{-0.6*sin(30)-3.5}) arc (0:60:2) arc (-210:-90:{2/sqrt(3)});
			\draw[ultra thick]({2-0.6*cos(30)},{-0.6*sin(30)-3.5}) arc (0:60:2);
			\draw[smooth,domain=3.3:3.6, thick] plot({\x},{0.05*sin(3000*\x)-2});
			\draw[->,thick] (3.6,-2) -- (3.8,-2);
			\draw[thick] ({3*cos(60)+4.5},{3*sin(60)-3.5}) -- ({2*cos(60)+4.5},{2*sin(60)-3.5}) arc (-210:-90:{2/sqrt(3)}) -- (7.5,-3.5);
			\draw ({1.5*cos(30)},{1.5*sin(30)}) node {$N_1$};
			\draw ({2.5*cos(30)+4.5},{2.5*sin(30)}) node {$N_{1, 0}$};
			\draw ({0.9*cos(30)+4.5},{0.9*sin(30)}) node {$N_{1, \lambda}$};
			\draw ({1.7*cos(30)+4.5},{1.7*sin(30)}) node {$X$};
			\draw ({2.5*cos(30)},{2.5*sin(30)-3.5}) node {$N_{1, 0}$};
			\draw ({1.1*cos(30)},{1.1*sin(30)-3.5}) node {$\widehat N_{1, \lambda}$};
			\draw ({1.7*cos(30)},{1.7*sin(30)-3.5}) node {$X$};
			\draw ({2*cos(30)+4.5},{2*sin(30)-3.5}) node {$\widehat N_1$};
		\end{tikzpicture}
		\caption{From $N_1$ to $\widehat N_1$ by cutting and pasting. The faces with boundary condition $B^\perp$ are highlighted by thick lines.}	\label{fig:cuttingAndPasting}
	\end{figure}
	
	Recall that $N_{t, 0}$  is the complement of $ \bigcup_{k=2}^n\bigcup_{ |\lambda|=k}N_{t,\lambda}$. In particular, $N_{t, 0}$ is a manifold with corners with all dihedral angles equal to $\pi/2$, for all $t\in [1-\delta, 1]$. Similarly,  $M_{t, 0}$ is also a manifold with corners with all dihedral angles equal to $\pi/2$, for all $t\in [1-\delta, 1]$. It follows from Remark \ref{remark:smoothEqual} and the usual deformation argument (cf. the proof of line \eqref{eq:deformation-equal}) that $\ind((D_{t,0})_{B_{t,0}})$ is constant for all $t\in[1-\delta,1]$. By  line \eqref{eq:sumOfIndex} and Claim \ref{claim:vanish}, we see that $\ind((D_t)_{B_t})$ is constant for all $t\in[1-\delta,1]$. Combined with line \eqref{eq:deformation-equal},  we have shown that $\ind((D_t)_{B_t})$ is constant for all $t\in[0,1]$.

	It remains to  compute the Fredholm index of $(D_1)_{B_1}$. We shall replace each $N_{1, \lambda}$ by a new manifold with corners  $\widehat N_{1,\lambda}$ such that,  when we glue $\widehat N_{1,\lambda}$ back to $N_{1,0}$ inductively, we obtain a manifold with smooth boundary, while not changing the Fredholm index of the Dirac operator. More precisely, in the codimension two case, $N_{1, \lambda} $ is of the form $W_\lambda \times \fiber^2$ where $\fiber^2 $ is a sector in $\R^2$. In this case, we define $\widehat N_{1,\lambda}$  to be $W_\lambda \times \mathcal E_2$, where $\mathcal E_2$ is a manifold with corners that is ``eye-shaped" as shown in Figure \ref{fig:cuttingAndPasting}. In other words, $\mathcal E_2$ is obtained from $\fiber^2$ by smoothing out the vertex. Recall that the faces $X_{1, \eta}$ that were created by the cutting in  the codimension three step above is fiberwise a  two dimensional spherical polygon.    Now we glue all these new $N_{1, \lambda}$'s with $|\lambda| =2$ back to $N_{1, 0}$ along the original cuts $X_{1, \lambda}$. This process also smoothes out the vertices of the above spherical polygons $X_{1, \eta}$. In other words, $X_{1, \eta}$ has been replaced fiberwise by a convex region $\widehat {\mathbb S}_{1, \eta}$  with smooth  boundary in $\mathbb S^2$.  Now $N_{1, \eta}$ with $|\eta|=3$ is of the form $W_\eta\times \fiber^3$, where  $\fiber^3$ is a corner in $\mathbb R^3$. We then define $\widehat N_{1,\eta}$  to be $W_\eta \times \mathcal E_3$, where $\mathcal E_3$ is the convex region in $\mathbb R^3$ is enclosed by  $\widehat {\mathbb S}_{1, \eta}$  and another smooth convex hypersurface that intersect $\widehat {\mathbb S}_{1, \eta}$ at a dihedral angle of $\pi/2$. In other words, $E_3$ is enclosed by two smooth hypersurfaces that intersecting orthogonally  along a common smooth curve.  Again,  $\mathcal E_3$ is an ``eye-shaped" convex region in $\mathbb R^3$. Now we repeat the same construction inductively until we replace all $N_{1, \lambda}$ by $\widehat N_{1, \lambda}$ so that the resulting new space $\widehat N_1$ obtained by gluing $\widehat N_{1, \lambda}$ back to $N_{1, 0}$ is a manifold with smooth boundary: 
	\[\widehat N_1\coloneqq N_{1,0}\cup \bigcup_{k=2}^n\bigcup_{\ |\lambda|=k}\widehat N_{1,\lambda}\]
	See Figure \ref{fig:cuttingAndPasting} for an illustration of the dimension two case. Similarly, we replace $M_{1, \lambda}$ by new manifolds with corners $\widehat M_{1,\lambda}$ such that 
	\[\widehat M_1\coloneqq M_{1,0}\cup \bigcup_{k=2}^n\bigcup_{\ |\lambda|=k}\widehat M_{1,\lambda}\]
	is a manifold with smooth boundary. Without loss of generality, we may assume each  $\widehat M_{1,\lambda}$ is of product structure and its fiber is isometric to the fiber of the corresponding $\widehat N_{1,\lambda}$. We extend the map $f_1\colon N_{1,0} \to M_{1, 0}$ to  a smooth map $\widehat f_1\colon \widehat N_1\to \widehat M_1$ that is  of product structure near all faces. The boundary conditions $\widehat B_{1,\lambda}$ for $\widehat  N_{1,\lambda}$ are defined by an induction process similar to how the boundary $B_{1, \lambda}$ for $N_{1, \lambda}$ was introduced above.  Since the dihedral angles of both $\widehat N_{1,\lambda}$ and $\widehat M_{1,\lambda}$ are  $\pi/2$, it follows from the proof of Theorem \ref{thm:ess-sa} and Lemma \ref{lemma:essensa-jumpanglewithfmixed}  that the Dirac operator $\widehat D_{1, \lambda}$ on $S_{\widehat N_{1,\lambda}}\otimes \widehat f_1^\ast S_{\widehat M_{1,\lambda}}$ subject to the boundary condition $\widehat B_{1, \lambda}$ is  essentially self-adjoint with domain $H^1(\widehat N_{1,\lambda}, S_{\widehat N_{1,\lambda}}\otimes \widehat f_1^\ast S_{\widehat M_{1,\lambda}}; \widehat B_{1, \lambda})$. By applying the gluing formula from Proposition \ref{prop:gluing} again, we have
	\[\ind((\widehat D_1)_{\widehat B_1})=\ind((D_{1,0})_{B_{1, 0}})+\sum_{k=2}^n\sum_{\ |\lambda|=k}\ind((\widehat D_{1,\lambda})_{\widehat B_{1, \lambda}})\]
	
	Recall that the fibers of $\widehat N_{1,\lambda}$ and $\widehat M_{1,\lambda}$ are an eye-shaped convex region in $\mathbb R^{k}$ enclosed by two smooth hypersurfaces that are diffeomorphic to a hemisphere and intersect orthogonally  along their boundaries. We may deform  $\widehat N_{1,\lambda}$, $\widehat M_{1,\lambda}$  and the map $\widehat f_1$ to a standard model where both $\widehat N_{1,\lambda}$ and $\widehat M_{1,\lambda}$ become fiberwise the standard Euclidean unit half-ball in $\mathbb R^k$, and fiberwise $\widehat f_1 \colon \widehat N_{1,\lambda}\to \widehat M_{1,\lambda}$ becomes the identity map, while keeping the dihedral angles always at $\pi/2$ through this deformation.   It follows from Remark \ref{remark:smoothEqual} and the usual deformation argument (cf. the proof of line \eqref{eq:deformation-equal}) that $\ind((\widehat D_{1,\lambda})_{\widehat B_{1, \lambda}}) =0$,  for all $|\lambda| \neq 0$. Therefore,  we have 
	$$\ind((D_1)_{B_1})=\ind((D_{1,0})_{B_{1, 0}})=\ind((\widehat D_1)_{\widehat B_1}).$$
    Since  $\widehat N_1$ and $\widehat M_1$ are manifolds with smooth boundary, it is a classical result that in this case 
	$$\ind(\widehat D_1)=\deg_{\widehat A}(\widehat f_1)\cdot\chi(\widehat M_1)=\deg_{\widehat A}( f)\cdot\chi(M).$$
	This finishes the proof of the theorem under the slightly stronger assumption that the comparison of dihedral angles is strict along all codimension two faces. 
	
	Now suppose the dihedral angles satisfy the equality  condition \eqref {eq:equaldihedral} along a certain codimension two face. Then  we simply keep the dihedral angles to satisfy the equality  condition  \eqref {eq:equaldihedral} along this given codimension two face throughout the construction of  the deformation $f_t\colon N_t \to M_t$ above, which is always achievable for manifolds with corners. The rest of the proof proceeds the same as above. This finishes the proof of the theorem.

\end{proof}

\begin{remark}\label{remark:poly-deform}
A key ingredient in    \textbf{Step I} of  the proof of Theorem \ref{thm:index} is the existence  of a continuous deformation of convex spherical polyhedra while increasing all corresponding dihedral angles. While such a deformation clearly exists in the case of manifolds with corners, it is currently an open question whether such a deformation exists in the case of general manifolds with polyhedral boundary, cf. Gromov's $\angle$-shrinking conjecture \cite[Section 7]{Gromov:2022tr}. In other words, if Gromov's $\angle$-shrinking conjecture holds in its complete generality, then Theorem \ref{thm:index} and its proof would also hold for  manifolds with polyhedral boundary. Note that,  in the case of two dimensional convex spherical polyhedra,  there always exist continuous deformations of convex spherical polyhedra that increase all corresponding dihedral angles as those in \textbf{Step I} of the proof of Theorem \ref{thm:index} (cf. Appendix \ref{sec:appendixdeform} for more details).  As a consequence, the same proof shows that Theorem \ref{thm:index} holds for manifolds with polyhedral boundary whose top codimension of singularities is $\leq 3$ (e.g. all three dimensional manifolds with polyhedral boundary).  
\end{remark}

It follows from the above discussion in Remark \ref{remark:poly-deform} that the proof of   Theorem \ref{thm:index} also applies to some special cases of manifolds with polyhedral boundary in higher dimensions. For example, we have the following proposition.

\begin{proposition}\label{prop:index-special}
	Let $(M, g)$ and $(N, \overbar{g})$ be two oriented compact Riemannian manifolds with polyhedral boundary.  Suppose   $f\colon N \to M$ is a spin polytope map.
For a point $x$ in a codimension $k$ face of $N$, let $\{\overbar v_1, \overbar v_2, \cdots, \overbar v_\ell\}$ be the unit inner normal vectors of all codimension one faces of $N$ that passing through $x$. Let $v_1,  v_2, \cdots,  v_\ell$ be the corresponding unit inner normal vectors passing through $f(x) \in M$.   Assume that\footnote{In contrast to the comparison conditions on dihedral angles in \eqref{eq:strictdihedral} and \eqref{eq:equaldihedral}, the  condition \eqref{eq:all-angle} is a much stronger geometric assumption in the sense that not only the corresponding dihedral angles of $N$ and $M$ coincide, but also all other angles (including angles between intersecting faces that are not adjacent) of $N$ and $M$ coincide. } 
\begin{equation}\label{eq:all-angle}
 \langle \overbar v_i, \overbar v_j\rangle = \langle v_i, v_j\rangle
\end{equation}
for all $1\leq i, j\leq \ell$, all $x$ in any codimension $k$ face of $N$ for all $k$.  
		Let $D_B$ be the Dirac operator on $S_N\otimes f^*S_M$ subject to the local  condition $B$ given in Definition \ref{def:boundarycondition}, and $\overbar{D}_B$  the unique self-adjoint extension of $D_B$ as in Theorem $\ref{thm:ess-sa}$. Then the Fredholm index of $\overbar{D}_B$ is 
	\[ \ind(\overbar{D}_B)=\deg_{\widehat A}(f)\cdot \chi(M),\]
	where $\deg_{\widehat A}(f)$ is $\widehat A$-degree of $f$ and $\chi(M)$ is Euler characteristic of $M$. 
\end{proposition}
\begin{proof}
Condition \eqref{eq:all-angle} ensures that the corresponding  fiberwise (asymptotic) polyhedral corners $\fiber$ and $\mathbb G$ that appear in $N$ and $M$ (as in \textup{Step I} of the proof of Theorem \ref{thm:index}) are in fact isometric.  Therefore, we may deform $N$ and $M$ simultaneously  to a locally direct product case  and directly apply the cutting-and-pasting argument as in the proof of Theorem \ref{thm:index}. This finishes the proof. 
\end{proof}

\section{An index theorem for  manifolds with polyhedral boundary} \label{sec:index-poly}

In this section, we prove our main index theorem for compact manifolds with polyhedral boundary in all dimensions. As pointed out above, a main difficulty of generalizing the method of proof in Section \ref{sec:fredindex}  to the general case of manifolds with polyhedral boundary is that it is  currently still an open question whether we may continuously deform a general convex spherical polyhedron while increasing all of its dihedral angles, cf. Gromov's $\angle$-shrinking conjecture \cite[Section 7]{Gromov:2022tr}.  Roughly speaking, in terms of linear algebra,  the main difficulty of finding such a continuous deformation arises from the fact that there are too many linear relations among a given set of vectors. We do not attempt to directly solve   this deformation issue \emph{geometrically} for general  manifolds with polyhedral boundary. Instead, we consider the associated \emph{linear-algebraic} problem and resolve it  by introducing auxiliary space dimensions. The main theorem of this section is the following. 
\begin{theorem}\label{thm:index-poly}
	Let $(M, g)$ and $(N, \overbar{g})$ be two oriented compact Riemannian manifolds with polyhedral boundary.  Suppose   $f\colon N \to M$ is a spin polytope map.	Let $D_B$ be the Dirac operator on $S_N\otimes f^*S_M = S_{TN\oplus f^\ast TM}$ subject to the local  condition $B$ given in Definition \ref{def:boundarycondition}. 
	For each pair of  codimension one faces $\overbar F_i, \overbar F_j$ of $N$, assume  the dihedral angles $\theta_{ij}(\overbar{g})$ of $N$ and $\theta_{ij}(g)$ of $M$ satisfy
	\begin{equation}\label{eq:strictdihedral-poly}
		0<\theta_{ij}(\overbar{g})_z < \theta_{ij}(g)_{f(z)} < \pi,  \textup{ for all }   z \in \overbar F_i\cap \overbar{F}_j.
	\end{equation}  
  Let $\overbar{D}_B$ be the unique self-adjoint extension of $D_B$ as in Theorem $\ref{thm:ess-sa}$. Then the Fredholm index of $\overbar{D}_B$ is 
	\[ \ind(\overbar{D}_B)=\deg_{\widehat A}(f)\cdot \chi(M),\]
	where $\deg_{\widehat A}(f)$ is $\widehat A$-degree of $f$ and $\chi(M)$ is Euler characteristic of $M$. 
\end{theorem}
The proof of the above theorem will be given in Section \ref{sec:fredholm-index-poly}. We first need some preparation.  
			
\subsection{Essential self-adjointness of the de Rham operator on model conical spaces with respect to more general local boundary conditions}\label{sec:ess-sa-general} 

In this subsection, we shall prove the essential self-adjointness of the de Rham operator on some model conical spaces subject to certain local boundary conditions that are more general than those given in  Definition \ref{def:boundarycondition}. Roughly speaking, while the local boundary conditions in Definition \ref{def:boundarycondition} are induced by the geometry of the polyhedral corners in  $N$ and $M$, we shall deal with more general local boundary conditions that are somewhat more algebraic. In particular, we may encounter local boundary conditions that are \emph{irrelevant} to the geometry of the polyhedral corners of the target space $M$. Such a flexibility will be important in the proof of our index theorem for twisted Dirac operators on compact manifolds with polyhedral boundary.

Much of the same discussion from Section \ref{sec:conic} carries over to the more general case we are dealing with in the current section.  In order to streamline the discussion of this section, we shall first simplify the notation from Section \ref{sec:conic}. In particular,  we shall first review Lemma \ref{lemma:essensa-higherdim} and its proof, and simplify the notation so that our discussion of the more general case becomes more transparent.

\begin{lemma}[cf. Lemma \ref{lemma:essensa-higherdim}] \label{lemma:ess-saNew} 
	Let $\fiber$ and $\mathbb G$ be two convex polyhedral corners in  $\R^n$ that are enclosed by $\ell$ hyperplanes through the origin respectively, where $\ell \geq n$.
	Let $B$ be the boundary condition on $\Bigwedge^*\fiber =  \Bigwedge^*\R^n$ over each codimension one face $\overbar F_i$ given by
	$$\mathscr E(\overbar c(u_i)\otimes c(v_i))\varphi=-\varphi,$$
	where $\mathscr E$ is the even-odd grading operator on $\Bigwedge^\ast \mathbb R^n$, $u_i$'s are the inner normal vectors of $\fiber $ and $v_i$'s are the inner normal vectors of $\mathbb G $.
	Let $D^\dR_{\fiber,B}$ be the de Rham operator acting on $\Bigwedge^*\fiber$ with the boundary condition $B$.  If  the dihedral angles $\alpha_{ij}$ of $\fiber$ are less than or equal to the corresponding dihedral angles  $\beta_{ij}$ of $\mathbb G$, then $D_{\fiber,B}^\dR$ is essentially self-adjoint.
\end{lemma}
\begin{proof}
	The main strategy of the proof is exactly the same as that of Lemma \ref{lemma:essensa-higherdim}. 
	
	The dimension two case (where $n=2$) is proved  exactly the same way as in Lemma \ref{lemma:essensa-jumpanglewithf}. We shall focus on simplifying the notation for the case where $n\geq 3$.  Let $\link$ be the link of $\fiber$ , that is, $\link$ is the intersection of $\fiber$ and the unit sphere of $\R^n$. Let $\R_+  = [0, \infty)$ and $\link \times \R_+$  the direct product space of $\link$ and $\R_+$.  The polar coordinates $(r,\theta)$ of $\R^n$, with $r\in \R_+ = [0, \infty)$ and $\theta\in \sph^{n-1}$, 
	induces an isometry 
	$$\Psi\colon L^2(\link\times\R_+,T(\link\times\R_+))\to L^2(\fiber, T\R^n)$$ given by
	$$\partial_r\mapsto r^{(n-1)/2}\cdot \partial_r \text{ and }v\mapsto r^{(n-1)/2}\cdot \frac 1 r v,$$
	where $r$ is the radial variable of $\fiber$, and $v\in T\link$. The map $\Psi$ induces an isometry
	$$L^2(\link\times\R_+,\Bigwedge^*(\link\times\R_+))\to L^2(\fiber, \Bigwedge^*\R^n),$$
	which we will still denote by $\Psi$,  cf. line \eqref{eq:coordinateChange}.
	
	For simplicity, let us write $D = D^\dR_{\fiber,B}$. Now we consider the operator $\Psi^*D\Psi$ acting on $\Bigwedge^*(\link\times\R_+)$ over $\link\times\R_+$. Similar to line \eqref{eq:deRhamAfterConj}, we have
	$$\Psi^*D\Psi=\overbar c(\partial_r)\partial_r+\sum_{i=1}^{n-1}\overbar c(e_i)\nabla_{e_i}^\link+\sum_{i=1}^{n-1}\overbar c(e_i)\otimes c(\partial_r)c(e_i),$$
	where $\{e_i\}$ is a local orthonormal frame of $T\link$, and $\nabla^\link$ is the canonical connection on $\Bigwedge^*\link $ over $\link$ induced by  the Levi-Civita connection of $\link \subset \sph^{n-1}$.
	
	Let us define 
	\begin{equation}\label{eq:operatoronlink}
		P= \sum_{i=1}^{n-1}\overbar c(e_i)\nabla_{e_i}^\link + \sum_{i=1}^{n-1}\overbar c(e_i)\otimes c(\partial_r)c(e_i).
	\end{equation}
	By Lemma \ref{lemm:>=1/2} and the induction argument in the proof of Lemma \ref{lemma:essensa-higherdim}, in order to prove  the essential self-adjointness of $D$, it suffices to prove the following claim.
	\begin{claim*}
		For any $\varphi\in C^\infty_0(\link,\Bigwedge^*(\link\times\R_+);B)$ (cf. Definition \ref{def:smooth,H1}), we have
		$$\|P\varphi\|\geq \frac 1 2\|\varphi\|.$$
	\end{claim*}
   We define 
	$$\widehat\nabla_X\coloneqq \nabla^\link_X+1\otimes c(\partial_r)c(X)$$
	to be a new connection on the bundle  $\Bigwedge^*(\link\times\R_+)$ over $\link$, where $X\in T\link$. We view  $P$ as a Dirac-type operator associated to this new  connection $\widehat\nabla$. The same computation from Claim \ref{claim:>=1/2} shows that 
	$$P^2=\widehat\nabla^*\widehat\nabla+\frac{(n-1)(n-2)}{4}.$$
	By the Stokes' formula (cf. line \eqref{eq:deRhamestimate}), we obtain
	\begin{equation}
		\begin{split}
			\int_{\link}|P\varphi|^2=\int_\link |\widehat\nabla\varphi|^2+\int_\link\frac{(n-1)(n-2)}{4}|\varphi|^2
			+\sum_{i, j}\int_{\link_j}\langle\overbar c(u_j)\overbar c(e_i^j)\widehat\nabla_{e_i^j}\varphi,\varphi\rangle.
		\end{split}
	\end{equation}
	where $\link_j=\link\cap\overbar F_j$ is a codimension one face of $\link$, $u_j$ is the unit inner normal vector of $\link_j$, and $\{e^j_i\}$ is a local orthonormal basis of tangent vectors of $\link_j$.
	
	Recall that the boundary condition $B$ on $\link_j$ is given by
	$$\mathscr E(\overbar c(u_j)\otimes c(v_j))\varphi=-\varphi.$$
	It is easy to see that $\mathscr E$ commutes with the operator $\overbar c(u_j)\overbar c(e_i^j)\widehat\nabla_{e_i^j}$ on $\link_j$. Since $\link_j$ has zero mean curvature in $\link$, it follows that $\overbar c(u_j)$ commutes with $\widehat\nabla_{e_i^j}$. Therefore $\overbar c(u_j)$ anti-commutes with $\overbar c(u_j)\overbar c(e_i^j)\widehat\nabla_{e_i^j}$.
	
	By assumption, $v_j$ is a constant vector in $\R^n$. Therefore we have
	$$[\nabla_X,1\otimes c(v_j)]=0$$
	where $\nabla_X$ is the standard flat connection on $\Bigwedge^*\R^n$ over $\mathbb R^n$.
By the classical Gauss-Codazzi equations, we have 
	$$\nabla_X=\nabla^\link_X+\frac 1 2 \overbar c(\partial_r)\overbar c(X)\otimes 1+\frac 1 2\otimes c(\partial_r) c(X)=\widehat\nabla_X+\frac 1 2 \overbar c(\partial_r)\overbar c(X)\otimes 1$$
It follows that 
	$$[\widehat\nabla_X,1\otimes c(v_j)]=-\frac 1 2\big[\overbar c(\partial_r)\overbar c(X)\otimes 1,1\otimes c(v_j)\big]=0.$$
	To summarize, we have shown that $\mathscr E(\overbar c(u_j)\otimes c(v_j))$ anti-commutes with the boundary operator $\overbar c(u_j)\overbar c(e_i^j)\widehat\nabla_{e_i^j}$. Therefore, as long as $\varphi$ satisfies the boundary condition $B$,  we have 
	$$\int_{\link}|P\varphi|^2=\int_\link |\widehat\nabla\varphi|^2+\int_\link\frac{(n-1)(n-2)}{4}|\varphi|^2.$$
	It follows that 
	$$\|P\varphi\|\geq\frac{\sqrt{(n-1)(n-2)}}{2}\|\varphi\|> \frac 1 2\|\varphi\|$$
	as $n\geq 3$.
\end{proof}

The computation of Lemma \ref{lemma:ess-saNew} leads us to the following key observation (cf. Lemma \ref{lemma:ess-sa-innerproductcomparison} and the discussion before it) . Although the local boundary condition $B$ in Lemma \ref{lemma:ess-saNew} involves the unit inner normal vectors  $v_j$ of the faces of $\mathbb G$, the computation of Lemma \ref{lemma:ess-saNew} shows that the fact  whether the vectors $v_j$ come from geometry (i.e.,   whether the vectors $v_j$ are the unit inner normal vectors of the faces of some polyhedral corners) is \emph{not} essential at all. The two key properties of the set of vector fields $\{v_j\}$ that were used in the proof of Lemma \ref{lemma:ess-saNew} are the following: 
\begin{enumerate}[label=(\roman*)]
	\item each $v_j$ is flat; more precisely, on the codimension one face  $\overbar F_j$ of $\fiber$, the covariant derivative of $v_j$ along each tangent direction of $\overbar F_j$  is zero  (or more generally  asymptotically zero, that is, the covariant derivatives  of $v_j$ become zero as we approach the singularities of $\fiber$);  
	\item the dihedral angles of $\fiber$ are $\leq \pi$, and 
	$$\langle v_i,v_j\rangle\geq \langle u_i, u_j\rangle$$
	for any two adjacent faces $\overbar F_i$ and $\overbar F_j$, where $u_i, u_j$ are the inner normal vectors of $\overbar F_i$ and $\overbar F_j$. 
\end{enumerate}
In particular, the exact same proof of Lemma  \ref{lemma:ess-saNew} implies the following generalization of Lemma \ref{lemma:ess-saNew}.
\begin{lemma}\label{lemma:ess-saNewOneFiber}
	Let $\fiber$ be a convex polyhedral corner in $\R^n$.  For each face $\overbar F_j$ of $\fiber$,  we denote its inner normal vector by $u_j$. Moreover, for each $\overbar F_j$,   let $v_j$ be a unit vector in $\R^n\oplus\R^k$, where $k$ is some fixed integer. Let $B$ be the local boundary condition on sections $\varphi$ of  $\Bigwedge^*(\R^n\times\R^k)$  given by
	$$\mathscr E(\overbar c(u_j)\otimes c(v_j))\varphi=-\varphi$$
	at each  $\overbar F_j$, 
	where $\mathscr E$ is the even-odd grading operator on $\Bigwedge^*(\R^n\times\R^k)$. Then for $n\geq 3$, the operator (cf. line \eqref{eq:operatoronlink}) 
	$$P= \sum_{i=1}^{n-1}\overbar c(e_i)\nabla_{e_i}^\link + \sum_{i=1}^{n-1}\overbar c(e_i)\otimes c(\partial_r)c(e_i)$$
	acting on $\Bigwedge^*(\link\times\R_+\times\R^k)$ over $\link$ satisfies
	$$\|P\varphi\|\geq\frac{\sqrt{(n-1)(n-2)}}{2}\|\varphi\|$$  for all $\varphi\in C_0^\infty(\link,\Bigwedge^*(\link\times\R_+\times\R^k);B)$. 
	 Furthermore, the de Rham operator acting on the bundle  $\Bigwedge^*(\R^n\times\R^k) =  \Bigwedge^*\R^n \otimes \Bigwedge^* \R^k$ over $\fiber$ subject to the above local  boundary condition $B$ is essentially self-adjoint, provided that
	$$\langle u_i,u_j\rangle\leq \langle v_i,v_j\rangle$$
	for all pairs of adjacent\footnote{Codimension one faces $\overbar F_i$ and $\overbar F_j$ are said to be adjacent if $\overbar F_i\cap \overbar F_j$ is a nonempty codimension two face of $\fiber$} codimension one faces $\overbar F_i$ and $\overbar F_j$ of $\fiber$.
\end{lemma}

Now we are ready to prove the main conclusion of this subsection.
\begin{theorem}\label{thm:ess-saNew}
	Let $N$ be an $n$-dimensional compact submanifold of $\R^n$ with polyhedral boundary and $Z$ a closed manifold. For  each codimension one face $\overbar F_j\times Z$ of $N\times Z$, let $u_j$ be its unit inner normal vector field. Moreover, on each $\overbar F_j\times Z$, let $v_j$ be a smooth unit vector field in $TN\oplus TZ = \R^n\oplus TZ$. Let $B$ be the boundary condition on sections $\varphi$ of $\Bigwedge^*(N\times Z)$  given by
	$$\mathscr E(\overbar c(u_j)\otimes c(v_j))\varphi=-\varphi$$
	at each $\overbar F_j\times Z$. Assume that the dihedral angles of $N$ are $< \pi$. If we have 
	\begin{equation}
		\langle u_i,u_j\rangle< \langle v_i,v_j\rangle
	\end{equation} 
 for each pair of adjacent codimension one faces $\overbar F_i$ and $\overbar F_j$ of $N$, then  the de Rham operator $D^\dR$ of $N\times Z$ (which acts on the bundle $\Bigwedge^*(N\times Z)$ over $N\times Z$) subject to the above boundary condition $B$ is an essentially self-adjoint Fredholm operator with domain $H^1(N\times Z,\Bigwedge^*(N\times Z);B)$. 
\end{theorem}

Theorem \ref{thm:ess-saNew} should be viewed as a generalization of Theorem \ref{thm:ess-sa}. For simplicity, we have  stated Theorem \ref{thm:ess-saNew}  for  the case where $N$ is a submanifold (with polyhedral boundary) in $\R^n$, but it is clear the theorem and its proof naturally extend to  the case where $N$ is a general manifold with polyhedral boundary, as long as the same conditions on dihedral angles are satisfied.  

However, there are two key differences between Theorem  \ref{thm:ess-saNew} and  Theorem \ref{thm:ess-sa}. The first key difference is that  Theorem  \ref{thm:ess-saNew} allows   more general (and more algebraic) local boundary conditions where the vectors $v_j$  do \emph{not} necessarily arise as the unit inner normal vectors of codimension one faces of some manifold with polyhedral boundary. On the other hand, we emphasize that the vectors $u_i$'s are always assumed to  be the inner normal vectors of codimension one faces of the domain space $N\times Z$. 
 Another key difference between Theorem \ref{thm:ess-saNew} and Theorem \ref{thm:ess-sa} is  that  the inner product $\langle v_i,v_j\rangle$ is allowed to be $1$. In other words,  the ``dihedral angle" between $v_i$ and $v_j$ is allowed to be $\pi$. 
 
 Recall that Lemma \ref{lemma:smoothEquivalence} is a key lemma that allows us to identify the local boundary conditions of nearby points. However, in order to apply  Lemma \ref{lemma:smoothEquivalence}, we need to assume the more restrictive assumption that both the dihedral angles of the domain and the target are strictly less than $\pi$. In particular, if  we have 
 $\langle v_i,v_j\rangle = 1$, then Lemma \ref{lemma:smoothEquivalence} does \emph{not} apply. To remedy this, we prove  the following key technical lemma, which should be viewed as an improvement of both  Lemma \ref{lemma:smoothEquivalence} and Lemma \ref{lemma:sobolevEmbedding}. 
 
 \begin{lemma}\label{lemma:1/rEmbedding}
 	Let $\fiber $ be a sector in $\R^2$ with angle $\alpha<\pi$. Let $B$ be the boundary condition on $\Bigwedge^*\fiber = \Bigwedge^\ast \mathbb R^2$ over each edge of $\fiber$ given by
 $$\mathscr E(\overbar c(u_i)\otimes c(v_i))\varphi=-\varphi$$
 for $i=1,2$, where $\mathscr E$ is the even-odd grading operator on $\Bigwedge^\ast \mathbb R^2$, $u_i$'s are the inner normal vectors of $\fiber$ and $v_i$'s are unit vectors.	Let $D^\dR_{\fiber,B}$ be the de Rham operator acting on $\Bigwedge^*\fiber$ with the boundary condition $B$. Suppose that 
 \begin{equation}\label{eq:strictinnerproduct}
 	\langle v_1,v_2\rangle> \langle u_1,u_2\rangle.
 \end{equation}
Then multiplying by $r^{-1}$ defines a bounded linear operator
\[ H^1(\fiber,\Bigwedge^*\R^2;B)\to L^2(\fiber,\Bigwedge^*\R^2)\]
\[  \varphi\mapsto r^{-1} \varphi.\]

 \end{lemma}
 \begin{proof}
 	Under the unitaries given in line \eqref{eq:transformeven} and \eqref{eq:transformodd}, the de Rham operator $D^\dR$ on $\fiber$ becomes 
 	$$ D=\begin{pmatrix}
 		0&-\frac{\partial}{\partial r}\\ \frac{\partial}{\partial r}&0
 	\end{pmatrix}+\frac 1 r\begin{pmatrix}
 		0&P\\P&0
 	\end{pmatrix}$$
 	where $P$ is the induced operator along the link. In particular, $P$ acts on the forms $\Bigwedge^*\link$ over the link $\link=[0,\alpha]$ subject to a  local  boundary condition induced by $B$. By Lemma \ref{lemma:ess-sa-innerproductcomparison} , we have $|P_B|>1/2$. 
 	
 	To prove the lemma, it suffices to show that there is a $C>0$ such that 
 	$$\big\|\frac 1 r \varphi\big\|^2_{L^2}\leq C(\big\|\varphi\big\|^2_{L^2}+\big\|D\varphi\big\|^2_{L^2}).$$
 	for all $\varphi\in \dom(D_B)= H^1(\fiber,\Bigwedge^*\R^2;B)$.  
 	
 	For each $\varphi\in H^1(\fiber,\Bigwedge^*\R^2;B)$, we denote  $\psi = D\varphi$. Let $\{\phi_k\}$ be an orthonormal basis of $L^2(\link,\Bigwedge^*\link)$ such that  $P\phi_k=\lambda_k\phi_k$. Then we have the orthogonal decompositions
 	$$\varphi=\begin{pmatrix}
 		\sum_{k}\varphi_{k,0}(r)\phi_k \vspace{0.2cm}\\
 		\sum_{k}\varphi_{k,1}(r)\phi_k
 	\end{pmatrix}\text{and }\psi=\begin{pmatrix}
 		\sum_{k}\psi_{k,0}(r)\phi_k \vspace{0.2cm}\\
 		\sum_{k}\psi_{k,1}(r)\phi_k
 	\end{pmatrix}, $$
 where $\varphi_{k, i}$ and $\psi_{k, i}$ are functions on $\R_+ = [0, \infty)$ and the top (resp. bottom) entry of a column vector is a $0$-form (resp. $1$-form). 
 	Since $D\varphi=\psi$, we have 
 	$$-\frac{d}{dr}\varphi_{k,1}+\frac{\lambda_k}{r}\varphi_{k,1}=\psi_{k,0} \quad \text{ and } \quad \frac{d}{dr} \varphi_{k,0}+\frac{\lambda_k}{r}\varphi_{k,0}=\psi_{k,1}.$$
It follows from Lemma \ref{lemma:1/rConical} below  that there exists $C>0$ such that 
 	$$\big\|\frac 1 r \varphi_{k,1}\big\|_{L^2}^2\leq C \big(\big\|\varphi_{k,1}\big\|_{L^2}^2+\big\|\psi_{k,0}\big\|_{L^2}^2\big) \text{ and } \big\|\frac 1 r \varphi_{k,0}\big\|_{L^2}^2\leq C \big(\big\|\varphi_{k,0}\big\|_{L^2}^2+\big\|\psi_{k,1}\big\|_{L^2}^2 \big)$$
 	for all $k$. 
 	This finishes the proof.
 \end{proof}
 \begin{lemma}\label{lemma:1/rConical}
 	Assume that $|\lambda|>1/2$. Let $\mathcal L_\lambda$ be the linear subspace of $L^2(\R_+)$ consisting of functions $\varphi \in  L^2(\R_+)$ such that  
 	$$\psi \coloneqq \varphi'+\frac{\lambda}{r}\varphi \textup{ lies in } L^2(\R_+).$$
 	Then there exists $C>0$ (independent of $\lambda$) such that
 	$$\big\|\frac 1 r\varphi \big\|_{L^2}\leq \frac{C}{|\lambda|-1/2}\sqrt{\big\|\varphi\big\|_{L^2}^2+\big\|\psi\big\|_{L^2}^2}$$	
 	for all  $\varphi\in \mathcal L_\lambda$. 
 \end{lemma}
 \begin{proof}
 	Without loss of generality, we may assume that $\varphi$ and $\psi$ are smooth functions on $(0, \infty)$ such that they vanish on $[2/3, \infty)$ (cf. the proof of Lemma \ref{lemma:asymptoticConical}).  Under such a assumption, for a given $\psi\in L^2(\R_+)$, there is  a unique $L^2$ solution $\varphi$ of the differential equation  
 	\[ \varphi'+\frac{\lambda}{r}\varphi=\psi \] 
 	subject to the condition that $\varphi$ vanishes on  $(2/3,+\infty)$. More precisely,  $\varphi$ has the following explicit expression: 
 	$$\varphi(r)=\begin{cases}
 		\displaystyle\int_0^r\Big(\frac s r\Big)^\lambda \psi(s)ds,& \textup{ if } \lambda>0,\vspace{0.3cm}\\
 		-\displaystyle\int_r^1\Big(\frac s r\Big)^\lambda \psi(s)ds,&\textup{ if } \lambda<0.
 	\end{cases}$$
 	Let us define 
 	$$K_{+}(r,s)=\frac 1 r\chi_{+}(r,s)\Big(\frac s r\Big)^\lambda \text{ and }
 	K_{-}(r,s)=\frac 1 r\chi_{-}(r,s)\Big(\frac s r\Big)^\lambda,$$
 	where $\chi_+$ is the characteristic function on $\{(r,s) : 0\leq s\leq r\leq 1\}$ and $\chi_-$ is the characteristic function on $\{(r,s)  : 0\leq r\leq s\leq 1\}$.
 	
 	To prove the lemma,  it suffices to show that the integral operators induced by the integral kernels $K_\pm$ are bounded on $L^2(\R_+)$. We prove this by the Schur test.
 	
 	If $\lambda>0$, then
 	\begin{align*}
 		\int K_+(r,s)ds=&\int_0^r\frac{s^\lambda}{r^{1+\lambda}}ds=\frac{1}{1+\lambda} 
 	\end{align*}
 for all $r\in [0, 1]$,  and 
 \begin{align*}
 		\int K_+(r,s)dr=&\int_s^1\frac{s^\lambda}{r^{1+\lambda}}dr=\frac{s^\lambda}{ \lambda}(s^{-\lambda}-1)=\frac{1-s^\lambda}{\lambda}\leq \frac 1 \lambda 
 	\end{align*}
 	for all $s\in [0, 1]$. Now it follows from  the Schur test that the integral operator induced by the integral kernel $K_+$ is bounded on $L^2(\R_+)$.
 	
 	Similarly, if $\lambda<0$, then
 	\begin{align*}
 		\int K_-(r,s)s^{-1/2}ds=&\int_r^1\frac{r^{|\lambda|-1}}{s^{|\lambda|+1/2}}ds=
 		r^{|\lambda|-1}\frac{1}{|\lambda|-1/2}(r^{-|\lambda|+1/2}-1)\\
 		=&\frac{1}{|\lambda|-1/2}(r^{-1/2}-r^{|\lambda|-1})\\
 		\leq&\frac{1}{|\lambda|-1/2}r^{-1/2}
 	\end{align*}
 for all $r\in [0, 1]$ and 
 	\begin{align*}
 		\int K_-(r,s)r^{-1/2}dr=&\int_0^s\frac{r^{|\lambda|-3/2}}{s^{|\lambda|}}dr=
 		\frac{1}{s^{|\lambda|}}\left(\frac{1}{|\lambda|-1/2}\right)s^{|\lambda|-1/2}\\
 		=&\frac{1}{|\lambda|-1/2}s^{-1/2}
 	\end{align*}
 for all $s\in [0, 1]$. Again, it follows from  the Schur test that the integral operator induced by the integral kernel $K_-$ is bounded on $L^2(\R_+)$. This finishes the proof. 
 \end{proof}

We remark that the strict comparison condition  \eqref{eq:strictinnerproduct} in Lemma \ref{lemma:1/rEmbedding} cannot be dropped. For example, Lemma \ref{lemma:1/rEmbedding} would fail if we assume $\langle v_1, v_2\rangle = \langle u_1, u_2\rangle$.    Indeed, for example, if $v_1=v_2$ and $u_1=u_2$, then the constant function $1$ lies in $H^1(\fiber,\Bigwedge^*\R^2;B)$, but the function $1/r$ is not $L^2$ integrable near the origin. In other words, the strict comparison condition  \eqref{eq:strictinnerproduct} implies that the elements in  $H^1(\fiber,\Bigwedge^*\R^2;B)$ decay\footnote{The rate of decay is essentially determined by the number	$\langle v_1,v_2\rangle - \langle u_1,u_2\rangle$.}  near the origin. Note that, for higher dimensional convex polyhedral corners, if we assume  the strict comparison condition  \eqref{eq:strictinnerproduct}, then  the obvious analogue of Lemma \ref{lemma:iteratedSobolev} also holds starting from codimension two.   

Now we are ready to prove Theorem \ref{thm:ess-saNew}.
 
\begin{proof}[Proof of Theorem \ref{thm:ess-saNew}]
	The verification of the essential self-adjointness  of $D = D^\dR$ can be localized near each singular point of $N\times Z$. By the same discussion from Section \ref{sec:conic} and Section \ref{sec:ess-selfadj-general} (in particular,  Lemma \ref{lemma:iteratedSobolev} together with Lemma \ref{lemma:1/rEmbedding} and the Kato--Rellich perturbation theorem, cf. the proof of Theorem \ref{thm:ess-sa}), it suffices to prove the essential self-adjointness in the following model case. 
	
	Let $\fiber$ be a polyhedral corner in $\R^n$. Consider the bundle $\Bigwedge^*(\R^{n+k})$ over $\fiber\times\R^k$. For each codimension one  face $\overbar F_j$ of $\fiber$, let $u_j$ be its unit inner normal vector. Moreover, for each $\overbar F_j$, let $v_j$ be a unit vector in $\R^{n+k}$. Suppose that
	$$\langle u_i,u_j\rangle<\langle v_i,v_j\rangle$$
	 for each pair of  adjacent codimension one faces $\overbar F_i$ and $\overbar F_j$ of $\fiber$. On each codimension one face $\overbar F_j\times \R^k$ of $\fiber \times \R^k$, the boundary condition $B$ on sections $\varphi$ of $\Bigwedge^*(\R^{n+k})$ is given by
	$$\mathscr E(\overbar c(u_j)\otimes c(v_j))\varphi=-\varphi,$$
	where $\mathscr E$ is the even-odd grading on $\Bigwedge^*(\R^{n+k})$. Let $D$ be the de Rham operator on $\fiber \times \R^k$ acting on $\Bigwedge^*(\R^{n+k})$. To prove the proposition,  it suffice to prove that $D$ subject to the boundary condition $B$ is essentially self-adjoint near $\{0\}\times\R^k$. 
	
	We retain the same notation from the proof of Lemma \ref{lemma:ess-saNew}. Let 
	\[  \Psi \colon L^2(\link\times\R_+ \times \R^k, \Bigwedge^*(\link\times\R_+\times \R^k))\to L^2(\fiber\times \R^k, \Bigwedge^*(\fiber \times\R^n) ),  \] be the isometry via  polar coordinates as constructed in the proof of Lemma \ref{lemma:ess-saNew}, where $\link$ is the link of $\fiber$.  We have 
	$$\Psi^*D\Psi=\overbar c(\partial_r)\partial_r+P+\sum_{\alpha=1}^k\overbar c(\partial_{x_\alpha })\partial_{x_\alpha},$$
	where $\{x_\alpha\}_{1\leq \alpha\leq k}$ is the coordinates of $\R^k$  and
	$$P=\sum_{i=1}^{n-1}\overbar c(e_i)(\nabla^\link_{e_i}+1\otimes c(\partial_r)c(e_i)).$$
	For any fixed $r\in \R_+$ and fixed coordinates $\{x_\alpha\}_{1\leq \alpha\leq k}$, it follows from Lemma \ref{lemma:ess-saNewOneFiber} that the operator $P$ (as an operator  acting on the bundle  $\Bigwedge^*(\link\times\R_+\times\R^k)$ over the space $\link$  subject to the boundary condition $B$)   is essentially self-adjoint with domain  $H^1(\link,\Bigwedge^*(\link\times\R_+\times\R^k);B)$.
	
	By the von Neumann Theorem,  $D$ is essentially self-adjoint if and only if  the deficiency indices of $D$ are zero, where the latter means $\ker(D_{\max} \pm i) = 0 $. We shall prove that $\ker(D_{\max} \pm i) = 0 $ under the given assumption above. Suppose there exists $\psi \in L^2(\link\times\R_+ \times \R^k, \Bigwedge^*(\link\times\R_+\times \R^k))  $ such that $\psi \in  \ker(D_{\max} + i)$, that is, 
	\begin{equation}\label{eq:(D+imu)phi}
		\Big(\overbar c(\partial_r)\partial_r+P+\sum_{\alpha=1}^k\overbar c(\partial_{x_\alpha})\partial_{x_\alpha}+i\Big)\psi=0.
	\end{equation}
We want to show that $\psi = 0$.

	For any $L^2$-section $\varphi$ of $\Bigwedge^\ast(\R^{n+k})$ over $\fiber\times\R^k$, its vector-valued Fourier transform along $\R^k$ is given by
	$$\widehat\varphi(\xi)=\int_{\R^k}e^{-i\langle x,\xi\rangle}\varphi(x)dx,$$
	which is again an $L^2$-section over $\fiber\times\R^k$ with values in $\Bigwedge^\ast (\R^{n+k})$. Under this Fourier transform, Equation \eqref{eq:(D+imu)phi} becomes 
	\begin{equation}\label{eq:(D+imu)HatPhi}
		\Big(\overbar c(\partial_r)\partial_r+\frac 1 rP+\sum_{\alpha = 1}^k\overbar c(\partial_{x_\alpha})i\xi_\alpha+i\Big)\widehat \psi(\xi)=0
	\end{equation}
where $\xi = (\xi_1, \cdots, \xi_k)\in \R^k$. 
	Let us define 
	$$D_\xi\coloneqq \overbar c(\partial_r)\partial_r+\frac 1 rP+\sum_{\alpha=1}^k\overbar c(\partial_{x_\alpha})i\xi_\alpha+i=\overbar c(\partial_r)\partial_r+\frac{1}{r} P+i\overbar c(\xi)+i.$$
	The operators $\{D_\xi\}_{\xi \in \R^k}$ form a smooth family of differential operators acting on $\Bigwedge^*(\link\times\R_+\times\R^k)$ over $\link\times\R_+$. Solving Equation \eqref{eq:(D+imu)phi} is equivalent to solving 
\begin{equation}\label{eq:fourierdiff}
D_\xi\widehat \psi(\xi)=0
\end{equation}
	for every $\xi\in \R^k$. 
	
For simplicity, let us denote  
	$$\gamma_j\coloneqq \mathscr E(\overbar c(u_j)\otimes c(v_j))$$
	to be the operator used in the construction of the boundary condition $B$ at the codimension one face $\overbar F_j\times Z$. Recall that $u_j$ is the unit inner normal vector of $\overbar F_j$ in $\fiber$. As $\fiber$ is polyhedral corner in $\R^n$, the same vector $u_j$ is also the unit inner normal vector of $\link_j=\link\cap\overbar F_j$ in $\link$. In particular,  $u_j$  is orthogonal to the radial vector $\partial_r$ of $\fiber$, and is also orthogonal to $\partial_{x_\alpha}$ for each $1\leq \alpha\leq k$. It follows that  $\overbar c(\partial_r)$ and $\overbar c(\partial_{x_\alpha})$ commute with $\gamma_j$, hence preserve the boundary condition $B$ as well as the domain of $P$. 
	
	Now let us define 
	$$\widetilde D_\xi\coloneqq -\overbar c(\partial_r)\cdot D_\xi=\partial_r-\frac 1 r\overbar c(\partial_r)P-i\overbar c(\partial_r)\overbar c(\xi)-i\overbar c(\partial_r)$$
and set 
	$$\widetilde P=-\overbar c(\partial_r)P,\quad \mu=\sqrt{|\xi|^2+1} \quad \textup{and}\quad  A=-\frac{i}{\mu}\Big(\overbar c(\partial_r)\overbar c(\xi)+ \overbar c(\partial_r)\Big).$$
So we have 
	$$\widetilde D_\xi=\partial_r+\frac 1 r\widetilde P+\mu A.$$
The operator $\widetilde P$ along $\link$ is formally symmetric. As $\overbar c(\partial_r)$ and $\overbar c(\xi)$ preserve the domain of $P$ and $P$ is essentially self-adjoint, the operator  $\widetilde P$ along $\link$ is  essentially self-adjoint with domain $H^1(\link,\Bigwedge^*(\link\times\R_+\times\R^k);B)$. Moreover, the operator $A$ is a self-adjoint endomorphism that anti-commutes with $\widetilde P$ and satisfies  $A^2=1$. 
	
Note that   $\widetilde{\mathscr E} \coloneqq  \mathscr E \cdot \overbar c(\partial_r)$ is a self-adjoint endomorphism  of $\Bigwedge^*(\link\times\R_+\times\R^k)$ such that $\widetilde{\mathscr E}^2=1$. We have the orthogonal decomposition
	$$\Bigwedge^*(\link\times\R_+\times\R^k)=E\oplus E^\perp,$$
	where $E$ is the subbundle corresponding to the positive eigenspace of  $\widetilde{\mathscr E}$ and $E^\perp$ is the subbundle corresponding to the negative eigenspace of  $\widetilde{\mathscr E}$. Since $A^*=A$, $A^2=1$, and $A$ anti-commutes with $\widetilde{\mathscr E}$, it follows that 
	$$A = \begin{pmatrix}
		0&U^*\\U&0
	\end{pmatrix}$$
	with respect to the above decomposition $E\oplus E^\perp$, where   $U\colon E\to E^\perp$ is some unitary.
	
	Let us identify $\Bigwedge^*(\link\times\R_+\times\R^k)$ with $E\oplus E$ via the unitary $1\oplus U^\ast$. Under this identification,  we have 
	$$ A = \begin{pmatrix}
		0&1 \\ 1 & 0
	\end{pmatrix} \textup{ acting on } E\oplus E. $$
	Note that $\widetilde{\mathscr E}$ preserves the domain of $\widetilde P$ and commutes with $\widetilde P$. Under the identification  $\Bigwedge^*(\link\times\R_+\times\R^k) \xrightarrow[\cong]{1\oplus U^\ast} E\oplus E$, the operator $\widetilde P$ is of the form: 
	$$\widetilde P = \begin{pmatrix}
		Q_1 &0\\0&Q_2
	\end{pmatrix}.$$
	As $\widetilde P$ anti-commutes with $A$, it follows that $Q_2=-Q_1$. For simplicity, let us denote $Q_1$ by $Q$. 
	
	 To summarize, under the identification  $\Bigwedge^*(\link\times\R_+\times\R^k) \xrightarrow[\cong]{1\oplus U^\ast} E\oplus E$, the operator $\widetilde D_\xi$ becomes
	\begin{equation}\label{eq:drQeta}
	\widetilde D_\xi = 	\partial_r+\frac 1 r\begin{pmatrix}
			Q&0\\0&-Q
		\end{pmatrix}+\begin{pmatrix}
			0&\mu\\ \mu&0
		\end{pmatrix},
	\end{equation}
	where $Q$ (subject to the boundary condition $B$) is an essentially self-adjoint operator on $L^2(\link,E)$ with domain $H^1(\link,E;B)$. Moreover, it follows from Lemma \ref{lemma:ess-saNewOneFiber} that  $|Q|\geq 1/2$.
	
	Let $\{\phi_\lambda\}$ be an orthonormal basis of $L^2(\link,E)$ such that $Q\phi_\lambda=\lambda\phi_\lambda$. For each $\xi\in \R^k$, we may write 
	$$\widehat\psi(\xi)=\Big(\sum_\lambda \sigma_{0,\lambda}(r)\phi_\lambda\Big)\oplus \Big(\sum_\lambda \sigma_{1,\lambda}(r)\phi_\lambda\Big),$$
where $\sigma_{0, \lambda}$ and $\sigma_{1, \lambda}$ are real-valued functions on $\R_+$. 	Equation \eqref{eq:fourierdiff}  then splits according to the above eigen-function decomposition and becomes the following family of differential equations: 
	\begin{equation*}
		\left(\frac{d}{dr}+\frac 1 r\begin{pmatrix}
			\lambda&0\\0&-\lambda
		\end{pmatrix}+\begin{pmatrix}
			0&\mu\\\mu&0
		\end{pmatrix}\right) \begin{bmatrix}
			\sigma_{0,\lambda}(r) \vspace{0.1cm}\\ \sigma_{1,\lambda}(r)
		\end{bmatrix}=0, 
	\end{equation*}
or equivalently, 
	\begin{equation}\label{eq:sigmaLambdaDef}
		\begin{cases}
			\displaystyle	\frac{d}{dr} \sigma_{0,\lambda}+\frac \lambda r\sigma_{0,\lambda}=-\mu\sigma_{1,\lambda}, \vspace{.2cm}\\ 
			\displaystyle	- \frac{d}{dr}\sigma_{1,\lambda}+\frac \lambda r\sigma_{1,\lambda}=\mu\sigma_{0,\lambda}.\\
		\end{cases}
	\end{equation}
	If we set
	$$t=\mu r, \quad \psi_{0,\lambda}=\sigma_{0,\lambda}\textup{ and }  \psi_{1,\lambda}=i\sigma_{1,\lambda},$$
	then Equation \eqref{eq:sigmaLambdaDef} becomes
	\begin{equation}
		\begin{cases}
			\displaystyle	\frac{d}{dt}\psi_{0,\lambda}+\frac \lambda t\psi_{0,\lambda}=i\psi_{1,\lambda}, \vspace{.2cm} \\
			\displaystyle	-\frac{d}{dt}\psi_{1,\lambda}+\frac \lambda t\psi_{1,\lambda}=i\psi_{0,\lambda},
		\end{cases}
	\end{equation}
	which is exactly Equation \eqref{eq:psi0,1def} in the proof of Lemma \ref{lemm:>=1/2}. In particular,  the solution of Equation \eqref{eq:sigmaLambdaDef} is given by
	$$\begin{bmatrix}
		\sigma_{0,\lambda}(r) \vspace{0.1cm}\\ \sigma_{1,\lambda}(r)
	\end{bmatrix} =a_1\begin{bmatrix}
		\sqrt{\mu r}K_{\lambda+1/2}(\mu r) \vspace{0.1cm}\\
		-\sqrt{\mu r}K_{\lambda-1/2}(\mu r)
	\end{bmatrix}+a_2\begin{bmatrix}
		\sqrt{\mu r}I_{\lambda+1/2}(\mu r) \vspace{0.1cm} \\
		-\sqrt{\mu r}I_{\lambda-1/2}(\mu r)
	\end{bmatrix}$$
for some constants $a_1$ and $a_2$, where $I_\nu$ and $K_\nu$ are modified Bessel functions of the first and the second kind, respectively. If $|\lambda|\geq 1/2$,  such a solution does not lie in $L^2(\R_+)$ unless both $a_1$ and $a_2$ are zero. This shows that $\ker(D_{\max} + i) = 0$. The proof for $\ker(D_{\max} - i) = 0$  is completely similar. This completes the proof of the essential self-adjointness of $D^\dR$ (subject to the boundary condition $B$). 

As for the domain of $D^\dR_B$, note that  Proposition \ref{prop:norm-equivalent} (with the same proof) still holds for the more general boundary condition $B$ of the present case. It follows that the domain of $D^\dR_B$ is $H^1(N\times 
Z,\Bigwedge^*(N\times Z);B)$. By the Rellich lemma, the inclusion $H^1(N\times 
Z,\Bigwedge^*(N\times Z);B) \hookrightarrow L^2(N\times 
Z,\Bigwedge^*(N\times Z))$ is a compact operator. It follows that $D^\dR_B$ is Fredholm. This finishes the proof. 
\end{proof}

\subsection{Fredholm index of twisted Dirac operators on manifolds with polyhedral boundary}\label{sec:fredholm-index-poly}
 In this subsection, we prove the main theorem of this section: an index theorem for compact manifolds with polyhedral boundary (Theorem \ref{thm:index-poly}). 

We first need a generalization of Lemma \ref{lemma:solutionapproximate}. Let $(N,\overbar g)$ be a Riemannian manifold with polyhedral  boundary and $E$ be a Clifford bundle over $N$. Let $\nabla$ be a connection on $E$ and $D$ the corresponding Dirac operator. Let $B$ be a local boundary condition on the sections of $E$ at the codimension one faces of $N$. Then $D$ satisfies  the following Bochner-type formula
\begin{equation}\label{eq:homogenousIneq-poly}
	\int_N|D\psi|^2=\int_N |\nabla\psi|^2+\int_N \langle R\psi,\psi\rangle+\int_{\partial N}\langle A\psi,\psi\rangle
\end{equation}
for all $\psi\in C_0^\infty(N,E;B)$ (cf. Definition \ref{def:smooth,H1}), 
where $R$ and $A$ are some endomorphisms of $E$ determined by the curvature of $\nabla$ (cf. Section \ref{sec:boundaryconditions}, in particular, Proposition \ref{prop:D^2} for an example that is the most relevant to the main results of the paper).

\begin{lemma}\label{lemma:solutionapproximate-poly}
	With the notation as above, assume both $R$ and $A$ uniformly bounded and  pointwise non-negative. Moreover, Suppose there is  a sequence of connections $\prescript{k}{}\nabla$ and local boundary condition $B_k$ on $E$ such that the associated Dirac operators $D_k$ satisfy 
	\begin{equation}\label{eq:homogenousIneqforn}			
		\int_N|D_k\psi|^2=\int_N |\prescript{k}{}\nabla\psi|^2+\int_N \langle R_k\psi,\psi\rangle+\int_{\partial N}\langle A_k\psi,\psi\rangle,
	\end{equation}
	for $\psi\in C_0^\infty(N,E;B_k)$, where $R_k$ and $A_k$ are endomorphisms of $E$ determined by the curvature of $\prescript{k}{}\nabla$.  Assume that  
	\begin{enumerate}[label=$(\arabic*)$]
		\item  as $k\to \infty$, $\prescript{k}{}\nabla\to \nabla$ and $D_k\to D$ in the  operator norm topology as bounded operators from $H^1(N,E)$ to $L^2(N,E)$,
		\item  as $k\to \infty$,  $R_k\to R$ in the  operator norm topology  as bounded operators from $H^1(N,E)$ to $L^2(N,E)$,
		\item  as $k\to \infty$,  $A_k\to A$ in the  operator norm topology  as bounded operators from $H^{1/2}(\partial N,E)$ to $L^2(\partial N,E)$,
		\item  as $k\to \infty$, $B_k\to B$ in the sense that the orthogonal projection associated to $B_k$ converges uniformly to the orthogonal projection associated to $B$,
		\item for each $k$, there exists a nonzero $\varphi_k\in H^1(N,E;B_k)$ such that $D_k\varphi_k=0$.
	\end{enumerate}  
	Then there exists a nonzero $\varphi\in H^1(N,E;B)$ such that  $D\varphi=0$ and $\nabla\varphi=0$. 	
\end{lemma}

\begin{proof}
	We assume that $\|\varphi_k\|=1$. Since $R$ and $A$ are pointwise nonnegative, it follows from the assumptions (1)--(3)  that  there exists a sequence of positive numbers $\{\varepsilon_k\}$ such that $\varepsilon_k \to 0$,  as $k\to \infty$,  and 
	\begin{align*}
		\int_N\big|\prescript{k}{}{\nabla}\varphi_k\big|^2 & = \int_N\big|D_k\varphi_k\big|^2 - \int_N \langle R_k\varphi_k,\varphi_k\rangle - \int_{\partial N}\langle A_k\varphi_k,\varphi_k\rangle \\
		&  \leq \int_N \langle (-R_k+R)\varphi_k,\varphi_k\rangle - \int_{\partial N}\langle (-A_k+A)\varphi_k,\varphi_k\rangle\\
		&\leq \varepsilon_k(\|\varphi_k\|^2+\|\prescript{k}{}\nabla\varphi_k\|^2)
	\end{align*}
	It follows that
	$$\|\prescript{k}{}\nabla\varphi_k\|\leq \sqrt{\frac{\varepsilon_k}{1-\varepsilon_k}}.$$
	Hence the $H^1$-norm of $\varphi_k$ is uniformly bounded for all $k$. By the Rellich lemma, there is a subsequence of $\{\varphi_k\}$ that converges in $L^2(N,E)$. Without loss of generality, we may assume that $\{\varphi_k\}$ converges to $\varphi$ in $L^2(N,E)$. It follows that $\|\varphi\|=1$. In particular,  $\varphi$ is nonzero. 
	
	Note that the $L^2$-norm of ${\nabla}\varphi_k$ also converges to zero since $\prescript{k}{}\nabla\to\nabla$. 
	Therefore, $\{\varphi_k\}$ forms a Cauchy sequence in $H^1(N,E)$. Consequently,  $\varphi_k$ converges  to  $\varphi$ with respect to the $H^1$-norm. In particular, we have 
	\[ \int_{N}|\nabla\varphi|^2 =  \lim_{k\to \infty} \int_{N}|\prescript{k}{}\nabla\varphi_k|^2 = 0,\]
	It follows that $\nabla \varphi =0$, hence $D\varphi = 0$. By the trace theorem for Sobolev spaces, we also have $\varphi_k|_{\overbar F} \to \varphi|_{\overbar F}$ in $H^{1/2}(\overbar F, E)$ for each codimension one face $\overbar F$ of $N$. Since each $\varphi_k$ satisfies the local boundary condition $B_k$ and $B_k\to B$ as $k\to \infty$, it follows that $\varphi$ satisfies the local boundary condition $B$. This finishes the proof. 
	
\end{proof}
We emphasize the assumption on the pointwise non-negativity of $R$ and $A$ is essential for Lemma \ref{lemma:solutionapproximate-poly} to hold. If $D_B$ only satisfies the usual G\r{a}rding's inequality as in Proposition \ref{prop:norm-equivalent}, then we would only be able to obtain a convergence in $L^2$ but not in $H^1$ in general. We also remark that Lemma \ref{lemma:solutionapproximate-poly} does not require the essential self-adjointness of $D$ or $D_k$. By the same argument, we also have the obvious analogue of Corollary \ref{coro:nosolution} for manifolds with polyhedral boundary.

Now we prove the main theorem of this section (Theorem \ref{thm:index-poly}): an index theorem for twisted Dirac operators on manifolds with polyhedral boundary in all dimensions. 

\begin{proof}[Proof of Theorem \ref{thm:index-poly}] 
	The main strategy of the proof is similar to that of Theorem \ref{thm:index}. Roughly speaking, we will first  ``deform" $N$ and $M$  (and their Riemannian metrics) so that they have almost the same geometry of their fiberwise polyhedral corners,   while keeping the Fredholm index of the associated twisted Dirac operator unchanged, and then compute the Fredholm index by the same cutting-and-pasting argument as in the proof of of Theorem \ref{thm:index}. However, a key difference is that we need to introduce auxiliary dimensions in order to construct such a deformation in the case of manifolds with polyhedral boundary. The auxiliary space dimensions allow us to circumvent the geometric deformation problem for general spherical polyhedra (cf. Remark \ref{remark:poly-deform}) and to  use more general (and more algebraic) local boundary conditions (that do not necessarily come from the geometry of any manifold with polyhedral boundary) to deform the twisted Dirac operator while keeping the Fredholm index unchanged.  
	
   The theorem is nontrivial only when $\dim N$ and $\dim M$ have the same parity. Let us first consider the case where both $N$ and $M$ are \emph{odd} dimensional. 	Let $\mathbb I$ be the interval $[-1,1]$ . Consider the product map 
	$$f\times \id\colon N\times \mathbb I\to M\times \mathbb I,$$
	where $\id\colon \mathbb I \to \mathbb I$ is the identity map. 
	Let us denote 
	\[ E\coloneqq S_{N\times \mathbb I}\otimes f^*S_{M\times \mathbb I} = S_{T(N\times \mathbb I)\oplus f^\ast T(M\times \mathbb I)}.  \]The bundle $E$ over $N\times \mathbb I$ naturally identifies  with
	$$(S_{TN\oplus f^\ast TM})\hotimes \Bigwedge^*\R, $$
	since $\mathbb I$ is flat. A face of $N\times \mathbb I$ is   either $\overbar F_i\times \mathbb I$ for some face $\overbar F_i$ of $N$, or $N\times \{\pm1\}$, where $\{\pm1\}$ are the end points of $\mathbb I$.  The local boundary condition $B$ on $S_{TN\oplus f^\ast TM}$ naturally induces the following local boundary condition on sections of $E$ at the codimension one faces of $N\times \mathbb I$.   For a smooth section $\varphi$ of $E$ over $N\times \mathbb I$, the boundary condition at $\overbar F_i\times \mathbb I$ is given by 
	$$\mathscr E(\overbar c( u_i\oplus 0)\otimes c(v_i\oplus 0))\varphi=-\varphi,$$
	where $\mathscr E$ is the $\Z_2$-grading on $E$, $ u_i$ is the unit inner normal vector of $\overbar F_i$ in $N$, and $v_i$ is the unit inner normal vector of the corresponding face $F_i$ in $M$. Here the notation $u_i\oplus 0$ means that $ u_i\oplus 0$ lies in $TN\subset TN\oplus T\mathbb I$.  Now at the codimension face $N\times \partial \mathbb I$,  the boundary condition is given by 
	$$\mathscr E(\overbar c(0\oplus w)\otimes c(0\oplus w))\varphi=-\varphi,$$
	where  $w$ is  the unit inner normal vector of $\partial \mathbb I$ in  $\mathbb I$. For simplicity, we shall still denote this boundary condition by $B$. 
	
	We shall construct the desired continuous deformation of twisted Dirac operators  as follows. 
	
	\begin{enumerate}[label= \textbf{Step (\arabic*)}]
		\item Let $u_i$ be the unit inner normal vector field of each codimension one face $\overbar F_i$ of $N$. Let $v_i$ be the unit inner normal vector field of the corresponding codimension one face $F_i$ of $M$. We construct a continuous deformation $\{\widetilde v_i(t)\}_{t\in [0, 1]}$ of  $\{v_i\}$ with the help of the extra space direction of $\mathbb I$ such that the inner products between every pair of adjacent $\widetilde v_i(t)$ and $\widetilde v_j(t)$ is always strictly larger than the corresponding inner product between $u_i$ and $u_j$ throughout the deformation, and furthermore  $\widetilde v_i(t)$ and $\widetilde v_j(t)$ converge to the same vector field as $t\to 1$. Note that the geometry of $N$ remains constant along $t\in[0,1]$ in the current step. 
		\item We deform the metric $\overbar g$ on $N$ so that $\overbar g$ eventually has product structure\footnote{More precisely, the product structure means the following. Let us start with the top codimension faces. If $\overbar F$ is a top codimension face, then product structure means that a sufficiently small tubular neighborhood $\mathcal N_\varepsilon(\overbar F)$ of $\overbar F$ is a direct product  $\overbar F\times \fiber$, where $\fiber$ is a certain convex Euclidean polyhedral corner. Now by induction, near  a codimension $k$ face $\overbar F_k$, product structure means that $\mathcal N_\varepsilon(\overbar F_k) - \bigcup_{|\lambda| \geq k +1}\mathcal N_\delta (\overbar F_\lambda )$ has a direct product structure.} near each face of every codimension, while keeping the strict comparison condition on dihedral angles throughout the deformation (thanks to the first step). 
		
		\item  Consider a family of Riemannian metrics $\{g_t\}_{t\in [2, 3]}$ (of manifolds with polyhedral boundary) on $M$  such that 
		\begin{enumerate}
			\item $g_2 = g$, 
			\item $g_3$ has product structure near each face of every codimension, 
				\item all dihedral angles of $(M, g_t)$ are $<\pi$ for all $t\in [2, 3]$,  
			\item and the map $f\colon (N, \overbar g_2) \to (M, g_3)$  is a fiberwise isometry near each codimension $k$ face, where the fiber is some convex Euclidean polyhedral corner in $\mathbb R^k$. 
		\end{enumerate} We construct a continuous family of vector fields $\{\widetilde v_i(t)\}_{t\in [2, 3]}$ along each codimension one face $F_i$ such that\footnote{Here  $\widetilde v_i(1)$ is the vector field from \textbf{Step (1)}.}   $\widetilde v_i(2) = \widetilde v_i(1)$   and  $\widetilde v_i(3)$ coincides with the corresponding unit inner normal vector field of $F_i$ in $M$ with respect to the metric $g_3$,  while keeping the strict comparison condition on dihedral angles (or more precisely inner products) for all $t\in [2, 3)$. Again, the geometry of $(N, \overbar g_2)$ remains unchanged in this step. 
		\item By a similar argument as the case of manifolds with corners (in the proof of Theorem \ref{thm:index}), we show that the Fredholm index of $D_E$ (subject to the corresponding boundary condition) remains unchanged throughout the above deformations. 
	\end{enumerate}

Let us start with \textbf{Step (1)}. We  fix some sufficiently small $\varepsilon >0$. For each codimension $\ell$ face  $\overbar F_{(\ell)}$ of $N$, we consider  
\begin{equation}\label{eq:smallneighborhood}
	\mathcal U(\overbar F_{(\ell)}) = \mathcal U_\varepsilon (\overbar F_{(\ell)}) \coloneqq  \mathcal N_\varepsilon (\overbar F_{(\ell)}) - \bigcup_{|\lambda| > \ell }\mathcal N_{\varepsilon/2}(\overbar F_{\lambda}) 
\end{equation} 
where $|\lambda|$ stands for codimension of the face  $\overbar F_{\lambda}$ in $N$  and $\mathcal N_\varepsilon (\overbar F_{\lambda})$ is the $\varepsilon$-tubular neighborhood of $\overbar F_{\lambda}$. The collection $\{\mathcal U(\overbar F_{\lambda})\}$ for all faces with codimension $\geq 1$ is an open cover of $\partial N$. Topologically, $\mathcal U(\overbar F_{(\ell)})$ carries a natural fiber bundle structure (see the discussion at the beginning of Section \ref{sec:conic}), where the  base space $\mathcal U(\overbar F_{(\ell)})$ is an open manifold  in the interior of $\overbar F_{(\ell)}$. In particular,  the tangent bundle $TN$ restricted on $\mathcal U(\overbar F_{(\ell)})$  decompose into the vertical part $T_VN$ along the fibers and the horizontal part $T_HN = (T_VN)^\perp$. By the definition of manifolds with polyhedral boundary, the vertical subbundle $T_VN$ is a trivial vector bundle over $\mathcal U(\overbar F_{(\ell)})$ . Similar remarks also apply to $TM$ near the corresponding codimension $\ell$ face $F_{(\ell)}$ in $M$. In particular, we have a decomposition $TM=T_HM\oplus T_VM$ on a similar neighborhood $\mathcal U(F_{(\ell)}) $ of $F_{(\ell)}$, where the vertical subbundle $T_VM$ is again a trivial vector bundle over  $\mathcal U(F_{(\ell)}) $ of $F_{(\ell)}$ and has the same rank as $T_VN$. 

Let $v_i$ be the unit inner normal vector field of each codimension one face $F_i$ of $M$. For  each $x\in \partial M$,  we see the corresponding tangent cone at $x$ is a \emph{convex} polyhedral corner in the tangent space $T_xM$, since  all dihedral angles of $M$ are $<\pi$ by assumption. It follows that there exists a smooth unit vector field $\normal\in TM$ over $\partial M$ such that 
\[ \langle v_i, \normal\rangle_x >0  \] 
for all $x\in \partial M$ and for all $i$. According to the above discussion on the horizontal and vertical subbundle decompositions, it is not difficult to construct a continuous family of vector fields\footnote{For example, such a family of vector fields $\{v_i(s)\}_{s\in [0, 1]}$  can be obtained as follows. By the definition of manifolds with polyhedral boundary,  $(M, g)$ is a codimension zero submanifold of an open smooth manifold $(M^\dagger, g^\dagger)$. Now we choose $M_s$ to be a continuous family of codimension zero submanifolds with polyhedral boundary in $(M^\dagger, g^\dagger)$ such that the fiberwise convex polyhedral corner structure of $M_s$ near each codimension $k$ face (but away from small neighborhoods of faces with codimension $\geq k+1$) converges to the half ball in the positive direction of $\normal$.} $\{v_i(s)\}_{s\in [0, 1]}$ of $TM$ along each face $F_i$  such that 
\begin{enumerate}[label=(\alph*)]
	\item $v_i(s)$ is fiberwise asymptotically flat near all faces of all codimensions,
	\item $v_i(s)$ converges to $\normal$ as $s\to 1$, for all $i$, 
	\item and $\langle v_i(s), \normal\rangle_x >0$ 
	for all $x\in \partial M$, all $s\in [0, 1]$ and  all $i$.
\end{enumerate} 
Since $M$ is compact, property (c) above implies that  there exists a positive number $\theta>0$ such that 
\begin{equation}\label{eq:lowerbound}
	\langle v_i(s),\normal\rangle_x\geq \theta
\end{equation}
all $x\in \partial M$, all $s\in [0, 1]$ and  all $i$. 
 
We emphasize that the  vector fields $\{v_i(s)\}$  only serves as a background of the actual deformation that we are about to construct. In general, we have no control over the dihedral angles (or more precisely the inner products) between $v_i(s)$ and $v_j(s)$.  In particular, if we use $\{v_i(s)\}$ to define a new local boundary condition,  the comparison condition on dihedral angles (cf. line \eqref{eq:dihedralstrict} and \eqref{eq:dihedralequal}) may fail for some  $s\in[0,1]$. Consequently, the essential self-adjointness and the Fredholmness of the associated twisted Dirac operator  may also fail for some  $s\in[0,1]$.  The key part of the proof is to ``modify" the above deformation so that the new deformation satisfies the required dihedral angle comparison condition. This will be achieved by making use of the auxiliary space direction $\mathbb I$ and forming  more general local boundary conditions such as those from Theorem \ref{thm:ess-saNew}.

Recall that we have a  smooth unit vector field $\normal$ defined over $\partial M$.  We may extend the vector field $\normal$ to a smooth vector field over $M$, still denoted by $\normal$,  such that  $\normal$ has unit length in a tubular neighborhood of  $\partial M$. Of course, $\normal$ may equal zero in the interior of $M$. Let $\Y$ be a constant unit vector in $\mathbb I$, say, pointing in the positive direction of $\mathbb I$. Recall that we have denoted the unit inner normal vector at either end point $\{1\}$ or $\{-1\}$ of $\mathbb I$ by $w$. With the above choice of $\Y$, we see that $\Y = -w$ at the end point $\{1\} \in \mathbb I$ and $\Y = w$ at the end point $\{-1\} \in \mathbb I$.   For $t\in [0, 1]$,   we define vector fields $\widetilde v_i(t)$ and $w(t)$ in $T(M\times \mathbb I)$ as follows: 
 \begin{equation}\label{eq:newvector}
\widetilde v_i(t)\coloneqq \frac{1}{\sqrt{1+p(t)^2}}(v_i(t)\oplus p(t)\Y)
 \end{equation}
 along the face $F_i\times \mathbb I$ of $M\times \mathbb I$, and 
\begin{equation}
w(t)\coloneqq \frac{1}{\sqrt{1+q(t)^2|\normal|^2}}(q(t)\normal\oplus w) 
\end{equation}
 along the faces $M\times \{\pm 1\}$ of $M\times \mathbb I$. 
 Here the function $p\colon [0, 1]\to \mathbb R$ is given by
 $$p(t)=\begin{cases}
 	\sqrt{Ct}& \textup{ if } 0\leq t\leq 1-\delta, \vspace{0.3cm}\\
 	\displaystyle \frac{\sqrt{C(1-\delta)}}{\delta}(1-t)& \textup{ if } 1-\delta\leq t\leq 1,
 \end{cases}$$
 with a  sufficiently large $C>0$ and a sufficiently small $\delta>0$, and the function $q\colon [0, 1]\to \mathbb R$ is given by 
 $$q(t)=\frac{2p(t)}{\theta},$$
 where $\theta$ is the positive number from line \eqref{eq:lowerbound}. 
 
 Let $u_i$ be the unit inner normal vector of a codimension one face $\overbar F_i$ of $N$. As before, for each codimension one face  $\overbar F_i$ of $N$, we denote its corresponding codimension one face in $M$ by $F_i$. 
 
 \begin{claim*} If $C >0$ is sufficiently large and $\delta >0$ is sufficiently small, we have the following comparison of inner products: for all $t\in [0, 1]$, 
 	\begin{equation}\label{eq:angleComparision0-3}
 		\begin{split}
 			\langle \widetilde v_i(t),\widetilde v_j(t)\rangle > \langle u_i,u_j\rangle&\text{ for  all adjacent faces }\overbar F_i\times\mathbb I\text{ and }\overbar F_j\times\mathbb I,  \\
 			\langle \widetilde v_i(t),w(t)\rangle\geq 0&\text{ along }(\overbar F_i\times\mathbb I)\cap(N\times\partial\mathbb I). 
 		\end{split}
 	\end{equation}
  Moreover, the second inequality $\langle \widetilde v_i(t),w(t) \rangle \geq 0$ attains the equal sign only at $t=0$ and $t=1$. In other words, the second inequality is a strict inequality $\langle \widetilde v_i(t),w(t) \rangle  > 0$ for all $t\in (0, 1)$.  
 \end{claim*}
 
 Now we verify the first inequality of the comparison condition \eqref{eq:angleComparision0-3}. Note that 
 $$\langle\widetilde v_i(t),\widetilde v_{j}(t) \rangle=\frac{\langle v_i(t),v_j(t)\rangle+(p(t))^2}{1+(p(t))^2}$$
 for all $t\in [0, 1]$. We divide the computation into two cases: for $t\in [0, 1-\delta]$ and for $t\in [1-\delta, 1]$.  
 
 For $0\leq t\leq 1-\delta$, we have
 $$\langle\widetilde v_i(t),\widetilde v_{j}(t) \rangle=\frac{\langle v_i(t),v_j(t)\rangle+Ct}{1+Ct}.$$
Therefore $\langle\widetilde v_i(t),\widetilde v_{j}(t) \rangle>\langle u_i,u_j\rangle$ is equivalent to 
$$C\geq \frac 1 t\cdot \frac{\langle u_i,u_j\rangle-\langle v_i(t), v_{j}(t) \rangle}{1-\langle  v_i, v_j\rangle}.$$
 By the assumption on dihedral angles (cf. line \eqref{eq:strictdihedral-poly}), we have  
 \[ \langle u_i,u_j\rangle - \langle v_i(0),v_j(0)\rangle = \langle u_i,u_j\rangle - \langle v_i,v_j\rangle<0 \text{ and } 1-\langle v_i,v_j\rangle>0\]
 along all pairs of adjacent faces $\overbar F_i\times\mathbb I$ and $\overbar F_j\times\mathbb I$.  It follows that
 $$\sup_{t\in[0,1]}\frac{\langle u_i,u_j\rangle-\langle v_i(t), v_{j}(t) \rangle}{t(1-\langle  v_i, v_j\rangle)}<+\infty.$$
 Therefore, the first inequality of the comparison condition \eqref{eq:angleComparision0-3} holds for all $t\in [0, 1-\delta]$,  as long as $C>0$ (independent of $\delta$) is sufficiently large.

 Recall that $v_i(t)$ converges to $\normal$ as $t\to 1$, for all codimension one faces $F_i$. It follows that for any $\varepsilon >0$, there exists $\delta>0$ such that 
 \[  |\langle v_i(t),v_j(t)\rangle - 1|< \varepsilon  \]
 for all $ t\in [1-\delta, 1]$.  In particular, there exists a $\delta>0$ such that, for all $t\in [1-\delta, 1]$, 
 \[  \langle v_i(t),v_j(t)\rangle > \langle u_i, u_j\rangle  \]
  along all adjacent faces $\overbar F_i\times\mathbb I$ and $\overbar F_j\times\mathbb I$. Let us fix such a $\delta >0$.  Observe that for any $\lambda\geq 0$, we have 
 $$\langle v_i(t),v_j(t)\rangle\leq \frac{\langle v_i(t),v_j(t)\rangle+\lambda}{1+\lambda}\leq 1.$$
 It follows that 
 $$\langle\widetilde v_i(t),\widetilde v_{j}(t) \rangle=\frac{\langle v_i(t),v_j(t)\rangle+(p(t))^2}{1+(p(t))^2} \geq \langle v_i(t),v_j(t)\rangle > \langle u_i, u_j\rangle $$
 for all $t\in [1-\delta, 1]$. To summarize, we have proved that 
  the first inequality of the comparison condition \eqref{eq:angleComparision0-3} holds as long as $C>0$ is sufficiently large and $\delta >0$ is sufficiently small. 
 
 Now we verify the second inequality of the comparison condition \eqref{eq:angleComparision0-3}. We have 
 \begin{align*}
 	\langle\widetilde v_i(t),w(t)\rangle=&\frac{q(t)\langle v_i(t),\normal\rangle+p(t)\langle \Y,w\rangle}{\sqrt{(1+(p(t))^2)(1+(q(t))^2)}}\\
 	=&\frac{p(t)}{\sqrt{(1+(p(t))^2)(1+\frac{(p(t))^2}{\theta^2/4})}}\Big(\frac{\langle v_i(t),\normal\rangle}{\theta/2}-\langle \Y,w\rangle\Big).
 \end{align*}
Recall that  the vector field $\normal$ is of unit length and satisfies 
 \[ \langle \normal,v_i(t)\rangle\geq \theta>0\]  along $( F_i\times\mathbb I)\cap (M\times\partial\mathbb I)$.  
It follows that 
 $$\frac{\langle v_i(t),\normal\rangle}{\theta/2}-\langle \Y,w\rangle\geq 2-\langle \Y,w\rangle>0$$
 for all $t\in [0, 1]$. Therefore, 
 we have 
 \[ \langle\widetilde v_i(t),w(t)\rangle\geq 0 \]
 for all $t\in [0, 1]$, and the equality holds only if $p(t)=0$, that is, only if $t=0$ or $t=1$. This finishes the proof of the claim and completes the construction of \textbf{Step (1)}. 
 
 Now let us consider \textbf{Step (2)}. We deform $(N,\overbar g)$ to a manifold $(N,\overbar g_2)$ with polyhedral boundary via a  family of Riemannian metrics $\{\overbar g_t\}_{t\in [1, 2]} $ (of manifolds with polyhedral boundary) such that $\overbar g_1 = \overbar g$ and  $\overbar g_2$ has product structure near each face of $N$ with any codimension, while maintaining the strict comparison condition 
 \[  \langle u_i(t), u_j(t) \rangle < \langle \widetilde v_i(1),\widetilde v_j(1)\rangle \] 
 along all pairs of  adjacent faces $\overbar F_i$ and $\overbar F_j$ of $N$, and for all $t\in [1, 2]$. Here $u_i(t)$ is the unit inner normal vector field of $\overbar F_i$ in $N$ with respect to the metric $\overbar g_t$.  It is clear that there exists a continuous family of metrics $\{\overbar g_t\}_{t\in [1, 2]}$ (of manifolds with polyhedral boundary) on the underlying topological manifold $N$ such that $\overbar g_1 = \overbar g$ and $\overbar g_2$ has product structure near each face of $N$ of every codimension. Of course, we generally do not have much control of the dihedral angles of $(N, \overbar g_t)$ (as regard to whether they increase or decrease). But in any case, as both $N$ and the interval $[0, 1]$ are compact, there exists a constant $\alpha <\pi$ such that all dihedral angles of $(N, \overbar g_t)$ are strictly less than $\alpha$ for all $t\in [1, 2]$. 
By the construction of $\textbf{Step (1)}$, when $t=1$,  the vector fields  $\{ \widetilde v_i(1)\}$ all coincide with $\normal$. In particular, we have 
 \[ \langle \widetilde v_i(1),\widetilde v_j(1)\rangle=1\]  for all pairs of  adjacent faces $F_i$ and $F_j$ of $M$. 
 Therefore,  the strict comparison condition 
 \[  \langle u_i(t), u_j(t) \rangle < \langle \widetilde v_i(1),\widetilde v_j(1)\rangle \] is always  satisfied for all  $t\in [1, 2]$. This completes \textbf{Step (2)}. 
 
 Now let us move to \textbf{Step (3)}. Consider a family of Riemannian metrics $\{g_t\}_{t\in [2, 3]}$ (of manifolds with polyhedral boundary) on $M$  such that 
 \begin{enumerate}[label=(\alph*)]
 	\item $g_2 = g$, 
 	\item $g_t$ has product structure near each face of every codimension for $t$ near $3$, 
 		\item all dihedral angles of $(M, g_t)$ are $<\pi$ for all $t\in [2, 3]$, 
 	\item and the map $f\colon (N, \overbar g_2) \to (M, g_3)$  is a fiberwise isometry near each codimension $k$ face, where the fiber is some convex Euclidean polyhedral corner in $\mathbb R^k$. 
 \end{enumerate} 
   Let $v_i(t)$ be the unit inner normal vector field of each codimension one face $F_i$ of $M$ with respect to the metric $g_t$. For  each $x\in \partial M$,  we see the corresponding tangent cone at $x$ is a \emph{convex} polyhedral corner in the tangent space $T_xM$, since  all dihedral angles of $(M, g_t)$ are $<\pi$ by assumption. It follows that there exists a smooth unit vector field $\normal_t\in TM_{g_t}$ over $\partial M$ such that 
 \[ \langle v_i(t), \normal_t\rangle_x >0  \] 
 for all $x\in \partial M$ and for all $i$. 
 
 Similar to the set defined in line \eqref{eq:smallneighborhood}, we consider a family of open sets $\{\mathcal U_\varepsilon(F_\lambda)\}$ of $M$ for  faces $F_\lambda$ of $M$ with codimension $\geq 1$. More precisely, for each face $F_\lambda$ of $M$, we denote by $|\lambda|$ its codimension. By induction (starting from the top codimension), we may choose $\varepsilon_\lambda >0$ for each $F_\lambda$ such that the sets $\{ \mathcal U_{\varepsilon_\lambda}(F_\lambda) \}$ cover $\partial M$ and,  for any pair of distinct faces $F_\lambda$ and $F_\rho$ of the same codimension $|\lambda| = |\rho|$, we have $\mathcal U_{\varepsilon_\lambda}(F_\lambda)\cap \mathcal U_{\varepsilon_\rho}(F_\rho)  = \emptyset$. Here we have 
 \begin{equation}\label{refinedsets}
 	\mathcal U_{\varepsilon_\lambda}(F_\lambda)\coloneqq \mathcal N_{\varepsilon_\lambda}( F_\lambda)- \bigcup_{|\mu|\geq|\lambda|+1}\mathcal N_{\varepsilon_{\mu}/2}( F_{\mu})
 \end{equation}
 where $\mathcal N_{\varepsilon_\lambda} (F_{\lambda})$ is the $\varepsilon_\lambda$-tubular neighborhood of $F_{\lambda}$. For $t$ sufficiently close to $3$, we may assume without loss of generality  that $g_t$ has  product structure on every $\mathcal U_{\varepsilon_\lambda} (F_\lambda)$, which is the direct product of a Euclidean polyhedral corner in $\mathbb R^{|\lambda|}$ and a subspace of $F_\lambda$.   In particular, there is a canonical choice of the vertical part $(T_VM)_{F_\lambda}$ of $TM$ on every $\mathcal U_{\varepsilon_\lambda}(F_\lambda)$ and a canonical choice of the radial variable $r$ for the fiber. For the choice of the smooth vector field $\normal_t$ above, we may assume without loss of generality that  $\normal_t(x)$ lies in $(T_VM)_{F_\lambda}$ for each $x\in \mathcal U_{\varepsilon_\lambda}(F_\lambda)\cap\partial M$. This can be, for example, achieved by induction starting from the top codimension faces.  See Figure \ref{fig:normal} for how $\normal_t(x)$ varies from a codimension two face to a codimension one face. To distinguish  the vector field $\normal_3 \in TM_{g_3}$ over $\partial M$ at $t=3$ from the rest, we shall denote $\normal_3$ by $\normalnew$. 
 \begin{figure}
 	\begin{tikzpicture}[scale=2]
 		\draw[thick] ({2*cos(40)},{2*sin(40)}) -- (0,0) -- ({-2*cos(40)},{2*sin(40)});
 		\draw[->,blue] (0,0) -- (0,0.3);
 		\draw[->,blue] ({0.3*cos(40)},{0.3*sin(40)}) -- ({0.3*cos(40)-0.3*cos(77)},{0.3*sin(40)+0.3*sin(77)});
 		\draw[->,blue] ({0.6*cos(40)},{0.6*sin(40)}) -- ({0.6*cos(40)-0.3*cos(63)},{0.6*sin(40)+0.3*sin(63)});
 		\draw[->,blue] ({0.9*cos(40)},{0.9*sin(40)}) -- ({0.9*cos(40)-0.3*cos(50)},{0.9*sin(40)+0.3*sin(50)});
 		\draw[->,blue] ({1.2*cos(40)},{1.2*sin(40)}) -- ({1.2*cos(40)-0.3*cos(50)},{1.2*sin(40)+0.3*sin(50)});
 		\draw[->,blue] ({1.5*cos(40)},{1.5*sin(40)}) -- ({1.5*cos(40)-0.3*cos(50)},{1.5*sin(40)+0.3*sin(50)});
 		\draw[->,blue] ({-0.3*cos(40)},{0.3*sin(40)}) -- ({-0.3*cos(40)+0.3*cos(77)},{0.3*sin(40)+0.3*sin(77)});
 		\draw[->,blue] ({-0.6*cos(40)},{0.6*sin(40)}) -- ({-0.6*cos(40)+0.3*cos(63)},{0.6*sin(40)+0.3*sin(63)});
 		\draw[->,blue] ({-0.9*cos(40)},{0.9*sin(40)}) -- ({-0.9*cos(40)+0.3*cos(50)},{0.9*sin(40)+0.3*sin(50)});
 		\draw[->,blue] ({-1.2*cos(40)},{1.2*sin(40)}) -- ({-1.2*cos(40)+0.3*cos(50)},{1.2*sin(40)+0.3*sin(50)});
 		\draw[->,blue] ({-1.5*cos(40)},{1.5*sin(40)}) -- ({-1.5*cos(40)+0.3*cos(50)},{1.5*sin(40)+0.3*sin(50)});
 		\draw[dashed] ({0.9*cos(40)},{0.9*sin(40)}) arc (40:130:0.9);
 		\draw ({1.2*cos(40)},0.4) node {$r=\varepsilon/2$};
 	\end{tikzpicture}
 	\caption{The smooth vector field $\normal$.}
 	\label{fig:normal}
 \end{figure}
 
  Consider the map $f\colon (N, \overbar g_t) \to (M, g_t)$ with $t\in [2, 3]$.  If  $t, s\in [2, 3]$ are sufficiently close, then there exists an bundle isometry $\Phi_{t, s}$ between  the pullback bundles $f^\ast(TM_{g_t})$ and  $f^\ast(TM_{g_{s}})$ such that the isometry $\Phi_{t, s}$ preserves the decompositions of vertical and horizontal subbundles over
 $\mathcal U_{\varepsilon}(\overbar F_\lambda)$ for each face $\overbar F_\lambda$ of $(N, \overbar g_2)$, as long as $\varepsilon$ is sufficiently small, where $\mathcal U_{\varepsilon}(\overbar F_\lambda)$ is the set defined in line  \eqref{eq:smallneighborhood}. Moreover, we may assume without loss of generality that $\Phi_{t, s}$ maps $\normal_s$ to $\normal_t$.
 By the compactness of the interval $[0, t]$, it follows that there exists a bundle isometry $\Phi_t$ between  the pullback bundles $f^\ast(TM_{g})$ and  $f^\ast(TM_{g_t})$ such that $\Phi_t$ preserves the decompositions of vertical and horizontal subbundles over
 $\mathcal U_{\varepsilon}(\overbar F_\lambda)$ for each face $\overbar F_\lambda$ of $(N, \overbar g_2)$, as long as $\varepsilon$ is sufficiently small, and $\Phi_t$ maps the vector field $\normal \in f^\ast(TM_g)$ to $ \normal_t\in f^\ast(TM_{g_t})$, for all $t\in [2, 3]$.  
 
 By the construction of $\textbf{Step (1)}$, when $t=1$,  the vector fields  $\{ \widetilde v_i(1)\}$ all coincide with $\normal$. In particular, we have 
 \[ \langle \widetilde v_i(1),\widetilde v_j(1)\rangle=1\]  for all pairs of  adjacent faces $F_i$ and $F_j$ of $M$. We define
 \[ \widetilde v_i(t-1) = \normal_t \textup{ along $F_i$ in } (M, g_t) \textup{ for all }  t\in [2, 3].\] 
Via the isometries $\Phi_t$, we have transformed the boundary condition defined using $\{ \widetilde v_i(1)\}$ for $f\colon (N, \overbar g_2) \to (M, g_2)$  to a local boundary condition defined using $\{ \widetilde v_i(2)\}$ for $f\colon (N, \overbar g_2) \to (M, g_3)$, while keeping the strict dihedral angle comparison, hence not changing the Fredholm index of the associated twisted Dirac operator. 

Now we have arrived at the following geometric situation: $(N, g_2)$ has product structure near each face of $N$ of every codimension and the map 
\[  f\times \id \colon (N, \overbar g_2)\times \mathbb I \to (M, g_3)\times \mathbb I \]
	is a fiberwise isometry near each codimension $k$ face for all $k\geq 1$. For  a smooth section $\varphi$ of $E = S_{T(N\times \mathbb I)\oplus f^\ast T(M\times \mathbb I)}$ over $N\times \mathbb I$, the boundary condition at $\overbar F_i\times \mathbb I$ is given by 
	$$\mathscr E(\overbar c( u_i(2)\oplus 0)\otimes c(\widetilde v_i(2)\oplus 0))\varphi=-\varphi,$$
	where $\mathscr E$ is the $\Z_2$-grading on $E$, $ u_i(2)$ is the unit inner normal vector of $\overbar F_i$ in $(N, \overbar g_2)$, and $\widetilde v_i(2) =  \normalnew = \normal_3$.   Now at the codimension face $N\times \{1\}$ (resp. $N\times \{-1\}$),  the boundary condition is given by 
	$$\mathscr E(\overbar c(0\oplus \overbar w)\otimes c(0\oplus w))\varphi=-\varphi,$$
	where $\overbar w$ and $w$ is  the unit inner normal vector of $\{1\}$ (resp. $\{-1\}$) with respect to $\mathbb I$.

Recall that $v_i(t)$ is the unit inner normal vector field of each codimension one face $F_i$ of $M$ with respect to the metric $g_t$ for $t\in [2, 3]$. Let $\normalnew$ be  the smooth unit vector field in  $TM_{g_3}$ over $\partial M$  such that 
\[ \langle v_i(t), \normalnew \rangle_x >0  \] 
for all $x\in \partial M$ and for all $i$. By the identifications via  the isometries $\Phi_{t, s}$ above, we may in fact view $\{v_i(t)\}_{t\in [2, 3]}$ as a family of vector fields in $f^\ast (TM_{g_3})$ over each codimension one face $\overbar F_i$ of $(N, \overbar g_2)$ such that 
\[ \langle v_i(t), \normalnew\rangle_x >0 \]
for all $x\in \partial M$, all $t\in [2, 3]$ and  all $i$. Now we apply the exact same construction of \textbf{Step (1)} to  $\{v_i(t)\}_{t\in [2, 3]}$ and the unit inner normal vector $w$ of $\partial \mathbb I$ in $\mathbb I$. To be precise, we apply the construction of \textbf{Step (1)} to the family $\{v_i(3-t)\}_{t\in [2, 3]}$. In other words, we consider the  family $\{v_i(t)\}_{t\in [2, 3]}$  in the reversed direction, and apply the construction starting at $t=3$ and ending at $t=2$.   As a result, we obtain  vector fields $\{\widetilde v_i(t)\}_{t\in [2, 3]}$ and $\{w(t)\}_{t\in [2, 3]}$ in $T(M\times \mathbb I)$ such that
\begin{equation}
	\begin{split}
		\langle \widetilde v_i(t),\widetilde v_j(t)\rangle \geq  \langle u_i(2),u_j(2)\rangle&\text{ for all adjacent faces }\overbar F_i\times\mathbb I\text{ and }\overbar F_j\times\mathbb I,  \\
		\langle \widetilde v_i(t),w(t)\rangle\geq 0&\text{ along }(\overbar F_i\times\mathbb I)\cap(N\times\partial\mathbb I). 
	\end{split}
\end{equation}
for all $t\in [2, 3]$. 
Moreover, the first inequality $\langle \widetilde v_i(t),\widetilde v_j(t)\rangle \geq  \langle u_i(2),u_j(2)\rangle$ attains the equal sign only at $t=3$; and the second inequality $\langle \widetilde v_i(t),w(t) \rangle \geq 0$ attains the equal sign only at $t=2$ and $t=3$. This completes \textbf{Step (3)}.

Now we come to the final step, \textbf{Step (4)},  of the proof. For the deformation constructed in \textbf{Step (1)} -- \textbf{Step (3)} that is parametrized by $t\in [0, 3]$, let us denote by $N_t$ and $M_t$ the manifolds at time $t$, and denote by $B_t$ the corresponding local boundary.   By Theorem \ref{thm:ess-sa} and Theorem \ref{thm:ess-saNew}, the twisted Dirac operator $D_{B_t}^E$ acting on $E$ with boundary condition $B_t$ is essentially self-adjoint and Fredholm, and moreover its domain is $H^1(N\times \mathbb I, E; B_t)$ for every $t\in[0,3]$.  
\begin{claim*}
	$\ind(D^E_{B_t})$ remains constant for all $t\in[0,3]$.
\end{claim*} 
For $t\in(0,1)$, the corresponding dihedral angles (or equivalently the corresponding inner products) satisfy the strict inequality comparison. 	Then for any  $0< t_1<t_2<1$,   there exists a bounded unitary map (cf. the proof of Theorem \ref{thm:ess-sa} and use Lemma \ref{lemma:1/rEmbedding} in place of Lemma \ref{lemma:sobolevEmbedding})
\[ \mathbf U \colon L^2(N_{t_1}\times \mathbb I, E) \to  L^2(N_{t_2}\times \mathbb I, E) \] 
such that $\mathbf U$ maps the boundary condition  $B_{t_1}$ to the boundary condition $B_{t_2}$. The unitary $\mathbf U$ is induced by a bundle isometry that  may not be defined near faces with codimension $\geq 2$, but is asymptotically conical near all faces of all codimensions. By the same argument in  the proof of Theorem \ref{thm:index},  it follows from the Kato-Rellich perturbation theorem, Lemma \ref{lemma:1/rEmbedding} and\footnote{More precisely, for higher dimensional convex polyhedral corners, if we assume  the strict comparison condition  \eqref{eq:strictinnerproduct}, then  the obvious analogue of Lemma \ref{lemma:iteratedSobolev} also holds starting from codimension two.   }  Lemma \ref{lemma:iteratedSobolev}    that $\ind(D^E_{B_t})$  is constant for $t\in (0, 1)$.

For $t\in (2,3)$, the corresponding dihedral angles (or equivalently the corresponding inner products) also satisfy the strict inequality comparison. Hence the same argument above shows that $\ind(D^E_{B_t})$  is constant for $t\in (2, 3)$. 

For $t\in (1, 2)$, the boundary condition $B_t$ satisfies the following: 
\begin{equation}
	\begin{split}
		\langle \widetilde v_i(t),\widetilde v_j(t)\rangle > \langle u_i(t),u_j(t)\rangle&\text{ for  all adjacent faces }\overbar F_i\times\mathbb I\text{ and }\overbar F_j\times\mathbb I,  \\
		\langle \widetilde v_i(t), w\rangle = 0&\text{ along }(\overbar F_i\times\mathbb I)\cap(N\times\partial\mathbb I). 
	\end{split}
\end{equation}
In particular, $D^E$ with the boundary condition $B_t$ becomes a  product 
\[  D^E=D\hotimes 1+1\hotimes D^{\dR}_{\mathbb I}, \] 
where $D^{\dR}_{\mathbb I}$ is the de Rham operator of $\mathbb I$ subject to the usual absolute boundary condition\footnote{The absolute boundary condition is given as follows. Suppose $\mathbb I =[-1, 1]$ is parameterized by $y$. A smooth differential form $\omega$ on $\mathbb I$ can be written as  $\omega = \omega_0 + \omega_1dy$, where $\omega_0$ and $\omega_1$ are smooth functions on $\mathbb I$. We say $\omega$ satisfies the absolute boundary condition if $\omega_1$ vanishes at $\partial \mathbb I = \{\pm 1\}$.}  on differential forms. Now for $t\in (1, 2)$, the corresponding dihedral angles (or equivalently the corresponding inner products)  satisfy the strict inequality comparison for  $f\colon N_t\to M_t$. Again, the same argument above shows that $\ind(D^E_{B_t})$  is constant for $t\in (1, 2)$.

Now it remains to prove the invariance of the Fredholm index near $t=0,1,2, 3$.  Near these points, the dihedral angle comparison (or the corresponding inner product comparison) may go from equality to strict inequality. In these cases, Lemma \ref{lemma:1/rEmbedding} does not  directly apply, so we shall instead use the cutting-and-pasting method developed in the proof of Theorem \ref{thm:index} (cf. Proposition \ref{prop:gluing}). 

Let us first consider the case of $t=0$.  The main technical difficulty to prove the invariance of the Fredholm index $\ind(D^E_{B_t})$ at $t=0$ is that the inner product  $\langle \widetilde v_i(t), w(t)\rangle$ changes from $0$ to a positive number as $t$ moves along $[0, 1)$, while the inner product of the corresponding vectors on $N\times\mathbb I$ remains constantly $0$. In other words, the dihedral angle comparison along $(\overbar F_i\times\mathbb I)\cap(N\times\partial\mathbb I)$ goes from the equality to a strict inequality.

For $t$ sufficiently close to $0$,  we first deform $\widetilde v_i(t)$ (from line \eqref{eq:newvector}) on the subspace 
$F_i\times(-2/3,2/3)$ of the codimension  one face $F_i\times\mathbb I$ so that $\widetilde v_i(t)$ is perturbed  back to $v_i(0) \oplus 0= v_i\oplus 0$ over the subspace  $F_i\times [-1/2, 1/2]$. We emphasize  that this new perturbation is only performed over 
$F_i\times(-2/3,2/3)$.  As long as $t$ is sufficiently small, such a perturbation preserves the strict dihedral angle comparison for all pairs of adjacent faces for $t\neq 0$. In particular, for each fixed $t$, the Fredholm index of $D^E$ (subject to this new perturbed boundary condition) coincides with  the Fredholm index of $D^E$ (subject to the original boundary condition $B_t$). Therefore, it suffices to prove the invariance of the Fredholm index of $D^E$ subject to this new family of perturbed boundary conditions near $t=0$.  For simplicity, for each $t$ sufficiently close to $0$, we still denote this new perturbed  boundary condition by $B_t$.

Let $\mathbb I_{1/2}=[1/2,1]$. Consider the submanifold $N\times \mathbb I_{1/2}$ of $N\times \mathbb I$. We define a local boundary condition   $\widetilde B_t$ for sections of $E$ at the codimension one faces of $N\times \mathbb I_{1/2}$ as follows. At a codimension one face of $N\times \mathbb I_{1/2}$ that is \emph{not} $N\times \{1/2\}$, we define    $\widetilde B_t = B_t$. At the face $N\times \{1/2\}$ of $N\times \mathbb I_{1/2}$,  the boundary condition $\widetilde B_t$ is  given by
$$\mathscr E(\overbar c(0\oplus w)\otimes c(0\oplus w))\varphi=\varphi$$
for sections $\varphi$ of $E$, where $w$ is the inner normal vector of $N\times \{1/2\}$. In other words, at the face  $N\times \{1/2\}$,   $\widetilde B_t$ is equal to the orthogonal complement of the absolute boundary condition. It follows from Theorem \ref{thm:ess-sa} and Theorem \ref{thm:ess-saNew} that  the twisted  Dirac operator acting on $E$ over $N\times \mathbb I_{1/2}$ (subject to the boundary condition  $\widetilde B_t$)  is essentially self-adjoint and Fredholm, and its domain is  $H^1(N\times \mathbb I_{1/2}, E; \widetilde B_t)$.

When $t=0$, the Dirac operator $D^E_{N\times \mathbb I_{1/2}}$ with the boundary condition $\widetilde B_0$ is a product 
\[  D^E_{N\times \mathbb I_{1/2}}=D_{N}\hotimes 1+1\hotimes D^{\dR}_{\mathbb I}, \] 
where $D^{\dR}_{\mathbb I}$ is the de Rham operator of $\mathbb I$ but subject to a mixed boundary condition as follows. Let us write a smooth differential form $\omega$  on $\mathbb I$ as $\omega = \omega_0 + \omega_1dy$, where $\omega_0$ and $\omega_1$ are smooth functions on $\mathbb I$ and $y$ is the coordinate of $\mathbb I$. Then $\omega$ is required to satisfy the following mixed boundary condition: $\omega_1$ vanishes at the end point $\{1\}$ of $\mathbb I$, and $\omega_0$ vanishes at the end point $\{1/2\}$ of $\mathbb I$. A direct computation shows the operator $D^\dR_{\mathbb I}$ subject to the above mixed boundary condition is invertible. Consequently, by the product formula, we see that  the operator $ (D^E_{N\times \mathbb I_{1/2}})_{\widetilde B_0}$ is invertible.

 Now we show that $(D^E_{N\times \mathbb I_{1/2}})_{\widetilde B_t}$ is also invertible for $t$ sufficiently close to $0$. Assume by contradiction that there is a sequence of positive number $\{t_n\}$ converging to  $0$ such that there is a non-zero  $\varphi_n \in H^1(N\times \mathbb I_{1/2}, E; \widetilde B_{t_n})$ such that $D^E_ {N\times \mathbb I_{1/2}}(\varphi_n)=0$. We may furthermore assume that $\|\varphi_n\|=1$. Note that  $N\times \mathbb I_{1/2}$ has the direct product geometry for all $t_n$. At $t=0$, $\widetilde B_{0}$ respects the product structure of $N\times\mathbb I_{1/2}$. Since  $\widetilde B_{t_n}$ converges  to $\widetilde B_{0}$ as $t_n\to 0$,  it follows from the same proof of Lemma \ref{lemma:solutionapproximate-poly} that there exist positive numbers $\varepsilon_n$ such that  $\varepsilon_n\to 0$ as $n\to\infty$ and 
$$\|D^\dR_{\mathbb I}\varphi_n\|^2+\|D_N\varphi_n\|^2\leq\varepsilon_n.$$  In particular, we have 
$$\Big\|\frac{\partial}{\partial y}\varphi_n\Big\|^2=\|D^\dR_{\mathbb I}\varphi_n\|^2 \leq \varepsilon_n,$$
where $y$ is the coordinate of $\mathbb I$. 
Therefore, for any $y_0,y_1\in\mathbb I_{1/2}$, we have
\begin{align*}
	&\int_N |\varphi_n(x,y_0)-\varphi_n(x,y_1)|^2dx\\
	\leq &\int_N\Big(\int_{y_0}^{y_1}\Big|\frac{\partial}{\partial y}\varphi_n(x,y)\Big|dy \Big)^2 dx\\
	\leq &\int_N|y_0-y_1|\Big(\int_{y_0}^{y_1}\Big|\frac{\partial}{\partial y}\varphi_n(x,y)\Big|^2dy\Big) dx\\
	\leq &\Big\|\frac{\partial}{\partial y}\varphi_n\Big\|^2\leq\varepsilon_n
\end{align*}
by the Cauchy--Schwarz inequality. If we fix $y_0$ and integrate 
\[  \int_N |\varphi_n(x,y_0)-\varphi_n(x,y_1)|^2dx \]
over $y_1$, then we obtain
$$\int_{\mathbb I_{1/2}}\int_N|\varphi_n(x,y_0)-\varphi_n(x,y)|^2dxdy\leq \varepsilon_n,$$
for any $y_0\in \mathbb I_{1/2}$, where $\varphi_n(x,y_0)$ should be regarded as a section of $E$ over  $N\times \mathbb I_{1/2}$ by parallel transporting $\varphi_n(x,y_0)$ along $\mathbb I_{1/2}$. In particular, we have
$$\int_{\mathbb I_{1/2}}\int_N|\varphi_n(x,1/2)-\varphi_n(x,y)|^2dxdy\leq \varepsilon_n.$$
The boundary condition $\widetilde B_{t_n}$ at $N\times \{1/2\}$ corresponds to a projection onto  a subbundle of $E$ over $N\times {1/2}$. Since $N\times \mathbb I_{1/2}$ has the product metric and the bundle $E = (S_{TN\oplus f^\ast TM})\hotimes \Bigwedge^*\R $ over $N\times \mathbb I_{1/2}$,   the above subbundle of $E$ over $N\times \{1/2\}$ naturally induces (by parallel transporting along $\mathbb I_{1/2}$,  for example) a subbundle of $E$ over $N\times \mathbb I_{1/2}$, which will be denoted by $E_{t_n, 1/2}$ from now on.   Since $\varphi_n$ satisfies the boundary condition $\widetilde B_{t_n}$ at $N\times \{1/2\}$, $\varphi_n(x,1/2)$ may be viewed a section of the subbundle $E_{t_n, 1/2} \subset E$ over $N\times \mathbb I_{1/2}$.  By construction, the boundary condition $\widetilde B_{t_n}$ at  $N\times \{1/2\}$ stays the same for all $t$ close to $0$. In particular, we have $E_{t_n,1/2}=E_{0,1/2}$.

Similarly, the boundary condition $\widetilde B_{t_n}$ at $N\times \{1\}$ corresponds to a projection onto  a subbundle of $E$ over $N\times {1}$, which again  naturally induces a subbundle of $E$ over $N\times \mathbb I_{1/2}$, which will be denoted by $E_{t_n, 1}$ from now on. The same argument above shows that 
$$\int_{\mathbb I_{1/2}}\int_N|\varphi_n(x,1)-\varphi_n(x,y)|^2dxdy\leq \varepsilon_n,$$
where $\varphi_n(x,1)$ is viewed as a section of the subbundle $E_{t_n,1}\subset E$ over $N\times \mathbb I_{1/2}$. Since the boundary condition $\widetilde B_{t_n}$ converges to $\widetilde B_0$ as $t_n\to 0$, there is a sequence of bundle isometries  $\{U_n\}$ on $E$ such that $U_n$  maps $\widetilde B_{t_n,1}$ to $\widetilde B_{0,1}$,  and $U_n$ converges to the identity operator on $L^2(N\times\mathbb I_{1/2},E)$ with respect to the operator norm, as $n\to \infty$. Since $\|\varphi_n\|=1$ for all $n$, we have
$$\lim_{n\to\infty}\int_{\mathbb I_{1/2}}\int_N|U_n\varphi_n(x,1)-\varphi_n(x,y)|^2dxdy=0.$$

Since $D^E_{N\times \mathbb I_{1/2}}(\varphi_n)=0$ and $\|\varphi_n\|=1$, it follows from Proposition \ref{prop:norm-equivalent} that the $H^1$-norms of 
$\varphi_n$'s  are  uniformly bounded. By the Rellich Lemma, there is convergent subsequence of $\{\varphi_n\}$ in $L^2(N\times\mathbb I_{1/2},E)$. Without loss of generality, we may assume that $\varphi_n\to \varphi$  in $ L^2(N\times\mathbb I_{1/2},E)$. In particular,  we have $\|\varphi\|=1$. However, since $\varepsilon_n\to 0$, it follows that both $\{U_n\varphi_n(x,1)\}$ and $\{\varphi_n(x,1/2)\}$ also converge to the same limit $\varphi$. Therefore we have
$$\varphi\in L^2(N\times\mathbb I_{1/2}, E_{0,1/2})\cap L^2(N\times\mathbb I_{1/2}, E_{0,1}).$$
This leads to a contradiction since the two subbundles $E_{0,1/2}$ and $E_{0,1}$ are orthogonal to each other. To summarize, we have proved that $(D^E_{N\times \mathbb I_{1/2}})_{\widetilde B_t}$ is invertible for $t$ sufficiently close to $0$. In particular, $\ind \big( (D^E_{N\times \mathbb I_{1/2}})_{\widetilde B_t}\big) $ is constantly zero near $t=0$.

Now we consider the  submanifold $N\times \mathbb I_{-1/2}$ of $N\times \mathbb I$, where $\mathbb I_{-1/2} = [-1, -1/2]$. We construct  a new boundary condition on the codimension one faces of  $N\times \mathbb I_{-1/2}$ that is completely similar to $\widetilde B_{t}$ above, which will still denoted by $\widetilde B_{t}$ for simplicity.  The exact same argument shows that  $D^E_{N\times \mathbb I_{-1/2}}$ (subject to the boundary condition $\widetilde B_{t}$) is invertible for $t$ sufficiently close to $0$. In particular, $\ind \big( (D^E_{N\times \mathbb I_{-1/2}})_{\widetilde B_t}\big) $ is constantly zero near $t=0$.

Now we cut $N\times \mathbb I$ into three parts 
\[ (N\times \mathbb I_{-1/2}) \bigcup (N\times [-1/2, 1/2]) \bigcup (N\times \mathbb I_{-1/2}) \] 
where each part is equipped with the corresponding local boundary condition, which will all be denoted by $\widetilde B_t$. By construction, for all $t$ sufficiently close to $0$,  the Dirac operator $D^E_{N\times [-1/2, 1/2]}$ with the boundary condition $\widetilde B_t$ is the product 
\[  D^E_{N\times [-1/2, 1/2]}=D_{N}\hotimes 1+1\hotimes D^{\dR}_{[-1/2, 1/2]}, \] 
where $D^{\dR}_{[-1/2, 1/2]}$ is the de Rham operator of $[-1/2, 1/2]$ subject to  the absolute boundary condition on differential forms. Consequently, the operator  $(D^E_{N\times [-1/2, 1/2]})_{\widetilde B_t}$ remains the same for all $t$ sufficiently close to $0$, which implies  $\ind\big((D^E_{N\times [-1/2, 1/2]})_{\widetilde B_t}\big)$ stays constant for $t$ sufficiently close to $0$. Now it follows from the gluing formula in   \ref{prop:gluing} (cf. Remark \ref{rk:gluingBs}) that $\ind(D^
E_{B_t})$ is constant near $t=0$.

The exact same argument for the case $t=0$ proves that  $\ind(D^E_{B_t})$ is also constant near  $t=1$ and $t=2$. To summarize, we have proved that  $\ind(D^E_{B_t})$ is constant for $t\in[0,3)$. 

Recall that, near $t=3$, $N_t$ and $M_t$ have product structures near all faces of all codimensions. Now we apply the same cutting-and-pasting argument from the proof of Theorem \ref{thm:index}.  Let us retain the same notation from the proof of Theorem \ref{thm:index}. We have the following decomposition of $N$
$$N_t= N_{t,0}\cup \bigcup_{k=2}^n\bigcup_{\ |\lambda|=k}N_{t,\lambda},$$
where $N_{t, \lambda}$ is the direct product of a convex polyhedral corner in $\R^{|\lambda|}$ and a subspace of $\overbar F_\lambda$ that is away from   the codimension $(k+1)$ faces. Here $|\lambda|$ is the codimension of $\overbar F_\lambda$ in $N$.  Similarly, we have a decomposition of $M$
$$M_t= M_{t,0}\cup\bigcup_{k=2}^n\bigcup_{\ |\lambda|=k} M_{t,\lambda},$$ 
such that $f\colon N_t \to M_t$ restricts  to a map $N_{t, \lambda} \to M_{t,\lambda}$ that is of product type. According to  the cutting construction of Theorem \ref{thm:index}, the manifold $N_{t,\lambda}$ is obtained inductively by cutting along a hypersurface $\overbar X_{t,\lambda}$ (which is fiberwise a  spherical polyhedron)  near the face $\overbar F_\lambda$. Similarly, the manifold $M_{t,\lambda}$ is obtained inductively by cutting along a hypersurface $X_{t,\lambda}$ (which is fiberwise a  spherical polyhedron)  near the face $F_\lambda$ (cf. Figure \ref{fig:decomposition}). 
We  choose  $X_{t,\lambda}$ to be at least ($\varepsilon_\lambda/2)$-away from $F_\lambda$ so that the vertical tangent space  $(T_VM)_{F_\mu}$ on each $\mathcal U_{\varepsilon_\mu}(F_\mu)$ is contained in the tangent space  $TX_{t,\lambda}$ at every point $x\in X_{t, \lambda} \cap F_\mu$, for  all $F_\mu$ with $|\mu|<|\lambda|$. For example, we may choose $X_{t, \lambda}$ fiberwise to be the spherical polyhedron at radius $r = 2\varepsilon_\lambda/3$. The same remark applies to $\overbar X_{t,\lambda}$  in $N_{t}$.  Since the cutting introduces new codimension one faces, the reason main for the above choices of  $\overbar X_{t,\lambda}$  and $X_{t, \lambda}$ is that the inner normal vectors at the new codimension one faces will be automatically orthogonal to the existing vectors that have  already been used in defining the local boundary conditions. 

More precisely, we construct a boundary condition $B_{t,\lambda}$ on each codimension one faces of $N_{t,\lambda}\times\mathbb I$ as follows.  On the face $\overbar X_{t,\lambda}\times\mathbb I$, the boundary condition $B_{t,\lambda}$ is given by 
$$\mathscr E(\overbar c(\overbar e_n\oplus 0)\otimes c(e_n\oplus 0))\varphi=\varphi,$$
where $\mathscr E$ is the $\Z_2$-grading on $E$, $\overbar e_n$ is the unit inner normal vector of $\overbar X_{t,\lambda}$ in $N_{t, \lambda}$, and $e_n$ is the unit inner normal vector of $X_{t,\lambda}$ in $M_{t, \lambda}$. On all other codimension one faces of $N_{t, \lambda}\times \mathbb I$ that are not $\overbar X_{t,\lambda}\times\mathbb I$,  the boundary condition $B_{t,\lambda}$ is defined to be the canonical restriction of $B_t$ (cf. the proof of Theorem \ref{thm:index}).

Recall that the vector field $\normal_t$ lies in the vertical tangent bundle $T_VM$ on each $\mathcal U_{\varepsilon_\lambda}( F_\lambda)$ for each face $F_{\lambda}$ of $M$ (cf. line \eqref{refinedsets}). In  \textbf{Step (3)}, we have extended $\normal_t$ from $\partial M$ to a vector field over  $\partial M$. We may assume without loss of generality that this extended vector field, still denoted by $\normal_t$,  vanishes outside $\bigcup_\lambda \mathcal U_{\varepsilon_\lambda}(F_\lambda)$ and lies in the vertical part $T_VM$ on each $\mathcal U_{\varepsilon_\lambda}(F_\lambda)$. In \textbf{Step (3)}, the vectors $\widetilde v_i(t)$ and $w(t)$ that define the boundary condition $B_t$ are linear combinations of the original inner normal vectors $v_i(t)$, the constant vector $\Y$ along the interval $\mathbb I$, and the vector field $\normal_t$ on $M$. With the above specific choice of  $X_{t,\lambda}$, the unit inner normal vector field of $X_{t,\lambda}$ is orthogonal to $v_i(t)$, $\normal_t$, and $\Y$. Consequently, the unit inner normal vector field of $X_{t,\lambda}$  is orthogonal to both $\widetilde v_i(t)$ and $w(t)$. Hence the new dihedral angles near $X_{t,\lambda}$ introduced by the cutting process  are always $\pi/2$.
Now it follows from Theorem \ref{thm:ess-sa} and Theorem \ref{thm:ess-saNew} that the Dirac operator $D^E$  on $N_{t,\lambda}\times\mathbb I$ subject to the  boundary condition $B_{t,\lambda}$ is essentially self-adjoint and Fredholm, and its domain is  $H^1(N_{t,\lambda}\times\mathbb I,E;B_{t,\lambda})$.

By the gluing formula in Proposition \ref{prop:gluing} and the same argument in the proof of Theorem \ref{thm:index}, in order to  show that $\ind(D^E_{B_t})$ is constant near $t=3$, it suffices to show that $\ind(D^E_{B_{t,\lambda}})=0$ for each $\lambda\ne 0$. This follows from exactly the same argument of the proof of Theorem \ref{thm:index}, together with the product formula of Fredholm index and the obvious analogue of Corollary \ref{coro:nosolution} of manifolds with polyhedral boundary. This finishes the proof of the claim.

Since at $t=3$, $f_3\times \id\colon N_3\times\mathbb I\to M_3\times\mathbb I$ is fiberwise isometry, the same cutting-and-pasting argument in the proof of Theorem \ref{thm:index} applies, which implies that  
\[ \ind(D^E_{B_3})=\deg_{\widehat A}(f)\cdot \chi(M\times \mathbb I)= \deg_{\widehat A}(f)\cdot \chi(M). \] 
This shows that $\ind(D^E_{B_t})=\deg_{\widehat A}(f)\cdot \chi(M)$ for all $t\in[0,3]$. Now the standard product formula for Fredholm index implies that 
\[  \ind(D^E_{B_0}) = \ind(D_B) \cdot \ind(D_{\mathbb I, a}),  \]
where $D_B$ is the Dirac operator acting on $S_{TN\oplus f^\ast TM}$  over $N$  (subject to the local  condition $B$) and $D_{\mathbb I, a}$ is the de Rham operator of $\mathbb I$ subject to the usual absolute boundary condition on differential forms. Since  $\ind(D_{\mathbb I, a}) = 1$, it follows that 
	\[ \ind({D}_B)=\ind(D^E_{B_0}) = \deg_{\widehat A}(f)\cdot \chi(M).\]
This finishes the proof of the theorem for the case where both $N$ and $M$ are odd dimensional. 

If both $N$ and $M$ are even dimensional, we consider the product 
$$f\times \id\colon N\times\mathbb I\to M\times\mathbb I$$
which essentially turns it into an odd dimensional case. To avoid confusion, let us write 
\[ \widehat N = N\times \mathbb I, \quad \widehat M= M\times \mathbb I \quad \textup{ and } \quad   \widehat f= f\times \id. \]
Now both $\widehat N$ and $\widehat M$ are odd dimensional. We shall consider 
$$\widehat f\times \id\colon \widehat N\times\mathbb I\to \widehat M\times\mathbb I$$
and apply the same argument above. Note that taking direct product with $\mathbb I$ introduces new codimension one faces $N\times \partial \mathbb I$ (resp. $M\times \partial \mathbb I$)  in $\widehat N$ (resp. $\widehat M$).  Although the dihedral angles of the original $N$ and $M$ satisfy the strict comparison inequality,  the new dihedral angles of both $\widehat N$ and $\widehat M$ are $\pi/2$, hence do not satisfy the strict comparison inequality.  This extra technical problem can easily be overcome by performing the exact same argument as in the odd dimensional case above but only to the first component $N$ in $\widehat N = N\times \mathbb I$ and $M$ in $\widehat M = M\times \mathbb I$. The rest of the proof is completely similar. This completes the proof of the theorem. 
\end{proof}

\section{Proofs of Theorem \ref{thm:special}, Theorem \ref{thm:extremal-rigid} and Theorem \ref{thm:polyhedra} }\label{sec:proofmain}
In this section, we prove the main theorems of the paper: 
the dihedral extremality conjecture (Theorem \ref{thm:special}) and the comparison theorem of scalar curvatures, mean curvatures and dihedral angles   for compact manifolds with polyhedral boundary (Theorem \ref{thm:extremal-rigid}). As mentioned in the introduction, Theorem \ref{thm:polyhedra} on dihedral rigidity conjecture is an immediate consequence of either Theorem \ref{thm:special} or Theorem \ref{thm:extremal-rigid}. 

Roughly speaking, we shall apply Theorem \ref{thm:index-poly} to obtain a nontrivial parallel solution of the associated twisted Dirac operator arising from the geometric setup of Theorem \ref{thm:special} and Theorem \ref{thm:extremal-rigid}. However, the dihedral angles are assumed to satisfy the strict comparison condition \eqref{eq:strictdihedral-poly} in Theorem \ref{thm:index-poly}, while  the dihedral angles in Theorem \ref{thm:special} and Theorem \ref{thm:extremal-rigid} are only assumed to satisfy the non-strict comparison condition. Therefore, strictly speaking, Theorem \ref{thm:index-poly} does not directly apply the geometric setup of Theorem \ref{thm:special} and Theorem \ref{thm:extremal-rigid}.  To remedy this, we shall approximate the boundary condition by a sequence of more general local boundary conditions such as those in the proof of Theorem \ref{thm:index-poly} (by again introducing auxiliary space dimensions) such that these new  boundary conditions satisfy  the strict comparison condition on dihedral angles. In particular, Theorem \ref{thm:index-poly} (or rather its proof) applies to each of the approximation and computes the correct Fredholm index. Then we apply Lemma \ref{lemma:solutionapproximate-poly} below to conclude that the twisted Dirac operator subject to the original local boundary condition (arising from the original geometry) admits a nontrivial parallel solution.

\begin{proof}[Proof of Theorem $\ref{thm:extremal-rigid}$]
	The assumption that $\deg_{\widehat A}(f) \neq 0$ implies that $\dim N$ and $\dim M$ are of the same parity. 
	
	The odd dimension can be reduced to the even dimensional case as follows. Suppose both $N$ and $M$ are odd dimensional. For part $(i), (ii), (iii)$ and (III), we simply consider the map $f\times \id\colon N\times [0,1]\to M\times [0,1]$, where $\id\colon [0, 1]\to [0, 1]$ is the identity map on $[0, 1]$, and both $N\times [0,1]$ and $M\times [0,1]$ are equipped with the product metric. For part (I) and (II), let $Q$ be a contractible polyhedron  in the $3$-dimensional round unit sphere $\mathbb S^3$ such that every codimension one face of $Q$ is  totally geodesic. For example, let $Q$ be the intersection of $\mathbb S^3$ with the first hyperoctant $\{a\in \mathbb R^4 \mid a_i \geq 0, i=1, \cdots, 4 \}$ of $\mathbb R^4$. Consider the map $f\times \id\colon N\times Q\to M\times Q$, where $\id\colon Q \to Q$ is the identity map on $Q$, and both $N\times Q$ and $M\times Q$ are equipped with the product metric. Hence without loss of generality, we assume that  $N$ and $M$ are even dimensional. 
	
	Let $D$ be the Dirac operator on $S_N\otimes f^*S_M$ over $N$ and $B$ the local  boundary condition from  Definition \ref{def:boundarycondition}. 	
	 
	 \begin{claim}\label{claim:solution}
	 	Under the assumptions of Theorem \ref{thm:extremal-rigid}, $D_B$ admits a nontrivial solution in $H^1(N,S_N\otimes f^*S_M;B)$, that is,  there is a non-zero $\varphi\in H^1(N,S_N\otimes f^*S_M;B)$ such that $\nabla\varphi=0$.
	 \end{claim}
	 The general strategy is to apply Theorem \ref{thm:index-poly} to obtain a nontrivial solution of $D_B$. However, since Theorem \ref{thm:index-poly} assumes the strict comparison condition on dihedral angles (cf. Inequality \eqref{eq:strictdihedral-poly}), we shall approximate the unit inner vector fields of codimension one faces of $M$ by a set of new unit vector fields that  satisfy the strict comparison  condition on dihedral angles, which achieved by introducing auxiliary space dimensions. Then we apply Lemma \ref{lemma:solutionapproximate-poly} to eventually obtain a nontrivial solution. 
	 
	 Consider the direct product of our Geometric Setup \ref{setup} with the interval $\mathbb I=[-1,1]$. More precisely, we consider the map 
	 $$f\times \id \colon N\times\mathbb I\to M\times \mathbb I,$$
	 where $\id\colon \mathbb I \to \mathbb I$ is the identity map on $\mathbb I$. 
	 For brevity, let us denote by $E$ the spinor bundle 
	 $$E\coloneqq S_{T(N\times\mathbb I)\oplus (f\times \id)^*T(M\times\mathbb I)}$$
	 Note that we have the natural identification: 
	 $$E\cong (S_N\otimes f^*S_M)\hotimes \Bigwedge^*\mathbb I.$$
	 The associated Dirac operator $D_E$ on $E$ over $N\times \mathbb I$ is of the form: 
	 $$D_E=D\hotimes 1+1\hotimes D^{\dR}_{\mathbb I},$$
	 where $D$ is the Dirac operator on $S_N\otimes f^\ast S_M$ over $N$ and $D^{\dR}_{\mathbb I}$ is the de Rham operator of $\mathbb I$.
	 
	 For  each codimension one face $\overbar F_i$ of $N$, we denote by $u_i$ the  unit inner normal vector of $\overbar F_i$ and $v_i$  the corresponding unit inner normal vector of $F_i$ in $M$. Let $\mathscr E$ be the grading operators on $S_N\otimes f^*S_M$  respectively. The boundary condition $B$ at $\overbar F_i$ is given by 
	 \begin{equation}
	 	\mathscr E(\overbar c( u_i)\otimes c(v_i))\varphi=-\varphi
	 \end{equation}
	 for sections $\varphi$  of $S_N\otimes f^*S_M$,	where we regard $v_i$ as vector fields over $\overbar F_i$ with values in $f^*TM$.
	 This boundary condition  canonically induces a local boundary condition at the codimension one face $\overbar F_i\times \mathbb I$ given by 
	 \begin{equation}\label{eq:ujvj}
	 	\mathscr E(\overbar c(u_i\oplus 0)\otimes c(v_i\oplus 0))\psi=-\psi,
	 \end{equation}
 and at the codimension one faces $N\times \partial \mathbb I$
 	 \begin{equation}\label{eq:ujvj}
 	\mathscr E(\overbar c(0\oplus w)\otimes c(0\oplus w))\psi=-\psi,
 \end{equation}
	 for sections $\psi$ of $E$, where $\mathscr E$ is the $\mathbb Z_2$-grading operator and $w$ is the unit  inner normal vector of the end points $\{\pm1\}$ in  $\mathbb I$. For brevity, we shall still denote by $B$ this local boundary condition on codimension one faces of $N\times \mathbb I$. 
	 
	 Now we perform the same construction in the proof of Theorem \ref{thm:index-poly} to  the boundary condition $B$ of $E$, and obtain a continuous family of boundary conditions $\{B_t\}_{t\in[0,3]}$ with the same properties. It follows from the same proof of Theorem \ref{thm:index-poly} that the Fredholm index of $(D_E)_{B_t}$ remains constant for all  $t\in (0,3]$, and equals 
	 	\[ \ind((D_E)_{B_t})=\deg_{\widehat A}(f)\cdot \chi(M),\] which is non-zero by assumption. Therefore, for each  $t\in(0,3]$, there is a non-zero $\psi_t \in H^1(N\times \mathbb I, E; B_t)$ such that  $D_E(\psi_t) = 0$. Note that the operator $D_E$ subject to the boundary condition $B$ satisfies the non-negativity condition in Lemma \ref{lemma:solutionapproximate-poly} (cf. Lemma \ref{lemma:solutionapproximate}). Therefore, it follows from Lemma \ref{lemma:solutionapproximate-poly} that there exists a non-trivial element $\psi\in H^1(N\times\mathbb I,E;B)$ such that $\nabla^E\psi=0$. In particular, we have 
	 $$\frac{\partial}{\partial y}\psi=0,$$
	 where $y$ is the parameter of $\mathbb I$. By checking the local  boundary condition $B$ at $N\times \{\pm1\}$, we see that   
	 \[ \psi=\varphi\hotimes 1 \]
	  as a section of $E=(S_N\otimes f^*S_M)\hotimes\Bigwedge^*\R$. It follows that $\varphi$ is a non-trivial parallel section in $H^1(N,S_N\otimes f^*S_M;B)$ as desired. This completes the proof of the claim.
   
  Since $\nabla\varphi=0$, it follows that $\varphi$ is smooth in the interior of $N$. We identify sections of $S_N\otimes f^* S_M$ over $N$ with functions from $N$ to $\R^{\ell}$ by choosing a smooth embedding of $S_N\otimes f^* S_M$ into a trivial bundle over $N$ with rank $\ell$. Since $\varphi$ is parallel, $\varphi$ is a smooth section in the sense of Definition \ref{def:smooth,H1}. Furthermore, as the connection $\nabla$ preserves the metric, $|\varphi|$ is a non-zero constant everywhere. 
   
  Since $C_0^\infty(N,S_N\otimes f^*S_M;B)$ defined in Definition \ref{def:smooth,H1} is dense in $H^1(N,S_N\otimes f^*S_M;B)$, there is a sequence of sections $\varphi_k\in C_0^\infty(N,S_N\otimes f^*S_M;B)$ that converges to $\varphi$ in the $H^1$-norm. By Proposition \ref{prop:D^2} and Lemma \ref{lemma:formallysa}, we obtain that 
  \begin{equation*}
  	\begin{aligned}
  		\int_N|D\varphi_k|^2\geq& \int_N |\nabla\varphi_k|^2 + \int_N\frac{\overbar{\Sc}}{4}|\varphi_k|^2-\int_N\|\wedge^2 df\|\cdot\frac{f^*\Sc}{4}|\varphi_k|^2\\
  		&+\int_{\partial N}\frac{\overbar H}{2}|\varphi_k|^2-\int_{\partial N}\|df\|\cdot\frac{f^*H}{2}|\varphi_k|^2
  \end{aligned}\end{equation*}
Each term in the above inequality is continuous with respect to the $H^1$-norm. Therefore by letting $k\to\infty$, we have
\begin{equation*}
	\begin{aligned}
		0\geq& \int_N\frac{\overbar{\Sc}}{4}|\varphi|^2-\int_N\|\wedge^2 df\|\cdot\frac{f^*\Sc}{4}|\varphi|^2\\
		&+
		\int_{\partial N}\frac{\overbar H}{2}|\varphi|^2-\int_{\partial N}\|df\|\cdot\frac{f^*H}{2}|\varphi|^2
\end{aligned}\end{equation*}
This proves items	$(i)$ and $(ii)$ of the theorem.

Now we prove $(iii)$ of the theorem by the following observation from \cite{Wang:2022vf}. For each interior point $x$ of a  codimension two face of $N$, let $\overbar e_{1}$ and $\overbar e_{2}$ be the two unit inner normal vectors of the two adjacent codimension one faces intersecting at $x$. Let $e_{1}$ and $e_{2}$ be the unit inner normal vectors of the corresponding codimension one faces of $M$ meeting at $f(x)$. Note that  $\varphi$ is smooth at $x$ by the above discussion. It follows that $\varphi$ has to  satisfy two boundary conditions at $x$, that is, we have 
$$(\overbar\epsilon\otimes\epsilon)(\overbar c(\overbar e_{1})\otimes c(e_{1}))\varphi(x)=-\varphi(x) \textup{ and }(\overbar\epsilon\otimes\epsilon)(\overbar c(\overbar e_{2})\otimes c(e_{2}))\varphi(x)=-\varphi(x).$$
Equivalently, we have 
$$(\overbar\epsilon\otimes\epsilon)\overbar c(\overbar e_{1})\varphi(x)=- c(e_{1})\varphi(x)\text{ and }(\overbar\epsilon\otimes\epsilon)\overbar c(\overbar e_{2}) \varphi(x)=-c(e_{2})\varphi(x).$$
Therefore, for any complex numbers $a_1,a_2\in\mathbb C$, we have 
$$(\overbar\epsilon\otimes\epsilon)\overbar c(a_1\overbar e_{1}+a_2\overbar e_{2})\varphi(x)=- c(a_1e_{1}+a_2e_{2})\varphi(x).$$
In particular, both sides have the same  norm, which implies   
$$|a_1\overbar e_{1}+a_2\overbar e_{2}|=|a_1e_{1}+a_2e_{2}|$$
for all $a_1, a_2\in \mathbb C$, since $|\varphi(x)|$ is non-zero. 
By the  polarization identity,  it follows that  
\[ \langle\overbar e_{1},\overbar e_{2}\rangle=\langle e_{1}, e_{2}\rangle, \] which implies that the dihedral angles at $x$ is equal to the dihedral angle at $f(x)$. This proves $(iii)$ of the theorem.

	Now let us prove the rigidity part, that is, (I), (II) and (III) of the theorem. To prove (I), we first make the following observation (cf. \cite[Proposition 1]{Listing:2010te}). 
	\begin{claim*}
		Assume the conditions in (I) hold, that is, $\dim M = \dim N$,  $\mathrm{Ric}(g)>0$ and 
		\[ \Sc(\overbar{g})_x \geq \|df\|^2 \cdot \Sc(g)_{f(x)} \]
		for all $x\in N$,  then for each $x\in N$, either $(df)_x\colon T_x N \to T_{f(x)} M$ is  a homothety or $(df)_x = 0$.  
	\end{claim*}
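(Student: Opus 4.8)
The plan is to argue pointwise, using only linear algebra together with the equality statements $(i)$–$(iii)$ already established and the Bochner estimate behind them. Fix $x\in N$ and set $A=(df)_x\colon T_xN\to T_{f(x)}M$; since $\dim N=\dim M=n$, diagonalize $f^\ast g$ with respect to $\overbar g$ exactly as in the proof of Lemma \ref{lemma:curvature>=}: choose a $\overbar g$-orthonormal frame $\overbar e_1,\dots,\overbar e_n$ of $T_xN$ and a $g$-orthonormal frame $e_1,\dots,e_n$ of $T_{f(x)}M$ with $f_\ast\overbar e_i=\mu_i e_i$ and $\mu_1\geq\mu_2\geq\cdots\geq\mu_n\geq 0$. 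Then $\|df\|_x=\mu_1$ and $\|\wedge^2 df\|_x=\mu_1\mu_2$, the map $A$ vanishes exactly when $\mu_1=0$, and $A$ is a (nonzero) homothety exactly when $\mu_1=\cdots=\mu_n>0$. So it is enough to show that either $\mu_1=0$, or all the $\mu_i$ coincide.

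First I would read off $\mu_1=\mu_2$ from the scalar curvature data. Since $\mathrm{Ric}(g)>0$, its trace $\Sc(g)$ is positive everywhere, so $\Sc(g)_{f(x)}>0$. Combining item $(i)$, which gives $\Sc(\overbar g)_x=\|\wedge^2 df\|_x\cdot\Sc(g)_{f(x)}=\mu_1\mu_2\,\Sc(g)_{f(x)}$, with the extra hypothesis of $(\mathrm I)$, namely $\Sc(\overbar g)_x\geq\|df\|_x^2\cdot\Sc(g)_{f(x)}=\mu_1^2\,\Sc(g)_{f(x)}$, we get $\mu_1\mu_2\geq\mu_1^2$. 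Hence either $\mu_1=0$, in which case $A=0$ and we are done, or $\mu_1>0$ and, together with $\mu_1\geq\mu_2$, this forces $\mu_1=\mu_2$.

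Assume now $\mu_1=\mu_2>0$. The remaining and, I expect, genuinely delicate step is to upgrade this to $\mu_i=\mu_1$ for all $i$, and for this I would revisit the equality case of the Bochner--Lichnerowicz--Weitzenb\"ock argument. Recall that $\ind(\overbar D_B)=\deg_{\widehat A}(f)\cdot\chi(M)\neq 0$ produces a parallel spinor $\varphi$ with $D\varphi=0$ and $|\varphi|$ bounded below away from $0$; consequently every inequality in the derivation of $(i)$–$(iii)$ from Proposition \ref{prop:D^2} (equivalently, the curvature estimate of Lemma \ref{lemma:curvature>=}) is saturated pointwise on $N$ when paired with $\varphi$. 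Tracing the proof of Lemma \ref{lemma:curvature>=}, the factor $\|\wedge^2 df\|$ enters only through the term-by-term bound $\mu_i\mu_j\leq\|\wedge^2 df\|_x$ applied against the sectional curvatures $R^M_{ijji}=K_{ij}(f(x))$, which are $\geq 0$ because the curvature operator of $g$ is nonnegative. Equality therefore forces, for every pair $i\neq j$, that either $K_{ij}(f(x))=0$ or $\mu_i\mu_j=\|\wedge^2 df\|_x=\mu_1\mu_2=\mu_1^2$. The main obstacle here is purely the bookkeeping needed to isolate this pointwise equality condition cleanly from the chain of estimates (in particular making the corner approximation of Proposition \ref{prop:D^2} legitimate at interior points), as opposed to the merely integrated identity $\Sc(\overbar g)=\|\wedge^2 df\|\,f^\ast\Sc$.

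Finally I would invoke $\mathrm{Ric}(g)>0$ a second time to show that every index participates in a positive-curvature pair: for each $i$ one has $\sum_{j\neq i}K_{ij}(f(x))=\mathrm{Ric}(g)_{f(x)}(e_i,e_i)>0$, and since all $K_{ij}\geq 0$ there is some $j$ with $K_{ij}(f(x))>0$, whence $\mu_i\mu_j=\mu_1^2$ by the previous paragraph. As $\mu_i,\mu_j\leq\mu_1$ and $\mu_1>0$, the equality $\mu_i\mu_j=\mu_1^2$ forces $\mu_i=\mu_j=\mu_1$. Since $i$ was arbitrary, $\mu_1=\cdots=\mu_n=:a>0$, so $A=(df)_x$ is a homothety. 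This proves the dichotomy at every $x\in N$, and the claim follows.
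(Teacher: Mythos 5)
Your argument is correct and follows essentially the same route as the paper: combine the established equality $\overbar{\Sc}=\|\wedge^2 df\|\cdot f^*\Sc$ with the hypothesis and $\Sc(g)>0$ to force $\|df\|^2=\|\wedge^2 df\|$, then use the saturated equality case of Lemma \ref{lemma:curvature>=} (which, since each summand $(\|\wedge^2 df\|^2-\mu_i^2\mu_j^2)R^M_{ijji}$ is nonnegative, vanishes term by term) together with $\mathrm{Ric}(g)>0$ to conclude $\mu_i=\|df\|$ for all $i$. Your ordering of the singular values and the separate extraction of $\mu_1=\mu_2$ is only a cosmetic reorganization, and the pointwise saturation you worry about follows directly from the Lichnerowicz formula \eqref{eq:D^2smooth} applied to the parallel harmonic spinor at interior points, with no need to revisit the corner approximation.
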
 
	We have already proved the equality $\overbar{\Sc}=\|\wedge^2 df\|\cdot f^*\Sc$ in the above. Since clearly $\|df\|^2\geq \|\wedge^2 df\|$, the condition  $\overbar\Sc\geq\|df\|^2\cdot f^*\Sc$ implies that  
	$$\overbar{\Sc}=\|df\|^2\cdot f^*\Sc=\|\wedge^2 df\|\cdot f^*\Sc,$$
	Choose a local $\overbar g$-orthonormal frame $\overbar e_1,\ldots,\overbar e_n$ of $TN$ and a local $g$-orthonormal frame $e_1,\ldots,e_n$ of $TM$ such that $f_*\overbar e_i=\mu_i e_i$ with $\mu_i\geq 0$.
	For any $i$, the condition $\mathrm{Ric}(g)>0$ implies that $\sum_j R_{ijji}^M>0$. That is,  for any $i$,  there is a $j\neq i$ such that $R_{ijji}^M>0$.
	The proof of Lemma \ref{lemma:curvature>=} shows that 
	$$\sum_{i,j}(\|df\|^2-\mu_i^2\mu_j^2)R^M_{ijji}=0.$$
	Hence for any $i$, there exists $j\neq i$ such that
	$$\|df\|^2 =  \mu_i\mu_j.$$
	It follows that $\mu_i=\|df\|$ for all $1\leq i\leq n$. This proves the claim.  
	
	Now let us define $h \coloneqq  \|df\|$ and $U$ the open subset of $N$ consisting of points where $h>0$. Since by assumption $\deg f$ is nonzero, we see that  $U$ is nonempty.  On $U$,   we have $f^* g=h^2\cdot \overbar g$. Therefore on $U$ we have
	$$f^*\Sc=\frac{\overbar Sc}{h^2}-\frac{2(n-1)}{h^3}\Delta h-\frac{(n-1)(n-4)}{h^4}|dh|^2.$$
	Since we have $\overbar \Sc=\|\wedge^2 df\|\cdot f^*\Sc$ and that $h=0$ on $N-U$, it follows from the above equation that
	\begin{equation}\label{eq:conformal}
	2h^k\Delta h=-(n-4)h^{k-1}|dh|^2
	\end{equation}
	on the whole $N$, for all $k\geq 1$. Furthermore,  on $U\cap (\partial N)$,  we have
	$$f^*H=\frac{\overbar H}{h}-(n-1)\frac{1}{h^2}\frac{\partial h}{\partial\overbar e_n}.$$
	where $\overbar e_n$ is the unit inner normal vector of the codimension one faces of $N$. Since we have $\overbar H=\|df\|\cdot f^*H$ on $\partial N$ and that $h = 0$ on $N-U$, it follows that 
	\[  \frac{\partial h}{\partial\overbar e_n} \equiv 0 \textup{ on } \partial N. \]
	Hence it follows from the Stokes' theorem that
	$$0=\int_N h^k\Delta h+\int_N \langle d(h^k), dh\rangle=
	\int_N h^k\Delta h+k\int_N h^{k-1}|dh|^2. $$
	Applying Equation \eqref{eq:conformal}, we obtain 
	$$\big(k-\frac{n-4}{2}\big)\int_N h^{k-1}|dh|^2=0$$
	for all $k\geq 1$. Therefore,  $dh\equiv 0$ on $N$. Recall that $U$ is nonempty. It follows that $h = \|df\|$ is a non-zero constant, say $a >0$,  on $N$ and  $f\colon (N, a\cdot \overbar{g}) \to (M, g)$ is a Riemannian covering map. This finishes of the proof of part (I).

	Now let us prove part (II). Let $U'$ be the open subset of $N$ consisting of points where $\|\wedge^2 df\| \neq0$. 
	If we have $\mathrm{Ric}(g)<\Sc\cdot g/2$, then for any fixed $k$, we have
	$$\sum_j R_{kjjk}^M+\sum_i R_{ikki}^M< \sum_{i,j} R_{ijji}^M.$$
	Therefore
	$$\sum_{i\ne k,j\ne k} R_{ijji}^M>0.$$
	Hence, for any $k$, there exists at least one pair\footnote{Here we need the fact that $n$ is at least $3$. Indeed, this is implied by the assumption that $\mathrm{Ric}(g)<\Sc\cdot g/2$ in (II), as in dimension two the Ricci curvature is equal to $K\cdot g=\Sc\cdot  g/2$, where $K$ is the Gaussian curvature.} $(i,j)$ with $i\ne k$ and $j\ne k$ such that $R_{ijji}^M>0$. Since we have the equality $\overbar{\Sc}=\|\wedge^2 df\|\cdot f^*\Sc$, it follows from the proof of Lemma \ref{lemma:curvature>=} that 
	$$\sum_{i,j}(\|\wedge^2 df\|-\mu_i^2\mu_j^2)R^M_{ijji}=0.$$
	Therefore, for each fixed $k$, there exist $i\ne k$ and $j\ne k$ such that  $\mu_i\mu_j=\|\wedge^2 df\|$. Since  $\|\wedge^2 df\|\neq 0$ at $x\in U'$, it follows that $\mu_i \neq 0$ and $\mu_j \neq 0$. 
	Note that $\mu_i\mu_k\leq\|\wedge^2 df\|$ and $\mu_j\mu_k\leq\|\wedge^2 df\|$. Thus $\mu_k\leq \|\wedge^2 df\|^{1/2}$ for all  $1\leq k\leq n$. This shows that $\|df\|^2=\|\wedge^2 df\|$ on $U'$.
	Now the same proof for part (I) shows that  $\mathrm{Ric}(g)>0$ implies $\mu_k=\|\wedge^2 df\|^{1/2}$ for all $1\leq k\leq n$. Now let $ h \coloneqq  \sqrt{\|\wedge^2 df\|}$ on $N$. Then by the above discussion, we have $h= 0$ on $N-U'$ and  $h = \|df\|$ on $U'$. The same proof for part (I) shows that $h =  \sqrt{\|\wedge^2 df\|}$ is a nonzero constant, say $c >0$, on $N$ and  $f\colon (N, \sqrt{c}\cdot \overbar{g}) \to (M, g)$ is a Riemannian covering map. This finishes the proof of part (II).

	Now let us prove part (III). If $M$ is and even dimensional flat manifold, then the connection $\nabla^M$ on $S_M$ is flat. Hence locally we can write 
	$$\varphi=\sum_\alpha \varphi_\alpha\otimes s_\alpha$$
	where $\{s_\alpha\}$ is a parallel basis of $f^*S_M$ and $\varphi_\alpha$ are local sections of $S_N$. Since there exists a nonzero section $\varphi$ of $S_N\otimes f^\ast S_M$ such that  $\nabla\varphi=0$. It follows that $$0=\sum_{i,j}R_{ijkl}\overbar c(\overbar e_i)\overbar c(\overbar e_j)\varphi_\alpha=-\frac{1}{2}\mathrm{Ric}_{kl}\varphi_\alpha$$ for any $\alpha,k,l$ in this case, cf. \cite[Corollary 2.8]{spinorialapproach}. Hence $N$ is Ricci flat. If $M$ is an odd dimensional flat manifold, then $M\times[0,1]$ is even dimensional and flat. The above discussion shows that $N\times[0,1]$ is Ricci-flat,  which implies that $N$ is also Ricci-flat. 
	
	Note that in dimension three, Ricci-flatness coincides with flatness. So for \mbox{$3$-dimensional} manifolds, the flatness of $M$ implies the flatness of $N$.  This finishes the proof of part (III), hence completes the proof of the theorem. 
\end{proof}

Now let us prove Theorem \ref{thm:special}. 
\begin{proof}[Proof of Theorem $\ref{thm:special}$]
	Except part (I) and (II), Theorem \ref{thm:special} is simply a special case of  Theorem \ref{thm:extremal-rigid}.

	Now for part (I), choose a local $\overbar g$-orthonormal frame $\overbar e_1,\ldots,\overbar e_n$ of $TN$ and a local $g$-orthonormal frame $e_1,\ldots,e_m$ of $TM$ such that 
	\[ f_*\overbar e_i= \begin{cases}
	\mu_i e_i  & \textup{ if } i\leq \min(m, n)\\
	0 & 	\textup{ otherwise}
	\end{cases} \] with $\mu_i\geq 0$.
	For any $1\leq i \leq m$,  we see that $\mathrm{Ric}(g)>0$ implies that $\sum_j R_{ijji}^M>0$. That is,  for any $1\leq i \leq m$,  there is a $j\neq i$ such that $R_{ijji}^M>0$.
	The proof of Lemma \ref{lemma:curvature>=} shows that 
	$$\sum_{i,j}(1-\mu_i^2\mu_j^2)R^M_{ijji}=0.$$
	Hence for any $1\leq i \leq m$, there exists $j\neq i$ such that
	$$1 =  \mu_i\mu_j.$$
	It follows that $\mu_i=1$ for all $1\leq i\leq m$. This shows that $m = \dim M\leq n = \dim N$ and $f$ is a Riemannian submersion. The proof for part (II) is completely similar. This finishes the proof.

\end{proof}

\begin{remark}
	Note that 	part (I) and (II) of Theorem \ref{thm:extremal-rigid} requires the extra assumption that $\dim N = \dim M$, while  part (I) and (II) of Theorem \ref{thm:special} hold even if the dimensions of $N$ and $M$ are different. In general,  one  needs extra geometric assumptions on the metrics $\overbar g$ on $N$ in order to have a version of part (I) or (II) of Theorem \ref{thm:extremal-rigid} for the case where $N$ and $M$ have different dimensions (cf. \cite[Section 4]{Listing:2010te}). 
\end{remark}

\appendix
\section{Clifford bundles}\label{sec:Cliffordbundles}
In this appendix, we review some standard identifications of  spinor bundles and Clifford bundles. 

Let $X$ be an $n$-dimensional smooth spin manifold with boundary and $S$ the complexified spinor bundle over $X$. We assume that $n$ is even. The Clifford bundle of $TX$ acts on $S$ by left multiplication. On the dual bundle $S^*$, there is naturally a Clifford right action given by
\begin{equation}\label{eq:rightaction}
\langle a \cdot c^*(v), b\rangle\coloneqq \langle a, c(v)\cdot b\rangle,~ \textup{ for all } a\in S^*,~b\in S.
\end{equation}
The left action of $\cl(TX)$ on $S$ induces a natural bundle isomorphism 

\begin{equation}\label{eq:spinortensorspinor}
\begin{aligned}
\cl(TX) &\longrightarrow S\otimes S^*\cong\mathrm{End}(S,S)\\
\alpha &\longmapsto \sum_{j} (\alpha s_j)\otimes s_j^\ast,
\end{aligned}
\end{equation}
where $\{s_j\}$ is a local orthonormal basis of $S$ and $\{s_j^\ast\}$ the corresponding dual basis of $S^\ast$ (cf. \cite[I.5.18]{spingeometry}). 

The spinor bundle $S$ admits a natural $\Z_2$-grading given by Clifford multiplication of the volume element
\[ (\sqrt{-1})^{n/2} e_1 \cdots e_{n}.\] We denote the grading operators on $S$ and $S^\ast$ by $\epsilon$ and $\epsilon^*$, respectively. Let $S_\pm$ and $S^*_\pm$ be the $\pm$-part of $S$ and $S^*$, respectively. The two grading operators induce a bi-grading on $S\otimes S^*$. In particular, the $\Z_2$-grading on $S\otimes S^*$ given by $\epsilon\otimes\epsilon^*$ corresponds to the even-odd grading on $\cl(TX)$,  under the isomorphism in line \eqref{eq:spinortensorspinor}.

Furthermore, under the natural isomorphism   $\cl(TX)\cong \Bigwedge^\ast X = \Bigwedge^\ast (T^\ast X)$, the even/odd grading on $\cl(TX)$ corresponds to the usual even/odd grading (with respect to the degrees of differential forms) on $\Bigwedge^\ast X$. In particular, the Dirac operator on $S\otimes S^*$ with respect to the $\mathbb Z_2$-grading given by $\epsilon\otimes\epsilon^*$ can be  naturally identified with the de Rham operator of  $X$ (cf. \cite[II.5.12]{spingeometry}).   

Let $e_n$ be the unit inner normal vector field of $\partial X$. As $c(e_n)$ anti-commutes with $\epsilon$,  multiplication by $c(e_n)$ switches the $+$ and $-$ parts of $S$. Similarly,  multiplication by $c^*(e_n)$ switches the $+$ and $-$ parts of $S^*$.
\begin{definition}\label{def:boundaryB}
	Let us restrict the bundle $S\otimes S^\ast$ over $\partial X$.	We define  $B$ to be the sub-bundle of $ S\otimes S^*$ over $\partial X$ consisting of the $(-1)$-eigenspace of the operator $(\epsilon\otimes \epsilon^*) (c(e_n)\otimes c^*(e_n))$, that is, 
	$$B\coloneqq \ker(1+(\epsilon\otimes \epsilon^*) (c(e_n)\otimes c^*(e_n))).$$
	Equivalently, if a sections $\varphi$ decomposed into
	$$\varphi=\varphi_{++}+\varphi_{+-}+\varphi_{-+}+\varphi_{--} $$
	with respect to the bi-grading on $S\otimes S^\ast$, then $\varphi$ lies in $B$ if and only if
	$$\varphi_{--}=-( c(e_n)\otimes c^*(e_n))\varphi_{++} \textup{ and } \varphi_{+-}=(c(e_n)\otimes c^*(e_n))\varphi_{-+}.$$
\end{definition}
\begin{proposition}\label{prop:tangentpart}
	Under the natural bundle isomorphisms 
	\[ S\otimes S^\ast \cong \cl(TX) \cong \Bigwedge^\ast X\] over $\partial X$, the sub-bundle $B$ corresponds to the sub-bundle of $\Bigwedge^\ast X$ generated by forms that are tangential to $\partial X$.
\end{proposition}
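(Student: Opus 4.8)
The plan is to compute both the Clifford-algebra description of $B$ and the differential-form description of the tangential forms, and match them. First I would set up coordinates at a point $x\in\partial X$: pick an orthonormal basis $e_1,\dots,e_{n-1}$ of $T_x(\partial X)$ together with the inner unit normal $e_n$, so that $e_1,\dots,e_n$ is an oriented orthonormal basis of $T_xX$. Under $\cl(TX)\cong\Bigwedge^\ast X$ the monomials $e_{i_1}\cdots e_{i_k}$ ($i_1<\dots<i_k$) correspond to the forms $e^\ast_{i_1}\wedge\dots\wedge e^\ast_{i_k}$, and the tangential subbundle is precisely the span of those monomials not involving the index $n$. So the claim to be proved is: under the isomorphism in line \eqref{eq:spinortensorspinor}, the operator $(\epsilon\otimes\epsilon^\ast)(c(e_n)\otimes c^\ast(e_n))$ acting on $S\otimes S^\ast$ corresponds to an operator on $\cl(TX)$ whose $(-1)$-eigenspace is exactly the span of the $n$-free monomials.

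The key computational step is to identify this operator on $\cl(TX)$ explicitly. Tracing through line \eqref{eq:spinortensorspinor}: left Clifford multiplication on $S$ by $e_n$ corresponds, on $\cl(TX)$, to left multiplication $\alpha\mapsto e_n\alpha$; the right action $c^\ast(e_n)$ on $S^\ast$, by its defining relation in line \eqref{eq:rightaction}, corresponds to right multiplication $\alpha\mapsto \alpha e_n$ (up to a sign that I would track carefully, since $c^\ast$ is the transpose action). The grading operator $\epsilon\otimes\epsilon^\ast$ corresponds to the even/odd grading operator on $\cl(TX)$, i.e. $\alpha\mapsto(-1)^{\deg\alpha}\alpha$ on a degree-homogeneous $\alpha$. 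Hence $(\epsilon\otimes\epsilon^\ast)(c(e_n)\otimes c^\ast(e_n))$ corresponds to $\alpha\mapsto \pm(-1)^{\deg\alpha}\, e_n\,\alpha\, e_n$ on $\cl(TX)$, with a definite overall sign to be pinned down. I would then test this operator on the two types of monomials: if $\alpha=e_{i_1}\cdots e_{i_k}$ does not involve $n$, then $e_n\alpha=(-1)^k\alpha e_n$, so $e_n\alpha e_n=(-1)^k\alpha e_n^2=-(-1)^k\alpha$ (using $e_n^2=-1$ in $\cl(TX)$ with the geometers' convention $v^2=-|v|^2$), giving eigenvalue $\mp(-1)^k(-1)^k=\mp 1$; and if $\alpha$ does involve $n$, write $\alpha=\beta e_n$ with $\beta$ of degree $k-1$ and $n$-free, then $e_n\alpha e_n=e_n\beta e_n e_n=-e_n\beta=-(-1)^{k-1}\beta e_n=-(-1)^{k-1}\alpha$, producing the opposite sign. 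After fixing the overall sign convention so that the $n$-free monomials land in the $(-1)$-eigenspace (which is forced by the geometric normalization in Definition \ref{def:boundarycondition}, cf. the convention that the sphere bounding a ball has positive mean curvature), this shows $B$ is exactly the tangential subbundle.

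The main obstacle I expect is bookkeeping of signs: there are three places where a sign convention enters (the convention $v\cdot v=-|v|^2$ vs $+|v|^2$ in the Clifford relation, the $(\sqrt{-1})^{n/2}$-normalized volume element defining $\epsilon$, and the transpose in the definition of $c^\ast$ in line \eqref{eq:rightaction}), and these must be combined consistently with the isomorphism \eqref{eq:spinortensorspinor}. I would resolve this by first checking the two lowest-degree cases by hand — $\alpha=1$ (the $n$-free degree-$0$ monomial, which should be a $+1$... wait, $(-1)$-eigenvector) and $\alpha=e_n$ (which should be a $(+1)$-eigenvector) — and using that pair to calibrate the overall sign, then invoking the general monomial computation above. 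As a cross-check, I would note the consistency with Lemma \ref{lemma:essensa-jumpangle} and Lemma \ref{lemma:essensa-jumpanglewithf}, where $B$ is used as the absolute boundary condition for the de Rham operator (the component $\varphi_2\,dx$ normal to the boundary vanishes), which is precisely the statement that $B$ is the tangential subbundle; so the proposition is really just making the identification used implicitly throughout Section \ref{sec:conic} precise and explicit.
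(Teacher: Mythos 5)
Your proposal is correct, and it reaches the conclusion by a slightly different route than the paper. The paper's proof works inside $S\otimes S^\ast$ with the explicit local frame $\{s_i,\,-c(e_n)s_i\}$ of $S$: it writes out the image of a tangential monomial $\alpha$ under the isomorphism \eqref{eq:spinortensorspinor}, checks separately for even and odd $\alpha$ that the two component equations of Definition \ref{def:boundaryB} hold (so tangential forms land in $B$), and then concludes equality by comparing ranks. You instead transport the whole involution $(\epsilon\otimes\epsilon^*)(c(e_n)\otimes c^*(e_n))$ to $\cl(TX)$: under \eqref{eq:spinortensorspinor}, $c(e_n)\otimes 1$ becomes left multiplication by $e_n$, $1\otimes c^*(e_n)$ becomes right multiplication by $e_n$ (exactly, with no extra sign — the transpose in \eqref{eq:rightaction} turns postcomposition of endomorphisms by $c(e_n)$ into right Clifford multiplication), and $\epsilon\otimes\epsilon^*$ becomes $(-1)^{\deg}$, which the paper records just before Definition \ref{def:boundaryB}. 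Diagonalizing $\alpha\mapsto(-1)^{\deg\alpha}e_n\alpha e_n$ on monomials then identifies \emph{both} eigenspaces at once, so you get the two inclusions simultaneously and need no rank count; that is the main thing your organization buys, at the cost of having to justify the three transport statements (which is routine). Two small remarks on your sign discussion: the overall sign is pinned down purely algebraically by the convention $v\cdot v=-|v|^2$ (the one the paper uses, cf.\ $\overbar c(u)w=u\wedge w-u\lrcorner w$ in the proof of Lemma \ref{lemma:essensa-jumpanglewithf}) together with the transpose definition of $c^*$ — the appeal to the mean-curvature normalization is not relevant and should be dropped; and your ``calibration'' on $\alpha=1$ and $\alpha=e_n$ is fine provided it is an honest computation from the definitions (e.g.\ $1\mapsto e_n\cdot 1\cdot e_n=-1$, already in the $(-1)$-eigenspace), not a choice of sign made to force the conclusion.
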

\begin{proof}
	Choose a local orthonormal basis $\{s_i\}$ of $S_+$. Then $\{s_i, -c(\overbar e_n) s_i\}$ is a  local orthonormal basis of $S$. By definition, we have $s_i^*c^*(e_n)=(-c( e_n)s_i)^*$. For simplicity, let us write $c_n$ and $c_n^\ast$ in place of $c(e_n)$ and $c^\ast(e_n)$.  The map in line \eqref{eq:spinortensorspinor} from $\cl TN$ to $S_N\otimes S_N^*$ becomes 
	$$\alpha  \mapsto \sum_{i}\alpha s_i\otimes s_i^*+ \sum_i  \alpha (-c_ns_i)\otimes s_i^*c^*_n.$$ 
	
	Let $\alpha$ be an element in $\cl(TN)$ consisting of vectors tangential to $\partial X$. Let us assume first that $\alpha $ has even degree. In this case, $\alpha$ commutes with the grading operator $\epsilon$ on $S$ and preserves the grading. Equivalently,  when viewed as an element  $S\otimes S^*$,  $\alpha$ has only $++$ and $--$ components. Since $\alpha$ does not contain the vector $e_n$, we see that  $c_n$ commutes with $\alpha$. Therefore, we have
	$$\alpha (-c_ns_i)\otimes s_i^*c^*_n
	=-c_n \alpha (s_i)\otimes s_i^*c^*_n=-
	(c_n\otimes c^*_n)(\alpha(s_i)\otimes s_i).$$
	In other words, we have 
	\[  \alpha_{--}=-( c_n\otimes c^*_n)\alpha_{++}.\]

	Similarly, now assume that $\alpha$ has odd degree. In this case, $\alpha$ anti-commutes with the grading operator $\epsilon$ on $S$, and as an element in  $S\otimes S^*$,  $\alpha$ have only $+-$ and $-+$ components. Also,  $c_n$ anti-commutes with $\alpha$. Therefore, we have
	$$\alpha(-c_ns_i)\otimes s_ic^*_n
	=c_n\alpha (s_i)\otimes s_ic^*_n=
	(c_n \otimes c^*_n)(\alpha(s_i)\otimes s_i).$$
	In other words, we have 
	\[  \alpha_{+-}=( c_n\otimes c^*_n)\alpha_{-+}\]
	in this case. 
	
	In conclusion, we have shown that $B$ is mapped to the sub-bundle\footnote{The bundle $\Bigwedge^*(\partial X)$ over $\partial X$ is a subbundle of $\Bigwedge^*X$ over $\partial X$ in a canonical way.} $\Bigwedge^*(\partial X)$ of $\Bigwedge^*X$ generated by  forms tangential to $\partial X$. Since $B$ and $\Bigwedge^*(\partial X)$ have the same rank,  the bundle isomorphism $S\otimes S^\ast \cong \Bigwedge^\ast X$ over $\partial X$ restricts to an isomorphism between $B$ and $\Bigwedge^*(\partial X)$. This finishes the proof.
\end{proof}

\section{Deformations of two dimensional convex spherical polygons}\label{sec:appendixdeform}

In this section, for the convenience of the reader,  we give some details on how to continuously deform a two dimensional convex spherical polygon through a continuous family of two dimensional convex spherical polygons of the same combinatorial type while increasing all corresponding dihedral angles. 

First, we have the following elementary lemma. 

\begin{lemma}\label{lemma:sphericalTriangle}
	Let $\mathbb S^2$ be the unit sphere in $\R^3$ and $\triangle ABC$ be a convex spherical triangle in $\mathbb S^2$. Then there exists a spherical triangle $\triangle AB'C'$ nearby such that $B'$ is on the geodesic line $AB$, $C'$ is on the geodesic line $AC$,  $\angle B'>\angle B$, and $\angle C'>\angle C$.
\end{lemma}
\begin{proof}
	Without loss of generality, we assume that $A$ is the point $(0,0,1)\in \mathbb R^3$, and the unit outer normal vectors of $AB$ and $AC$ (as edges of the spherical triangle $\triangle ABC$) are given by the vectors $\langle 0,-1,0\rangle $ and $\langle -\sin\theta,\cos\theta,0\rangle$, where $\theta = \angle A$. Let $\langle u,v,w\rangle $ be the unit outer normal vector of $BC$. Note that we have $w<0$. Moreover, we  have that
	$$\cos\angle B=v \textup{ and }\cos\angle C=u\sin\theta - v\cos\theta.$$
	Since $\triangle ABC$ are convex, we have $\sin\theta>0$. To make $\angle B$ and $\angle C$ larger, we first decrease  $v$ by a small amount and then decrease  $u$ by a sufficient amount accordingly so that $\cos\angle C=u\sin\theta - v\cos\theta$ decreases.  
\end{proof}

The following lemma shows that we may continuously deform a two dimensional convex spherical polygon through a  family of two dimensional convex spherical polygons of the same combinatorial type while increasing all corresponding dihedral angles. 

\begin{lemma}\label{lemma:deformsphericalpolygon}
	Let $\mathbb P$ be a two dimensional convex spherical polygon. Then for any $\varepsilon >0$,  there exists a continuous family of two dimensional convex spherical polygons $\{\mathbb P_t\}_{0\leq t \leq 1}$ in $\mathbb S^2$  with the following properties: 
	\begin{itemize}
		\item $\mathbb P_0 = \mathbb P$,
		\item $\mathbb P_t$ has the same combinatorial type as that of $\mathbb  P$, 
		\item for each $t>0$,  the dihedral angles of $\mathbb P_t$ are strictly larger than the corresponding dihedral angles of $\mathbb P_0$, and 
		\item furthermore at $t=1$, all dihedral angles of $\mathbb P_1$ are $\geq \pi -\varepsilon$. 
	\end{itemize} 
\end{lemma}
\begin{proof}
	Note that for each given edge $E_1$ of $\mathbb P$, there are precisely two other edges $E_2$ and $E_3$ adjacent to $E_1$.  We extend $E_2$ and $E_3$ along the geodesic circles in which they lie so that these extended geodesic lines together with  $E_1$ form a spherical triangle. By Lemma \ref{lemma:sphericalTriangle}, we may replace the edge $E_1$ by another geodesic line $E'_1$ so that $E'_1$ intersects with $E_2$ (resp. $E_3$) to form a new dihedral angle that is strictly larger than  the original dihedral angle between $E_1$ and $E_2$  (resp. $E_1$ and $E_3$). As a consequence, we obtained a new convex spherical polygon $\mathbb P'$ of the same combinatorial type as that of  $\mathbb P$ such that precisely two dihedral angles of $\mathbb P'$ are strictly greater than the corresponding dihedral angles of $\mathbb P$, and all the  other dihedral angles  of $\mathbb P'$ coincide with the corresponding dihedral angles of $\mathbb P$. In other words, if $\mathbb P$ is a non-degenerate convex spherical polyhedron, we are always able to deform it to another non-degenerate convex spherical polyhedron, while two of the dihedral angles increase and all the other dihedral angles remain the same.
	
	Suppose that $\mathbb P$ has $n$ vertices. Let $S$ be the supremum of the sum of all dihedral angles of non-degenerate convex spherical polygons (with $n$ vertices) that are obtained from $\mathbb P$ by the above deformation. Since a spherical polygon is completely determined by the normal vectors of its edges, the space of all such polygons are compact. Therefore, there is a spherical polygon $\widehat{\mathbb P}$ that attains the supremum $S$. However, $\widehat{\mathbb P}$ may contain degenerate faces, which is of either of the following two types:
	\begin{enumerate}
		\item the length of an edge is equal to zero, or
		\item the angle between two non-trivial adjacent edges is equal to $\pi$.
	\end{enumerate}
	By the Gauss--Bonnet formula for surfaces with piecewise smooth boundary, the area of the polygon $\widehat{\mathbb P}$ is equal to $S-(n-2)\pi$, which is larger than the area of $\mathbb P$. Therefore there is at least one non-degenerate edge of $\widehat{\mathbb P}$. In other words, the shape of $\widehat{\mathbb P}$ is still a convex spherical polygon, but some edges of $\widehat{\mathbb P}$ may degenerate (and become vertices) and some adjacent edges may intersect at angle $\pi$ (that is, they are contained in the same line). 
	
	We claim that the shape of $\widehat{\mathbb P}$ is a hemisphere. If not, one can always move an edge of $\widehat{\mathbb P}$ using Lemma \ref{lemma:sphericalTriangle} to increase two of its dihedral angles. Now by the maximality of $S$, there exists a non-degenerate spherical polygon $\mathbb P_a$ that is obtained  from $\mathbb P$ by the above edge-moving deformation and is very close to $\widehat{\mathbb P}$. If we apply the same move (done on $\widehat{\mathbb P}$) to $\mathbb P_a$, then we obtain a non-degenerate spherical polygon $\mathbb P_b$ whose dihedral angles sum up to exceed $S$.  This contradicts to the maximality of $S$.	Therefore, the only possible shape of $\widehat{\mathbb P}$ is a hemisphere. To summarize, we obtain a continuous family of two dimensional convex spherical polygons $\{\mathbb P_t\}_{0\leq t \leq 1}$ in $\mathbb S^2$  with the following properties: 
	\begin{itemize}
		\item $\mathbb  P_0 = \mathbb P$,
		\item $\mathbb  P_t$ has the same combinatorial type as that of $\mathbb  P$, 
		\item for $t\in[0,1)$,  the angles of $\mathbb  P_t$ is non-decreasing and strictly smaller than $\pi$, and
		\item furthermore at $t=1$, all dihedral angles of $\mathbb  P_1$ are all equal to $\pi$. 
	\end{itemize} 
Now consider the family of spherical polygons $\{\mathbb P_t\}_{t\in [0, 1-\delta]}$ for some sufficiently small $\delta$, and reparameterize the interval $[0, 1-\delta]$ to $[0, 1]$ to give the desired family of spherical polygons. This finishes the proof. 
\end{proof} 

\section{Estimates of scalar curvature, mean curvature and dihedral angles for smooth sections}\label{sec:appendixestimate} 

In this section, under an extra regularity condition on polytope maps, we prove a stronger version of Proposition \ref{prop:D^2} that not only detects the comparisons of scalar curvatures and mean curvatures, but also the comparison of dihedral angles. 

For simplicity, we shall focus on the case of manifolds with corners in Proposition \ref{prop:D^2WithAngles}. In order to streamline the proof, we will work with the following notion of corner maps. 
\begin{definition}\label{def:corner} A  map $f\colon N\to M$ between manifolds with corners is called a corner map if 
	\begin{enumerate} 
		\item $f$ is smooth map\footnote{Recall that $f$ is  a smooth map between manifolds with corners if and only if $f$ is the restriction of a smooth map $\varphi\colon N' \to M'$, where $N'$ and $M'$ are two open manifolds which respectively contain $N$ and $M$ as their submanifolds.} between manifolds with corners;
		\item $f$ maps faces to faces, that is, for each codimension one face $\overbar{F}_i$ of $N$, we have $f(\overbar{F}_i) \subseteq F_i$ for some codimension one face $F_i$ of $M$.
		\item  for any collection of codimension one faces, say,   $\{\overbar F_{1}, \cdots, \overbar F_{k} \}$ and any $x\in \cap_{j=1}^k \overbar F_{j}$, the tangent map 
		\[   f_\ast  \colon T_xN \to T_{f(x)} M \]
		restricted to the linear subspace $\mathfrak N_x \subset T_xN$ is injective and 
		\[  \cap_{j=1}^k TF_j  \bigcap f_\ast(\mathfrak N_x) =  0, \] where $\mathfrak N_x $ is the linear subspace of $T_xN$ spanned by the normal vectors of $\{\overbar F_{j}\}_{j=1}^k$ at $x$, and $F_j$ is the corresponding codimension one face in $M$ such that $f(\overbar F_j) \subseteq F_j$. 
	\end{enumerate}
\end{definition}

\begin{proposition}\label{prop:D^2WithAngles}
	Let $(M, g)$ and $(N, \overbar{g})$ be two oriented compact Riemannian manifolds with corners.  Suppose   $f\colon N \to M$ is a spin corner map (Definition \ref{def:spin} and Definition \ref{def:corner}). Let $D$ be the Dirac operator on $S_N\otimes f^* S_M$.  If both the curvature operator of $M$ and the second fundamental form of $\partial M$ are non-negative, then  we have 
	\begin{equation}\label{eq:>=0cornerWithAngles}
	\begin{aligned}
	\int_N|D\varphi|^2\geq& \int_N |\nabla\varphi|^2 + \int_N\frac{\overbar{\Sc}}{4}|\varphi|^2-\int_N\|\wedge^2 df\|\cdot\frac{f^*\Sc}{4}|\varphi|^2\\
	&+\int_{\partial N}\langle D^\partial\varphi,\varphi\rangle+
	\int_{\partial N}\frac{\overbar H}{2}|\varphi|^2-\int_{\partial N}\|df^\partial\|\cdot\frac{f^*H}{2}|\varphi|^2\\
	&+\frac{1}{2}\sum_{i,j}\int_{\overbar F_{ij}}(\pi-\overbar\theta_{ij})\cdot  |\varphi|^2-\frac{1}{2}\sum_{i,j}\int_{\overbar F_{ij}}|\pi -\theta_{ij}|\cdot |\varphi|^2
	\end{aligned}\end{equation}
	for all smooth sections  $\varphi$ of $S_N\otimes f^* S_M$, where $\overbar\theta_{ij}$ \textup{(}resp. $\theta_{ij}$\textup{)} are the dihedral angles of $N$ \textup{(}resp. $M$\textup{)}, cf. Definition $\ref{def:exterior}$.  
\end{proposition}
\begin{proof} 
 Our general strategy is to approximate $N$ by a family of manifolds with smooth boundary, then apply the computation in Section \ref{sec:smoothboundary} on each of these approximations, and finally obtain the inequality in line \eqref{eq:>=0corner} by taking the limit. 
	
	The choice of these approximations, which are manifolds with smooth boundary, has to be made in a careful way.  As a matter of fact, sometimes an approximation of $N$ may not be contained inside $N$. For this reason, we introduce the following ambient manifolds. 	Let $N^\dagger$ be  a manifold  that contains $N$ in its interior. We extend the Riemannian metric $\overbar g$ of $N$ to a smooth Riemannian metric  on $N^\dagger$.  Similarly let $M^\dagger$ be  a manifold  that contains $M$ in its interior. We also extend the Riemannian metric of $M$ to a smooth Riemannian metric on $M^\dagger$. For example, we can choose $N^\dagger$ to be the space obtained by attaching a small cylinder $\partial N\times [0, \varepsilon]$ to   $N$ along $\partial N$, and similarly  choose $M^\dagger = M\cup_{\partial M} (\partial M\times [0, \varepsilon])$. In this case, $f\colon N \to M$ also extends to a from $N^\dagger$ to $M^\dagger$. 
	
	We will construct a family of manifolds $\{N_r\}_{r>0}$ so that the following holds:
	\begin{itemize}
		\item $N_r$ is a manifold with smooth boundary,
		\item $N_r$ coincides with $N$ away from the $r$-neighborhood of the codimension two faces of $N$,
		\item Within the $r$-neighborhood of each codimension two face of $N$, the second fundamental form of $\partial N_r$  is dominated by the  curvature of a family of curves that are asymptotically orthogonal to the tangent space of the given codimension two face, 
		\item For each curve above, the signed curvature of the image curve in $M^\dagger$ does not change its sign.
	\end{itemize}
	
	For each $r>0$, we construct $N_r$ inductively as follows.
	\begin{enumerate}[label=$(\arabic*)$, ref=$(\arabic*)$]
		\item \label{smooth-1}Away from the $r$-neighborhood of all codimension two faces,  we simply choose $N_r$ to coincide with $N$. 
		\item \label{smooth-2} Now we shall smooth out the part of codimension two faces that is away from the codimension three faces. Intuitively speaking, we will smooth out the dihedral angle at each point by a nice smooth curve while the dihedral angle will be remembered as the integral of the signed curvature over this curve. Here the signed curvature at a point of a curve is positive if the normal vector of the curve at that point is inner (i.e., this normal vector points inward of $N$).
		
		If $\overbar{F}_{ij} =  \overbar F_i \cap \overbar F_j$ is a codimension two face of $N$, we denote its interior by $\interior{\overbar{F}}_{ij}$,  that is, all points in  $ \overbar{F}_{ij}$,  but not in any codimension three faces. For each $x \in \interior{\overbar{F}}_{ij}$, let $\mathfrak N_x\subset T_xN$ be the linear subspace of $T_xN$ spanned by the normal vectors to $\overbar F_i $ and $\overbar F_j$ at $x$. For any smooth curve  $\alpha$ in the $r$-disc of  $\mathfrak N_x$, we denote by  $f_\ast(\alpha)$ its  image in $f_\ast(\mathfrak N_x)\subset T_{f(x)}M$. Furthermore, we denote by $\alpha_N$  the image curve of $\alpha$  in $N$ under the exponential map $\exp\colon T_{x}N \to N$.   Similarly, $\alpha_M$ is the image  of $f_\ast(\alpha)$ in $M$ under the exponential map $\exp\colon T_{f(x)}M \to M$.

		Now we choose a curve $\gamma$ in the $r$-disc of  $\mathfrak N_x$ such that the corresponding curve $\gamma_M$ in $M$ intersects the faces $F_i$ and $F_j$ tangentially, and such that the signed curvature $k_{\gamma_M}$ of $\gamma_M$ does not change sign, i.e., either  $k_{\gamma_M} \geq 0$ on the whole curve or $k_{\gamma_M} \leq 0$ on the whole curve. Furthermore, we can assume without loss of generality that $|k_{\gamma_M}|\leq 1/r$. The curve $\gamma_N$ could lie inside of $N$ or outside of $N$ depending on the dihedral angle at $f(x)$. See Figure \ref{fig:inside} for when the curve $\gamma_N$ lies inside of $N$; and see Figure \ref{fig:outside} for when the curve $\gamma_N$ lies in $N^\dagger$ but outside of $N$. Let us also denote $\gamma_N$ by $\gamma^x_N$ if we want to specify its relevance to $x$. 
		\begin{figure}[h]
			\centering
			\begin{tikzpicture}[scale=0.8]
			\fill[gray!20] plot[domain=-2:0] ({\x}, {sqrt(25-(\x-3)^2)}) --
			plot[domain=0:2] ({\x}, {sqrt(25-(\x+3)^2)}) -- cycle;
			\draw [black,very thick,domain=-2:0] plot ({\x}, {sqrt(25-(\x-3)^2)});
			\draw [red,thick,domain=-1.95:-1] plot ({\x}, {sqrt(24.5025-(\x-3)^2)});
			\draw [black,very thick,domain=0:2] plot ({\x}, {sqrt(25-(\x+3)^2)});
			\draw [red,thick,domain=1:1.95] plot ({\x}, {sqrt(24.5025-(\x+3)^2)});
			\filldraw[red] ({-1}, {sqrt(24.5025-(-1-3)^2)}) circle[radius=0.05];
			\filldraw[red] ({1}, {sqrt(24.5025-(-1-3)^2)}) circle[radius=0.05];
			\draw [red,thick,domain=-1:1] plot ({\x}, {3*sqrt(24.5025-(-1-3)^2)/4+sqrt(1-(\x)^2+(24.5025-(-1-3)^2)/16)});
			\draw [-stealth] (3,2) -- node[anchor=south] {$f$} (4,2);
			\fill[gray!20] (5,0) -- (9,3) -- (13,0) -- cycle;
			\draw[very thick] (9,3) -- (5,0);
			\draw[very thick] (9,3) -- (13,0);
			\draw[red,thick,domain=0:1.95] plot ({1+(\x-1.95)*4/3+20/3},{\x});
			\draw[red,thick,domain=0:1.95] plot ({1-(\x-1.95)*4/3+28/3},{\x});
			\filldraw[red] ({20/3+1}, {1.95}) circle[radius=0.05];
			\filldraw[red] ({28/3+1}, {1.95}) circle[radius=0.05];
			\filldraw[red] (9,2) node {$\gamma_M$};
			\filldraw[red] (0,3) node {$\gamma_N$};
			\draw[red,thick,domain=20/3:28/3] plot({\x+1},{sqrt(400/81-(\x-8)^2)+1.95-16/9});
			\end{tikzpicture}
			\caption{ $\gamma_N$ lies inside of $N$}
			\label{fig:inside}
		\end{figure}
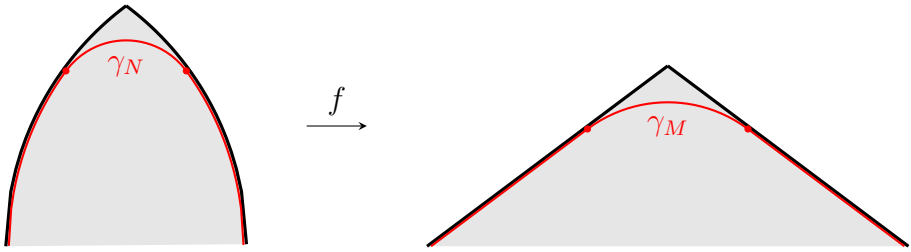
		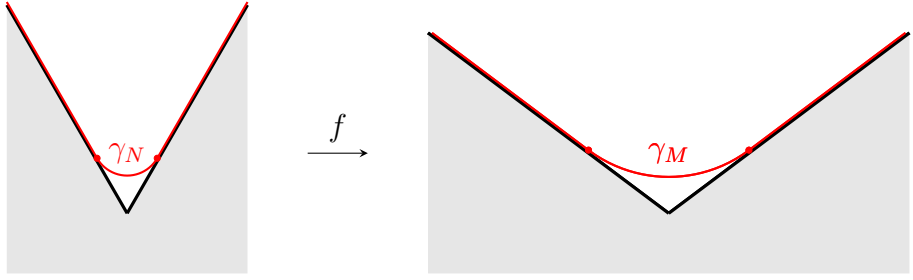
\begin{figure}[h]
			\centering
			\begin{tikzpicture}[scale=0.8]
			\fill[gray!20] (0,1) -- ({4*cos(60)},{1+4*sin(60)}) -- ({4*cos(60)},0) -- ({-4*cos(60)},0)-- ({-4*cos(60)},{1+4*sin(60)}) -- cycle;
			\draw[very thick] (0,1) -- ({4*cos(60)},{1+4*sin(60)});
			\draw[very thick] (0,1) -- ({-4*cos(60)},{1+4*sin(60)});
			\draw[red,thick]  ({1*cos(60)},{1+1*sin(60)+0.05}) --  ({4*cos(60)},{1+4*sin(60)+0.05});
			\draw[red,thick]  ({-1*cos(60)},{1+1*sin(60)+0.05}) --  ({-4*cos(60)},{1+4*sin(60)+0.05});
			\filldraw[red] ({-1*cos(60)},{1+1*sin(60)+0.05}) circle[radius=0.05];
			\filldraw[red] ({1*cos(60)},{1+1*sin(60)+0.05}) circle[radius=0.05];
			\draw[red,thick,domain=-30:-150] plot ({cos(\x)/sqrt(3)},{1+1*sin(60)+0.05+(1/2+sin(\x))/sqrt(3)});
			\filldraw[red] (0,2) node {$\gamma_N$};
			\draw[very thick] (9,1) -- (5,4);
			\draw[very thick] (9,1) -- (13,4);
			\draw[red,thick,domain=0:1.95] plot ({1+(\x-1.95)*4/3+20/3},{4-\x});
			\draw[red,thick,domain=0:1.95] plot ({1-(\x-1.95)*4/3+28/3},{4-\x});
			\filldraw[red] ({20/3+1}, {4-1.95}) circle[radius=0.05];
			\filldraw[red] ({28/3+1}, {4-1.95}) circle[radius=0.05];
			\draw[red,thick,domain=20/3:28/3] plot({\x+1},{4-(sqrt(400/81-(\x-8)^2)+1.95-16/9)});
			\filldraw[red] (9,2) node {$\gamma_M$};
			\draw [-stealth] (3,2) -- node[anchor=south] {$f$} (4,2);
			\fill[gray!20] (5,4) -- (9,1) -- (13,4) -- (13,0) -- (5,0) -- cycle;
			\draw[very thick] (9,1) -- (5,4);
			\draw[very thick] (9,1) -- (13,4);
			\draw[red,thick,domain=0:1.95] plot ({1+(\x-1.95)*4/3+20/3},{4-\x});
			\draw[red,thick,domain=0:1.95] plot ({1-(\x-1.95)*4/3+28/3},{4-\x});
			\filldraw[red] ({20/3+1}, {4-1.95}) circle[radius=0.05];
			\filldraw[red] ({28/3+1}, {4-1.95}) circle[radius=0.05];
			\draw[red,thick,domain=20/3:28/3] plot({\x+1},{4-(sqrt(400/81-(\x-8)^2)+1.95-16/9)});
			\filldraw[red] (9,2) node {$\gamma_M$};
			\end{tikzpicture}
			\caption{$\gamma_N$ lies outside of $N$}
			\label{fig:outside}
		\end{figure}
		
		In the above choice of $\gamma^x_N$, we can in fact choose the curves $\gamma^x_N$ (as $x$ varies in $ \interior{\overbar{F}}_{ij}$)  so that
		these curves together form a smooth codimension one  submanifold $\overbar Y_{ij}$ of $N^\dagger$ (cf. Figure \ref{fig:smooth-codim-two}). Roughly speaking, $\overbar Y_{ij}$ is  the boundary of some tubular neighborhood of $\interior{\overbar F}_{ij}$ in $N$.  Furthermore, we can arrange the curves so that the second fundamental form $\overbar A_r$ of $\overbar Y_{ij}$ is of the form: 
		\begin{equation}\label{eq:second-fund}
		\overbar A_r=\overbar k_r \cdot  d\overbar{e}_1^2+\sum_{\lambda >1 \text{ or }\mu >1}O(1)d \overbar e_\lambda d\overbar e_\mu, 
		\end{equation}
		where  $\{\overbar e_\lambda\}$ is a local orthonormal basis of $T\overbar{Y}_{ij}$ with $\overbar e_1$ being the unit tangent vector of the curve $\gamma_N^x$ and $\overbar k_r$ is the signed curvature of $\gamma_N^x$. Here $O(1)$ means a quantity that is uniformly bounded as $r\to 0$. 
		
		\begin{figure}[h!]
			\begin{tikzpicture}
			\draw[very thick] (0,0) -- (0,3) -- (-2,2) -- (-2,-1) -- cycle;
			\draw[very thick] (0,3) -- (-1.6,3.8) -- (-1.6,2.2);
			\draw[red,thick,domain=0.1:2.7] plot({-0.2+0.05*sin(\x*300)},{\x});
			\draw[red,thick,domain=-1:0] plot({\x-0.2+0.05*sin(0.5*300)},{0.5-sqrt(-0.1*\x)});
			\draw[red,thick,dashed,domain=-1:0] plot({\x-0.2+0.05*sin(0.5*300)},{0.5+sqrt(-0.1*\x)});
			\draw[red,thick,domain=-1:0] plot({\x-0.2+0.05*sin(1*300)},{1-sqrt(-0.1*\x)});
			\draw[red,thick,dashed,domain=-1:0] plot({\x-0.2+0.05*sin(1*300)},{1+sqrt(-0.1*\x)});
			\draw[red,thick,domain=-1:0] plot({\x-0.2+0.05*sin(1.5*300)},{1.5-sqrt(-0.1*\x)});
			\draw[red,thick,dashed,domain=-1:0] plot({\x-0.2+0.05*sin(1.5*300)},{1.5+sqrt(-0.1*\x)});
			\draw[red,thick,domain=-1:0] plot({\x-0.2+0.05*sin(2*300)},{2-sqrt(-0.1*\x)});
			\draw[red,thick,dashed,domain=-1:0] plot({\x-0.2+0.05*sin(2*300)},{2+sqrt(-0.1*\x)});
			\end{tikzpicture}
			\caption{Smoothing out codimension two faces}
			\label{fig:smooth-codim-two}
		\end{figure}
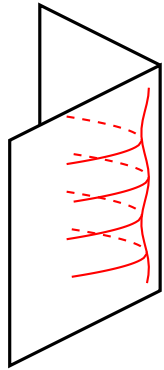

		\item \label{smooth-3} Now we shall smooth out the part of codimension three faces that is away from codimension four faces. In order to make our exposition more transparent, we will in fact first choose an approximation that roughly speaking turns codimension three singularities in $N$ into codimension two singularities.  
		
		Let $ \interior{\overbar{F}}_{ijk}$ be the interior of a codimension three face $\overbar F_{ijk} = \overbar F_i \cap \overbar F_j\cap \overbar F_k$ of $N$. Roughly speaking, we shall choose $\overbar Y_{ijk}$ to be the boundary of some tubular neighborhood of $ \interior{\overbar{F}}_{ijk}$ in $N$. More precisely, for each $x\in \interior{\overbar{F}}_{ijk}$, let  $\mathfrak N_x$ be the linear subspace of $T_xN$ spanned by $\{v_i, v_j, v_j\}$, where $v_i, v_j, v_k$ are the inner normal vectors to $\overbar F_i, \overbar F_j$ and $\overbar F_k$ respectively. Let $\mathfrak S_r$ be the sphere of radius $r$ in $\mathfrak N_x$. 
		Let $\exp(\mathfrak S_r)$ be the image  of $\mathfrak S_r$ under the exponential map $\exp\colon T_y N\to N$. Then the intersection of $ \exp(\mathfrak S_r)$ with $\overbar Y_{ij}\cup \overbar Y_{jk}\cup \overbar Y_{ik}$   is a smooth curve $\beta$. Let $\mathfrak R_x$ be the part of  $\exp(\mathfrak S_r)$ that is enclosed by the curve $\beta$. As $x$ varies in $\interior{\overbar F}_{ijk}$, these $\mathfrak R_x$'s trace out a smooth codimension one submanifold in $N^\dagger$, which we shall denote by $\overbar Y_{ijk}$.   Note that in general the exterior angle is non-trivial at a point where  $\overbar Y_{ijk}$ and $\overbar Y_{ij}\cup \overbar Y_{jk} \cup \overbar Y_{ik}$ intersect. In any case,  the intersection of $\overbar Y_{ijk}$ and $\overbar Y_{ij}\cup \overbar Y_{jk} \cup \overbar Y_{ik}$ contains only singularities of codimension $\leq 2$, which will eventually be smoothed out in a subsequent step.   In the following,  we shall use $\overbar Y_{\Theta}$ as a generic notation for referring to $\overbar Y_{ij}$ or $\overbar Y_{ijk}$ without specifying the actual sub-index. 
		
		Roughly speaking,  we have partially resolved the codimension three singularities in the original space by codimension two singularities. We will eventually smooth out these new codimension two singularities by repeating the construction in Step \ref{smooth-2}. But for the moment, let us continue this partial resolution process. Inductively, suppose we have partially resolved the codimension $\ell$ singularities by codimension $(\ell-1)$ singularities. Let $ \interior{\overbar{F}}_{\Lambda}$ be the interior of a codimension $(\ell+1)$ face $\overbar F_{\Lambda}$ of $N$.  Again, for each $x\in \interior{\overbar{F}}_{\Lambda}$,  let  $\mathfrak S_s$ be the sphere of sufficiently small radius $s$ in the linear subspace $\mathfrak N_x\subset T_xN$ spanned by the normal vectors of all codimension one faces that intersect $\interior{\overbar{F}}_{\Lambda}$ nonempty. Let $\exp(\mathfrak S_s)$ be the image  of $\mathfrak S_s$ under the exponential map $\exp\colon T_x N\to N$. Then the intersection of $ \exp(\mathfrak S_s)$ with the previously constructed $Y_\Theta$'s  is a closed $(\ell-1)$-dimensional manifold $\Sigma$. In general the inherited Riemannian metric on $\Sigma$ is not smooth, but with singularities. Let $\mathfrak R_x$ be the part of  $\exp(\mathfrak S_s)$ that is enclosed by  $\Sigma$. As $x$ varies in $\interior{\overbar F}_{\Lambda}$, these $\mathfrak R_x$'s trace out a codimension one submanifold in $N^\dagger$, which we shall denote by $\overbar Y_{\Lambda}$. Again, the dihedral angle is non-trivial at a point where  $\overbar Y_{\Lambda}$ and the previous $\overbar Y_{\Theta}$'s intersect. On the other hand, the intersection of  $\overbar Y_{\Lambda}$ and the previous $\overbar Y_{\Theta}$'s  contains only singularities of codimension $\leq \ell$. By repeating this partial resolution process inductively, we eventually obtain an  approximation of $N$ by another $n$-dimensional manifold  $N_{(1)}$ with corners such that  $N_{(1)}$ only has singularities of codimension $\leq (n-1)$. That is,  $N_{(1)}$ does not have codimension $n$ faces, or equivalently,  any collection of $n$ distinct codimension one faces of $N_{(1)}$ has empty intersection.  
		
		\item \label{smooth-4} We repeat the same process above on $N_{(1)}$ to obtain  an approximation of $N$ by an $n$-dimensional manifold $N_{(2)}$ with corners  such that $N_{(2)}$ only has singularities of codimension $\leq (n-2)$. Now by induction, we finally arrive at an approximation of $N$ by a manifold  $N_r$ with smooth boundary. See Figure \ref{fig:steps} for the effect of  these smoothing steps when performed near a  vertex of a $3$-dimensional manifold with corners. 
	\end{enumerate}
	
	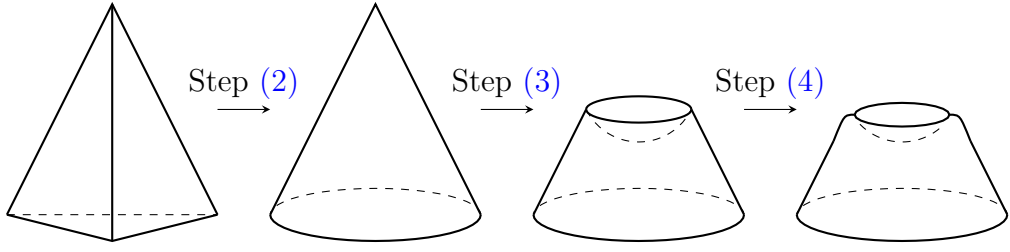
\begin{figure}
		\begin{tikzpicture}[scale=0.7]
		\draw[thick] (-7,0) -- (-5,4) -- (-3,0);
		\draw[thick] (-5,4) -- (-5,-0.5);
		\draw[thick] (-7,0) -- (-5,-0.5) -- (-3,0);
		\draw [-stealth] (-3,2) -- node[anchor=south] {Step \ref{smooth-2}} (-2,2);
		\draw[dashed] (-7,0) -- (-3,0);
		\draw[thick] (-2,0) -- (0,4) -- (2,0);
		\draw[thick,domain=-180:0] plot({2*cos(\x)},{0.5*sin(\x)});
		\draw[dashed,domain=0:180] plot({2*cos(\x)},{0.5*sin(\x)});
		\draw [-stealth] (2,2) -- node[anchor=south] {Step \ref{smooth-3}} (3,2);
		\draw[thick] (3,0) -- (4,2);
		\draw[thick] (6,2) -- (7,0);
		\draw[dashed,domain=-1:1] plot({5+\x},{2.5-sqrt(1.25-(\x)^2)});
		\draw[thick,domain=-180:0] plot({5+cos(\x)},{2+0.25*sin(\x)});
		\draw[thick,domain=0:180] plot({5+cos(\x)},{2+0.25*sin(\x)});
		\draw[thick,domain=-180:0] plot({5+2*cos(\x)},{0.5*sin(\x)});
		\draw[dashed,domain=0:180] plot({5+2*cos(\x)},{0.5*sin(\x)});
		\draw [-stealth] (7,2) -- node[anchor=south] {Step \ref{smooth-4}} (8,2);
		\draw[thick] (8,0) -- (8.7,1.4);
		\draw[thick] (11.3,1.4)-- (12,0);
		\draw[dashed,domain=-0.8:0.8] plot({10+\x},{2.5-sqrt(1.25-(\x)^2)});
		\draw[thick,domain=-180:180] plot({10+0.45*2*cos(\x)},{1.9+0.45*0.5*sin(\x)});
		\draw[thick] (8.7,1.4) .. controls (8.9,1.9) .. (9.1,1.9);
		\draw[thick] (11.3,1.4) .. controls (11.1,1.9) .. (10.9,1.9);
		\draw[thick,domain=-180:0] plot({10+2*cos(\x)},{0.5*sin(\x)});
		\draw[dashed,domain=0:180] plot({10+2*cos(\x)},{0.5*sin(\x)});
		\end{tikzpicture}	
		\caption{An illustration of the steps in the construction of the smooth approximation $N_r$}
		\label{fig:steps}
	\end{figure}

	Set $\overbar C_{r}= \partial N_r-\partial N$, i.e., $\overbar C_r$ is the part of $\partial N_r$ that is not in $\partial N$. We denote by $\overbar H_r$ the mean curvature of $\overbar C_r$. Given a smooth section $\varphi$  of $S_N\otimes f^\ast S_M$ over $N$, we extend it to a smooth section of the corresponding bundle over $N^\dagger$, which will still be denoted by $\varphi$. By the proof of Proposition \ref{prop:D^2smooth}, we have
	\begin{equation}\label{eq:>=0forN_r}
	\begin{aligned}
	\int_{N_r}|D\varphi|^2\geq& \int_{N_r} |\nabla\varphi|^2 + \int_{\partial N_r}\langle \slashed D^\partial\varphi,\varphi\rangle + o(1) \\
	& +  \int_{N_r\cap N }\frac{\overbar{\Sc}}{4}|\varphi|^2-\int_{N_r\cap N}\|\wedge^2 df\|\cdot\frac{f^*\Sc}{4}|\varphi|^2  \\
	&+\int_{\partial N\cap\partial N_r}\frac{\overbar H}{2}|\varphi|^2-\int_{\partial N\cap\partial N_r}\|df^\partial\|\cdot\frac{f^*H}{2}|\varphi|^2\\
	&+\int_{\overbar C_r}\sum_{\lambda} \langle\overbar c_\partial(\overbar e_\lambda)\nabla_{\overbar e_\lambda}\varphi,\varphi\rangle,
	\end{aligned}\end{equation}
	where $\slashed D^\partial$ is an appropriate substitute of  $D^\partial$ and  $o(1)$ means a quantity that goes to zero, as $r\to 0$. Recall that in our definition of the boundary Dirac operator $D^\partial$ from line \eqref{eq:boundarydirac}, we used both the normal vectors to $\partial N$ and $\partial M$. 	Since the image $f(\partial N_r)$ of $\partial N_r$ may not be a manifold in general, the notion of normal vectors to $f(\partial N_r)$ does not quite make sense. However, there exists a natural substitute $\slashed D^\partial$ of $D^\partial$ that makes all the estimates work. The precise definition of $\slashed D^\partial$ will be given in line  \eqref{eq:Dscript}. Also, the reason for having the extra term $o(1)$ is the following.  The curvature operator on $f(N_r)-M$ (i.e., the part of $f(N_r)$ that is outside of $M$) is generally not non-negative, hence the curvature estimate in Lemma \ref{lemma:curvature>=} does not apply. This  results in an extra error term coming from a certain integral over $N_r- N$. On the other hand, the volume of $N_r - N$ goes to zero, as $r\to 0$. We see this extra error term goes to zero, as $r\to 0$.

	We need to show that  each term on the right hand side of the inequality \eqref{eq:>=0forN_r} converge to the corresponding term of line \eqref{eq:>=0corner}.  This is relatively  straightforward for all terms except the last term. So we shall mainly focus on  estimating the last term 
	\[ \int_{\overbar C_r}\sum_{\lambda} \langle\overbar c_\partial(\overbar e_\lambda)\nabla_{\overbar e_\lambda}\varphi,\varphi\rangle.  \]
	
	To illustrate the key idea of our estimation, we first prove the special case where $\dim N=\dim M=2.$ In this case, by applying Equation \eqref{eq:secondf}, we have 
	\begin{equation}\label{eq:>=0forN_r2}
	\begin{aligned}
	\int_{\overbar C_r} \langle\overbar c_\partial(\overbar e_1)\nabla_{\overbar e_1}\varphi,\varphi\rangle = &
	\int_{\overbar C_r}\langle D^\partial\varphi,\varphi\rangle+
	\int_{\overbar C_{r}}\frac{\overbar H_r}{2}|\varphi|^2\\
	&-\frac{1}{2}\int_{\overbar C_{r}}A_r( e_1,f_*\overbar e_1) \cdot \langle\overbar c_\partial(\overbar e_1)\otimes c_\partial(e_1)\varphi,\varphi\rangle,
	\end{aligned}
	\end{equation}
	where $A_r$ is the second fundamental form of the curve $C_r = f(\overbar C_r)$ in $M^\dagger$,  and  $e_1$ is the unit tangent vector of $C_r$.  We denote by $\overbar v_{ij}$ the vertices of $N$. Then we have $\overbar C_r=\cup \overbar C_{ij,r}$, where  $\overbar C_{ij,r}$ is the curve near $\overbar v_{ij}$ coming from the construction of the approximation $N_r$. In this case, the mean curvature $\overbar H_r$ of $\overbar C_{ij,r}$ is simply the signed curvature of $\overbar C_{ij,r}$, which measures how fast the angle of the tangent vector of the curve changes with respect to the arc length of the curve. Therefore we have
	$$\int_{\overbar C_{ij,r}}\overbar H_r|\varphi|^2=(\pi-\overbar\theta_{ij})\cdot |\varphi(\overbar v_{ij})|^2+o(1)  \textup{ as } r \to 0, $$
	where $\overbar \theta_{ij}$ is the dihedral angle at the vertex $\overbar v_{ij}$, and $(\pi-\overbar\theta_{ij})$ is the jump angle of the tangent vector. 
	
	Set $C_{ij,r}=f(\overbar C_{ij,r})$, which is a smooth curve near the vertex $v_{ij} = f(\overbar v_{ij})$ in $M^\dagger$. We have $A_r(u,v)=k_r\cdot \langle u,v\rangle_M$, where $k_r$ is the signed curvature of the curve $C_{ij,r}$. 
	Therefore, when restricted on each curve $\overbar C_{ij, r}$, the last term on the right hand side of  \eqref{eq:>=0forN_r2} becomes 
	$$-\frac{1}{2}\int_{\overbar F_{ij,r}} k_r \cdot \langle e_1,f_* \overbar e_1\rangle_M \cdot \langle\overbar c_\partial(\overbar e_1)\otimes c_\partial(e_1)\varphi,\varphi\rangle d\overbar{s},$$
	where $d\overbar{s}$ is the infinitesimal arc length of $\overbar C_{ij,r}$. Since $e_1$ is a unit vector in $TM^\dagger$ and is parallel to $f_*\overbar e_1$, we have
	$$f_*\overbar e_1=\sqrt{\langle f_*\overbar e_1,f_*\overbar e_1\rangle_M}\cdot e_1.$$
	Therefore
	$$\langle e_1,f_* \overbar e_1\rangle_M \cdot d\overbar s=\langle e_1,e_1 \rangle_M \cdot ds=ds,$$
	where $ds$ is  the infinitesimal arc length of $C_{ij,r}$.

	Furthermore, since $\overbar c_\partial(\overbar e_1)\otimes 1+1\otimes c_\partial(e_1)$ is skew-symmetric, it follows that 
	\begin{align*}
	-\overbar c_\partial(\overbar e_1)\otimes c_\partial(e_1)=&
	\frac{1}{2}\big( -(\overbar c_\partial(\overbar e_1)\otimes 1+1\otimes c_\partial(e_1))^2 +\overbar c_\partial(\overbar e_1)^2\otimes 1+1\otimes c_\partial(e_1)^2\big)\\
	\geq& -1.
	\end{align*}
	Therefore, as $r\to 0$, we have 
	\begin{align*}
	-\int_{\overbar C_{ij,r}} k_r \langle e_1,f_* \overbar e_1\rangle_M \cdot \langle\overbar c_\partial(\overbar e_1)\otimes c_\partial(e_1)\varphi,\varphi\rangle d\overbar s \geq&
	-\int_{C_{ij,r}} |k_r| \cdot \langle\varphi,\varphi\rangle ds\\
	=& 
	-|\pi -f^*\theta_{ij}|\cdot |\varphi(v_{ij})|^2+o(1),
	\end{align*}
	where the last  equality follows from   the fact that $k_r$ does not change sign and that  $\varphi$ is smooth.

	Now we derive a similar estimate for the general case. Consider the decomposition 
	\[ \overbar C_r=(\bigcup_{ij} \overbar C_{ij,r})\cup \overbar V_r, \] where $\overbar V_r$ is the part of $\partial N_r$ that is sufficiently close to the codimension three faces of $N$,  and  $\overbar C_{ij,r}$ is the part that lies in the complement of $\overbar V_r$ and near the codimension two face $\overbar F_{ij}$ of $N$. A main observation in the estimation for the general case is that only the mean curvature of $\overbar C_{ij,r}$ will eventually contribute to the final limit, the integral of which converges to become the dihedral angle contribution.

	Recall that in our definition of the boundary Dirac operator $D^\partial$ from line \eqref{eq:boundarydirac}, we used both the normal vectors to $\partial N$ and $\partial M$. 	Since the image $f(\partial N_r)$ of $\partial N_r$ may not be a manifold in general, the notion of normal vectors to $f(\partial N_r)$ does not quite make sense. Hence our first step is to make sense of the ``boundary Dirac operator" in this general case.   Indeed, there is a natural substitute for the normal vector even at points  of $f(\partial N_r)$ where it fails to have a manifold structure.  First, for any point  $z \in  \partial N_r\cap \partial N$, we have $f(z)\in \interior{F}_j$ for some codimension one face $F_j$ of $M$. In this case, we simply choose $e_n \in T_{f(z)}M^\dagger $ to be the unit inner normal vector to $F_j$.

	Now consider a point  $z\in \gamma_N^x\subset  \overbar Y_{ij}$, where $x\in \interior{\overbar F}_{ij}$, and $\gamma_N^x$ and $\overbar Y_{ij}$ are the curve and submanifold constructed in Step \ref{smooth-2} above. At $f(x) \in \interior{F}_{ij} \subset  M$, let $T_{f(x)} F_{ij}$ be the tangent space of $ F_{ij}$ at $f(x)$. We identify $T_{f(x)} F_{ij}$ with a linear subspace $\mathfrak M_{f(z)}$ of $T_{f(z)}M^\dagger$ via the exponential map $\exp\colon T_{f(x)} M^\dagger \to M^\dagger$. Let $\overbar e_1$ be the unit tangent vector of   the curve $\gamma_N^x$ at the point $z \in \gamma_N^x$. It follows that $f_*(\overbar e_1)$ is non-zero and does not lie in  $\mathfrak M_{f(z)}$. Therefore, there exists a unique  unit inner vector in $T_{f(z)}M^\dagger $ that is orthogonal to the linear span of $\mathfrak M_{f(z)}$ and $f_*(\overbar e_1)$. Again, we denote this unique unit inner vector  by $e_n$. More generally, for a point $z\in \overbar Y_{\Lambda}$, we simply replace $F_{ij}$ and $\gamma_N^x$ by $F_{\Lambda}$ and $\mathcal R_x$ as in Step \ref{smooth-3}, and carry out the same construction as above. This produces the desired unit vector $e_n\in T_{f(z)} M^\dagger$. 
	
	Here we shall also introduce an extra notation to be used in later part of the proof.  We define $e_1 \in T_{f(z)}M^\dagger$ to be 
	\begin{equation}\label{eq:e_1}
	e_1 \coloneqq  \frac{f_\ast(\overbar e_1) - v}{\|f_\ast(\overbar e_1) - v\|}
	\end{equation} 
	where $v$ is the projection of  $f_\ast(\overbar e_1)$ onto $\mathfrak M_{f(z)}$. In particular, $e_1$ is orthogonal to $\mathfrak M_{f(z)}$ in $T_{f(z)}M^\dagger$.
	
	Now let $\overbar e_n \in TN_r$ be the unit inner normal vector to $\partial N_r$ and $e_n\in TM^\dagger$ the unit vector chosen above. Similar to  line \eqref{eq:nablapartial}, we define the following boundary Dirac operator over $\partial N_r$: 
	\begin{equation}\label{eq:Dscript}
	\begin{aligned}
	\slashed{D}^\partial\coloneqq \sum_\lambda \overbar c_\partial (\overbar e_\lambda)\nabla_{\overbar e_\lambda}
	&-\frac 1 2 \sum_{\lambda,\mu}\langle\prescript{N}{}\nabla_{\overbar e_\lambda} \overbar e_n,\overbar e_\mu \rangle_N\overbar c_\partial (\overbar e_\lambda)\overbar c_\partial(\overbar e_\mu)\otimes 1\\
	&-\frac 1 2\sum_{\lambda,\mu}\overbar c_\partial (\overbar e_\lambda)\otimes \langle\prescript{M}{}\nabla_{f_*(\overbar e_\lambda)}  e_n, e_\mu \rangle_M c_\partial(e_\mu)
	\end{aligned}
	\end{equation}
	where $\{\overbar e_\lambda\}$ is a local orthonormal basis of $T(\partial N_r)$ and $\{e_\lambda\}$ is local orthonormal basis of $f^\ast(\mathbb R e_1 \oplus \mathfrak M) \subset f^\ast(TM^\dagger)$. 
	Set $\overbar H_r$ to be the mean curvature of  $\partial N_r$ and $A_r$ to be as follows
	\begin{equation}\label{eq:substitute}
	A_{r}(e_\lambda,e_\mu)\coloneqq \langle \prescript{M}{}{\nabla} _{ e_\lambda}e_\mu, e_n\rangle_M=-\langle \prescript{M}{}{\nabla} _{ e_\lambda}e_n, e_\mu\rangle_M.
	\end{equation}
	for $e_\lambda, e_\mu$ in $f^\ast TM^\dagger$ that are orthogonal to $e_n$. 
	Then we have
	\begin{equation}
	\sum_{\lambda} \overbar c_\partial(\overbar e_\lambda)\nabla_{\overbar e_\lambda}=\slashed D^\partial+\frac{\overbar H_r}{2}-\frac{1}{2}\sum_{\lambda,\mu}A_r(f_*\overbar e_\lambda,e_\mu)\overbar c_\partial(\overbar e_\lambda)\otimes c_\partial(e_\mu).
	\end{equation}
	
	By construction, the second fundamental form of $\overbar C_{ij,r}$ is of the form \eqref{eq:second-fund}. It follows that essentially only the curvatures of the curves $\gamma_N^x$ in Step \ref{smooth-2} contribute to the mean curvature $\overbar H_r$. In particular,   as $r\to 0$, we have 
	$$\int_{\overbar C_{ij,r}}\overbar H_r|\varphi|^2=\int_{\overbar F_{ij}}(\pi -\overbar\theta_{ij} )\cdot |\varphi|^2+o(1).$$

	Again, consider a point $z\in \gamma_N^x\subset  \overbar Y_{ij}$, where $x\in \interior{\overbar F}_{ij}$, and $\gamma_N^x$ and $\overbar Y_{ij}$ are the curve and submanifold constructed in Step \ref{smooth-2} above. By assumption on $f$, the tangent map $f_\ast \colon T_x N\to T_{f(x)}M$ maps $T_x \overbar{F}_{ij}$ to $T_{f(x)} F_{ij}$. By construction,   $\mathfrak M_{f(z)}$ is a copy of $T_{f(x)} F_{ij}$ in $T_{f(z)}M^\dagger$ via  the exponential map $\exp\colon T_{f(x)} M^\dagger \to M^\dagger$. Since the second fundamental form $\overbar A_r$ of $\partial N_r$ is of the form  $\eqref{eq:second-fund}$, it follows that $A_r$ defined in line \eqref{eq:substitute} is also of the form
	\begin{equation}\label{eq:asympsecondform}
	A_r= k_r\cdot de_1^2+\sum_{\lambda>1\text{ or }\mu>1}O(1)de_\lambda de_\mu,
	\end{equation} 
	where $k_r$ is the signed curvature of local flow curves in $M^\dagger$ generated by the vector field $e_1$ from line \eqref{eq:e_1}.

	Recall that by construction $\overbar C_{ij,r}$ is part of $\overbar Y_{ij}$, where $\overbar Y_{ij}$ can be viewed as a  fiber bundle over $\interior {\overbar F}_{ij}$ with fibers being the curves $\gamma_N^x$, cf. Step \ref{smooth-2}. Let $d\overbar \omega$ be the infinitesimal volume element of $\overbar C_{ij,r}$. Then as $r\to 0$, we have asymptotically
	\[ d\overbar w =d\overbar s \, d\overbar \sigma, \] where $d\overbar \sigma$ is the infinitesimal volume element of $\interior {\overbar F}_{ij}$, and $d\overbar{s}$ is the  infinitesimal length element along the curves $\gamma_N^x$. 
	
	By the definition of $e_1$ in line \eqref{eq:e_1}, when $r\to 0$, the local flow curves  generated by the vector field $e_1$  asymptotically coincide with the curves obtained by projecting $f(\gamma_N^x)$ to  $\exp(\mathfrak L_{f(x)})$, where $\mathfrak L_{f(x)}$ is the linear subspace of $T_{f(x)}M$ spanned by the normal vectors of $F_{i}$ and $F_j$, and  $\exp(\mathfrak L_{f(x)})$ is its image in $M^\dagger$ under the exponential map. For simplicity, let us denote this local flow curve by $\zeta_M^{f(x)}$. See Figure \ref{fig:px}. In particular, if $ds$ is the infinitesimal length element of the curve $\zeta_M^{f(x)}$, then we have 
	$$ds=\langle f_*(\overbar e_1),e_1\rangle_M \cdot d\overbar s.$$
	Since the total length of each curve $\gamma_N^x$ goes to zero as $r\to 0$, it follows that,  as $r\to 0$, we have
	\begin{equation}\label{eq:>=angleM}
	\begin{split}
	&-\int_{\overbar C_{ij,r}}\sum_{\lambda,\mu}A_r( e_\lambda,f_*\overbar e_\mu)\langle\overbar c_\partial(\overbar e_\mu)\otimes c_\partial(e_\lambda)\varphi,\varphi\rangle d\overbar\omega\\
	=&-\int_{\overbar C_{ij,r}}\Big(k_r\cdot \langle e_1,f_*\overbar e_1\rangle_M\cdot \langle\overbar c_\partial(\overbar e_1)\otimes c_\partial(e_1)\varphi,\varphi\rangle +O(1)\Big)d\overbar s\, d\overbar \sigma\\
	\geq &  -\int_{\overbar F_{ij}}\int_{\gamma_M^{f(x)}}\langle\varphi,\varphi \rangle\cdot |k_r| \cdot ds d\overbar \sigma \quad + \quad o(1) \\
	=&-\int_{\overbar F_{ij}} |\pi -f^*\theta_{ij}| \cdot |\varphi|^2 d\overbar{\sigma} \quad + \quad o(1),\\
	\end{split}
	\end{equation}
	where  the last equality follows from the fact that $\varphi$ is smooth and that $k_r$ does not change its sign along the curve $\zeta_M^{f(x)}$.
	\begin{figure}[h]
		\centering
		\begin{tikzpicture}
		\draw[thick,black] (0,-1) -- (0,3);
		\filldraw (0,0) circle[radius=0.05] node[anchor=east] {$f(x)$};
		\filldraw[blue] (2.6, 3.5) node {$f(\gamma_N^x)$};
		\draw[dashed,black] (0,0) -- (4,0);
		\draw[dashed,black] (0,0) -- (-2.8,-2.8);
		\draw[very thick,black] (3.5,0).. controls (0.35,-0.15).. (-2.1,-2.1);
		\draw[dashed,black] (3.5,0) -- (3.5,3.5);
		\draw[dashed,black] (-2.1,-2.1) -- (-2.1,0.9);
		\draw[dashed,blue] (0,0) -- (4,4);
		\draw[dashed,blue] (0,0) -- (-2.8,1.2);
		\draw[very thick,blue] (3.5,3.5).. controls (0.35,0.2) .. (-2.1,0.9);
		\draw[-stealth,black] (0.5,-0.32) -- (2.5,0.22);
		\filldraw (2,0.3) node {$e_1$};
		\draw[-stealth,blue] (0.5,0.75) -- (2,1.7);
		\filldraw[blue] (1.8,1) node {$f_*(\overbar{e}_1)$};
		\draw[very thick,-stealth,red] (0.35,0.5) -- (0.35,-0.25);
		\filldraw[black] (2.6, -.6) node {$\zeta_M^{f(x)}$};
		\end{tikzpicture}
		\caption{The curve $f(\gamma_N^x)$ projects onto the local flow curve $\zeta_M^{f(x)}$ generated by the vector field $e_1$.}
		\label{fig:px}
	\end{figure}
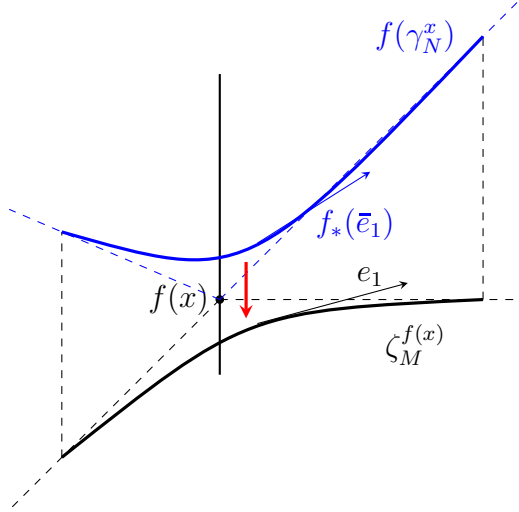

	It remains to estimate the following integral over $\overbar V_r$: 
	\begin{equation}\label{eq:small}
	\begin{aligned}
	\int_{\overbar V_r} \langle\overbar c_\partial(\overbar e_1)\nabla_{\overbar e_1}\varphi,\varphi\rangle = &
	\int_{\overbar V_r}\langle \slashed D^\partial\varphi,\varphi\rangle+
	\int_{\overbar V_{r}}\frac{\overbar H_r}{2}|\varphi|^2\\
	&-\frac{1}{2}\int_{\overbar 
		V_{r}}\sum_{\lambda,\mu}A_r( e_\lambda,f_*\overbar e_\mu)\langle\overbar c_\partial(\overbar e_\mu)\otimes c_\partial(e_\lambda)\varphi,\varphi\rangle.
	\end{aligned}
	\end{equation} 
	By repetitively applying the same estimation on $\overbar C_{ij, r}$ above to the last two terms of the right hand side of Equation \eqref{eq:small}, it follows that, as $r\to 0$, 
	\[ \int_{\overbar V_{r}}\frac{\overbar H_r}{2}|\varphi|^2 \to 0, \] 
	\[ \frac{1}{2}\int_{\overbar 
		V_{r}}\sum_{\lambda,\mu}A_r( e_\lambda,f_*\overbar e_\mu)\langle\overbar c_\partial(\overbar e_\mu)\otimes c_\partial(e_\lambda)\varphi,\varphi\rangle  \to 0.\]
	This is because the decay rate of  the volume of $\overbar V_r$ is greater than the growth rate of the mean curvature function $\overbar H_r$ of $\overbar V_r$, as $r \to 0$. 
	
	To summarize, for each smooth section $\varphi$ of $S_{N^\dagger}\otimes f^\ast S_{M^\dagger}$,  we have proved that
	\begin{equation}\label{eq:corner-approx}
	\begin{aligned}
	\int_{N_r} |D\varphi|^2\geq& \int_{N_r} |\nabla\varphi|^2 + \int_{\partial N_r}\langle \slashed D^\partial\varphi,\varphi\rangle -  o(1)  \\
	& + \int_{N_r}\frac{\overbar{\Sc}}{4}|\varphi|^2-\int_{N_r}\|\wedge^2 df\|\cdot\frac{f^*\Sc}{4}|\varphi|^2
	\\
	&+\int_{\partial N\cap\partial N_r}\frac{\overbar H}{2}|\varphi|^2-\int_{\partial N\cap\partial N_r}\|df^\partial\|\cdot\frac{f^*H}{2}|\varphi|^2\\
	&+\frac{1}{2}\sum_{i,j}\int_{\overbar F_{ij}}(\pi -\overbar\theta_{ij})\cdot  |\varphi|^2 - \frac{1}{2}\sum_{i,j}\int_{\overbar F_{ij}} |\pi -f^*\theta_{ij}| \cdot |\varphi|^2.
	\end{aligned}\end{equation}
	The proof will be complete once we show that each term in line \eqref{eq:corner-approx} converges to the corresponding term in line \eqref{eq:>=0corner}. The only term that needs an explanation is 
	\[  \int_{\partial N_r}\langle \slashed D^\partial\varphi,\varphi\rangle.  \]
	That is, we want to show that 
	\[   \int_{\partial N_r}\langle \slashed D^\partial\varphi,\varphi\rangle\to \int_{\partial N}\langle  D^\partial\varphi,\varphi\rangle\]
	as $r\to 0$. This will be an immediate consequence of the following claim and the fact that the coefficients of $\slashed D^\partial$  converges to the coefficients  of  $D^\partial$ (as differential operators). 
	\begin{claim*}
		For any smooth section $\varphi$ of $S_{N^\dagger} \otimes f^*S_{M^\dagger}$, the supreme norm $|\slashed D^\partial \varphi|$ of $\slashed D^\partial \varphi$ is uniformly bounded (i.e. independent of  $r$). 
	\end{claim*}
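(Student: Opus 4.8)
The plan is to estimate $\slashed D^\partial\varphi$ summand by summand, splitting $\slashed D^\partial$ as in \eqref{eq:Dscript} into its first order part $\sum_\lambda \overbar c_\partial(\overbar e_\lambda)\nabla_{\overbar e_\lambda}$ and the two zeroth order corrections built from $\langle\prescript{N}{}\nabla_{\overbar e_\lambda}\overbar e_n,\overbar e_\mu\rangle_N$ and $\langle\prescript{M}{}\nabla_{f_*\overbar e_\lambda}e_n,e_\mu\rangle_M$. The only way $\slashed D^\partial\varphi$ could fail to be uniformly bounded is through the blow up of these second fundamental form coefficients near the lower codimensional faces of $N$, so the heart of the matter is to see that the leading parts of the two corrections cancel.

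For the first order part I would run the computation behind \eqref{eq:D-Dpartial} on the ambient manifold $N^\dagger$ with its fixed extended metric, obtaining $\sum_\lambda \overbar c_\partial(\overbar e_\lambda)\nabla_{\overbar e_\lambda} = \overbar c(\overbar e_n) D + \nabla_{\overbar e_n}$, where $D$ is the Dirac operator of the \emph{fixed} bundle $S_{N^\dagger}\otimes f^\ast S_{M^\dagger}$ and $\overbar e_n$ is the unit inner normal of $\partial N_r$. Since $\varphi$ is a fixed smooth section of a fixed bundle, $|D\varphi|$ and $|\nabla_{\overbar e_n}\varphi|$ are bounded on $N^\dagger$ by constants not involving $r$, and Clifford multiplication by the unit vector $\overbar e_n$ is a fibrewise isometry; hence this part of $\slashed D^\partial\varphi$ is uniformly bounded.

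For the two corrections the structure equations \eqref{eq:second-fund} and \eqref{eq:asympsecondform} do most of the work. By \eqref{eq:second-fund} every entry of the second fundamental form $\overbar A_r$ of $\partial N_r$ other than $\overbar A_r(\overbar e_1,\overbar e_1)=\overbar k_r$ is bounded uniformly in $r$; since $\langle\prescript{N}{}\nabla_{\overbar e_\lambda}\overbar e_n,\overbar e_\mu\rangle_N = -\overbar A_r(\overbar e_\lambda,\overbar e_\mu)$ and the operators $\overbar c_\partial(\overbar e_\lambda)$ anticommute, the $N$-side correction reduces, modulo a uniformly bounded operator, to $-\tfrac12\overbar k_r\,(1\otimes 1)$. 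Likewise, by \eqref{eq:asympsecondform} every entry of $A_r$ other than $A_r(e_1,e_1)=k_r$ is uniformly bounded, and because $f_*\overbar e_\lambda$ has a nonzero $e_1$-component only for $\lambda=1$, with $\langle f_*\overbar e_1,e_1\rangle_M\, d\overbar s = ds$, the $M$-side correction reduces, modulo a uniformly bounded operator, to $\tfrac12\langle f_*\overbar e_1,e_1\rangle_M\,k_r\,\overbar c_\partial(\overbar e_1)\otimes c_\partial(e_1)$. It then remains to see that the operator $-\tfrac12\overbar k_r\,(1\otimes 1) + \tfrac12\langle f_*\overbar e_1,e_1\rangle_M\,k_r\,\overbar c_\partial(\overbar e_1)\otimes c_\partial(e_1)$, applied to the fixed section $\varphi$, stays bounded as $r\to 0$; this is where one exploits the choice of the substitute normal $e_n$ (orthogonal both to $\mathfrak M_{f(z)}$ and to $f_*\overbar e_1$), the description of $\overbar C_{ij,r}$ as a fibre bundle over $\interior{\overbar F}_{ij}$ with fibres $\gamma_N^x$, and the sign-definiteness of $\overbar k_r$ and $k_r$ along those fibres. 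Granting this, all remaining terms in $\slashed D^\partial$ have coefficients that are $O(1)$ uniformly in $r$ and act on $\varphi$ through Clifford multiplications by unit vectors, so $|\slashed D^\partial\varphi|$ is bounded independently of $r$.

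The step I expect to be the main obstacle is exactly this last cancellation of the two $O(1/r)$ contributions; it is the reason $\slashed D^\partial$ had to be defined using the substitute normal $e_n$ of \eqref{eq:e_1}--\eqref{eq:Dscript} rather than any naive replacement for the non-existent normal to $f(\partial N_r)$, and carrying it out requires keeping track of how $e_n$ twists along the smoothing curves $\gamma_N^x$ relative to the twisting of the inner normal $\overbar e_n$ of $\partial N_r$.
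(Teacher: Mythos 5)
There is a genuine gap, and it sits exactly where you flag it. Your splitting of $\slashed D^\partial$ as in \eqref{eq:Dscript} and your identification of the singular parts of the two zeroth order corrections do agree with the paper: using \eqref{eq:second-fund} and \eqref{eq:asympsecondform}, the $N$-side correction is $-\tfrac12\overbar H_r=-\tfrac12\overbar k_r+O(1)$ and the $M$-side correction is $\tfrac12 k_r\langle f_*\overbar e_1,e_1\rangle_M\,\overbar c_\partial(\overbar e_1)\otimes c_\partial(e_1)+O(1)$. But the step you then ``grant'' --- that $\bigl(-\tfrac12\overbar k_r+\tfrac12 k_r\langle f_*\overbar e_1,e_1\rangle_M\,\overbar c_\partial(\overbar e_1)\otimes c_\partial(e_1)\bigr)\varphi$ stays bounded --- is the entire content of the claim, and it cannot be produced from the ingredients you cite. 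The endomorphisms $1\otimes 1$ and $\overbar c_\partial(\overbar e_1)\otimes c_\partial(e_1)$ are linearly independent (the latter is a nontrivial involution), the scalars $\overbar k_r$ and $k_r\langle f_*\overbar e_1,e_1\rangle_M$ are unrelated in general, and neither the choice of $e_n$ in \eqref{eq:e_1} nor the sign-definiteness of $\overbar k_r$, $k_r$ (which controls integrals along $\gamma_N^x$, not pointwise sizes) makes this combination small on a general smooth section: already for $N=M$ a flat two-dimensional square corner and $f=\mathrm{id}$ it is $-\tfrac1r\cdot\tfrac12\bigl(1-\overbar c_\partial(\overbar e_1)\otimes c_\partial(e_1)\bigr)$, i.e.\ $-1/r$ times a nonzero spectral projection. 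So no cancellation \emph{within} the zeroth order part is available.

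The paper finds the compensating $1/r$-terms in the other summand of \eqref{eq:Dscript}: it expands $\sum_\lambda\overbar c_\partial(\overbar e_\lambda)\nabla_{\overbar e_\lambda}\varphi$ by the explicit local formula for the spinor connection \cite[Theorem 2.7]{spinorialapproach} relative to the frame adapted to $\partial N_r$ along the smoothing curves, and, using $\prescript{N}{}\nabla_{\overbar e_1}\overbar e_1=\overbar k_r\overbar e_n+O(1)$, $\prescript{M}{}\nabla_{e_1}e_1=k_r e_n+O(1)$ and $\langle f_*\overbar e_\lambda,e_1\rangle=o(1)$ for $\lambda>1$, isolates from it exactly $\tfrac{\overbar k_r}{2}\varphi-\tfrac12 k_r\langle f_*\overbar e_1,e_1\rangle_M(\overbar c_\partial(\overbar e_1)\otimes c_\partial(e_1))\varphi$ modulo terms controlled by the $C^1$-norm of $\varphi$; the cancellation is thus between the first summand and the last two, not inside the last two. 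Your decision to discharge the first summand at the outset via $\sum_\lambda\overbar c_\partial(\overbar e_\lambda)\nabla_{\overbar e_\lambda}=\overbar c(\overbar e_n)D+\nabla_{\overbar e_n}$ and $\sup|\nabla\varphi|$ is therefore not a harmless simplification but the point where you part company with the paper's argument: with that accounting the whole burden of the claim falls on the zeroth order part alone, and, as the model computation above shows, the argument cannot then be closed along the lines you sketch. To follow the paper you must instead expand $\nabla_{\overbar e_1}\varphi$ in the adapted trivialization, whose frame turns at rate $\overbar k_r$ (resp.\ $k_r$) along $\gamma_N^x$ and its image, and track where that rotation enters; reconciling this with your naive covariant-derivative bound --- i.e.\ deciding which piece of the expansion genuinely is $O(1)$ --- is precisely the delicate bookkeeping on which the claim turns, and your proposal does not engage with it.
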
 
	Recall the definition of $\slashed D^\partial$ from line \eqref{eq:Dscript}.  We first estimate the first term in line \eqref{eq:Dscript}. By the construction of the curves $\gamma_N^x$ in Step \ref{smooth-2}, we have  
	$$\prescript{N}{}\nabla_{\overbar e_1} \overbar e_1=\overbar k_r \overbar e_n+O(1), $$
	where $\overbar k_r$ is the signed curvature of $\gamma_N^x$. 
	Similarly, we have
	$$\prescript{M}{}\nabla_{e_1} e_1=k_r  e_n+O(1).$$ Since the tangent map $f_\ast \colon T_x N\to T_{f(x)}M$ maps $T_x \overbar{F}_{ij}$ to $T_{f(x)} F_{ij}$,  we see that asymptotically $\overbar e_1$ is the only basis vector in  $T_z(\partial N_r)$ such that $\langle f_\ast(\overbar e_1), e_1\rangle \neq 0$. In other words, if $\lambda>1$, then
	\[ \langle f_\ast(\overbar e_\lambda), e_1\rangle = o(1) \textup{ as $r\to 0$, }\] 
	where   $\{\overbar e_\lambda \}$ are basis vectors of $T_z(\partial N_r)$  chosen above. 
	Therefore,  we have
	\begin{align*}
	\sum_\lambda \overbar c_\partial (\overbar e_\lambda)\nabla_{\overbar e_\lambda}\varphi = & \overbar c_\partial (\overbar e_1)\cdot 
	\frac{1}{2}\langle\prescript{N}{}\nabla_{\overbar e_1} \overbar e_1,e_n\rangle_N\cdot\Big(\overbar c(\overbar e_1)\overbar c(\overbar e_n)\otimes 1\Big)\varphi
	\\
	&+\frac{1}{2}\langle\prescript{M}{}\nabla_{f_*(\overbar e_1)} e_1,e_n\rangle_M\cdot \Big(\overbar c_\partial(\overbar e_1)\otimes c(e_1)c(e_n)	\Big)\varphi+O(1)\\
	=&\frac{\overbar k_r}{2}\varphi-\frac{1}{2}k_r\langle f_*(\overbar e_1),e_1\rangle_M\cdot \Big(\overbar c_\partial(\overbar e_1)\otimes c_\partial(e_1)\Big)\varphi+O(1).
	\end{align*}
	Here the first equality follows from the explicit formula of the spinor connection $\nabla$ \cite[Theorem 2.7]{spinorialapproach}, and the $O(1)$ term  depends on the $C^1$-norm of $\varphi$.
	Furthermore, by the expressions in line \eqref{eq:second-fund} and line \eqref{eq:asympsecondform}, the last two terms of the right hand side of Equation \eqref{eq:Dscript} become 
	$$-\frac{\overbar k_r}{2}+\frac{1}{2}k_r\langle f_*(\overbar e_1),e_1\rangle_M\cdot \overbar c_\partial(\overbar e_1)\otimes c_\partial(e_1)+O(1).$$
	To summarize, we have 
	\[\langle \slashed D^\partial \varphi, 
	\varphi\rangle  = O(1).  \] 
	This proves the claim, hence completes the proof of the proposition. 
\end{proof}

\end{document}